\documentclass[a4paper,10pt,reqno]{amsart}

\usepackage{ifpdf}
\ifpdf 
    \usepackage[pdftex]{graphicx}   % to include graphics
    \pdfcompresslevel=9 
    \usepackage[pdftex,     % sets up hyperref to use pdftex driver
            plainpages=false,   % allows page i and 1 to exist in the same document
            breaklinks=true,    % link texts can be broken at the end of line
            colorlinks=true,
            pdftitle=My Document
            pdfauthor=My Good Self
           ]{hyperref} 
\else 
    \usepackage{graphicx}       % to include graphics
    \usepackage{hyperref}       % to simplify the use of \href
\fi 

%-------------------------------------------------------------------------------------------------
% PACCHETTI
%--------------------------------------------------------------------------------------------------

\usepackage{amsfonts,amsmath}	
\usepackage{amssymb}
\usepackage{verbatim}
\usepackage{amsopn}
\usepackage[english]{babel}
\usepackage{amsthm}
\usepackage{enumerate}
\usepackage{mathrsfs}	
\usepackage{mathtools}
\usepackage{color}
\usepackage{bbm}
%----------------------------------------------------------------------------------------
% pacchetti per le immagini
%----------------------------------------------------------------------------------------
\usepackage{subfig}

%\usepackage[pdftex]{graphicx}
%\DeclareGraphicsExtensions{.pdf,.png,.jpg,.mps,.eps,.fig, .bmp}
%----------------------------------------------------------------------------------------
% fine pacchetti per le immagini
%----------------------------------------------------------------------------------------

%----------------------------------------------------------------------------------------
% MARGINI
%----------------------------------------------------------------------------------------

%\pagestyle{headings}
%\setlength{\topmargin}{-1cm}
%\setlength{\textwidth}{16cm}
%\setlength{\textheight}{23cm}
%\setlength{\oddsidemargin}{0pt}
%\setlength{\evensidemargin}{0pt}

\usepackage{geometry}
\geometry{a4paper,top=2.5cm,bottom=2cm,left=2cm,right=2cm,%
	heightrounded,bindingoffset=5mm}

\title[Regularity estimates for scalar conservation laws]{Regularity estimates for scalar conservation laws in one space dimension}
\author{Elio Marconi}
\address{E. Marconi: S.I.S.S.A., via Bonomea 265, 34136 Trieste, Italy}
\email{elio.marconi@sissa.it}%\date{\today}

%--------------------------------------------------------------------------------------------------
% COMANDI MATEMATICI
%--------------------------------------------------------------------------------------------------
\newcounter{assu}

\theoremstyle{definition} \newtheorem{definition}{Definition}[section]
\theoremstyle{definition} \newtheorem{remark}[definition]{Remark}
\theoremstyle{plain} \newtheorem{lemma}[definition]{Lemma}
\theoremstyle{plain} \newtheorem{proposition}[definition]{Proposition}
\theoremstyle{plain} \newtheorem{theorem}[definition]{Theorem}
\theoremstyle{plain} \newtheorem{corollary}[definition]{Corollary}
\theoremstyle{definition} 
\theoremstyle{plain} 

\newtheorem{theorem01}[assu]{Theorem}
\newtheorem{theorem02}[assu]{Theorem}
\newtheorem{theorem03}[assu]{Theorem}

\DeclareMathOperator{\conv}{conv}

\DeclareMathOperator{\graph}{Graph}
\DeclareMathOperator{\dist}{dist}
\newcommand{\R}{\mathbb{R}}

\newcommand{\N}{\mathbb{N}}
\newcommand{\Z}{\mathbb{Z}}
\newcommand{\TV}{\text{\rm TV}}
\newcommand{\PTV}{\Phi\text{\rm -TV}}

\DeclareMathOperator{\BV}{BV}
\DeclareMathOperator{\SBV}{SBV}

\newcommand{\Id}{\mathbbm{I}}
\newcommand{\Int}{\mathrm{Int}}

\newcommand{\e}{\varepsilon}

\newcommand{\loc}{\mathrm{loc}}
\newcommand{\di}{\mathfrak{d}}
\newcommand{\supp}{\mathrm{supp}\,}

\newcommand{\sci}{\mathrm{sc^-}}

\newcommand{\U}{\mathtt{u}}
\newcommand{\T}{\mathtt{T}}
\newcommand{\s}{\mathtt{S}}
\newcommand{\X}{\mathtt{X}}
\newcommand{\el}{\mathbf{l}}

\numberwithin{equation}{section} %numera le formule in accordo con le sezioni
%\numberwithin{claim}{section}

\begin{document}

\maketitle
\begin{abstract}
In this paper we deal with the regularizing effect that, in a scalar conservation laws in one space dimension, 
the nonlinearity of the flux function $f$ has on the entropy solution.
More precisely, if the set $\{w:f''(w)\ne 0\}$ is dense, the regularity of the solution can be expressed in terms of 
$\BV^\Phi$ spaces, where $\Phi$ depends on the nonlinearity of $f$.
If moreover the set $\{w:f''(w)=0\}$ is finite, under the additional polynomial degeneracy condition at the inflection points, 
we prove that $f'\circ u(t)\in \BV_{\loc}(\R)$ for every $t>0$ and that this can be improved to $\SBV_{\loc}(\R)$ regularity except an at most
countable set of singular times.
Finally we present some examples that shows the sharpness of these results and counterexamples to related questions, namely
regularity in the kinetic formulation and a property of the fractional BV spaces.
\end{abstract}

%\tableofcontents

%\begin{center}
%SISSA  37/2017/MATE
%\end{center}

\section{Introduction}
We consider the scalar conservation law in one space dimension:
\begin{equation}\label{E_cl}
\begin{cases}
u_t+f(u)_x=0 & \mbox{in }\R^+\times \R, \\
u(0,\cdot)=u_0(\cdot), &
\end{cases}
\end{equation}
where the flux $f:\R\to \R$ is smooth and the function $u:\R^+_t\times \R_x\rightarrow \R$ is the spatial density of the conserved quantity.
In the classical setting, the problem is well-posed only locally in time, therefore we consider solutions in the sense of distributions, 
which are however not unique. 
The well-posedness is finally obtained requiring some admissibility condition: more precisely we say that a bounded distributional solution $u$ is 
an entropy solution if for every convex entropy $\eta:\R\to \R$, it holds in the sense of distributions
\begin{equation*}
\eta(u)_t+q(u)_x\le 0,
\end{equation*}
where $q'(u)=f'(u)\eta'(u)$ is the entropy flux.
A celebrated theorem of Kruzkov \cite{Kruzhkov_contraction} establishes existence, uniqueness and continuous dependence with respect to $L^1_\loc$ topology in the
setting of bounded entropy solutions, also in several space dimensions.
Moreover, as a consequence, we have that the $\BV$ regularity is propagated in time and this allows a precise description of the structure of the
solution $u$ to \eqref{E_cl} with $u_0\in \BV(\R)$.

Since the well-posedness result by Kruzkov holds for $u_0\in L^\infty$ and in general entropy solutions have not bounded variations, it is 
natural to try to understand the structure of the entropy solution in this setting and to look for regularity estimates.
The problem is to quantify the regularizing effect that the nonlinearity of the flux $f$ has on the initial datum.
A first result in this direction is due to Oleinik \cite{Oleinik_translation}: if the flux is uniformly convex, say $f''\ge c>0$, then for every 
positive time $t$ the solution $u(t)\in \BV_{\loc}(\R)$ and it holds the one-sided Lipschitz estimate
\begin{equation}\label{E_Oleinik_i}
D_xu(t)\le \frac{\mathcal L^1}{ct}.
\end{equation}
Observe that if the flux $f$ is linear, say $f(w)=\lambda w$ for some $\lambda\in \R$, then the solution to \eqref{E_cl} is simply
\begin{equation*}
u(t,x)=u_0(x-\lambda t),
\end{equation*}
therefore no regularization occurs in this case.

Several results have been obtained between these two extremal cases.

First, we consider the case in which the flux $f$ has no flat parts: 
in order to fix the terminology, we say that $f$ is \emph{weakly genuinely nonlinear} if $\{w:f''(w)\ne 0\}$ is dense.
Under this assumption on the flux it is proved in \cite{Tartar_notes} that an equibounded family of entropy solutions to \eqref{E_cl}
is precompact in $L^1_{\loc}(\R)$. 
Always relying on some nondegeneracy condition of the flux, regularity estimates in terms of fractional Sobolev spaces can be obtained also in 
several space dimensions, by means of the kinetic formulation and averaging lemmas (see \cite{LPT_kinetic, DPLM_averaging_lemma}).
The kinetic formulation is also one of the basic tools in \cite{DLW_structure}, where the authors prove that solutions to scalar 
conservation laws in several space dimensions enjoy some properties of $\BV$ functions (see also \cite{COW_bologna}).
In \cite{BGJ_fractional,CJ_BVPhi} the regularity of the entropy solution $u$ in the case of a  strictly convex  flux $f$ is expressed in terms of 
$\BV^\Phi$ spaces: they provide a convex function $\Phi:[0,+\infty)\rightarrow [0,+\infty)$ depending on the nonlinearity of $f$ 
such that for every $t>0$ and $[a,b]\subset \R$, the solution $u(t)$ satisfies
\begin{equation}\label{E_Junca_i}
\PTV_{[a,b]}u(t):= \sup_{n\in \N,\, a<x_1<\ldots<x_n<b} \sum_{i=1}^{n-1}\Phi \left(|u(t,x_{i+1})-u(t,x_i)|\right) <+\infty.
\end{equation}

Next we require some more structure on the flux $f$:
we say that the flux $f$ has \emph{polynomial degeneracy} if $\{f''(w)=0\}$ is finite and for each $w\in \{f''(w)=0\}$ there exists $p\ge 2$ 
such that $f^{(p+1)}(w)\ne 0$. More precisely, for every $w\in \{f''(w)=0\}$ let $p_w$ be the minimal $p\ge 2$ such that $f^{(p+1)}(w)\ne 0$ and let
$\bar p=\max_wp_w$. We say that $\bar p$ is the \emph{degeneracy} of $f$.

As conjectured in \cite{LPT_kinetic}, it is proved in \cite{Jabin_scalar} that if the flux $f$ as above has degeneracy $p\in \N$, then for every 
$\e,t>0$ the entropy solution $u(t) \in W_{\loc}^{s-\e,1}(\R)$, with $s=\frac{1}{p}$. The result is proved actually in several space dimensions.
However in this setting, it seems convenient to describe the
regularity of $u$ in terms of fractional $\BV$ spaces, i.e. $BV^\Phi$ spaces with $\Phi(u)=u^\alpha$ for some $\alpha\ge 1$. 
In \cite{BGJ_fractional}, under the additional convexity assumption on the flux $f$,
the authors prove that for every $t>0$ the entropy solution $u(t)\in BV_\loc^{s}(\R)$. In particular this implies that $u(t)\in W^{s-\e,p}_{\loc}(\R)$ and
that for every $x$, the function $u(t)$ admits both left and right limits.
The strategy to prove this result is essentially to exploit the BV regularity of $f'\circ u(t)$ for $t>0$ and then to deduce the corresponding regularity
for the solution $u$ itself. 
This regularity holds even out of the convex case: if $f$ has polynomial degeneracy, then for every $t>0$,
\begin{equation}\label{E_Cheng_i}
f'\circ u(t) \in \BV_{\loc}(\R),
\end{equation}
see \cite{Cheng_speed_BV}. We also mention that the case of fluxes with a single inflection 
point is studied in \cite{BC_homogeneous} for homogeneous fluxes $f(u)=|u|^{\alpha-1}u$, by a scaling argument and in 
\cite{Dafermos_inflection} for fluxes with polynomial degeneracy at the inflection point, by an accurate description of the 
extremal backward characteristics. In both these works, the author gets the BV regularity for positive time of the following nonlinear function of the
entropy solution: 
\begin{equation*}
F\circ u(t):= f\circ u(t) - u(t)(f'\circ u(t)).
\end{equation*}
This leads to a fractional regularity of the solution of one order less accurate then the sharp one: more precisely, if $p$ is the degeneracy of 
the flux $f$, it is possible to deduce from the previous results that the entropy solution $u(t)\in BV^s(\R)$ with $s=\frac{1}{p+1}$.

A remarkable fact is that the BV regularity of $f'\circ u(t)$ can be improved to SBV regularity except an at most countable set 
$Q\subset (0,+\infty)$ of singular times. This regularity has been proved for the entropy solution $u$ in \cite{ADL_note} in the case of a uniformly
convex flux $f$, and extended to genuinely nonlinear hyperbolic systems in \cite{BC_SBV}. 
The proof in \cite{ADL_note} is based on the Lax-Oleinik formula that
gives in particular the structure of characteristics in the convex setting: once you have it, the fundamental observation is that the slope of nonintersecting
segments in a given time interval parametrized by the position of their middle points is a Lipschitz function. See also \cite{AGV_SBV}, where the 
same procedure is used to obtained the $\SBV$ regularity of $f'\circ u$ for strictly convex fluxes.
 
The estimate \eqref{E_Cheng_i} is also used in \cite{DLR_dissipation} together with the kinetic formulation to improve the velocity averaging lemma and finally to obtain
that the entropy dissipation measure is rectifiable. 

On the other hand the rectifiability of the  entropy dissipation measure holds for every entropy solution of \eqref{E_cl} with $f$ smooth
\cite{BM_structure}.
The proof is based on the notion of \emph{Lagrangian representation}: since this is the main tool  of this paper, we give some details.
Suppose for simplicity that $u_0$ is continuous. 
The underlying idea is to adapt the method of characteristics, even after the appearance of discontinuities.
We say that $\X:\R^+_t\times \R_y \to \R$ is a \emph{Lagrangian representation}
of an entropy solution $u$ of \eqref{E_cl} if $\X$ is Lipschitz with respect to $t$, increasing with respect to $y$ and 
for every $t\ge 0$ it holds
\begin{equation*}
u(t,x)=u_0(\X(t)^{-1}(x)),
\end{equation*}
for every $x\in \R\setminus N$  with $N$ at most countable.
The link with the PDE \eqref{E_cl} is encoded in the characteristic equation:
for every $y\in \R$ and for $\mathcal L^1$-a.e. $t\in \R^+$
\begin{equation*}
\partial_t \X(t,y)=\lambda (t,\X(t,y)),
\end{equation*}
where
\begin{equation*}
\lambda(t,x)=
\begin{cases}
f'(u(t,x)) & \text{if }u(t) \text{ is continuous at }x, \\
\displaystyle \frac{f(u(t,x+))-f(u(t,x-))}{u(t,x+)-u(t,x-)} & \text{if }u(t)\text{ has a jump at }x.
\end{cases}
\end{equation*}

Several versions of Lagrangian representation have been recently introduced to deal with different settings.
A preliminary version is presented in \cite{BM_scalar} for wave-front tracking approximate solutions, see also \cite{BM_continuous} to deal
with bounded and continuous initial data and \cite{BM_system} for an extension to systems. 
The existence of a Lagrangian representation in the previous settings can be proved essentially by passing
to the limit the wave-front tracking approximation scheme.

It is instead more subtle to pass to the limit for general initial data $u_0\in L^\infty(\R)$: the compactness you have by the regularity of $\X$
and the stability in $L^1_{\loc}(\R)$ granted by Kruzkov theorem seem not to be sufficient to repeat the argument of solutions with bounded variation.
The problem has been overcome in \cite{BM_structure} by the following observation: if $\X$ is a Lagrangian representation of $u$ then
there exists an \emph{existence time} function $\T:\R_y\rightarrow [0,+\infty)$ such that $u$ solves the two initial-boundary value problems
\begin{equation*}
\begin{cases} 
u_t + f(u)_x = 0 & \mbox{in }\{(t,x)\in (0,\T(y))\times \R: x<\X(t,y)\}, \\
u(0,\cdot)=u_0(\cdot)  &  \mbox{in } (-\infty,\X(0,y)), \\
u(t,\X(t,y))=u_0(y) & \mbox{in }(0,\T(y))
\end{cases}
\end{equation*}
and
\begin{equation*} 
\begin{cases}
u_t + f(u)_x = 0 & \mbox{in }\{(t,x)\in (0,\T(y))\times \R: x>\X(t,y)\}, \\
u(0,\cdot)=u_0(\cdot)  &  \mbox{in } (\X(0,y),+\infty), \\
u(t,\X(t,y))=u_0(y) & \mbox{in }(0,\T(y)).
\end{cases}
\end{equation*}
We say that the pair $(\X(\cdot, y),u_0(y))$ is an \emph{admissible boundary} of $u$ in $(0,\T(y))$. Moreover we can cover $\R^+\times \R$ with admissible
boundaries: for every $t>0$,
\begin{equation*}
\X(t,\{y:\T(y)\ge t\})=\R.
\end{equation*}
This allows to prove that the entropy solution $u$ has the following structure: there exists a partition
\begin{equation}\label{E_decomposition2}
\R^+\times \R = A\cup B\cup C,
\end{equation}
where
\begin{enumerate}
\item $A$ is covered by the graphs of at most countably many characteristics curves, in particular it is countably 1-rectifiable;
\item $B$ is the countable union of open sets $B_n$ such that $u\llcorner B_n$ is $\BV$ and $(f'\circ u)\llcorner B$ is locally Lipschitz;
\item for every $(t,x)\in C$, there exists a unique characteristic $\X(\cdot,y)$ such that $\X(t,y)=x$ and $\X(\cdot,y)$ has constant speed in $(0,t)$.
\end{enumerate}
Moreover $u$ has a representative such that for every positive time $t$ and every point $\bar x\in \R$, the limit points of $u(t,x)$ as $x\to \bar x^-$ 
belong all to the same linearly degenerate component of the flux, and similarly if $x\to \bar x^+$. Furthermore the left and the right linearly degenerate
components above are equal at every point in $\R^+\times \R\setminus A$.
In particular if $f$ is weakly genuinely nonlinear, $u$ is continuous on $(\R^+\times \R)\setminus A$ and 
for every $t>0$, the function $u(t)$ has left and right limit at every point $x\in \R$.

A natural question is to obtain the structure of $u$ described above as a consequence of $u\in X$, where $X$ is a compact subspace of $L^1$,
as for example \eqref{E_Oleinik_i} and \eqref{E_Junca_i} in the case of a convex flux.
We also notice that the proof of \eqref{E_Cheng_i} in \cite{Cheng_speed_BV} deals only with fluxes with one or two inflection points
and the author makes implicitly some simplifying
assumptions that do not hold in general. However the argument is valid and we will implement it here.

\medskip
\noindent
We now present the contributions of this paper.
First we consider the case of a weakly genuinely nonlinear flux $f$. We quantify the nonlinearity of $f$ in the following way:
for any $h>0$ let
\begin{equation*}
\di(h):=\min_{a\in [-\|u_0\|_\infty,\|u_0\|_\infty-h]} \dist (f\llcorner [a,a+h], \mathcal A(a,a+h)),
\end{equation*}
where $\mathcal A(a,a+h)$ denotes the set of affine functions defined on $[a,a+h]$ and the distance is computed
with respect to the $L^\infty$ norm.
Moreover let $\Phi$ be the convex envelope of $\di$ and set for every $\e>0$
\begin{equation*}
\Psi_\e(x)=\Phi\left(\frac{x}{2}\right)x^\e.
\end{equation*}
Then we prove the following result.

\begin{theorem01}\label{T1}
Let $f$ be weakly genuinely nonlinear and $u$ be the entropy solution of \eqref{E_cl} with $u_0\in L^\infty(\R)$ with compact support.
Let moreover $\e>0$ and $\Psi_\e$ be defined above.
Then there exists a constant $C>0$ depending on $\mathcal L^1(\conv(\supp u_0))$, $\e$, $\|u_0\|_\infty$ and $\|f'\|_\infty$ such that for every
$t>0$, it holds
\begin{equation*}
u(t)\in \BV^{\Psi_\e}(\R) \qquad \mbox{and}\qquad \Psi_\e\text{-}\TV (u(t))\le C\left(1+\frac{1}{t}\right).
\end{equation*}
\end{theorem01}

The fundamental tool to prove this result is the following ``length'' estimate: let $t>0$ and $x_1<x_2$ be such that $u(t,x_1)=u(t,x_2)=\bar w$ and
consider the characteristics $\X(\cdot,y_1)$ and $\X(\cdot, y_2)$ such that $\X(t,y_1)=x_1$ and $\X(t,y_2)=x_2$. Let
\begin{equation*}
w_m:= \inf_{(x_1,x_2)} u(t) \qquad \mbox{and} \qquad w_M:= \sup_{(x_1,x_2)} u(t).
\end{equation*}
If we denote by
\begin{equation*}
s=\max\{x_2-x_1, \X(0,y_2)-\X(0,y_1)\} \qquad \mbox{and} \qquad \di(w_m,w_M):= \dist\left(f\llcorner [w_m,w_M],\mathcal A(w_m,w_M)\right),
\end{equation*}
then it holds
\begin{equation}\label{E_length_i}
s \ge \frac{\di(w_m,w_M)t}{2\|u_0\|_\infty}.
\end{equation}
Roughly speaking it means that an oscillation between two values at time $t$ must occupy a space, at time 0 or at time $t$, of length 
bounded by below in terms of the nonlinearity between the extremal values.
In particular the number of disjoint oscillations between two given values on a given space interval is uniformly bounded and this 
implies the regularity stated in Theorem \ref{T1}.

Next, we consider fluxes of poynomial degeneracy: in particular we prove the following theorem.
\begin{theorem02}\label{T2}
Let $f$ be a flux of polynomial degeneracy and let $u$ be the entropy solution of \eqref{E_cl} with $u_0\in L^\infty(\R)$ with compact support.
Then there exists a constant $C>0$ depending on $\mathcal L^1(\conv(\supp u_0))$, $\|u_0\|_\infty$ and $f$ such that for every
$t>0$, it holds
\begin{equation}\label{T_Cheng_i}
\TV (f'\circ u(T)) \le C\left(1+\frac{1}{T}\right).
\end{equation}
\end{theorem02}
By means of \eqref{E_length_i}, we reduce the proof to the analysis of the solution in regions where the oscillation is small and the estimate follows
by a careful analysis on the characteristics. If $u$ takes values far from the inflection points, the structure of characteristics is well-known 
and it implies a one-sided Lipschitz estimate as \eqref{E_Oleinik_i} for $f'\circ u(t)$.
If instead $u$ oscillates around an inflection point, the structure of characteristics is described in details in \cite{Dafermos_inflection}.
Then we can conclude adapting the argument in \cite{Cheng_speed_BV}.

As a corollary of Theorem \ref{T_Cheng_i}, we deduce the following regularity result of the entropy solution $u$.
This improves Theorem \ref{T1} in the case of fluxes with polynomial degeneracy.
\begin{theorem03}\label{T3}
Let $f$ be a flux of degeneracy $p$ and let $u$ be the entropy solution of \eqref{E_cl} with $u_0\in L^\infty(\R)$ with compact support.
Then there exists a constant $C>0$, depending on $\mathcal L^1(\conv(\supp u_0))$, $\|u_0\|_\infty$ and $f$, such that for every
$t>0$, it holds
\begin{equation*}
u(t)\in \BV^{1/p}(\R) \qquad \mbox{and} \qquad \left(\TV^{1/p} u (t)\right)^p \le C\left(1+\frac{1}{t}\right).
\end{equation*}
\end{theorem03}

\begin{remark}\label{R_i}
In order to slightly simplify the argument, the proofs of these theorems are provided for non negative solutions with compact support.
By finite speed of propagation, this is not a restrictive assumption.
\end{remark}

Finally we prove the following theorem about the $\SBV$ regularity of $f'\circ u$.
\begin{theorem01}\label{T4}
Let $u$ be the entropy solution of \eqref{E_cl} with $f$ smooth and denote by
\begin{equation*}
\begin{split}
\mathcal B := &~ \{t\in (0,+\infty): f'\circ \bar u(t) \in \BV_{\loc}(\R)\}, \\
\mathcal S := &~ \{t\in (0,+\infty): f'\circ \bar u(t) \in \SBV_{\loc}(\R)\}.
\end{split}
\end{equation*}
Then $\mathcal B \setminus \mathcal S$ is at most countable.
\end{theorem01}
Observe that no additional regularity on the flux is needed to prove this result, but this is relevant in relation with Theorem \ref{T2},
where sufficient conditions to have $\mathcal B=\R^+$ are provided and this allows to prove
the $\SBV$ regularity of $f'\circ u$ with respect to the space-time variable $(t,x)$.
 Indeed the argument is essentially the same as in \cite{ADL_note},
relying on the structure of characteristics presented above under general assumptions on $f$, instead of relying on the Lax-Oleinik formula.
More in details, consider the partition $\R^+\times \R=A\cup B \cup C$ as in \eqref{E_decomposition2}.
Recall that $f'\circ u$ is locally Lipschitz in $B$.
This does not imply that for every compact set $K\subset \R$, $|D_x(f'\circ u(t))|(B\cap K)<+\infty$,
however, when $f'\circ u(t)\in \BV_{\loc}(\R)$, the Cantor part of its 
derivative is concentrated on the section $A_t\cup C_t$ of $A\cup C$ 
at time $t$, and therefore on $C_t$, since $A_t$ is at most countable.
Being $C$ the union of segments starting from 0, we are now in the same position as in \cite{ADL_note}, and we can similarly
prove that if $\bar t\in \mathcal B\setminus \mathcal S$, a positive measure of segments that reach time $\bar t$ cannot be prolonged for $t>\bar t$. In particular, this can happen for a set of times at most countable.

In the final part of this paper we provide some examples:
the first example shows that there exists a flux $f$ with only an inflection point and an entropy solution $u$ such that 
$f'\circ u \notin \BV_{\loc}(\R^+\times \R)$. In particular this proves that the polynomial degeneracy assumption in Theorem \ref{T2} plays a key role, as noticed for different reasons in \cite{DLR_dissipation}.

The second example is related to the possibility of repeating the analysis in \cite{DLR_dissipation}, without relying on Theorem \ref{T2}.
In order to be more precise we fix the notation in the kinetic representation:
\begin{equation*}
\partial_t\chi_{\{u>w\}}+f'(w)\partial_x\chi_{\{u>w\}}=\partial_w\mu.
\end{equation*}
In \cite{DLR_dissipation} it is proved that, under the assumptions of Theorem \ref{T2}, the distribution $\partial_{ww}\mu$ can be represented
as a finite measure. We exhibit an example with a general flux $f$ such that $\partial_w \mu$ is not a finite measure.

The third example answers to a question raised in \cite{CJJ_fractional}: we exhibit a function $u\in L^\infty((0,1))$ such that
\begin{equation*}
\PTV^+_{(0,1)} u := \sup_{n\in \N,\, 0<x_1<\ldots<x_n<1} \sum_{i=1}^{n-1}\Phi \left((u(t,x_{i+1})-u(t,x_i))^+\right) <+\infty
\end{equation*}
and $u$ does not belong to $\BV^\Phi((0,1))$.

Finally the fourth example shows that Theorem \ref{T3} is sharp. This result is already known, see e.g. \cite{CJ_oscillating} for a similar
construction. We provide it here for the sake of completeness.

It remains open the problem of the optimal regularity of $f'\circ u$ with $f$ smooth, out of the polynomial degeneracy assumption: examples in \cite{BM_structure} and at
the end of this paper suggest that the right space could be $f'\circ u \in L^1(\R^+,\BV^\Phi(\R))$ with $\Phi$ such that in a neighborhood of $0$ it
holds $\Phi(-x\log x)=x$. One difficulty is that for a fixed time $t$, there is in general no uniform estimates of $\PTV (f'\circ u(t))$.

\subsection{Plan of the paper}
The paper is organized as follows. \\
In Section \ref{S_preliminaries} we collect the preliminary results that will be useful in the following sections.
First we introduce a decomposition in ``undulations'' for piecewise monotone functions, 
then we recall the definition of the space $\BV^\Phi(\R)$ and how the $\PTV$ of a piecewise monotone function can be estimated in terms of
its undulations. After this we prove some elementary properties of the fluxes we are going to consider and we recall some result about
initial-boundary value problems for scalar conservation laws. This will be relevant in connection with the Lagrangian representation, which is
introduced at the end of this section. Then its properties and the related structure of the entropy solution are presented both in the case of
piecewise monotone entropy solutions and in the case of $L^\infty$-entropy solutions. \\ 
Section \ref{S_length} is devoted to the length estimate \eqref{E_length_i}: this result is based only on the existence of a Lagrangian representation for piecewise monotone entropy solutions. \\
In Section \ref{S_frac_reg} we prove Theorem \ref{T1}:
the main argument is contained in Lemma \ref{L_number_oscillations}, where a weak $\ell^1$ estimate is proven for the terms defining 
the $\Phi$-variation of the entropy solution $u$ at a positive time $t$. \\
Section \ref{S_Cheng} is devoted to the proof of Theorem \ref{T2}; first we recall the structure of characteristics in the case of a convex flux
(Lemma \ref{L_convex}), then we consider the case of a flux with one inflection point: Lemma \ref{L_inflection} summarizes the results obtained
in \cite{Dafermos_inflection} about the structure of extremal backward characteristics for solutions with bounded variation.
Once the structure of characteristics is established, we estimate the total variation of $f'\circ u(t)$ for piecewise monotone solutions to initial
boundary value problems with constant boundary data. Proposition \ref{P_convex} deals with the case of a convex flux and and Proposition 
\ref{P_inflection} with the case of a flux with an inflection point of polynomial degeneracy. In both proofs it is useful to recall the interpretation of
characteristics as admissible boundaries and a fundamental step in Proposition \ref{P_inflection} is the argument of \cite{Cheng_speed_BV}.
The general case can be reduced to the cases studied in Proposition \ref{P_convex} and Proposition \ref{P_inflection}, by means of the length estimate \eqref{E_length_i} (Lemma \ref{L_red_small}) and this leads to the proof of Theorem \ref{T2}. \\
In Section \ref{S_frac_Cheng}, we deduce Theorem \ref{T3} from the previous result: the argument consider separately the big and the small jumps
of the entropy solution $u(t)$. The contribution of the big jumps is controlled by the length estimate and small jumps are considered in 
Lemma \ref{L_chord}. \\
Theorem \ref{T4} is proved in Section \ref{S_SBV} combining the structure of characteristics obtained in \cite{BM_structure} with the argument of 
\cite{ADL_note} and as a consequence we get that $f'\circ u \in \SBV_{loc}(\R^+\times \R)$ (Corollary \ref{C_SBV}). \\
Finally the examples described above are presented in Section \ref{S_examples}.

\section{Preliminary results}
\label{S_preliminaries}

In this section we introduce some notation, we prove some basic lemma  and we recall for completeness results already present in the literature
that will be useful in the following sections.

\subsection{Piecewise monotone functions}\label{Ss_piec_mon}

\begin{definition}\label{D_piec_mon}
A function $u:\R\rightarrow \R$ is said to be \emph{piecewise monotone} 
if there exist $y_1<\ldots<y_k$ in $\R$ such that for every 
$i=1,\ldots,k-1$ the function $u$ is monotone in the interval $(y_i,y_{i+1})$ and in the intervals $(-\infty,y_1)$ and $(y_k,+\infty)$.
\end{definition}
We denote by $X$ the set of piecewise monotone functions $u$ such that the following assumptions are satisfied:
\begin{enumerate}
\item $u$ is bounded;
\item $u$ has compact support; 
\item $u\ge 0$;
\item for every $x\in \R$,
\begin{equation*}
u(x)=\limsup_{y\rightarrow x}u(y);
\end{equation*}
in particular $u$ is upper semicontinuous.
\end{enumerate}

We denote by $\sci u$ the lower semicontinuous envelope of $u$.
It is well-known that the left and right limits of a piecewise monotone function exist at every point and in particular it has at most
countably many discontinuity points. Under the boundedness assumption the limits are finite and  we denote them by
\begin{equation*}
u(x+):=\lim_{y\rightarrow x^+}u(y),\qquad u(x-):=\lim_{y\rightarrow x^-}u(y).
\end{equation*}

In the following proposition, we introduce a decomposition of the functions in $X$ in terms of more elementary piecewise monotone functions.

\begin{proposition}\label{P_undulations}
Let $u\in X$. Then there exist $\tilde N=\tilde N(u)\in \N$ and $\{u_i\}_{i=1}^{\tilde N}\subset X$ non identically zero such that
\begin{enumerate}
\item it holds
\begin{equation}\label{E_decomposition}
u=\sum_{i=1}^{\tilde N} u_i;
\end{equation}
\item for every $i=1,\ldots \tilde N$ there exists $\bar x_i$ such that $u_i$ is increasing in $(-\infty, \bar x_i]$ and decreasing in $[\bar x_i,+\infty)$;
\item for every $i,j = 1,\ldots, \tilde N$ with $i > j$, one of the following holds:
\begin{equation*}
\supp u_i \subset \supp u_j  \qquad \mbox{or} \qquad \Int (\supp u_i) \cap \Int (\supp u_j) = \emptyset. 
\end{equation*}
If the first condition holds, then $u_j$ is constant on the interior of the support of $u_i$ and $u(\bar x_i)\le u(\bar x_j)$.
\end{enumerate}
\end{proposition}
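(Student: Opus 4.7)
The plan is to construct the decomposition by recursion on the number $n$ of local maxima of $u$ inside its support, which is finite because $u$ is piecewise monotone with compact support. A preliminary reduction lets us assume $\supp u = [a,b]$ is a single closed interval with $u > 0$ on $(a,b)$: otherwise apply the construction to each maximal open subinterval on which $u>0$, noting that pieces arising from different such subintervals automatically have disjoint-interior supports and are unrelated by nesting. Denote by $p_1 < \cdots < p_n$ the local maxima of $u$ in $(a,b)$ and by $q_1 < \cdots < q_{n-1}$ the intermediate local minima. If $n = 1$ then $u$ is itself unimodal and we set $\tilde N = 1$, $u_1 = u$; otherwise proceed as below.

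Let $L := \min_j u(q_j)$, which is strictly positive by the reduction, and define $u_1 := \min(u, L)$ on $[a,b]$, extended by $0$ outside. The minimality of $L$ guarantees that the superlevel set $\{u \geq L\} \cap [a,b]$ is a single closed subinterval $[a',b']$: any disconnection would produce an interior local minimum with value strictly below $L$. Consequently $u_1 \equiv L$ on $[a',b']$, while $u_1 = u$ and is monotone on the flanks $[a,a']$ and $[b',b]$, so $u_1$ is unimodal with a plateau at height $L$. A short case analysis on whether the jumps of $u$ cross the level $L$ confirms that both $u_1$ and $v := u - u_1$ lie in $X$. The support of $v$ is the closure of $\{u > L\}$, a finite union of closed subintervals with pairwise disjoint interiors meeting only at those valleys $q_{j^*}$ where $u(q_{j^*}) = L$; on each such subinterval $v$ has strictly fewer local maxima than $u$ and belongs to $X$. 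Applying the inductive hypothesis on each piece produces the remaining bumps $u_2, \ldots, u_{\tilde N}$.

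Property (1) is then immediate from $u = u_1 + v$ together with the inductive hypothesis, and (2) is clear for $u_1$ by construction and follows by induction for the rest. For (3), every $u_i$ with $i \geq 2$ has support contained in a subinterval of $[a',b'] \subset \supp u_1$, and $u_1 \equiv L$ on $(a',b') \supset \Int \supp u_i$; bumps coming from distinct recursive subproblems have disjoint-interior supports by construction, while the laminar structure inside each subproblem is provided by induction. The peak-value inequality is obtained by declaring $\bar x_i$ to be a point at which $u$ attains its maximum on $\supp u_i$: since by the laminar structure all outer bumps are constant on $\supp u_i$, this $\bar x_i$ coincides with the argmax of the residual function handled at the recursion step producing $u_i$, hence lies in the peak plateau of $u_i$; the inequality $u(\bar x_i) \leq u(\bar x_j)$ for nested supports then reduces to monotonicity of $\max u$ under set inclusion. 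The principal technical obstacle is verifying $v \in X$, i.e.\ the identity $v(x) = \limsup_{y\to x} v(y) = \max(v(x-),v(x+))$ at jump points of $u$ that cross the level $L$; this is a finite case analysis based on the corresponding identity for $u$ and presents no conceptual difficulty, the rest of the argument being bookkeeping on the recursion.
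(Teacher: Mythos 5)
Your construction is genuinely different from the paper's (the paper peels undulations off from the top, taking the increasing/decreasing monotone envelopes of $u$ around its global maximum, whereas you truncate from the bottom at the lowest valley level), but as written it has a real gap in the treatment of downward jumps at valleys, and these are unavoidable in $X$: by the normalization $u(x)=\limsup_{y\to x}u(y)$, at a downward jump the pointwise value is the \emph{larger} of the two one-sided limits. Concretely, take $u\in X$ with $\supp u=[0,4]$, $u(x)=3x$ on $[0,1]$, $u(x)=4-x$ on $[1,2]$, $u(x)=2(x-2)+\tfrac12$ on $(2,3]$, and $u$ decreasing from $\tfrac52$ to $0$ on $[3,4]$. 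Then $u>0$ on $(0,4)$, $u$ has two local maxima (values $3$ and $\tfrac52$), and the unique valley point is $q_1=2$ with $u(2)=\max(u(2-),u(2+))=\max(2,\tfrac12)=2$. Your level is $L=u(q_1)=2$, yet $\{u\ge 2\}=[\tfrac23,2]\cup[\tfrac{11}{4},y]$ is disconnected and $u_1=\min(u,2)$ is not unimodal: the assertion ``any disconnection would produce an interior local minimum with value strictly below $L$'' fails because the relevant depth of the valley is the one-sided limit $u(2+)=\tfrac12$, which is not a value of $u$ at any point. The quantity you need is $\min\big(u(q_j-),u(q_j+)\big)$, not $u(q_j)$.

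Making that repair, $L:=\min_j\min\big(u(q_j-),u(q_j+)\big)$, exposes a second problem: this $L$ need not be strictly positive under your preliminary reduction, and more generally the valley depth of the residual can equal the current base level without being attained, so the recursion stalls and the support of the residual does not split. In the same example the corrected level is $L=\tfrac12$ and $u_1=\min(u,\tfrac12)$ is fine, but the residual $v=(u-\tfrac12)^+$ is strictly positive on the single open interval $(\tfrac16,y_1)$ while still having two local maxima separated by a valley at $x=2$ with $v(2+)=0$; the next level is $0$, $\min(v,0)\equiv 0$, and no progress is made, nor does $\{v>0\}$ disconnect. You need an additional case that, at such depth-zero jump valleys, splits the domain into abutting closed intervals with disjoint interiors before recursing (the two resulting bumps are then related by the second alternative in (3)). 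Both difficulties are exactly what the paper's envelope operator $\mathcal G$ sidesteps: the increasing and decreasing envelopes are defined through infima over intervals and therefore automatically register non-attained valley depths, and termination is obtained by induction on the number of monotonicity intervals rather than on the number of local maxima of a truncation.
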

\begin{proof}
First we introduce an operator $\mathcal G:X\rightarrow X$.
Given $u\in X$, if $\max u=0$ we set $\mathcal G(u)=u$. 
If instead $\max u>0$ let
\begin{equation*}
\bar x=\min \{x: u(x)=\max u\}.
\end{equation*}
The existence of $\bar x$ is guaranteed by definition of $X$.
Let $v_l:(-\infty,\bar x]\rightarrow \R$ be the increasing envelope of $u\llcorner (-\infty,\bar x]$:
\begin{equation*}
v_l=\sup \{v':(-\infty,\bar x]\rightarrow \R \mbox{ such that $v'$ is increasing and }v'\le u\llcorner (-\infty,\bar x]\}.
\end{equation*}
Similarly let $v_r:(\bar x,+\infty)\rightarrow \R$ be the decreasing envelope of $u\llcorner (\bar x,+\infty)$:
\begin{equation*}
v_r=\sup \{v':(\bar x,+\infty)\rightarrow \R \mbox{ such that $v'$ is decreasing and }v'\le u\llcorner (\bar x,+\infty)\}.
\end{equation*}
Then let
\begin{equation*}
\mathcal G(u)=
\begin{cases}
v_l & \mbox{in }(-\infty,\bar x], \\
v_r & \mbox{in }(\bar x,+\infty).
\end{cases}
\end{equation*}
It is straightforward to check that $\mathcal G(u)\in X$.

Moreover $u-\mathcal G(u)\in X$ and this allows to iterate this procedure: we set $u_1=\mathcal G(u)$ and by induction for $n>1$
\begin{equation*}
u_n=\mathcal G\left(u-\sum_{i=1}^{n-1}u_i\right).
\end{equation*}
We show now that there are only finitely many $n\in \Z^+$ such that $u_n$ is not identically zero.

If $u=0$ we set $k(u)=0$ and for every $u\in X$ non identically zero, we set $k(u)=\bar k$ where $\bar k$ is the minimum value of $k\in \Z^+$ 
such that there exists $x_1<\ldots<x_k$ for which $u$ is monotone on $(-\infty,x_1), (x_k,+\infty)$ and $(x_i,x_{i+1})$ for every $i=1,\ldots,k-1$.

It is easy to check that if $\bar x \in [x_i,x_{i+1})$ for some $i\in \{2,\ldots,k-1\}$, then $u-\mathcal G(u)$ is monotone on $(x_{i-1},x_{i+1})$ and 
similarly if $\bar x \in [x_1,x_2)$, then $u$ is monotone in $(-\infty,x_2)$ and if $\bar x \in [x_k,+\infty)$, then $u$ is monotone in $(x_{k-1}, +\infty)$.
Moreover, since $\mathcal G(u)$ is constant on each connected component of $\{x: u(x)\ne \mathcal G(u)=x\}$,  the function $u-\mathcal G(u)$ 
is monotone on $(-\infty,x_1), (x_k,+\infty)$ and $(x_i,x_{i+1})$ for every $i=1,\ldots,k-1$.
Therefore
\begin{equation*}
k(u-\mathcal G(u))\le k(u)-1
\end{equation*}
and this proves that $u_n=0$ for every $n>k(u)$. 

Now we check that conditions (1), (2) and (3) in the statement are satisfied.
Let $\tilde N(u)\in \N$ be such that $u_{\tilde N(u)}\ne 0$ and $u_{\tilde N(u)+1}=0$.
Then, since $\mathcal G(u)=0 \Rightarrow u=0$, by
\begin{equation*}
0=u_{\tilde N(u)+1}= \mathcal G\left(u - \sum_{i=1}^{\tilde N(u)}u_i\right),
\end{equation*}
it follows that \eqref{E_decomposition} holds and this proves Condition (1).
Condition (2) is clearly satisfied by construction.
Consider $i,j\in \{1,\ldots, \tilde N(u)\}$ with $j<i$ such that there exists $x\in \Int (\supp u_i)\cap \Int (\supp u_j)$. 
Then if we denote by $I$ the connected component of 
\begin{equation*}
\left\{x':\sum_{l=1}^j u_l(x')<u(x')\right\}
\end{equation*}
containing $x$, it holds
\begin{equation*}
\supp u_i \subset \bar I \subset \supp u_j
\end{equation*}
and $u_j$ is constant on $I$.

Then we only have to check that $u(\bar x_i)\le u(\bar x_j)$:
since for every $l=1,\ldots, j-1$, $u_l$ is constant on $\supp u_j$ and $\bar x_i\in \supp u_j$, it holds
\begin{equation*}
\begin{split}
u(\bar x_j)= &~ \left(\sum_{l=1}^{j-1}u_l(\bar x_j) \right) + \max \left(u-\sum_{l=1}^{j-1}u_l\right) \\
= &~  \left(\sum_{l=1}^{j-1}u_l(\bar x_i) \right) + \max \left(u-\sum_{l=1}^{j-1}u_l\right) \\
\ge &~ u(\bar x_i).
\end{split}
\end{equation*}
This concludes the proof of Condition (3) and therefore the proof of the proposition.
\end{proof}

\begin{definition}\label{D_undulations}
Let $u\in \X$ and $\{u_i\}_{i=1}^{\tilde N}$ be as in Proposition \ref{P_undulations}. Then we say that $u_i$ is an \emph{undulation} of $u$ and that
$h_i:=\max u_i$ is its \emph{height}. Moreover we say that $u_i$ is a \emph{descendant} of $u_j$ if $\supp u_i\subset \supp u_j$.
\end{definition}

\begin{figure}
\centering
\def\svgwidth{0.8\columnwidth}
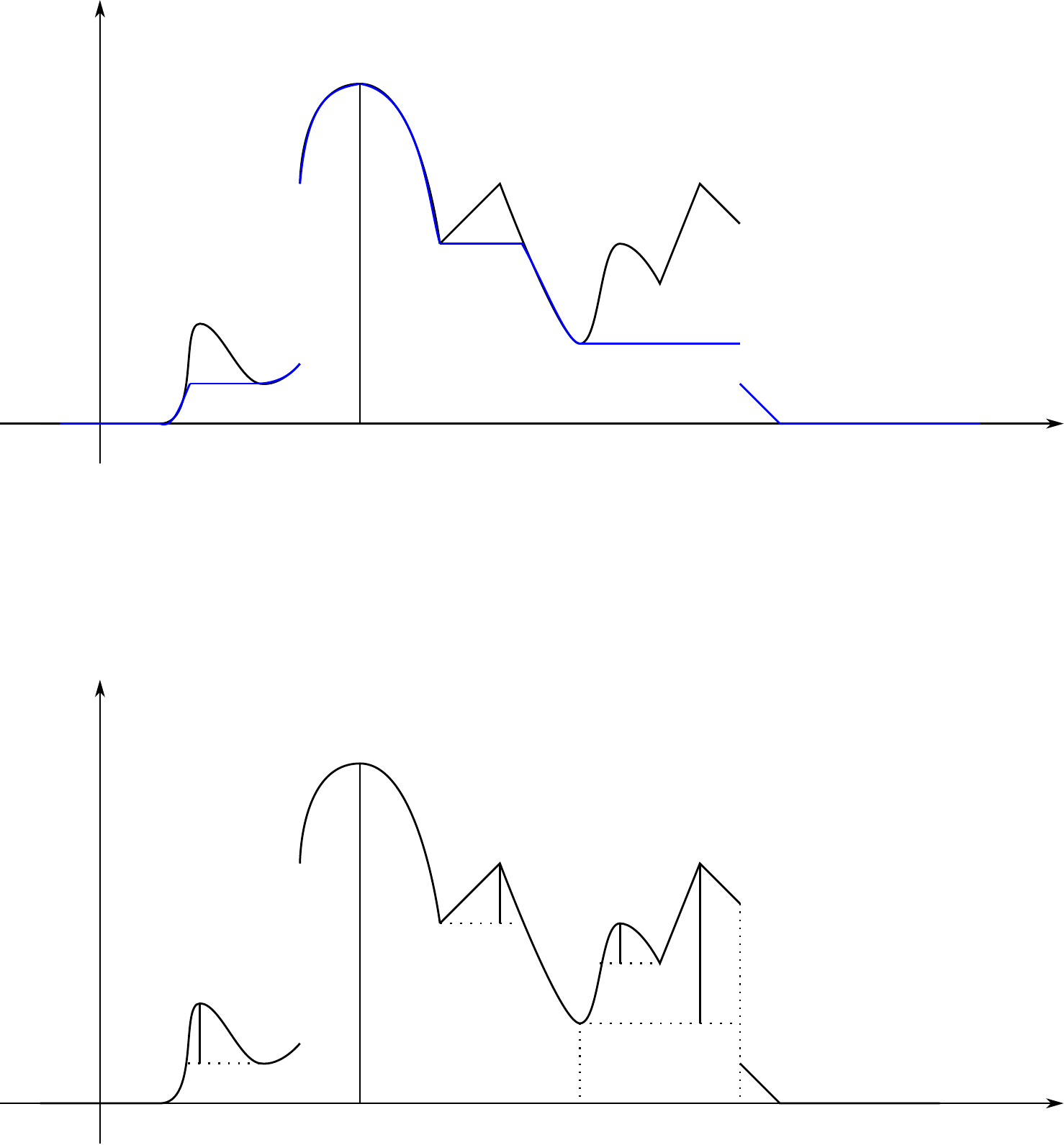
\caption{A representation of the decomposition: the figure above represents the operator $\mathcal G$ and in the figure below they are represented the height of the undulations.}
\end{figure}

\subsection{$\BV^\Phi$ spaces}
In this section we recall, for the convenience of the reader, the definition of $\BV^\Phi$ spaces on the real line (see \cite{MO_BVPhi} for more details) 
and we see how the $\Phi$-total variation of piecewise monotone functions can be estimated in terms of their undulations.
Moreover we recall some basic properties of functions of bounded variations.

\begin{definition}
Let $\Phi:[0,+\infty)\rightarrow [0,+\infty)$ be a convex function with $\Phi(0)=0$ and $\Phi>0$ in $(0,+\infty)$.
Let $I\subset \R$ be a nonempty interval and for $k\in \N$ denote by 
\begin{equation*}
\mathcal P_k(I)=\big\{(x_1,x_2,\ldots,x_k)\in I^k: x_1<x_2<\ldots<x_k\big\} \qquad \mbox{and}\qquad \mathcal P(I)=\bigcup_{k\in \N}\mathcal P_k(I).
\end{equation*}
The $\Phi$-total variation of $u$ on $I$ is
\begin{equation*}
\PTV_I (u)= \sup_{\mathcal P(I)}\sum_{i=1}^{k-1}\Phi(|u(x_{i+1})-u(x_i)|).
\end{equation*}
If the supremum is finite we say that $u\in \BV^\Phi(I)$.
\end{definition}

If $\Phi$ is the identity the $\Phi$-total variation coincides with the classical total variation.
It will be of particular interest also the case $\Phi(z)=z^p$ with $p> 1$. 
In this case if $\PTV u(I)<\infty$ we write that $u\in \BV^{1\over p}(I)$.

Let us recall an elementary lemma about convex functions due to Karamata.
\begin{proposition}\label{P_karamata}
Let $\phi:[0,+\infty)\rightarrow \R $ be increasing and convex and let $a_k,b_k\in [0,+\infty)$ for $k=1,\ldots,n$.
Assume that for every $k=1,\ldots, n-1$
\begin{equation*}
 a_{k+1}\le a_k, \qquad b_{k+1}\le b_k 
\end{equation*}
and for every $k=1,\ldots, n$
\begin{equation*}
\sum_{i=1}^ka_i\ge \sum_{i=1}^kb_i.
\end{equation*}
Then 
\begin{equation*}
\sum_{k=1}^n\phi(a_k) \ge \sum_{k=1}^n\phi(b_k).
\end{equation*}
\end{proposition}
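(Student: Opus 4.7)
The statement is the classical Karamata-type inequality, adapted to the case where the majorization condition only gives $\sum_{i=1}^k a_i \geq \sum_{i=1}^k b_i$ (rather than equality at $k=n$); the monotonicity of $\phi$ is precisely what lets us exploit this weaker hypothesis.

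The plan is to use convexity to pass from $\phi$ to a linear expression, and then use Abel's summation formula together with the partial-sum hypothesis. Concretely, for each $k=1,\ldots,n$ pick a subgradient $c_k$ of $\phi$ at $b_k$ (e.g.\ $c_k=\phi'_+(b_k)$, which exists since $\phi$ is convex on $[0,+\infty)$). By convexity,
\begin{equation*}
\phi(a_k)-\phi(b_k)\ge c_k(a_k-b_k),
\end{equation*}
so it suffices to prove
\begin{equation*}
\sum_{k=1}^n c_k(a_k-b_k)\ge 0.
\end{equation*}

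The key monotonicity observations are: (i) $c_k\ge 0$ because $\phi$ is increasing, and (ii) $c_k\ge c_{k+1}$ because $b_k\ge b_{k+1}$ and the right derivative of a convex function is non-decreasing in its argument. Now set $S_k=\sum_{i=1}^k(a_i-b_i)$, with $S_0=0$; by assumption $S_k\ge 0$ for all $k$. Summation by parts gives
\begin{equation*}
\sum_{k=1}^n c_k(a_k-b_k)=\sum_{k=1}^n c_k(S_k-S_{k-1})=c_nS_n+\sum_{k=1}^{n-1}(c_k-c_{k+1})S_k,
\end{equation*}
and every term on the right is non-negative by (i), (ii) and the majorization hypothesis. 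This yields the desired inequality.

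The argument is essentially one line once the right subgradient is chosen, so there is no serious obstacle; the only point that requires care is the non-differentiability of $\phi$, which is handled by working with the right derivative $\phi'_+$ instead of $\phi'$. Note also that the hypothesis $a_{k+1}\le a_k$ is not explicitly used in the computation, but it ensures the setup is consistent with the ``rearranged in decreasing order'' convention under which Karamata's inequality is normally stated.
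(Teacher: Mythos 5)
Your proof is correct and follows essentially the same route as the paper's: both linearize via convexity (you with the subgradient $\phi'_+(b_k)$, the paper with the secant slope between $a_k$ and $b_k$), observe that these slopes are nonnegative and non-increasing, and conclude by an Abel-summation/induction argument using $S_k\ge 0$. The only minor difference is that your choice of slope makes the monotonicity of $(a_k)$ genuinely unnecessary, whereas the paper's secant slopes rely on it.
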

\begin{proof}
For $i=1,\ldots k$ denote by
\begin{equation*}
\Delta \phi_i =
\begin{cases}
\displaystyle\frac{\phi(a_i)-\phi(b_i)}{a_i-b_i} & \mbox{if }a_i\ne b_i \\
\max \{\partial^- \phi(a_i+)\} & \mbox{if }a_i=b_i,
\end{cases}
\end{equation*}
where $\partial ^-\phi$ denotes the subdifferential of $\phi$.
Therefore
\begin{equation*}
\Delta \phi_i(a_i-b_i)= \phi(a_i)-\phi(b_i).
\end{equation*}
Since $\phi$ is convex and increasing, for every $i\in 1,\ldots, k-1$
\begin{equation*}
0\le \Delta \phi_{i+1}\le \Delta \phi_i.
\end{equation*}
We prove by induction that for every $k=1,\ldots, n$
\begin{equation*}
\sum_{i=1}^k\phi(a_i)-\phi(b_i)  \ge \Delta \phi_k\sum_{i=1}^k(a_i-b_i).
\end{equation*}
For $k=1$ it holds by hypothesis, and if the claim holds for $k$, then
\begin{equation*}
\sum_{i=1}^{k+1}\phi(a_i)-\phi(b_i)\ge  \Delta \phi_k\sum_{i=1}^k(a_i-b_i) + \Delta \phi_{k+1}(a_{k+1}-b_{k+1}) \ge  \Delta \phi_{k+1}\sum_{i=1}^{k+1}(a_i-b_i).
\end{equation*}
If $k=n$, we have
\begin{equation*}
\sum_{i=1}^n\phi(a_i)-\phi(b_i)\ge \Delta \phi_n\sum_{i=1}^n(a_i-b_i)\ge 0,
\end{equation*}
which is the claim.
\end{proof}

Now we prove that it is possible to control the $\Phi$-total variation of a function $u\in X$ in terms of its undulations.
To simplify the exposition we assume the following additional properties about $u$:
\begin{enumerate}
\item $u$ is continuous;
\item $\supp u=[a,b]$ for some $a,b\in \R$ and local minima and maxima of $u$ assume different values.
\end{enumerate}
The proof in general follows by a simple approximation argument.

\begin{lemma}\label{L_osc_phi}
Let $u\in X$ and let $(h_i)_{i=1}^{\tilde N(u)}$ be the heights of its undulations.
Then 
\begin{equation*}%\label{E_phi_var_osc}
\TV (u) =  2\sum_{i=1}^{\tilde N(u)}h_i \qquad \mbox{and} \qquad 
\PTV (u)\le 2\sum_{i=1}^{\tilde N(u)}\Phi(h_i).
\end{equation*}
\end{lemma}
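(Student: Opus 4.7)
The plan is, first, to prove the identity $\TV(u) = 2\sum_i h_i$ by induction on $\tilde N$. The case $\tilde N = 1$ is immediate since $u$ is a single tent of height $h_1$. For the inductive step, I would pick an innermost undulation $u_i$ (one with no descendants). By Proposition~\ref{P_undulations}(3), the function $u - u_i$ is constant on $\supp u_i$, say equal to some value $c$; hence on $\supp u_i$, $u = c + u_i$ is a tent of height $h_i$ sitting on a plateau at level $c$, while outside $\supp u_i$ the two functions coincide. This modification increases the total variation by exactly $2h_i$, giving $\TV(u) = \TV(u - u_i) + 2 h_i$, and the claim follows by induction.

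For the $\Phi$-total variation bound, I would first reduce the problem to partitions at the local extrema of $u$. By the convexity of $\Phi$ together with $\Phi(0) = 0$, and hence its superadditivity, inserting a point into the interior of a monotone piece does not increase the partial sum, whereas refining by a missed local extremum can only increase it. Consequently
\begin{equation*}
\PTV(u) = \sum_i \Phi(|M_i - m_i|),
\end{equation*}
where $(M_i, m_i)$ runs over the pairs of adjacent local max and min of $u$. Next I would apply Proposition~\ref{P_karamata} with $(a_k)$ the heights $h_j$ each taken with multiplicity two and sorted decreasingly, and $(b_k)$ the oscillations $|M_i - m_i|$ sorted decreasingly; the equality of totals follows from the first part of the lemma, so it only remains to verify the majorization $\sum_{k=1}^K a_k \ge \sum_{k=1}^K b_k$ for every $K$.

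The majorization I would also prove by induction on $\tilde N$. Removing an innermost undulation $u_i$ modifies the oscillations of $u$ only on the monotone stretch of $u - u_i$ that contains $\supp u_i$: a single oscillation $M^* - m^*$ of $u - u_i$ is replaced in $u$ by three oscillations whose total is $M^* - m^* + 2h_i$, while the multiset of heights gains two copies of $h_i$. Proposition~\ref{P_undulations}(3) constrains the sizes of the new oscillations enough to locate them in the sorted sequence and to propagate the majorization from $u - u_i$ to $u$. The hard part will be handling the case where the adjacent maximum $M^*$ of $u - u_i$ is the peak of a \emph{sibling} of $u_i$ rather than of an ancestor: there $c + h_i$ may exceed $M^*$, so one of the new oscillations is strictly larger than the one it replaces, and the bookkeeping at the top of the sorted sequence needs care; however, the simultaneous insertion of two copies of $h_i$ into the heights preserves the dominance of the partial sums, and Karamata's inequality then yields the desired estimate $\PTV(u) \le 2\sum_j \Phi(h_j)$.
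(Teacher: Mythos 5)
Your first identity and your overall framework for the second inequality coincide with the paper's: the paper also gets $\TV(u)=2\sum_i h_i$ from the additivity of the total variation when one summand is constant on the support of the other (Proposition~\ref{P_undulations}(3)), and it also reduces $\PTV(u)\le 2\sum_i\Phi(h_i)$ to Proposition~\ref{P_karamata} applied to the doubled, decreasingly sorted heights versus the sorted increments, so that everything hinges on the majorization $\sum_{k\le K}z_k\ge\sum_{k\le K}w_k$. Up to that point your proposal is sound (your preliminary reduction to partitions at local extrema via superadditivity of $\Phi$ is a legitimate simplification the paper does not bother with).

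The gap is in your proof of the majorization itself, and it sits exactly where you flag it. In the sibling configuration, removing an innermost $u_i$ of height $h_i$ sitting at plateau level $c$ replaces one oscillation $s=M^*-m^*$ of $u-u_i$ by $s_1=(c+h_i)-m^*$, $s_2=h_i$, $s_3=M^*-c$, and when $M^*$ is the peak of a shorter sibling one has $s_1>s$ while the two new entries of the height sequence, both equal to $h_i$, may enter the sorted list only \emph{below} the position of $s_1$. So at the critical index $K$ the $b$-side partial sum increases by $s_1-s>0$ while the $a$-side partial sum does not increase at all, and the inductive hypothesis alone gives you nothing: you need pre-existing slack $\sum_{k\le K}a_k-\sum_{k\le K}b_k\ge s_1-s$. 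That slack does exist, but only because of the quantitative part of Proposition~\ref{P_undulations}(3), namely $u(\bar x_i)\le u(\bar x_j)$ for every ancestor $u_j$ (in the simplest instance: $c+h_i\le P$ where $P$ is the parent's peak value, which makes the deficit $s_1-s=h_i-h'$ exactly absorbed by the gap between the parent's doubled height and the two top oscillations). Your write-up never invokes this inequality, and the sentence asserting that ``the simultaneous insertion of two copies of $h_i$ \ldots preserves the dominance'' is precisely the statement to be proved, not a consequence of anything established before it. A secondary point you should also address is that the induction tacitly assumes the undulation decomposition of $u-u_i$ is $\{u_j\}_{j\ne i}$ with the same heights; since the decomposition is produced by a specific greedy algorithm, this requires a (short) verification.

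For comparison, the paper avoids the induction entirely: it fixes $K$ and takes a partition $(x_1,\dots,x_{2K})$ maximizing $\sum_{i}\bigl(u(x_{2i})-u(x_{2i-1})\bigr)$, observes that each $x_{2i}$ must be the peak of an undulation and that the selected family is closed under taking ancestors, and then compares $u$ with $\tilde u=\sum_{\text{selected}}u_{j(i)}$, which satisfies $\tilde u\le u$ with equality at the selected peaks; this bounds the sum by $\tfrac12\TV(\tilde u)$, i.e. by the sum of $K$ heights, giving the majorization in one stroke. If you want to keep your inductive route, you must carry out the bookkeeping in the sibling case using $u(\bar x_i)\le u(\bar x_j)$ explicitly; otherwise the cleanest fix is to replace your third paragraph by an argument of the paper's variational type.
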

\begin{proof}
Given two functions $v_1,v_2:\R\rightarrow \R$ of bounded variation and $v=v_1+v_2$ it holds
\begin{equation*}
\TV (v) \le \TV (v_1) + \TV (v_2).
\end{equation*}
If we also require that $v_1$ is constant on the support of $v_2$ then equality holds.

By Property (3) in Proposition \ref{P_undulations} and the additional continuity assumption on $u$, if $u_i$ is a descendant of $u_j$, then $u_j$ is 
constant on $\supp u_i$ and obviously the same holds if the supports of $u_i$ and $u_j$ have disjoint interiors. 
In particular for every $k=1,\ldots \tilde N(u)-1$ the function $\sum_{i=1}^k u_i$ is constant on the support of $u_{k+1}$, 
therefore we can prove by induction that
\begin{equation*}
\TV (u) = \TV \left(\sum_{i=1}^{\tilde N(u)} u_i\right) = \sum_{i=1}^{\tilde N(u)}\TV (u_i)= 2\sum_{i=1}^{\tilde N(u)} h_i.
\end{equation*}

Now we consider the case of the $\Phi$-total variation.
Let $\e>0$ and $(x_1,\ldots,x_k)\in \mathcal P$ be such that
\begin{equation*}
\PTV (u) -\e < \sum_{i=1}^{k-1}\Phi(|u(x_{i+1})-u(x_i)|).
\end{equation*}
Denote by $(w_j)_{j=1,\ldots, k-1}$ the non increasing rearrangement of $(|u(x_{j+1})-u(x_j)|)_{j=1,\ldots,k-1}$ and by 
$(\tilde z_j)_{j=1,\ldots, \tilde N(u)}$ the non increasing rearrangement of $(h_j)_{j=1,\ldots, \tilde N(u)}$.
Then let $(z_j)_{j\in \N}$ be the sequence defined by
\begin{equation*}
z_{2j-1}=z_{2j}=
\begin{cases}
\tilde z_j & \mbox{if }1\le j\le \tilde N(u), \\
0 & \mbox{if }j>\tilde N(u),
\end{cases}
\end{equation*}
and consider it restricted to $j=1,\ldots,k-1$.
The conclusion follows by Proposition \ref{P_karamata} with $a_j=z_j$ and $b_j=w_j$:
we only have to check that for every $\bar j=1,\ldots,k-1$ it holds 
\begin{equation}\label{E_hyp_kam}
\sum_{j=1}^{\bar j}z_j \ge \sum_{j=1}^{\bar j} w_j.
\end{equation}

Consider $(x_1,\ldots,x_{2k})\in \mathcal P_{2k}$ a maximum point in $\mathcal P_{2k}$ of the quantity
\begin{equation*}
\sum_{i=1}^k u(x_{2i})-u(x_{2i-1}).
\end{equation*}
Then, if we denote by $\bar x_j$ the maximum point of the undulation $u_j$ for $j=1,\ldots,\tilde N(u)$ it clearly holds that for every $i=1,\ldots,k$ there
exists $j(i)$ such that $x_{2i}=\bar x_{j(i)}$. Moreover, by the maximality of the partition, it is fairly easy to prove that if $\bar x_j=\bar x_{j(i)}$ for some
$i$ and $u_j$ is a descendant of another undulation $u_{j'}$, then there exists $i'$ such that $j'=j(i')$.
Set
\begin{equation*}
\tilde u=\sum_{i=1}^k u_{j(i)}.
\end{equation*}
Since $u_j\ge 0$, it holds $\tilde u\le u$. Moreover, if $\bar x_j$ is a maximum point of $u_j$ and $\bar j$ is such that $u_j$ is not a descendant of 
$u_{\bar j}$, then $u_{\bar j}(\bar x_j)=0$.
Therefore it holds 
\begin{equation*}
u(\bar x_{j(i)})=\sum_{l=1}^{\tilde N(u)} u_l(\bar x_{j(i)})= \sum_{i=1}^k u_i(\bar x_{j(i)})=
\tilde u(\bar x_{j(i)}) \qquad \mbox{for every }i=1,\ldots, k.
\end{equation*}
It follows that
\begin{equation*}
\sum_{i=1}^k u(x_{2i})-u(x_{2i-1}) \le \sum_{i=1}^k \tilde u(x_{2i})-\tilde u(x_{2i-1}) \le \frac{1}{2}\TV (\tilde u) = \sum_{j\in I}h_j,
\end{equation*}
which is exactly \eqref{E_hyp_kam} and this concludes the proof.
\end{proof}

\begin{remark}
Looking at the proof we have that the positive and the negative parts
\begin{equation*}
\TV_+^\Phi(u) := \sup_{\mathcal P(I)}\sum_{i=1}^{k-1}\Phi((u(x_{i+1})-u(x_i))^+) \quad \mbox{and} \quad 
\TV_-^\Phi(u) := \sup_{\mathcal P(I)}\sum_{i=1}^{k-1}\Phi((u(x_{i+1})-u(x_i))^-)
\end{equation*}
are separately bounded by $\sum_{i=1}^{N(u)} \Phi(h_i)$.
The converse is not true, even up to a constant.
The $\Phi$-total variation of a piecewise monotone function depends not only the height of its undulations, but also how they are placed.
In general the positive and the negative $\Phi$-total variations are not comparable with $\sum \Phi(h_i)$, 
and it may be that they are not  comparable with each other. 
In the last section we provide an example where the increasing $\Phi$-variation is finite and the decreasing $\Phi$-variation is not.
The question has been raised in \cite{CJJ_fractional} where it has been observed that a counterexample like this one precludes 
the possibility to obtain $\BV^\Phi$ regularity by an Oleinik type estimate in the case of convex fluxes.
\end{remark}

In the following lemma we collect some easy properties of functions of bounded variation that will be useful later.
\begin{lemma}\label{L_trivial}
Let $g:\R\rightarrow \R$  be a piecewise monotone left-continuous function with bounded variation and suppose it does not have positive jumps, i.e. for every $x\in \R$
\begin{equation*}
g(x+)\le g(x-).
\end{equation*}
Then, for every $a<b$ and $a=x_1<\ldots<x_n=b$ it holds
\begin{equation*}
\TV^+_{(a,b)} g=  \sum_{i=1}^{n-1} \TV^+_{(x_i,x_{i+1})}g
\qquad \mbox{and} \qquad
\TV^-(g) - 2\|v\|_\infty \le \TV^+(g) \le \TV^-(g) + 2\|g\|_\infty.
\end{equation*}
\end{lemma}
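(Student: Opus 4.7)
The plan is to prove the additivity identity by showing that $\TV^+$ is monotone under refinement and then comparing the two inequalities. The elementary inequality $(\gamma-\alpha)^+ \le (\beta-\alpha)^+ + (\gamma-\beta)^+$, valid for all real $\alpha,\beta,\gamma$ (trivial if $\gamma\le\alpha$, and otherwise a consequence of $\gamma-\alpha = (\gamma-\beta)+(\beta-\alpha)$), implies that inserting any new point into a partition can only increase $\sum_j (g(y_{j+1})-g(y_j))^+$. The inequality $\TV^+_{(a,b)}(g) \ge \sum_{i=1}^{n-1}\TV^+_{(x_i,x_{i+1})}(g)$ then follows by concatenating near-maximal partitions of each open piece $(x_i,x_{i+1})$: the ``transition'' terms at the gluings are nonnegative, so the value of the concatenated partition is at least the sum of the values of the pieces.

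For the reverse inequality I fix a partition $a<y_1<\dots<y_k<b$ of $(a,b)$ with sum close to $\TV^+_{(a,b)}(g)$ and use the refinement property to assume it contains all of $x_2,\dots,x_{n-1}$. Splitting the sum at these points yields blocks $\sum_{j=1}^{k_i-1}(g(y^{(i)}_{j+1})-g(y^{(i)}_j))^+$ whose endpoints $y^{(i)}_1=x_i$ and $y^{(i)}_{k_i}=x_{i+1}$ lie outside the open interval $(x_i,x_{i+1})$, so they cannot be directly bounded by $\TV^+_{(x_i,x_{i+1})}(g)$. I handle this by perturbation: replace $x_i$ by $x_i+\delta$ and $x_{i+1}$ by $x_{i+1}-\delta$ and let $\delta\to 0^+$. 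Left-continuity gives $g(x_{i+1}-\delta)\to g(x_{i+1})$, so the rightmost term is preserved in the limit; at the left, $g(x_i+\delta)\to g(x_i+)\le g(x_i)$ by the no-positive-jump hypothesis, whence $(g(y^{(i)}_2)-g(x_i+\delta))^+\ge(g(y^{(i)}_2)-g(x_i))^+$ asymptotically. Each block is therefore bounded by $\TV^+_{(x_i,x_{i+1})}(g)$ up to arbitrarily small error, and summing over $i$ delivers the matching upper bound.

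For the second part, the comparison $|\TV^+(g)-\TV^-(g)|\le 2\|g\|_\infty$, I will use the telescoping identity: for any partition $x_1<\dots<x_k$ of $\R$,
\[
\sum_{j=1}^{k-1}\bigl(g(x_{j+1})-g(x_j)\bigr)^+ - \sum_{j=1}^{k-1}\bigl(g(x_{j+1})-g(x_j)\bigr)^- = g(x_k)-g(x_1) \in [-2\|g\|_\infty, 2\|g\|_\infty].
\]
Bounding the subtracted term by $\TV^-(g)$ gives $\sum_j(g(x_{j+1})-g(x_j))^+ \le \TV^-(g)+2\|g\|_\infty$, and taking the supremum over partitions yields $\TV^+(g)\le \TV^-(g)+2\|g\|_\infty$; the symmetric argument produces the lower bound.

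The main obstacle I anticipate is the boundary bookkeeping in the reverse direction of the additivity: both the left-continuity and the absence of positive jumps enter there in exactly the right way to show that the closed-endpoint positive variation on $[x_i,x_{i+1}]$ does not exceed the open-interval quantity $\TV^+_{(x_i,x_{i+1})}(g)$. Without either hypothesis a positive jump of $g$ at some $x_i$ would produce an extra term in the left-hand side that is not accounted for by the right-hand side, and the additivity would fail.
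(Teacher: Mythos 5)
Your proof is correct. The paper gives no proof of this lemma (it is stated as a collection of "easy properties" and left unproved), and your argument supplies a complete and careful verification: the refinement inequality $(\gamma-\alpha)^+\le(\beta-\alpha)^++(\gamma-\beta)^+$, the $\delta$-perturbation of the block endpoints (which is exactly where left-continuity and the absence of positive jumps are needed, as you correctly observe), and the telescoping identity for the second estimate are all sound. The only cosmetic point is that $\|v\|_\infty$ in the statement is evidently a typo for $\|g\|_\infty$, which your proof implicitly corrects.
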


Finally we state an easy lemma for future reference.
\begin{lemma}\label{L_uniform_continuity}
Let $a,b\in \R$ with $a<b$ and let $g:(a,b)\rightarrow \R$ be increasing and bounded. Denote by
\begin{equation*}
h:=\max_{x\in (a,b)}g(x+)-g(x-).
\end{equation*}
Then for every $\e>h$ there exists $\delta>0$ such that
\begin{equation*}
|x_2-x_1|<\delta \qquad \Longrightarrow \qquad |g(x_2)-g(x_1)|<\e.
\end{equation*}
\end{lemma}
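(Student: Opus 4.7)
The plan is to argue by contradiction using the compactness of $[a,b]$ together with the fact that, for an increasing function, the size of the jump at an interior point is captured exactly by the difference between the left and right limits at that point.

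Suppose, toward a contradiction, that there exists $\e > h$ for which no $\delta > 0$ works. Then I would produce sequences $\{x_1^n\}, \{x_2^n\} \subset (a,b)$ with $x_1^n < x_2^n$, $x_2^n - x_1^n \to 0$, and $g(x_2^n) - g(x_1^n) \ge \e$ (the absolute value can be dropped thanks to monotonicity). By Bolzano--Weierstrass, after extracting a subsequence, both $x_1^n$ and $x_2^n$ converge to a common point $x_0 \in [a,b]$. Passing to a further subsequence, I can assume that each of the two sequences is either constantly equal to $x_0$, strictly less than $x_0$ for all $n$, or strictly greater than $x_0$ for all $n$.

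I would then split into cases. If both sequences approach $x_0$ from the same side (or are eventually equal to $x_0$), monotonicity of $g$ together with the existence of one-sided limits gives $g(x_2^n) - g(x_1^n) \to 0$, contradicting the lower bound $\e > 0$. Otherwise, since $x_1^n < x_2^n$, the only remaining possibility is $x_1^n \uparrow x_0$ and $x_2^n \downarrow x_0$; in this configuration $x_0$ lies strictly inside $(a,b)$, and
\[
g(x_2^n) - g(x_1^n) \longrightarrow g(x_0+) - g(x_0-).
\]
Hence $g(x_0+) - g(x_0-) \ge \e > h$, which contradicts the definition of $h$ as the maximum jump of $g$ on $(a,b)$.

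I do not anticipate any serious obstacle; the only slightly delicate point is to ensure, in the mixed-side case, that $x_0$ is genuinely interior, which is automatic from the strict inequalities $x_1^n < x_0 < x_2^n$ and $x_1^n, x_2^n \in (a,b)$, so that the contradiction with the definition of $h$ is legitimate.
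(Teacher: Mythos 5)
Your argument is correct. Note that the paper states this lemma without proof (it is introduced as ``an easy lemma for future reference''), so there is nothing to compare against; your compactness-plus-contradiction argument is a perfectly standard and valid way to supply the missing proof. One small imprecision: in the case where, after passing to a subsequence, one of the two sequences is constantly equal to $x_0$ and the other approaches $x_0$ strictly from one side, the difference $g(x_2^n)-g(x_1^n)$ does not tend to $0$ but to a one-sided half-jump, namely $g(x_0)-g(x_0-)$ or $g(x_0+)-g(x_0)$. This does not harm the proof, since by monotonicity each of these quantities is bounded by the full jump $g(x_0+)-g(x_0-)\le h<\e$, which still contradicts the lower bound $\ge\e$; you should just state the conclusion of that sub-case as ``the limit is at most $h$'' rather than ``the limit is $0$''. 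The decisive mixed-side case, where $x_1^n\uparrow x_0$ and $x_2^n\downarrow x_0$ force $x_0\in(a,b)$ and yield $g(x_0+)-g(x_0-)\ge\e>h$, is handled correctly.
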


\subsection{Weakly genuinely nonlinear fluxes and fluxes with polynomial degeneracy}
The regularity of the entropy solution to \eqref{E_cl} depends on the nonlinearity of the flux $f$; we introduce here some terminology.
\begin{definition}\label{D_weak_nonlinear}
We say that $f:\R\rightarrow \R$ is \emph{weakly genuinely nonlinear} if the set $\{w:f''(w)\ne 0\}\subset \R$ is dense. 
\end{definition}

We will also consider the case of a flux $f\in C^\infty(\R)$ such that the set $\{w:f''(w)=0\}$ is finite; let $w_1<\ldots < w_S$ denote its elements.

\begin{definition}\label{D_degeneracy}
We say that $f$ has \emph{degeneracy $p\in \N$}, at the point $w_s$ if $p\ge 2$ and
\begin{equation*}
f^{(j)}(w_s)=0\quad  \mbox{for}\quad j =2,\ldots, p \qquad \mbox{and}\qquad f^{(p+1)}(w_s)\ne 0.
\end{equation*}
If there exists such a $p\in \N$ we say that $f$  has \emph{polynomial degeneracy at $w_s$}.
If the set $\{w:f''(w)=0\}$ is finite and $f$ has polynomial degeneracy at each of its points, we say that $f$ has \emph{polynomial degeneracy}.
Finally we say that $f$ has \emph{degeneracy $p$} if $f$ has polynomial degeneracy and $p$ is the maximum of the degeneracies of $f$ 
at the points of $\{w:f''(w)=0\}$.
\end{definition}

In Section \ref{S_Cheng} it will be important the behavior of $f$ around its inflection points.
The following lemma and its corollary will be useful to describe the small oscillations of the solution around an inflection point of the flux.
\begin{lemma}\label{L_conjugate}
Let $f:\R\rightarrow \R$ be smooth and let $\bar w$ be such that for every $w\in \R\setminus \{\bar w\}$, 
\begin{equation}\label{E_conv_conc}
f''(w)(w-\bar w)<0.
\end{equation}
Then there exists $\delta>0$ such that for every $w\in (\bar w -r, \bar w + r) \setminus \{\bar w\}$,
there exists a unique \emph{conjugate point} $w^*\in \R\setminus \{w\}$ such that
\begin{equation}\label{E_conjugate}
f'(w^*)=\frac{f(w)-f(w^*)}{w-w^*}.
\end{equation}
Assume moreover that $\bar w$ is of polynomial degeneracy, then there exist $\delta,\e>0$ such that
for every $w,w'\in(\bar w-\delta,\bar w+\delta)\setminus \{\bar w\}$ with $w\ne w'$ it holds
\begin{equation}\label{E_ratio_delta}
\frac{w^*-\bar w}{w-\bar w} \in (-1+\e,0) \qquad \mbox{and} \qquad \frac{f'(w^*)-f'(w'^*)}{f'(w)-f'(w')} \in (0,1-\e).
\end{equation}
\end{lemma}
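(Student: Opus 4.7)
My plan is to introduce the auxiliary function $\phi_w(w^*) := f(w^*) + f'(w^*)(w - w^*) - f(w)$, whose zeros on $\R\setminus\{w\}$ are exactly the conjugate points of $w$. Note $\phi_w(w) = 0$ and $\phi_w'(w^*) = f''(w^*)(w - w^*)$, so the sign hypothesis on $f''$ prescribes the monotonicity of $\phi_w$ on each of $(-\infty, \bar w)$, $(\bar w, w)$, $(w, +\infty)$ (for $w > \bar w$; the opposite case is symmetric). In particular $\phi_w$ is strictly monotone on each of these intervals, so uniqueness of a conjugate point reduces to ruling out zeros on $(\bar w, +\infty)\setminus\{w\}$: there $w$ is a strict minimum of $\phi_w$ with $\phi_w(w)=0$, so $\phi_w > 0$ away from $w$.

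For existence on the side $(-\infty, \bar w)$, I would use strict concavity on $(\bar w,+\infty)$ to obtain $\phi_w(\bar w) > 0$, and strict convexity on $(-\infty, \bar w)$ to obtain $\phi_{\bar w}(w_0^*) < 0$ for any fixed $w_0^* < \bar w$ (by applying $g(x) := f(x) - f(w_0^*) - f'(w_0^*)(x - w_0^*)$, which satisfies $g(w_0^*) = g'(w_0^*) = 0$ and $g'' > 0$, hence $g(\bar w) > 0$). By continuity in $w$, $\phi_w(w_0^*) < 0$ for $w$ close enough to $\bar w$, and the intermediate value theorem combined with monotonicity produces the unique $w^* \in (w_0^*, \bar w)$. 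The case $w < \bar w$ is identical by swapping the roles of the two sides.

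For the quantitative estimates under polynomial degeneracy, I first note that the sign condition $f''(w)(w - \bar w) < 0$ combined with $f^{(p+1)}(\bar w) \ne 0$ forces $p$ to be even and $c := f^{(p+1)}(\bar w) < 0$. Setting $w = \bar w + a$, $w^* = \bar w - \beta a$ with $\beta > 0$ and Taylor expanding, the equation $\phi_w(w^*) = 0$ reduces at leading order $a^{p+1}$ to
\begin{equation*}
p\,\beta^{p+1} + (p+1)\,\beta^p - 1 = 0.
\end{equation*}
The left-hand side is strictly increasing in $\beta \geq 0$, vanishes at $\beta = 0$, and equals $2p > 1$ at $\beta = 1$, hence has a unique root $\beta^* \in (0,1)$. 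The implicit function theorem applied to the rescaled equation yields $\beta(a) = \beta^* + O(a)$, giving $(w^* - \bar w)/(w - \bar w) \in (-1 + \e, 0)$ for $\delta$ small.

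Finally, expanding $f'(\bar w + t) - f'(\bar w) = \frac{c}{p!}t^p + O(t^{p+1})$ and using $w^* - \bar w = -\beta^* a + O(a^2)$ with $p$ even, we get $f'(w^*) - f'(\bar w) = (\beta^*)^p(f'(w) - f'(\bar w)) + O(a^{p+1})$. Subtracting the analogous identity for $w'$ and noting $(\beta^*)^p \in (0,1)$ gives the claimed bound $(0, 1 - \e)$. The main technical hurdle I anticipate is controlling this subtraction: the remainders are $O(a^{p+1})$, smaller than the leading term $O(a^p)$ individually, but after subtracting two nearby quantities the leading terms can partially cancel, so one must track the uniform $C^1$ dependence of $\beta$ on $a$ to ensure the correction is genuinely negligible compared to $f'(w) - f'(w')$.
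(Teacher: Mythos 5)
Your proof follows essentially the same route as the paper's: the same auxiliary function (your $\phi_w$ is the paper's $g_w$), the same monotonicity analysis of $\phi_w'(t)=f''(t)(w-t)$ for existence and uniqueness via a continuity/IVT argument near $\bar w$, and the same reduction of the asymptotic ratio to the root in $(-1,0)$ of $p\rho^{p+1}-(p+1)\rho^p+1$ (your $\beta$-equation is this polynomial evaluated at $\rho=-\beta$ for $p$ even). One small slip: $p\beta^{p+1}+(p+1)\beta^p-1$ equals $-1$ at $\beta=0$, not $0$ --- the negative value is precisely what yields the unique root in $(0,1)$ --- and your closing remark about tracking the $C^1$ dependence of $\beta$ on $a$ correctly identifies (and resolves) a cancellation issue in the final ratio that the paper's own sketch passes over with ``$\simeq$''.
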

See Figure \ref{F_conjugate}.
\begin{proof}
Suppose $w<\bar w$, being the opposite case analogous and let $g_w:\R  \to \R$ be defined by
\begin{equation*}
g_w(t)=  f(t) + f'(t)(w-t) -f(w).
\end{equation*}
Observe that \eqref{E_conjugate} is equivalent to
\begin{equation}\label{E_conjugate_2}
w^* \ne w  \qquad  \mbox{and} \qquad g_w(w^*)=0.
\end{equation}
Since $g_w'(t)=f''(t)(w-t)$, by \eqref{E_conv_conc}, it holds
\begin{equation*}
g'_w<0 \quad \mbox{in }(w,\bar w) \qquad \mbox{and} \qquad g'_w>0 \quad \mbox{in }(-\infty,w)\cup(\bar w, +\infty).
\end{equation*}
Moreover $g_w(w)=0$ and by strict monotonicity this proves that there exists at most one $w^*$ as in \eqref{E_conjugate_2}
and it exists if and only if $\lim_{t\to +\infty}g_w(t)>0$.
\begin{equation}\label{E_lim_g}
\begin{split}
\lim_{t\to +\infty}g_w(t) = &~ g_w(\bar w)  + \int_{\bar w}^{+\infty}g_w'(t)dt \\
\ge  &~ g_w(\bar w)  + \int_{\bar w}^{+\infty}|f''(t)|(t-\bar w)dt.
\end{split}
\end{equation}
Let $A:=\int_{\bar w}^{+\infty}|f''(t)|(t-\bar w)dt>0$.
Since the function $w\mapsto g_w(\bar w)$ is continuous and $g_{\bar w}(\bar w)=0$, there exists $\delta>0$ such that
$w\in (\bar w-\delta,\bar w) \Rightarrow g_w(\bar w)>-A$ and therefore, by \eqref{E_lim_g}, for which $w^*$ exists.
 
Now let us consider the case of $f$ with polynomial degeneracy $p\in \N$ at $\bar w$. Since the statement is elementary, we only sketch the
computations.
Notice that since $f''$ changes sign at $\bar w$, $p$ is even. 
It is sufficient to prove that 
\begin{equation}\label{E_limit_1}
\lim_{w\rightarrow \bar w} \frac{w^*-\bar w}{w-\bar w}=\bar \rho,
\end{equation}
with $\bar \rho\in (-1,0)$ and
\begin{equation*}
\lim_{w_1,w_2 \rightarrow \bar w} \frac{f'(w_2^*)-f'(w_1^*)}{f'(w_2)-f'(w_1)} = \bar\rho^{p}.
\end{equation*}

By assumption we have
\begin{equation*}
f(w)\simeq f(\bar w) + f'(\bar w)(w-\bar w) + \alpha (w-\bar w)^{p+1},
\end{equation*}
with $\alpha\ne 0$.

By \eqref{E_conjugate}, we have that
\begin{equation*}
\alpha(p+1)(w^*-\bar w)^p(w^*-w)\simeq \alpha [(w^*-\bar w)^{p+1}-(w-\bar w)^{p+1}],
\end{equation*}
Dividing by $(w-\bar w)^{p+1}$ and setting $\rho:= \frac{w^*-\bar w}{w-\bar w}$, we get
\begin{equation*}
(p+1)\rho^p(\rho-1) \simeq \rho^{p+1}-1.
\end{equation*}
Setting $G(\rho)=p\rho^{p+1}-(p+1)\rho^p +1$, the above formula is equivalent to $G(\rho)\simeq 0$.
It is easy to show that the polynomial $G$ has two roots in $\rho=1$ and one root $\bar \rho \in (-1,0)$.
Since $\frac{w^*-\bar w}{w-\bar w}<0$, the only possibility is that \eqref{E_limit_1} holds.
Moreover
\begin{equation*}
\frac{f'(w_2^*)-f'(w_1^*)}{f'(w_2)-f'(w_1)} \simeq 
\frac{\alpha (p+1) (\bar \rho w_2)^p -\alpha (p+1) (\bar \rho w_1)^p }{\alpha (p+1)w_2^p - \alpha (p+1)w_1^p} = \bar\rho^p.
\qedhere
\end{equation*}
\end{proof}

\begin{figure}
\centering
\def\svgwidth{0.5\columnwidth}
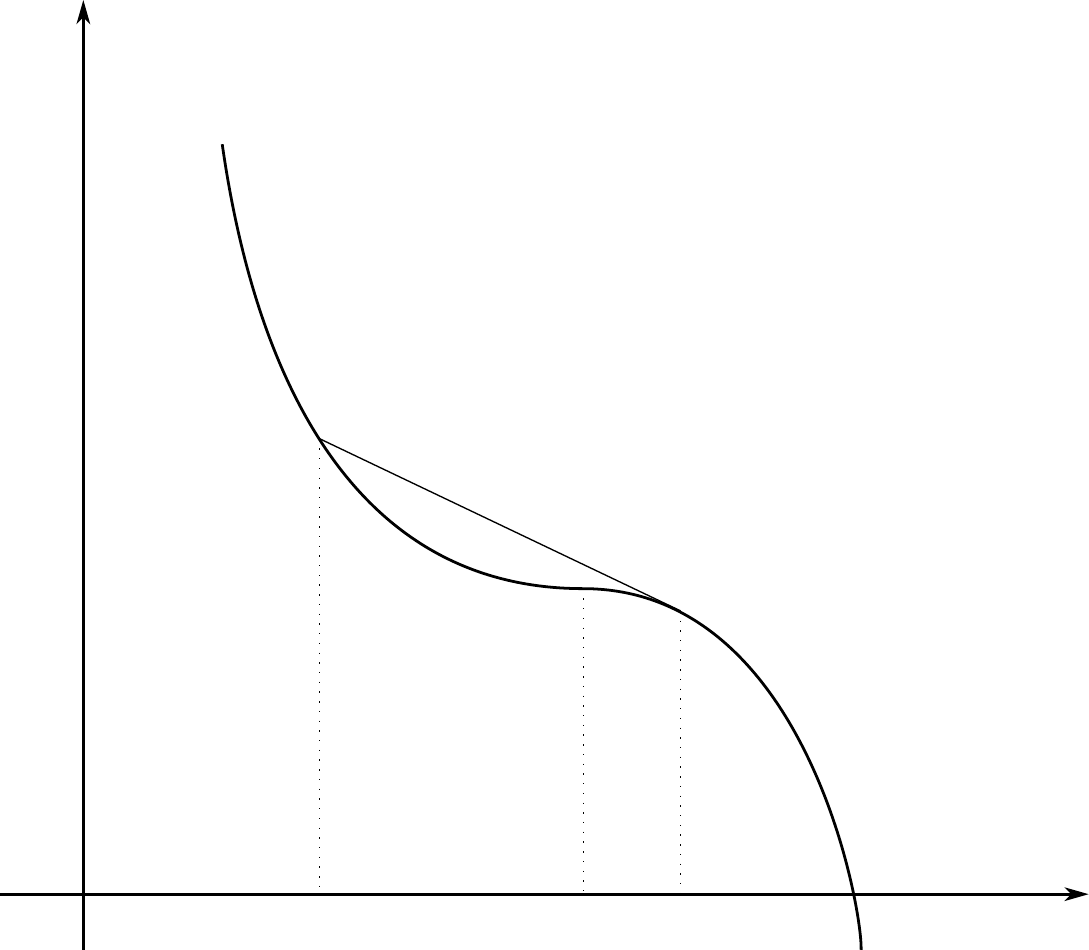
\caption{In this picture are represented the graph of a flux $f$ with an inflection point in $\bar w$ and a point $w$ with its conjugate $w^*$.}\label{F_conjugate}
\end{figure}

Applying the previous lemma around each inflection point of the flux we can easily obtain the following corollary for general fluxes with 
polynomial degeneracy.
\begin{corollary}\label{C_conjugate}
Let $f:\R\rightarrow \R$ be smooth and of polynomial degeneracy and let $w_1<\ldots<w_S$ be the points where $f''$ vanishes. Then there exists $\delta>0$
such that
\begin{enumerate}
\item it holds
\begin{equation*}
\delta < \frac{1}{2}\min_{s=1,\ldots,S-1}\big(w_s-w_{s-1}\big);
\end{equation*}
\item for every $s=1,\ldots, S$ and $w\in (w_s-\delta,w_s+\delta)$ there exists a unique $w^*\in (w_s-\delta,w_s+\delta)$ such that 
\begin{equation*}
f'(w^*)=\frac{f(w)-f(w^*)}{w-w^*};
\end{equation*}
\item there exists $\e>0$ such that for every $s=1,\ldots, S$ and every $w,w'\in(w_s-\delta, w_s+\delta)\setminus \{ w_s\}$ with $w\ne w'$ it holds
\begin{equation*}
\frac{w^*- w_s}{w- w_s} \in (-1+\e,0) \qquad \mbox{and} \qquad \frac{f'(w^*)-f'(w'^*)}{f'(w)-f'(w')} \in (0,1-\e).
\end{equation*}
\end{enumerate}
\end{corollary}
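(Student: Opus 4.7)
The strategy is to reduce the statement to $S$ separate applications of Lemma \ref{L_conjugate}, one at each inflection point, after modifying $f$ outside a small neighborhood of each $w_s$ so that the global sign hypothesis of the lemma is satisfied.

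First, I would fix $\eta>0$ smaller than $\tfrac{1}{4}\min_{s=2,\ldots,S}(w_s-w_{s-1})$ and small enough that, for each $s$, $f''$ has a definite sign on each of the two components of $(w_s-\eta,w_s+\eta)\setminus\{w_s\}$. Since $w_s$ has polynomial degeneracy $p_s$, the order $p_s$ is even (by the same parity argument used inside the proof of Lemma \ref{L_conjugate}) and $f''$ changes sign at $w_s$; hence $f''(w)(w-w_s)$ has a definite sign on this punctured interval. The case $f''(w)(w-w_s)>0$ reduces to the other one by replacing $f$ with $-f$, which leaves \eqref{E_conjugate} and the two ratios in \eqref{E_ratio_delta} unchanged, so I may assume $f''(w)(w-w_s)<0$ on $(w_s-\eta,w_s+\eta)\setminus\{w_s\}$. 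For each $s$ I would then use a standard smooth cut-off to build $\tilde f_s\in C^\infty(\R)$ which agrees with $f$ on $(w_s-\eta/2,w_s+\eta/2)$ and satisfies the global condition $\tilde f_s''(w)(w-w_s)<0$ for every $w\neq w_s$, while retaining the polynomial degeneracy $p_s$ at $w_s$ (which holds automatically since $\tilde f_s\equiv f$ near $w_s$). Making $\tilde f_s''$ compactly supported ensures the integrability used inside the proof of Lemma \ref{L_conjugate}.

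Next, I would apply Lemma \ref{L_conjugate} to each $\tilde f_s$ with $\bar w=w_s$. This produces constants $\delta^0_s>0$ and $\e_s>0$ such that every $w\in(w_s-\delta^0_s,w_s+\delta^0_s)\setminus\{w_s\}$ admits a unique conjugate point $w^*\in\R\setminus\{w\}$ with respect to $\tilde f_s$, with the bounds of \eqref{E_ratio_delta} holding with constant $\e_s$. Set $\delta_s:=\min(\delta^0_s,\eta/2)$. The first bound in \eqref{E_ratio_delta} then gives
\begin{equation*}
|w^*-w_s|<|w-w_s|\le \delta_s\le \eta/2,
\end{equation*}
so both $w$ and $w^*$ lie in the interval $(w_s-\eta/2,w_s+\eta/2)$ where $\tilde f_s$ and $f$ coincide. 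Consequently \eqref{E_conjugate} and \eqref{E_ratio_delta} hold verbatim for $f$ in place of $\tilde f_s$. Uniqueness inside $(w_s-\delta_s,w_s+\delta_s)$ also transfers: any $f$-conjugate of $w$ in that interval automatically lies in the region where $\tilde f_s\equiv f$ and is therefore a $\tilde f_s$-conjugate, and thus coincides with $w^*$ by the uniqueness given by Lemma \ref{L_conjugate}.

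Finally I would set $\delta:=\min_{s=1,\ldots,S}\delta_s$ and $\e:=\min_{s=1,\ldots,S}\e_s$. Property (1) follows from $\delta\le\eta<\tfrac{1}{4}\min_s(w_{s+1}-w_s)$, and (2) and (3) follow from the previous paragraph applied at each $s$. The only technical step is the smooth cut-off construction of the auxiliary fluxes $\tilde f_s$, which is routine; all the substantive analytic content has already been done in Lemma \ref{L_conjugate}, and the present corollary merely packages this local analysis uniformly across the finitely many inflection points.
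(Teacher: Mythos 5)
Your overall strategy coincides with the paper's, which disposes of this corollary in a single sentence (``applying the previous lemma around each inflection point''); your localization via auxiliary fluxes $\tilde f_s$ is a legitimate and in fact welcome way to make that sentence precise, since Lemma \ref{L_conjugate} as stated carries the global hypothesis \eqref{E_conv_conc}, which a flux with several inflection points cannot satisfy. The transfer of existence, uniqueness and of the bounds \eqref{E_ratio_delta} back to $f$ is correct, because the first inclusion in \eqref{E_ratio_delta} forces $w^*$ into the region where $\tilde f_s$ and $f$ agree.

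There are, however, two points to fix. The substantive one is the parity claim: polynomial degeneracy at $w_s$ does \emph{not} imply that $p_s$ is even, and hence does not imply that $f''$ changes sign at $w_s$. The parity argument inside the proof of Lemma \ref{L_conjugate} runs in the opposite direction: it \emph{assumes} the sign change (via \eqref{E_conv_conc}) and deduces that $p$ is even. For $f(w)=w^4$ one has $p=3$ and $f''\ge 0$ everywhere, and for such a point no conjugate exists at all: the function $g_w$ of the lemma increases up to $t=w$ and decreases afterwards, so it is strictly negative away from $t=w$ and conclusion (2) of the corollary fails. This is really an imprecision of the statement itself --- the corollary is only meant for, and in Theorem \ref{T_Cheng} is only applied at, those $w_s$ where $f''$ genuinely changes sign --- but your proof should restrict to such points explicitly rather than derive the sign change from polynomial degeneracy. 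The second, cosmetic, point: you cannot make $\tilde f_s''$ compactly supported and simultaneously satisfy the strict inequality $\tilde f_s''(w)(w-w_s)<0$ for every $w\ne w_s$; take instead tails of the appropriate signs decaying at infinity. No integrability of $\tilde f_s''(t)(t-w_s)$ is actually needed in the lemma's argument: if the integral $A$ there diverges, the existence of $w^*$ only becomes easier.
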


\subsection{Initial-boundary value problems}
In this section we recall some basic facts about the initial boundary value problem for \eqref{E_cl}, introduced in \cite{BLRN_boundary}. This is relevant here in connection with the notion of Lagrangian
representation, that will be introduced later.
\begin{definition}
We say that $(\eta,q)$ is an \emph{entropy-entropy flux pair} if $\eta:\R\rightarrow \R$ is convex and $q:\R\rightarrow \R$ satisfies $q'(w)=\eta'(w)f'(w)$ for $\mathcal L^1$-a.e. $w\in \R$.
In particular we will use the following notation: for every $k\in \R$ let
\begin{equation*}
\eta^+_k(u):=(u-k)^+,\qquad \eta^-_k(u):=(u-k)^-
\end{equation*}
and the relative fluxes
\begin{equation*}%\label{E_flux}
 q^+_k(u):=\chi_{[k,+\infty)}(u)\big(f(u)-f(k)\big),\qquad q^-_k(u):=\chi_{(-\infty,k]}(u)\big(f(k)-f(u)\big),
\end{equation*}
where $\chi_E$ denotes the characteristic function of the set $E$:
\begin{equation*}
\chi_E(u):=
\begin{cases}
1 & \mbox{if }u\in E, \\
0 & \mbox{if }u \notin E.
\end{cases}
\end{equation*}
\end{definition}

Let $\gamma_l,\gamma_r:[0,+\infty)\rightarrow \R$ be Lipschitz with $\gamma_l\le \gamma_r$. Let $T>0$ and denote by
\begin{equation*}
\Omega:=\{(t,x)\in (0,T)\times \R : \gamma_l(t)<x<\gamma_r(t)\}.
\end{equation*}
Let $u_l,u_r:[0,T]\rightarrow \R$ and $u_0:(\gamma_l(0),\gamma_r(0))\rightarrow \R$ be functions of bounded variation and consider the 
initial-boundary value problem
\begin{equation}\label{E_cl_boundary}
\begin{cases}
u_t+f(u)_x=0 & \mbox{in }\Omega, \\
u(t,\gamma_l(t))=u_l(t) & \mbox{for }t\in (0,T), \\
u(t,\gamma_r(t))=u_r(t) & \mbox{for }t\in (0,T), \\
u(0,\cdot)=u_0(\cdot) & \mbox{in }(\gamma_l(0),\gamma_r(0)).
\end{cases}
\end{equation}

\begin{definition}
Let $u\in \BV(\Omega)$ and denote by $\tilde u_l,\tilde u_r:(0,T)\rightarrow \R$ and $\tilde u_0:(\gamma_l(0),\gamma_r(0))\rightarrow \R$ the
traces of $u$ on $\graph \gamma_l, \graph \gamma_r$ and on $\{0\}\times(\gamma_l(0),\gamma_r(0))$ respectively.
We say that $u\in \BV(\Omega)$ is an \emph{entropy solution} of the initial-boundary value problem \eqref{E_cl_boundary} 
if it is an entropy solution of \eqref{E_cl} in $\Omega$, $u_0=\tilde u_0$ and for $\mathcal L^1$-a.e. $t\in (0,T)$
\begin{equation}\label{E_right}
\begin{split}
-\dot\gamma_l(t)\eta_k^+(\tilde u_l(t))+q_k^+(\tilde u_l(t))\le 0 & \qquad \forall k \ge u_l(t), \\
-\dot\gamma_l(t)\eta_k^-(\tilde u_l(t))+q_k^-(\tilde u_l(t))\le 0 & \qquad \forall k \le u_l(t),
\end{split}
\end{equation}
and similarly
\begin{equation}\label{E_left}
\begin{split}
-\dot\gamma_r(t)\eta_k^+(\tilde u_r(t))+q_k^+(\tilde u_r(t))\ge 0 & \qquad \forall k \ge u_r(t), \\
-\dot\gamma_r(t)\eta_k^-(\tilde u_r(t))+q_k^-(\tilde u_r(t))\ge 0 & \qquad \forall k \le u_r(t).
\end{split}
\end{equation}
\end{definition}

In the following proposition the total variation of the solution of an initial-boundary value problem is estimated in terms of the total variation of the
initial and boundary data. In \cite{Amadori_wft_bdy} the estimate is proved for wave-front tracking approximate solutions in the context of
systems. Since the analysis can be repeated in this setting, we omit the proof.
\begin{proposition}\label{P_TV_est}
Let $u\in \BV(\Omega)$ be a bounded entropy solution of \eqref{E_cl_boundary}. Then 
\begin{equation*}
\TV\, \left(u(T)\right) \le \TV (u_0) + \TV(u_l) + \TV(u_r) + |u_l(0+)-u_0(\gamma_l(0)+)| + |u_r(0+)-u_0(\gamma_r(0)-)|,
\end{equation*}
where the total variations are computed on the domains of the corresponding functions.
\end{proposition}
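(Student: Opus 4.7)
I would establish the estimate by the standard wave-front tracking strategy: reduce to piecewise constant approximations, track total wave strength along the evolution, and pass to the limit.

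First, I would approximate $u_0, u_l, u_r$ by sequences of piecewise constant functions $u_0^\nu, u_l^\nu, u_r^\nu$ whose total variations converge to those of the corresponding limits and which converge a.e. to the original data. Then, using the theory of wave-front tracking for scalar conservation laws adapted to the moving boundaries $\graph\gamma_l, \graph\gamma_r$, I would construct approximate solutions $u^\nu$ of \eqref{E_cl_boundary} consisting of finitely many polygonal fronts, each carrying a scalar strength given by the jump of $u^\nu$ across it. Since $u^\nu$ is piecewise constant, $\TV(u^\nu(T))$ is exactly the sum of the strengths of the fronts that cross the horizontal line $\{t=T\}\cap \Omega$.

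The core of the argument is an accounting of front strengths. New fronts can be created only at three kinds of source points: (i) the initial line $\{0\}\times(\gamma_l(0),\gamma_r(0))$, contributing total strength at most $\TV(u_0^\nu)$; (ii) the left boundary $\graph\gamma_l$, where a jump in $u_l^\nu$ between consecutive values produces at most one incoming front of that strength (after solving the boundary Riemann problem in the sense of BLN), and similarly for $\graph\gamma_r$, contributing at most $\TV(u_l^\nu)$ and $\TV(u_r^\nu)$; and (iii) the two corners $(0,\gamma_l(0))$ and $(0,\gamma_r(0))$, where the mismatch between $u_0^\nu$ and the boundary trace produces a single front of strength at most $|u_l^\nu(0+)-u_0^\nu(\gamma_l(0)+)|$ and $|u_r^\nu(0+)-u_0^\nu(\gamma_r(0)-)|$ respectively. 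Interior interactions in a scalar conservation law can only decrease (or leave invariant) the sum of outgoing strengths compared to incoming ones, since two adjacent fronts of the same sign merge into a single front of summed strength, while fronts of opposite signs partly cancel; moreover, fronts that exit through $\graph\gamma_l$ or $\graph\gamma_r$ only remove strength from the count. Summing all contributions yields the discrete bound
\begin{equation*}
\TV(u^\nu(T))\le \TV(u_0^\nu) + \TV(u_l^\nu) + \TV(u_r^\nu) + |u_l^\nu(0+)-u_0^\nu(\gamma_l(0)+)| + |u_r^\nu(0+)-u_0^\nu(\gamma_r(0)-)|.
\end{equation*}

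The main delicate point is the boundary analysis: unlike in the Cauchy problem, when a front reaches $\graph\gamma_l$ or $\graph\gamma_r$ the BLN conditions \eqref{E_right}--\eqref{E_left} may force the trace to differ from the prescribed boundary value, so one must check that the corresponding boundary Riemann problem is resolved by fronts that either exit the domain or match the existing boundary trace without injecting extra strength beyond what is already accounted for in $\TV(u_l^\nu), \TV(u_r^\nu)$. This is precisely the content of the boundary Riemann solver of \cite{BLRN_boundary} in the scalar setting, which guarantees that incoming boundary data and arriving waves can be resolved while respecting the total-strength balance; the estimate for systems in \cite{Amadori_wft_bdy} subsumes this.

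Finally, I would pass to the limit $\nu\to\infty$: by standard BV-compactness for wave-front tracking, a subsequence $u^\nu$ converges in $L^1_\loc(\Omega)$ to the unique entropy solution $u$ of \eqref{E_cl_boundary}, and in particular $u^\nu(T)\to u(T)$ in $L^1_\loc$. Lower semicontinuity of the total variation under $L^1_\loc$ convergence, together with $\TV(u_0^\nu)\to\TV(u_0)$, $\TV(u_{l/r}^\nu)\to\TV(u_{l/r})$ and the continuity of the corner terms, yields the claimed estimate for $u$.
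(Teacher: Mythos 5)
Your proposal is correct and follows essentially the route the paper itself indicates: the paper omits the proof of Proposition \ref{P_TV_est}, stating that the wave-front tracking estimate of \cite{Amadori_wft_bdy} for initial-boundary value problems can be repeated in the scalar setting, which is exactly the front-strength accounting and limiting argument you describe. The one delicate point you correctly flag --- that waves reaching $\graph\gamma_l$ or $\graph\gamma_r$ must be resolved via the BLN boundary Riemann problem without injecting strength beyond $\TV(u_l)$, $\TV(u_r)$ --- is precisely the content delegated to \cite{BLRN_boundary} and \cite{Amadori_wft_bdy}.
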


It is useful to reverse the point of view: now let $u$ be an entropy solution of \eqref{E_cl} and consider the following definition.

\begin{definition}
Let $\gamma:\R\rightarrow \R$ be Lipschitz, $w\in\R$ and $T>0$. Moreover consider the domains
\begin{equation*}
\Omega_l=\{(t,x)\in [0,T]\times \R: x<\gamma(t)\},\qquad \Omega_r=\{(t,x)\in [0,T]\times \R: x>\gamma(t)\}.
\end{equation*}
We say that $(\gamma,w)$ is an \emph{admissible boundary} for $u$ in $[0,T]$ if $u_l:=u\llcorner \Omega_l$ and $u_r:= u \llcorner \Omega_r$ 
solve the initial boundary value problems
\begin{equation*}
\begin{cases} 
u_t + f(u)_x = 0 & \mbox{in }\Omega_l, \\
u(0,\cdot)=u_0(\cdot)  &  \mbox{in } (-\infty,\gamma(0)), \\
u(t,\gamma(t))=w & \mbox{in }(0,T),
\end{cases}
\qquad \qquad \mbox{and} \qquad \qquad 
\begin{cases}
u_t + f(u)_x = 0 & \mbox{in }\Omega_r, \\
u(0,\cdot)=u_0(\cdot)  &  \mbox{in } (\gamma(0),+\infty), \\
u(t,\gamma(t))=w & \mbox{in }(0,T),
\end{cases}
\end{equation*}
respectively.
\end{definition}
\begin{remark}
Since $u$ is an entropy solution, $(\gamma,w)$ is an admissible boundary for $u$ in $[0,T]$ if and only if for $\mathcal L^1$-a.e. $t\in [0,T]$
the corresponding versions of \eqref{E_right} and \eqref{E_left} hold.
\end{remark}

The key fact is that for every point $(t,x)\in \R^+\times \R$, there exists at least an admissible boundary $(\gamma,w)$ with $\gamma(t)=x$.
We will be more precise in the following section, relating this notion to the notion of Lagrangian representation. In the following lemma from \cite{BM_structure}, we recall
a stability result for the admissible boundaries.

\begin{proposition}[Stability]\label{P_stab}
Let $u^n$ be entropy solutions of \eqref{E_cl} and $(\gamma^n,w^n)$ admissible boundaries for $u^n$.
Suppose that 
\begin{itemize}
\item $u^n \rightarrow u$ strongly in $L^1(\R)$;
\item $w^n\rightarrow w$;
\item $\gamma^n \rightarrow \gamma$ uniformly.
\end{itemize}
Then $(\gamma,w)$ is an admissible boundary for $u$.
\end{proposition}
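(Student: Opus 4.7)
The plan is to encode admissibility as a family of integral inequalities that is manifestly stable under the given convergences, and then to recover admissibility of the limit from the limiting inequalities.

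\textbf{Step 1 (Interior entropy inequality).} By the standard $L^1_{\loc}$ stability of Kruzkov entropy solutions on the whole line, the limit $u$ is itself an entropy solution of \eqref{E_cl} on $\R^+\times\R$; in particular it satisfies the interior entropy inequality on $\Omega_l=\{x<\gamma(t)\}$ and on $\Omega_r=\{x>\gamma(t)\}$.

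\textbf{Step 2 (Reformulation of admissibility).} I would first show that, for each $n$, the admissibility of $(\gamma^n,w^n)$ on $\Omega_l^n:=\{x<\gamma^n(t)\}$ combined with the interior entropy inequality is equivalent to the following: for every $k\ge w^n$ and every non-negative $\varphi\in C_c^\infty([0,T)\times\R)$,
\begin{equation*}
\iint_{\Omega_l^n}\big[\eta^+_k(u^n)\varphi_t+q^+_k(u^n)\varphi_x\big]\,dx\,dt+\int_{-\infty}^{\gamma^n(0)}\eta^+_k(u_0)\,\varphi(0,\cdot)\,dx\ge 0,
\end{equation*}
together with the analogous inequalities using $\eta^-_k,q^-_k$ for $k\le w^n$, and the corresponding dual inequalities on $\Omega_r^n$ (with the sign induced by the reversed outer normal). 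The equivalence is a divergence-theorem argument applied to the vector field $(\eta^+_k(u^n),q^+_k(u^n))$ on $\Omega_l^n$: in one direction the interior inequality gives non-positivity in $\Omega_l^n$ and \eqref{E_right} gives non-positivity of the singular part on $\graph\gamma^n$; in the reverse direction, the integral inequality forces the inward trace of $(\eta^+_k(u^n),q^+_k(u^n))$ on $\graph\gamma^n$ to obey \eqref{E_right} when $k$ is varied over $[w^n,+\infty)$.

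\textbf{Step 3 (Passage to the limit).} Fix $k>w$ and a non-negative $\varphi\in C_c^\infty([0,T)\times\R)$. Since $w^n\to w$, we have $k\ge w^n$ eventually, so the inequality of Step 2 holds for $u^n$. The strong $L^1$ convergence $u^n\to u$, the uniform $L^\infty$ bound on $(u^n)$, and the continuity of $\eta^+_k,q^+_k$ give $\eta^+_k(u^n)\to\eta^+_k(u)$ and $q^+_k(u^n)\to q^+_k(u)$ in $L^1_{\loc}$; the uniform convergence $\gamma^n\to\gamma$ gives $\mathbf 1_{\Omega_l^n}\to\mathbf 1_{\Omega_l}$ almost everywhere and, in view of the compact support of $\varphi$, in $L^1_{\loc}$ by dominated convergence; and $\gamma^n(0)\to\gamma(0)$ handles the initial integral. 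Passing to the limit term by term yields the analogous integral inequality for $(u,\gamma,w)$ for every $k>w$, and then for $k=w$ by monotone approximation. The inequalities for $k\le w$ (with $\eta^-_k,q^-_k$) and those on $\Omega_r$ are obtained by the same scheme.

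\textbf{Step 4 (Conclusion).} Applying the reverse direction of Step 2 to $(u,\gamma,w)$, the limiting integral inequalities yield that the restrictions $u\llcorner\Omega_l$ and $u\llcorner\Omega_r$ satisfy \eqref{E_right} and \eqref{E_left} with constant boundary value $w$ along $\graph\gamma$, so $(\gamma,w)$ is an admissible boundary for $u$.

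The main obstacle I expect is Step 2: identifying an integral reformulation of admissibility that is simultaneously equivalent to the pointwise trace conditions \eqref{E_right}, \eqref{E_left} and expressed purely in terms of $u^n,u_0,w^n,\gamma^n$ with no explicit reference to traces, so that Step 3 reduces to routine dominated convergence. The backward implication in Step 4 tacitly requires the existence of strong traces of $u$ along $\graph\gamma$; while such traces are well established for entropy solutions of scalar conservation laws, one should be careful not to implicitly invoke a non-degeneracy assumption on $f$.
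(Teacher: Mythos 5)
The paper contains no proof of Proposition \ref{P_stab}: it is recalled from \cite{BM_structure}, so there is no internal argument to compare against. Your outline is nevertheless the natural one and is, in substance, how such stability is obtained there: rewrite the trace conditions \eqref{E_right}--\eqref{E_left} as a family of integral inequalities in $(u,\gamma,w,k,\varphi)$ containing no explicit traces, observe that each inequality is stable under $u^n\to u$ in $L^1$, $\gamma^n\to\gamma$ uniformly and $w^n\to w$, and read off the boundary condition for the limit.

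Three points deserve care. First, a sign and labelling slip in Step 2: for $\Omega_l^n=\{x<\gamma^n(t)\}$ the curve $\gamma^n$ is the \emph{right} boundary of the domain, so the relevant condition is \eqref{E_left} (with ``$\ge 0$''), not \eqref{E_right}; with that reading your displayed inequality ``$\iint+\int\ge 0$'' is the correct one, whereas the sentence asserting that the interior inequality and \eqref{E_right} give ``non-positivity'' of the two contributions has the signs backwards. Second, the convergence $q_k^\pm(u^n)\to q_k^\pm(u)$ in $L^1_{\loc}$ requires a uniform $L^\infty$ bound on $(u^n)$, which you invoke but which is not literally among the hypotheses; it is harmless because in every application the $u^n$ arise from uniformly bounded data and the maximum principle applies, but it should be stated. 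Third, the obstacle you flag at the end is not a genuine gap: in the reverse implication of Step 2 the quantity $-\dot\gamma\,\eta_k^\pm(\tilde u)+q_k^\pm(\tilde u)$ should be interpreted as the normal trace on $\graph\gamma$ of the divergence-measure field $\big(\eta_k^\pm(u),q_k^\pm(u)\big)$, which exists (Anzellotti, Chen--Frid) because its distributional divergence is a locally finite nonpositive measure; no strong trace of $u$ itself, and hence no nondegeneracy assumption on $f$, is needed, and for the $\BV$ solutions appearing in the paper's Definition this normal trace coincides with the expression evaluated at the classical trace. With these adjustments the proposal is correct.
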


\subsection{Lagrangian representation and structure of entropy solutions}
In this section we recall the notion of Lagrangian representation and its properties in the case of piecewise monotone solutions and general $L^\infty$-entropy solutions.

\begin{definition}\label{D_L_infty}
Let $u$ be the entropy solution of \eqref{E_cl} with $u_0\in L^\infty(\R)$. We say that the couple $(\X,\U)$ is a \emph{Lagrangian representation} of $u$ if
\begin{enumerate}
\item $\X:[0,+\infty)\times \R\rightarrow \R$ is continuous, $t\mapsto\X(t,y)$ is Lipschitz for every $y$ and $y\mapsto \X(t,y)$ is non decreasing for every $t$;
\item $\U\in L^\infty(\R)$;
\item there exists a representative of $u$ and an at most countable set $\bar Y=\{y_n\}_{n\in \N}$ such that, setting 
\begin{equation}\label{E_def_A}
A=\bigcup_{n\in \N}\graph (\X(\cdot,y_n)),
\end{equation}
for every $(t,x)\in \R^+\times \R\setminus A$,
\begin{equation*}
\X(t)^{-1}(x)=\{y\} \qquad \mbox{and} \qquad  u(t,x) =\U(y).
\end{equation*}
\end{enumerate}
\end{definition}

In the next proposition, we state the existence of a Lagrangian representation for piecewise monotone solutions and we recall the properties 
that we will need in the following.
The properties listed here are the collection of the contributions in \cite{BM_scalar,BM_continuous,BM_structure}.

\begin{proposition}\label{P_lagr_repr}
Let $u_0\in X$ (defined in Section \ref{Ss_piec_mon}) be continuous and let $u$ be the entropy solution of \eqref{E_cl}. Then there exist a Lagrangian representation $(\X,\U)$ of $u$, an at most countable set $Q'\subset (0,+\infty)$ and a function $\T:\R\rightarrow [0,+\infty)$ 
(which we call \emph{existence time function}) such that 
\begin{enumerate}
\item $\X(0)=\Id$ and $\U=u_0$;
\item for every $t\in [0,+\infty)$, 
	\begin{equation*}%\label{E_covering}
	\{(x,w):w\in [\sci u(t,x), u(t,x)]\} \subset \{(\X(t,y),u_0(y)): \T(y)\ge t\}.
	\end{equation*}
\item for every $t\in [0,+\infty)\setminus Q'$ and for every $(x,w)\in \R^2$ such that
	$u(t)$ is continuous at $x$ and $u(t,x)=w$, or $w\in (\sci u(t,x),u(t,x))$,
	there exists a unique $y(t,x,w) \in \R$ such that
	\begin{equation*}
	\T(y(t,x,w))\ge t, \quad \X(t,y(t,x,w))=x, \quad \mbox{and}\quad u_0(y(t,x,w))=w.
	\end{equation*}
	Moreover, if $u(t,x-)<u(t,x+)$ the function $w\mapsto y(t,x,w)$ is increasing in $(u(t,x-),u(t,x+))$ and 
	if $u(t,x+)<u(t,x-)$ the function $w\mapsto y(t,x,w)$ is decreasing in $(u(t,x+),u(t,x-))$;
\item for every $y\in \R$ the pair $(\X(\cdot, y),u_0(y))$ is an admissible boundary of $u$ in $[0,\T(y)]$.
\end{enumerate}
Moreover there exists a piecewise constant \emph{sign function} $\s: \R\rightarrow \{-1,1\}$ such that
\begin{enumerate}
\item[(5)] for every $t\in [0,\T(y)]\setminus Q'$,
\begin{equation}\label{E_sign}
\begin{split}
\s(y)=1\quad \Longrightarrow \quad  \Big(u_0(y)\le u(t,\X(t,y)+) \quad  \emph{or} \quad  u_0(y)\ge u(t,\X(t,y)-)\Big), \\
\s(y)=-1\quad \Longrightarrow \quad \Big(u_0(y)\le u(t,\X(t,y)-) \quad  \emph{or} \quad  u_0(y)\ge u(t,\X(t,y)+)\Big);
\end{split}
\end{equation}
\item[(6)] if $y_1,y_2\in \{y:\T(y)\ge T\}$ with $y_1<y_2$ and there exists $t\in[0,T)$ such that $\X(t,y_1)=\X(t,y_2)$, then
$u$ is strictly monotone in $(\X(T,y_1),\X(T,y_2))$ and $f'\circ u(T)$ is strictly increasing in $(\X(T,y_1),\X(T,y_2))$.
\item[(7)] the characteristic equation holds: for every $y\in \R$ and for $\mathcal L^1$-a.e. $t\in \R^+$
\begin{equation*}
\partial_t \X(t,y)=\lambda (t,\X(t,y)),
\end{equation*}
where
\begin{equation*}
\lambda(t,x)=
\begin{cases}
f'(u(t,x)) & \text{if }u(t) \text{ is continuous at }x, \\
\displaystyle \frac{f(u(t,x+))-f(u(t,x-))}{u(t,x+)-u(t,x-)} & \text{if }u(t)\text{ has a jump at }x.
\end{cases}
\end{equation*}
\end{enumerate}
\end{proposition}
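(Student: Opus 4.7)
The plan is to build the representation through a wave-front tracking (WFT) approximation, exploiting the fact that for piecewise constant initial data the Lagrangian picture is essentially explicit. Take a sequence $u_0^n$ of piecewise constant functions (with values on a $2^{-n}$-grid) converging uniformly to $u_0$, and let $u^n$ be the corresponding WFT approximate solutions. On each $u^n$ there is a natural candidate $(\X^n,\U^n)$: partition $\R$ into the finitely many wave packets of $u_0^n$, declare $\U^n(y)=u_0^n(y)$, let $\X^n(t,y)$ follow the front that carries the label $y$ until the first time $\tcanc(y)$ at which that front gets cancelled in an interaction, and after that time freeze $\X^n$ on the trajectory of the absorbing front. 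Define $\T^n(y)=\tcanc(y)$ for cancelled labels and $\T^n(y)=+\infty$ otherwise, and let $\s^n(y)\in\{\pm 1\}$ record whether the wave at $y$ is a rarefaction/shock of positive or negative sign, cf.\ \cite{BM_scalar}.

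The map $t\mapsto \X^n(t,y)$ is uniformly Lipschitz (speed bounded by $\|f'\|_\infty$ on $[-\|u_0\|_\infty,\|u_0\|_\infty]$), and $y\mapsto \X^n(t,y)$ is non-decreasing, so by Helly/Ascoli--Arzel\`a one extracts a subsequence with $\X^n\to \X$ locally uniformly in $t$ and pointwise monotone limits in $y$. Simultaneously $u^n\to u$ in $L^1_{\loc}$ by Kruzkov stability and $\T^n\to \T$ along a further subsequence (monotone rearrangement of the cancellation-time functions). Property (1) holds by construction. For (4), each pair $(\X^n(\cdot,y),u_0^n(y))$ is an admissible boundary for $u^n$ on $[0,\T^n(y)]$: a front trajectory is, by definition of WFT, a Rankine--Hugoniot/entropy-admissible curve to both sides. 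The stability Proposition \ref{P_stab} then gives (4) in the limit. The characteristic equation (7) follows by passing to the limit in $\partial_t\X^n=\lambda^n(t,\X^n)$, using again the stability of traces along admissible boundaries together with the representative chosen for $u$ (the right representative is specified together with an at most countable bad set $Q'$ on which interactions in the limit are still accumulating). The sign property (5) is stable along the approximation because $\s^n$ is piecewise constant with finitely many discontinuities, and because (5) at the WFT level is immediate from the construction of the fronts: a label of positive sign either sits at a rarefaction point or at the right end of a shock, both of which force $u_0^n(y)\le u^n(t,\X^n(t,y)+)$ or $u_0^n(y)\ge u^n(t,\X^n(t,y)-)$.

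The delicate properties are (2), (3) and (6). For (2) one has to show that every intermediate value between $\sci u(t,x)$ and $u(t,x)$ is attained by some $u_0(y)$ with $\X(t,y)=x$ and $\T(y)\ge t$; at the WFT level this is obvious because a shock at time $t$ is obtained as the superposition of many surviving fronts whose initial labels cover the whole jump interval, and the uniform covering passes to the limit thanks to the monotonicity of $y\mapsto \X^n(t,y)$. Property (3) is the statement that, outside the countable bad time set $Q'$, this covering is actually injective at continuity points and bijective (in the relevant direction) across each jump; here $Q'$ must be chosen large enough to exclude times at which two distinct surviving characteristics meet without merging, and a standard Fubini argument shows that this exceptional set is at most countable. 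The main obstacle is (6): if two labels $y_1<y_2$ with $\T(y_j)\ge T$ meet at some earlier time $t<T$, one must rule out that $u$ is constant or non-monotone between $\X(T,y_1)$ and $\X(T,y_2)$ and, more strongly, show that $f'\circ u(T)$ is strictly increasing there. This is the point where the entropy condition really enters: at the WFT level, two distinct fronts can merge only into a single shock or a single rarefaction fan that is entropy-admissible, which forces $f'\circ u^n(T)$ to be strictly increasing on the wedge they span (Oleinik-type inequality built into the WFT scheme); the strictness survives in the limit because the characteristic speed is exactly $f'$ at continuity points and the Rankine--Hugoniot speed at jumps, and the former is strictly monotone wherever $f''\neq 0$. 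The work lies in propagating this wedge-monotonicity through infinitely many interactions and through the limit, which is essentially the content borrowed from \cite{BM_continuous, BM_structure}.
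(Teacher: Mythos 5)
Your overall strategy coincides with the one the paper itself adopts: Proposition \ref{P_lagr_repr} is not proved in the paper but is presented as a collection of results from \cite{BM_scalar,BM_continuous,BM_structure}, obtained precisely by constructing $(\X,\U,\T,\s)$ explicitly for wave-front tracking approximations and passing to the limit via compactness of the flows, $L^1$ stability of the solutions, and the stability of admissible boundaries (Proposition \ref{P_stab}). So the route is the right one, and your treatment of (1), (4), (5) and (7) is consistent with that scheme.

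There are, however, two places where the sketch as written would not go through. First, the compactness argument only yields a limit $\X(t,\cdot)$ that is monotone in $y$; it does not by itself give that $\X$ is continuous in $y$, that $\X(0)=\Id$ with $\U=u_0$, nor the representation property $u(t,x)=u_0(\X(t)^{-1}(x))$ outside a countable set required by Definition \ref{D_L_infty}. The paper explicitly warns that this ``does not directly follow from the natural compactness assumptions and it is related to the fact that constant regions are not created at positive times'': one must rule out that an interval of labels collapses in the limit onto a set where $u(t)$ is constant with a value different from $u_0$ on those labels, and this requires an argument on the structure of the limit solution, not just Helly/Ascoli--Arzel\`a. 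Second, for property (6) you argue that strict monotonicity of $f'\circ u^n(T)$ on the wedge spanned by the two merging labels ``survives in the limit''; strict inequalities do not survive $L^1$ (or even uniform) convergence -- a sequence of strictly increasing profiles can converge to a constant. The correct argument is performed directly on the limit object: since $\X(t,y_1)=\X(t,y_2)$ for some $t<T$ and both labels survive to time $T$, the region $(\X(T,y_1),\X(T,y_2))$ is covered by characteristics emanating from the single point $(t,\X(t,y_1))$ which, being non-crossing segments with speeds $f'(u_0(y))$, must have pairwise distinct speeds; this is what forces $f'\circ u(T)$ to be strictly increasing there. As written, your passage to the limit for (6) is a step that would fail.
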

The existence of a Lagrangian representation as above has been proved by explicitly constructing the representation for wave-front tracking
approximations and passing it to the limit.
We also observe that the possibility of representing the solution with a flow $\X$ continuous with respect to $y$ and such that $\X(0)=\Id$ does 
not directly follow from the natural compactness assumptions and it is related to the fact that constant regions are not created at positive times.
See \cite{BM_structure} for more details and for the proofs of the following proposition for the case of $L^\infty$-entropy solutions.
\begin{proposition}\label{P_structure}
Let $u$ be the entropy solution to \eqref{E_cl} with $u_0\in L^\infty(\R)$. Then there exists a Lagrangian representation $(\X,\U)$ of $u$.
Additionally $\bar u$ and $\bar Y$ in Definition \ref{D_L_infty} can be chosen 
so that there exists a partition of the half-plane $\R^+\times \R=A\cup B\cup C$ with the following properties:
\begin{enumerate}
\item $A$ is given by \eqref{E_def_A} and  $f'\circ \bar u$ is continuous in $\R^+\times \R \setminus A$;
\item $B$ is open and $(f'\circ \bar u)\llcorner B$ is locally Lipschitz;
\item for every $(\bar t,\bar x)\in C$, $\X(\bar t)^{-1}(\bar x)=\{\bar y\}$ for some $\bar y\in \R$ and 
\begin{equation*}
\X(t,\bar y)=\bar x - f'(\bar u(\bar t,\bar x))(\bar t-t)
\end{equation*}
for every $t\in [0,\bar t]$. In particular, $\bar u (t,\X(t,\bar y))=\U(\bar y)$ for every $t\in [0,\bar t]$.
\end{enumerate}
Moreover for every entropy-entropy flux pair $(\eta,q)$ the entropy dissipation measure
\begin{equation*}
\mu:=\eta(u)_t+q(u)_x \ll \mathcal H^1\llcorner A.
\end{equation*}
\end{proposition}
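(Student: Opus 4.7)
My plan is to build the Lagrangian representation by approximation from the continuous piecewise monotone case of Proposition \ref{P_lagr_repr}, and then to extract the partition from the asymptotic structure of the characteristics.

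First I approximate $u_0\in L^\infty(\R)$ by a sequence $u_0^n$ of continuous piecewise monotone functions with $u_0^n\to u_0$ in $L^1_{\loc}(\R)$ and $\|u_0^n\|_\infty\le\|u_0\|_\infty$. Proposition \ref{P_lagr_repr} supplies Lagrangian representations $(\X^n,\U^n)$ of the entropy solutions $u^n$; the curves $\X^n(\cdot,y)$ are equi-Lipschitz in $t$ with constant $L:=\|f'\|_{L^\infty(-M,M)}$, $M:=\|u_0\|_\infty$, and non-decreasing in $y$ with $\X^n(0,\cdot)=\Id$. Fixing a countable dense $D\subset\R$, a diagonal Arzel\`a--Ascoli extraction, combined with monotonicity in $y$, yields a continuous limit flow $\X:[0,\infty)\times\R\to\R$ of the required regularity, while $u^n\to u$ in $L^1_{\loc}$ by Kruzkov's theorem. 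Proposition \ref{P_stab} then transfers admissibility to the limit: $(\X(\cdot,y),\U(y))$ is an admissible boundary of $u$ for every $y$, where $\U$ extends the pointwise limit of $u_0^n$ on $D$. The monotonicity of $\X(t,\cdot)$ forces $\X(t)^{-1}(x)$ to be a single point for all but countably many $x$, and at such points admissibility on both sides yields $u(t,x)=\U(y)$; absorbing the exceptional $x$ into a countable set $\bar Y$ produces the representative $\bar u$ required by Definition \ref{D_L_infty}.

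For the partition, enlarge $\bar Y=\{y_n\}_{n\in\N}$ to include every $y$ whose image $\X(\cdot,y)$ supports a persistent jump of $\bar u$, together with a countable dense subset of the ``branch'' indices described below; set $A:=\bigcup_n\graph\X(\cdot,y_n)$. For $(\bar t,\bar x)\notin A$, $\X(\bar t)^{-1}(\bar x)=\{\bar y\}$. Let $C$ be the set of such $(\bar t,\bar x)$ for which the backward trajectory $s\mapsto\X(s,\bar y)$ extends to $s=0$ as an admissible boundary carrying the constant value $\U(\bar y)$, and put $B:=(\R^+\times\R)\setminus(A\cup C)$. On $C$ admissibility together with the absence of incoming jumps collapses the Rankine--Hugoniot/Kruzkov inequalities to the classical ODE $\dot\X(s,\bar y)=f'(\U(\bar y))$, giving the straight-line formula in (3). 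A point $(\bar t,\bar x)\in B$ has a positive ``splitting time'' $\tau(\bar t,\bar x)$, and a neighborhood of it is covered by a fan of admissible boundaries issued from $(\tau,\X(\tau,\bar y))$. A limiting version of property (6) from Proposition \ref{P_lagr_repr}, obtained through the approximation, gives strict monotonicity of $f'\circ\bar u$ between previously coincident characteristics at later times, whence
\begin{equation*}
\bigl|(f'\circ\bar u)(t,x_1)-(f'\circ\bar u)(t,x_2)\bigr|\le\frac{|x_1-x_2|}{t-\tau}
\end{equation*}
locally; combined with the Lipschitz-in-$t$ bound from the characteristic equation, this yields local Lipschitz regularity of $f'\circ\bar u$ on $B$.

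The main technical obstacle is openness of $B$, i.e.\ the requirement that $\tau$ stay locally bounded away from the current time. I would prove it by contradiction: a sequence $(\bar t_n,\bar x_n)\in C$ converging to $(\bar t,\bar x)\in B$ would, through an Arzel\`a--Ascoli extraction of the associated straight-line characteristics, produce an admissible straight-line characteristic through $(\bar t,\bar x)$ reaching $s=0$ with the correct value of $\bar u$, contradicting $(\bar t,\bar x)\notin C$. This step uses crucially the monotonicity of $y\mapsto\X(t,y)$ and the stability Proposition \ref{P_stab}. Finally, the entropy dissipation: on $C$ the solution is constant along straight characteristics so classical entropy equality gives $\mu\llcorner C=0$; on $B$ the local Lipschitz regularity of $f'\circ\bar u$ and boundedness of $\bar u$ allow an approximation by smooth entropies and a chain-rule argument yielding $\mu\llcorner B=0$. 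Hence $\mu$ is supported on $A$, which is a countable union of graphs of Lipschitz curves and is therefore $\sigma$-finite with respect to $\mathcal H^1\llcorner A$, so Radon--Nikodym gives $\mu\ll\mathcal H^1\llcorner A$.
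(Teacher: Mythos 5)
First, a point of reference: the paper itself does not prove Proposition \ref{P_structure} --- it is quoted from \cite{BM_structure} --- so there is no internal proof to compare against, and your proposal must stand on its own. As written it follows exactly the route the introduction warns against: ``the compactness you have by the regularity of $\X$ and the stability in $L^1_{\loc}(\R)$ granted by Kruzkov theorem seem not to be sufficient'', and ``the possibility of representing the solution with a flow $\X$ continuous with respect to $y$ and such that $\X(0)=\Id$ does not directly follow from the natural compactness assumptions''. Concretely: (i) $u_0^n\to u_0$ in $L^1$ gives no pointwise limit on a prescribed dense set $D$, so your $\U$ is not well defined at every $y$, whereas admissibility of $(\X(\cdot,y),\U(y))$ is needed for every $y$; (ii) an admissible boundary carries its datum only in the relaxed Bardos--LeRoux--N\'ed\'elec sense, so ``admissibility on both sides'' does not yield $u(t,x)=\U(y)$ --- the trace of $u$ on the curve need not equal the boundary value; (iii) you never introduce the existence-time function $\T$, yet characteristics are absorbed into shocks in finite time, after which $\X(\cdot,y)$ no longer carries the value $\U(y)$; the curves are admissible with constant datum only on $[0,\T(y)]$, and the covering property $\X(t,\{\T\ge t\})=\R$ is precisely the nontrivial point that \cite{BM_structure} has to establish.

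The final step is also incorrect as stated: showing that $\mu$ gives no mass to $B\cup C$ proves that $\mu$ is \emph{concentrated} on $A$, but concentration on a countable union of Lipschitz graphs does not imply $\mu\ll\mathcal H^1\llcorner A$ --- a Dirac mass at a point of $A$ is concentrated on $A$ and singular with respect to $\mathcal H^1\llcorner A$. Ruling out atoms and singular parts along the shock curves requires a quantitative bound on the dissipation along each curve of $A$, which is one of the substantive contributions of \cite{BM_structure} and is absent from your sketch. Two further gaps: openness of $B$ does not follow from your compactness argument, because you must also exclude points of $A$ accumulating at a point of $B$ (the countably many graphs forming $A$ need not be locally closed away from themselves); and the joint local Lipschitz regularity of $f'\circ\bar u$ on $B$ does not follow from the Lipschitz regularity of $t\mapsto\X(t,y)$, which controls the characteristics rather than the composition $f'\circ\bar u$ in the time direction.
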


We conclude this section of preliminaries by considering the case of a weakly genuinely nonlinear flux and recalling the well-known chord
admissibility condition. The proof can be found in \cite{BM_structure}.
\begin{proposition}\label{P_chord}
Let $u$ be the entropy solution of \eqref{E_cl} with $u_0\in L^\infty(\R)$ and the flux $f$ weakly genuinely nonlinear or with a general smooth flux
and $u_0\in C^0(\R)\cap L^\infty(\R)$. Then
it is possible to choose the partition in Proposition \ref{P_structure} so that the solution $u$ is continuous in $\R^+\times \R \setminus A$
and for every $(\bar t,\bar x)\in A$ there exist both the limits
\begin{equation*}
u^-:=\lim_{x\to \bar x^-}u(\bar t, x) \qquad \mbox{and} \qquad u^+:=\lim_{x\to \bar x^+}u(\bar t, x).
\end{equation*}
Moreover there exists a set $N\subset\R^+$ such that $\mathcal L^1(N)=0$ and for every $t\in T\setminus N$ at each jump of the solution $u(t)$
the chord condition is satisfied: more precisely
\begin{equation}\label{E_chord}
\begin{split}
 u^-<u^+ \qquad \Longrightarrow \qquad & f(k)\ge f(u^-)+\frac{f(u^+)-f(u^-)}{u^+-u^-}(k-u^-) \quad \forall k \in (u^-,u^+); \\
 u^->u^+ \qquad \Longrightarrow \qquad & f(k)\le f(u^+)+\frac{f(u^+)-f(u^-)}{u^+-u^-}(k-u^+) \quad \forall k \in (u^+,u^-).
\end{split}
\end{equation} 
\end{proposition}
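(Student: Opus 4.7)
My plan is to combine the structure theorem of Proposition~\ref{P_structure} with the two alternative hypotheses to produce the continuity and one-sided-limit statements, and then to derive the chord inequalities by testing the entropy inequality with Kruzhkov's entropies at each jump.

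First I would show continuity of $\bar u$ on $B\cup C$, where $A\cup B\cup C$ is the partition given by Proposition~\ref{P_structure}. Continuity on $C$ is immediate from property (3) there: each point of $C$ lies on a unique straight characteristic carrying a constant value of $\bar u$. On $B$, Proposition~\ref{P_structure} yields that $f'\circ\bar u$ is locally Lipschitz; under weak genuine nonlinearity the set $\{f''\ne 0\}$ is dense, so $f'$ has no flat interval and its level sets inside the range of $u$ are singletons, which promotes Lipschitz continuity of $f'\circ\bar u$ to continuity of $\bar u$ itself. In the continuous data case, the Lagrangian representation can be produced with $\X$ continuous in $y$ and $\X(0)=\Id$ (as in Proposition~\ref{P_lagr_repr}), so on $B$ the section $\bar u(t,\cdot)$ is the composition of the continuous $u_0$ with the monotone inverse of $\X(t,\cdot)$, hence continuous.

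For a point $(\bar t,\bar x)\in A$, the discussion in the introduction (based on the construction in~\cite{BM_structure}) guarantees that all left cluster points of $\bar u(\bar t,\cdot)$ at $\bar x$ lie in a single linearly degenerate component of $f$, and likewise all right cluster points. Both the weak genuine nonlinearity hypothesis, by definition, and the combination of continuity of $u_0$ with the monotonicity of $y\mapsto\X(\bar t,y)$ in the continuous data case, force such a component to be a single point, producing the one-sided limits $u^-,u^+$.

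To obtain the chord condition I would proceed as follows. Since each pair $(\X(\cdot,y),u_0(y))$ is an admissible boundary (property (4) of Proposition~\ref{P_lagr_repr}), the trace of $u$ along it is $\BV$ in time, and since $A$ is covered by countably many such characteristics there exists a null set $N\subset\R^+$ outside of which every jump of $u(t)$ has limits $u^\pm$ and Rankine--Hugoniot speed $\lambda=(f(u^+)-f(u^-))/(u^+-u^-)$ determined pointwise by these traces. Fix $t\notin N$ and a jump at $\bar x$ with $u^-<u^+$. Testing the entropy inequality with Kruzhkov's entropy $\eta_k(w)=|w-k|$ and flux $q_k$ for each $k\in(u^-,u^+)$, and integrating over a parallelogram shrinking to the jump point, yields
\[
\lambda\bigl[\eta_k(u^+)-\eta_k(u^-)\bigr]\ge q_k(u^+)-q_k(u^-),
\]
which after expansion is exactly $f(k)\ge f(u^-)+\lambda(k-u^-)$, the first line of~\eqref{E_chord}. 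The case $u^->u^+$ is symmetric. The main obstacle I foresee is not the chord algebra but controlling the exceptional set $N$: one must knit together the countably many admissible-boundary traces covering $A$, using the stability of admissible boundaries from Proposition~\ref{P_stab}, to ensure that at $\mathcal L^1$-a.e.\ time every jump is captured by some $\X(\cdot,y)$-trace, so that the chord inequality holds pointwise in $x$ rather than merely in an averaged sense.
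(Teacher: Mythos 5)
The paper does not prove this proposition itself: it states explicitly that the proof can be found in \cite{BM_structure}, so your sketch is competing with a citation rather than with an argument in the text. Your treatment of the chord condition is the sound part: the algebra is correct --- testing with $\eta_k(w)=|w-k|$ and the associated flux $q_k$ at a jump with Rankine--Hugoniot speed $\lambda$ gives $\lambda(u^++u^--2k)\ge f(u^+)+f(u^-)-2f(k)$, which is exactly $f(k)\ge f(u^-)+\lambda(k-u^-)$ --- and localizing on the countably many Lipschitz curves covering $A$ is the right way to produce the null set $N$. But you flag the real difficulty (existence of strong traces and of a well-defined speed along each covering curve for a.e.\ $t$, so that the shrinking-parallelogram limit is legitimate at \emph{every} jump of $u(t)$) as an ``obstacle'' without resolving it; that is precisely the content of the structure theory in \cite{BM_structure} that the proposition is quoting, so this part of your argument is a deferral rather than a proof.

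The genuine error is in your continuity argument on $B$. You claim that weak genuine nonlinearity forces the level sets of $f'$ to be singletons, so that Lipschitz continuity of $f'\circ\bar u$ upgrades to continuity of $\bar u$. This is false: density of $\{f''\ne 0\}$ only prevents $f'$ from being constant on an interval, not from taking the same value twice (take $f(u)=u^3$, so $f'(u)=u^2$ is $2$-to-$1$ although $f$ is weakly genuinely nonlinear). Continuity of $f'\circ\bar u$ therefore does not imply continuity of $\bar u$; the same objection applies to your claim that continuity on $C$ is ``immediate'' from the constancy of $\bar u$ along the single characteristic through the point. The correct mechanism --- which you do invoke, but only for the one-sided limits on $A$ --- is the structural fact that the set of left (resp.\ right) limit points of $u(t,\cdot)$ at any point is contained in a single linearly degenerate component of the flux, i.e.\ a maximal interval on which $f$ is affine, and that these left and right components coincide off $A$; under weak genuine nonlinearity every such component is a singleton, which yields both the continuity on $\R^+\times\R\setminus A$ and the existence of $u^\pm$ on $A$ in one stroke. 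That fact is again theorem-level input from \cite{BM_structure}, not a consequence of Proposition \ref{P_structure} together with the density of $\{f''\ne0\}$.
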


\begin{definition}\label{D_generic}
Let $u$ be an entropy solution of \eqref{E_cl} with $u_0\in X$. We say that $t\in (0,+\infty)$ is \emph{generic} if $t\notin Q' \cup N$, 
where $Q'$ is given in Proposition \ref{P_lagr_repr} and $N$ is given by Proposition \ref{P_chord}.
\end{definition}

We observe that Proposition \ref{P_structure} and \eqref{E_chord} out of the setting of piecewise monotone solutions are needed only in
Section \ref{S_SBV}. The analysis in Sections \ref{S_length} to \ref{S_frac_Cheng} relies only on the structure of piecewise monotone solutions
and Proposition \ref{P_lagr_repr}.

\section{Length estimate}\label{S_length}
In this section we prove a lower bound for the distances of two characteristics with the same value, depending on the nonlinearity of the flux
function $f$ between the extreme values assumed between the two characteristics at a positive time $t$. 
We only assume that the flux $f$ is smooth.
We quantify the nonlinearity of $f$ between two values $w_1\le w_2$ by considering twice the $C^0$ distance of
$f\llcorner[w_1,w_2]$ from the set of affine functions on $[w_1,w_2]$:
\begin{equation}\label{E_di}
\di(w_1,w_2):=\min_{\lambda\in \R}\max_{\{w,w'\}\in [w_1,w_2]}\big(f(w)-f(w')-\lambda(w-w')\big).
\end{equation}

In the statement and in the proof of the following theorem, we will refer to the objects introduced in Proposition \ref{P_lagr_repr}: 
$\X$ is the Lagrangian flow of the entropy solution $u$ to \eqref{E_cl} with  
$u_0\in X$ and continuous, $\T$ denotes the existence time function and $\s$ denotes the sign function.
We recall that the set $X$ has been introduced in Section \ref{Ss_piec_mon}.
\begin{theorem}\label{T_main_estimate}
Let $T>0$ and $u,\X,\T$ be as above. Let $y_l<y_r$ such that
\begin{equation*}
u_0(y_l)=u_0(y_r)=\bar w, \quad 
 \T(y_l) \ge T, \quad  \T(y_r) \ge T,
\end{equation*}
and let
\begin{equation*}
s:=\max\{y_r-y_l, \X(T,y_r)-\X(T,y_l)\}.
\end{equation*}
Then
\begin{equation*}
\di(w_m, w_M)\le \frac{2s\|u_0\|_\infty}{T},
\end{equation*}
where 
\begin{equation*}
[w_m,w_M]=\left\{w:\exists y \in [y_l,y_r]\big( u_0(y)=w, \T(y)\ge T\big)\right\}.
\end{equation*}
\begin{remark}
The set $[w_m,w_M]$ contains the closure of the convex hull of the image $u(t,(\X(T,y_l),\X(T,y_r)))$ by Proposition \ref{P_lagr_repr}, and the inclusion may be strict.
\end{remark}
\end{theorem}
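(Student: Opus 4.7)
The approach is to show that every surviving characteristic starting in $[y_l,y_r]$ ends inside a narrow space-time window, which yields a uniform bound on its \emph{average velocity} $V(y):=(\X(T,y)-y)/T$; the admissible-boundary structure provided by Proposition \ref{P_lagr_repr} then converts this velocity control into a bound on the affine defect of $f$ on $[w_m,w_M]$.

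The first step is purely geometric. By the monotonicity of $y\mapsto\X(T,y)$ (property (6) of Proposition \ref{P_lagr_repr}), every $y\in[y_l,y_r]$ with $\T(y)\ge T$ must end at $\X(T,y)\in[a,b]:=[\X(T,y_l),\X(T,y_r)]$, so that
\[
V(y)=\frac{\X(T,y)-y}{T}\in\Big[\frac{a-y_r}{T},\frac{b-y_l}{T}\Big],
\]
an interval of length at most $\big((b-a)+(y_r-y_l)\big)/T\le 2s/T$. For every $w\in[w_m,w_M]$ I select, exploiting the very definition of $[w_m,w_M]$, an index $y(w)\in[y_l,y_r]\cap\{\T\ge T\}$ with $u_0(y(w))=w$; the observation above then yields $|V(y(w_1))-V(y(w_2))|\le 2s/T$ for every $w_1,w_2\in[w_m,w_M]$.

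The central step is to transfer this oscillation bound on $V$ into an oscillation bound on $f-\lambda^\ast\mathrm{id}$ on $[w_m,w_M]$, for a suitable $\lambda^\ast$. For this, I would invoke the admissible-boundary property of $\X(\cdot,y(w))$ with boundary value $w$ (Proposition \ref{P_lagr_repr}(4)): specialising \eqref{E_right}--\eqref{E_left} to the Kruzkov threshold $k=w$, one obtains for almost every $t\in[0,T]$ the sandwich
\[
\frac{f(u^+(t))-f(w)}{u^+(t)-w}\ \le\ \partial_t\X(t,y(w))\ \le\ \frac{f(u^-(t))-f(w)}{u^-(t)-w},
\]
where $u^\pm(t)=u(t,\X(t,y(w))\pm)$ are the one-sided traces (with $|u^\pm|\le\|u_0\|_\infty$ by the maximum principle). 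Integrating this sandwich in time for two different values $w_1,w_2\in[w_m,w_M]$ and combining with the $2s/T$-oscillation bound on $V$, the factors $u^\pm-w$ on the right-hand side (each of modulus at most $2\|u_0\|_\infty$) absorb the trace-dependence and produce, after rearrangement, the pointwise inequality
\[
f(w_1)-f(w_2)-\lambda^\ast(w_1-w_2)\ \le\ \frac{2s\|u_0\|_\infty}{T}
\]
for every $w_1,w_2\in[w_m,w_M]$, with the choice $\lambda^\ast:=V(y_l)$. Taking the infimum over $\lambda$ in the definition of $\di(w_m,w_M)$ then gives the claimed bound.

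I expect the main obstacle to be exactly this conversion step: since the two secants bounding $\partial_t\X$ depend on the unknown traces $u^\pm$, passing cleanly from an oscillation bound on the time-averaged velocity $V$ to an oscillation bound on the affine defect of $f$ requires careful bookkeeping to absorb the trace dependence into the single factor $\|u_0\|_\infty$ on the right hand side. This is the step where the precise constant $2$ in the statement is produced, and where the uniform bound $|u^\pm|\le\|u_0\|_\infty$ on the traces must be used in a uniform way (noting in particular that when the characteristic $\X(\cdot,y(w))$ sits inside a jump, its Rankine--Hugoniot speed is itself a secant of $f$ between values controlled by $\|u_0\|_\infty$).
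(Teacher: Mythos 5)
Your first step is sound and corresponds to part of the paper's bookkeeping: the displacement $\X(T,y)-y$ of every surviving characteristic in $[y_l,y_r]$ is confined to an interval of length at most $2s$, and this is where the factor $s$ enters. The gap is exactly where you anticipate it, in the conversion step, and it cannot be repaired in the form you propose. Knowing that the time-averaged speeds $V(y(w_1))$ and $V(y(w_2))$ differ by at most $2s/T$ gives essentially no control on $f(w_1)-f(w_2)-\lambda^\ast(w_1-w_2)$: the instantaneous speed of $\X(\cdot,y(w))$ is a secant slope $\bigl(f(u^\pm(t))-f(w)\bigr)/\bigl(u^\pm(t)-w\bigr)$, and to isolate $f(w)$ you must multiply by the time-dependent factor $u^\pm(t)-w$, which cannot be pulled out of the time integral; the resulting trace-dependent terms $f(u^\pm(t))-\partial_t\X\cdot u^\pm(t)$ along the two characteristics are unrelated functions of time and do not cancel against each other. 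A concrete symptom: if all the values of $[w_m,w_M]$ sit inside a single admissible decreasing shock, then all the curves $\X(\cdot,y(w))$ coincide and $V(y(w))$ is the same Rankine--Hugoniot speed for every $w$, so the oscillation of $V$ over your family is zero while $\di(w_m,w_M)$ can be of order one. Your argument makes no use of the sign function $\s$, which is precisely the ingredient that rules this degenerate configuration out.

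The paper's proof integrates over the value variable as well as over time. It fixes $w_1,w_2$ nearly attaining $\di(w_m,w_M)$ for the specific slope $\lambda=(\X(T,y_l)-\X(0,y_l))/T$, selects $y_1,y_2$ with $u_0(y_i)=w_i$ and \emph{opposite} signs $\s(y_1)=1$, $\s(y_2)=-1$, and tracks the superlevel-set measure $m(t,w)=\mathcal L^1(\{u(t)>w\}\cap[\X(t,y_1),\X(t,y_2)])$. Differentiating in $t$, integrating in $w$, and using the telescoping $\int_{a_i}^{a_{i+1}}f'(w)\,dw=f(a_{i+1})-f(a_i)$ over the monotone pieces of $u(t)$, all the trace-dependent contributions cancel except the two endpoint secants, which the chord condition \eqref{E_chord} together with the sign choice bounds from below by $-f(w_1)$ and $f(w_2)$ respectively (this is inequality \eqref{E_altitude}); the quantity $\int(m(T,w)-m(0,w))\,dw$ is then bounded by $s\|u_0\|_\infty$, which produces the constant in the statement. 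If you want to salvage your outline, the missing idea is this integration in $w$ of the level-set velocities $\lambda^\pm(t,w)$, not a pointwise comparison of two characteristic speeds.
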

\begin{proof}
Observe that by Proposition \ref{P_chord}, we can assume that $T>0$ is generic.
The general case follows considering the same $y_l$ and $y_r$ for a sequence of generic times $T_n\to T^-$.

Fix $\e>0$ and let 
\begin{equation*}
\lambda= \frac{\X(T,y_l)-\X(0,y_l)}{T}.
\end{equation*}
By Proposition \ref{P_lagr_repr} it immediately follows that for every $t>0$ the solution $u(t)$ is piecewise monotone.
In particular we can choose $w_1, w_2\in [w_m,w_M]$ different from $\bar w$ such that 
\begin{equation*}
\di (w_m,w_M)-\e \le  f(w_2)-f(w_1)-\lambda(w_2-w_1)
\end{equation*}
and such that $w_1,w_2$ are not local maximum or minimum values of $u(T)$.
We consider the case $w_1\le w_2$, being the opposite case analogous.
Since $u_0(y_l)=u_0(y_r)$ there exists $y_1\in [y_l,y_r]$ such that $u_0(y_1)=w_1$, $\s(y_1)=1$ and $\T(y_1)\ge T$ and similarly
$y_2 \in [y_l,y_r]$ such that $u_0(y_2)=w_2$, $\s(y_2)=-1$ and $\T(y_2)\ge T$.

The proof in the two cases $y_1< y_2$ and $y_2<y_1$ differs only in some sign, therefore we only consider the case $y_1< y_2$.
Let $w\in \R$ be such that $w$ is not a value of local minimum or local maximum for $u_0$.
Let $t\in [0,T]$ and compute
\begin{equation*}
m(t,w):=\mathcal L^1(\{x:u(t,x)>w\}\cap [\X(t,y_1),\X(t,y_2)]).
\end{equation*}
By Proposition \ref{P_lagr_repr}, there exist $\bar y_1<\ldots < \bar y_{2k}$ such that 
\begin{equation*}
\{x:\sci u(t,x)>w\}= \bigcup_{i=1}^k(\X(t,\bar y_{2i-1}),\X(t,\bar y_{2i})).
\end{equation*}

Let us consider 
\begin{equation*}
I^+:=[y_1,y_2]\cap \s^{-1}(1), \quad I^-:=[y_1,y_2]\cap \s^{-1}(-1).
\end{equation*}
and let
\begin{equation*}
\lambda^\pm(t,w)=\sum_{y\in I^\pm\cap  u_0^{-1}(w)}\partial_t \X(t,y). 
\end{equation*}
For every $w$ the function $m(t,w)$ is Lipschitz with respect to $t$ because the characteristics are Lipschitz and for 
$\mathcal L^1$-a.e. $t\in (0,T)$
\begin{equation*}
\partial_t m(t,w)= \lambda^-(t,w)-\lambda^+(t,w)-\partial_t\X(t,y_1)\chi_{[0, w_1)}(w) +\partial_t\X(t,y_2)\chi_{[0, w_2)}(w),
\end{equation*}
where $\chi_E$ denotes the characteristic function of the set $E$.

Fix $w \in [0,\|u_0\|_\infty]$ which is not an extremal value; integrating with respect to $t$ we get
\begin{equation*}
\begin{split}
m(T,w)-m(0,w)= &~ \int_0^T\partial_t m(t,w) dt \\
 = &~  \int_0^T( \lambda^-(t,w)-\lambda^+(t,w))dt -\Delta \X_1\chi_{[0,w_1)}(w)  + \Delta \X_2\chi_{[0,w_2)}(w),
\end{split}
\end{equation*}
where for $i=1,2$
\begin{equation*}
\Delta \X_i=\X(T,y_i)-\X(0,y_i).
\end{equation*}
Integrating with respect to $w\in [0,\|u_0\|_\infty]$, we get
\begin{equation}\label{E_tw}
\begin{split}
\int_{0}^{\|u_0\|_\infty}(m(T,w)-m(0,w))dw = &~
\int_{0}^{\|u_0\|_\infty} \int_0^T(\lambda^-(t,w)-\lambda^+(t,w))dt dw \\
&~ + \Delta \X_2w_2-\Delta \X_1w_1.
\end{split}
\end{equation}

Now consider a fixed time $t\in [0,T]$. We claim that 
\begin{equation}\label{E_altitude}
-\int_{0}^{\|u_0\|_\infty}(\lambda^-(t,w)-\lambda^+(t,w))dw \ge f(w_2)-f(w_1).
\end{equation}
This follows by the fact that $\s(y_1)=1$ and $\s(y_2)=-1$. See Figure \ref{F_flusso} and Figure \ref{F_notation_sec3} to get a graphic intuition 
of the proof.
The solution $u$ at time $t$ is piecewise monotone so denote by $x_1<\ldots<x_k$ the local minimum and maximum points of
$u(t)$ in the interval $(\X(t,y_1),\X(t,y_2))$. For every $i=1,\ldots,k$ set $a_i=u(t,x_i)$ and let $a_0=u(t,\X(t,y_1)+)$, 
$a_{k+1}=u(t,\X(t,y_2)-)$.
Since $\s(y_1)=1$, by \eqref{E_sign}, it holds $a_1\ge w_1$ and similarly $a_{k+1}\le w_2$.
Therefore
\begin{equation*}
\begin{split}
-\int_{0}^{\|u_0\|_\infty} (\lambda^-(t,&w)-\lambda^+(t,w))dw  \\
= &~ (a_0-w_1)\partial_t\X(t,y_1)+ \sum_{i=0}^k\int_{a_i}^{a_{i+1}}f'(w)dw -(a_{k+1}-w_2)\partial_t\X(t,y_2) \\
= &~ (a_0-w_1)\partial_t\X(t,y_1) -f(a_0)+f(a_{k+1}) -(a_{k+1}-w_2)\partial_t\X(t,y_2).
\end{split}
\end{equation*}
By the chord admissibility condition and the characteristic equation, 
\begin{equation*}
(a_0-w_1)\partial_t\X(t,y_1) - f(a_0) \ge -f(w_1) \quad \text{and} \quad   f(a_{k+1})-(a_{k+1}-w_2)\partial_t\X(t,y_2) \ge f(w_2),
\end{equation*}
therefore we get \eqref{E_altitude}.

Integrating this relation with respect to $t$ we get
\begin{equation}\label{E_wt}
-\int_0^T\int_{0}^{\|u_0\|_\infty}(\lambda^-(t,w)-\lambda^+(t,w))dw dt\ge T(f(w_2)-f(w_1)).
\end{equation}

Comparing \eqref{E_tw} and \eqref{E_wt}:
\begin{equation*}
\begin{split}
T(\di (w_m,w_M) -\e)\le &~ T(f(w_2)-f(w_1)) - T\lambda (w_2-w_1) \\
\le &~ -\int_0^T\int_{0}^{\|u_0\|_\infty}(\lambda^-(t,w)-\lambda^+(t,w))dw dt -T\lambda  (w_2-w_1) \\
=&~ -\int_{0}^{\|u_0\|_\infty}(m(T,w)-m(0,w))dw + (\Delta \X_2-\Delta \X_1)w_1\\
 &~+(\Delta \X_2-\lambda T)(w_2-w_1) \\
\le &~ s \|u_0\|_\infty + s w_1 + s (w_2-w_1) \\
\le &~ 2s\|u_0\|_\infty.
\end{split}
\end{equation*}
Letting $\e\rightarrow 0$ we conclude the proof.
\end{proof}

\begin{figure}
\centering
\def\svgwidth{0.8\columnwidth}
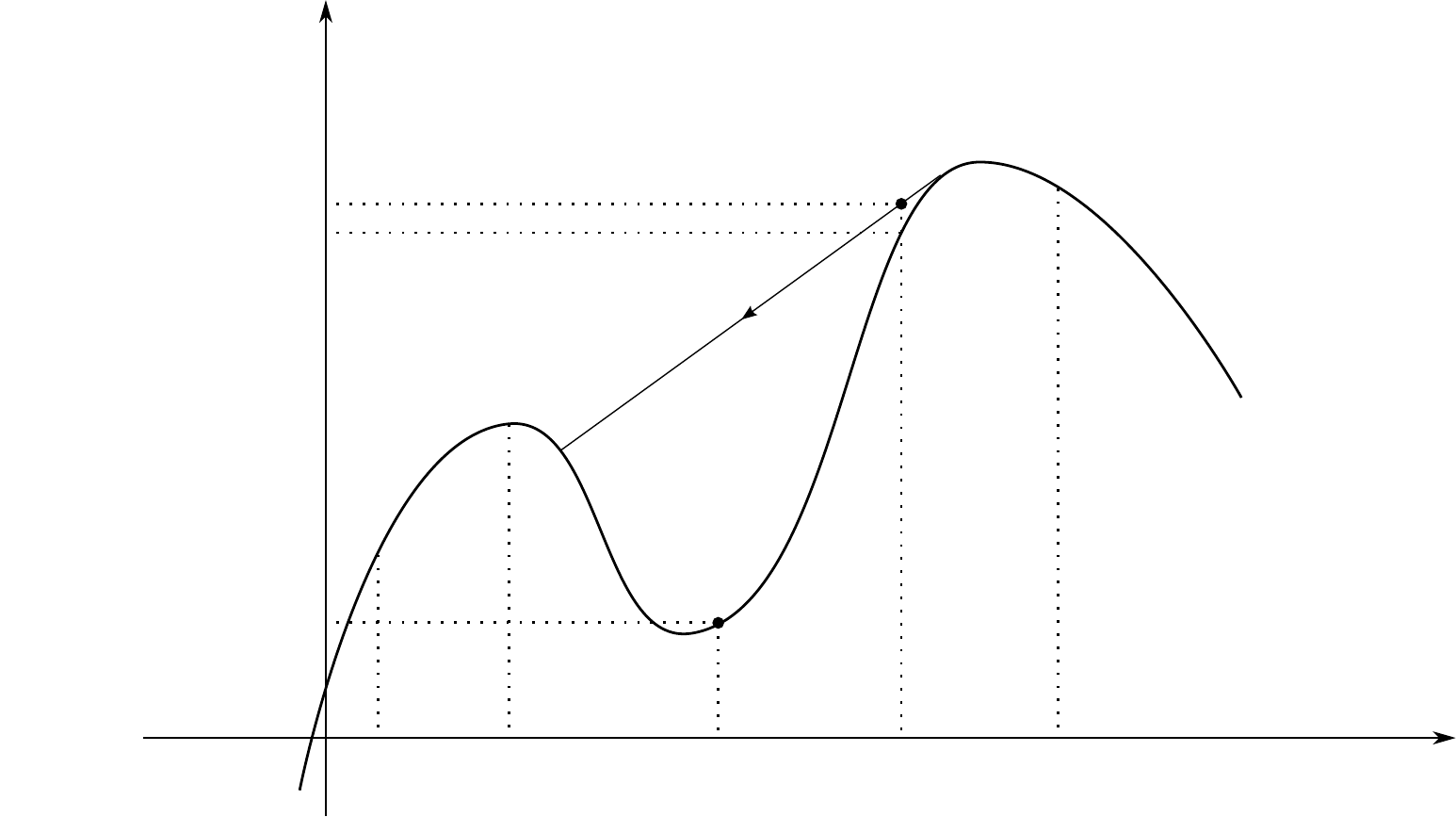
\caption{The flow $f$ and the secant denoting the shock at the point $\X(t,y_2)$.
The difference $z_2-z_1$ is equal to the l.h.s. in \eqref{E_altitude}; 
since $\s(y_2)=-1$ the secant passes above the graph of $f$, and similarly if there is a shock in $\X(t,y_1)$ it passes below.
Therefore $f(w_2)-f(w_1)\le z_2-z_1$ and this is \eqref{E_altitude}.}
\label{F_flusso}
\end{figure}

\begin{figure}
\centering
\def\svgwidth{0.9\columnwidth}
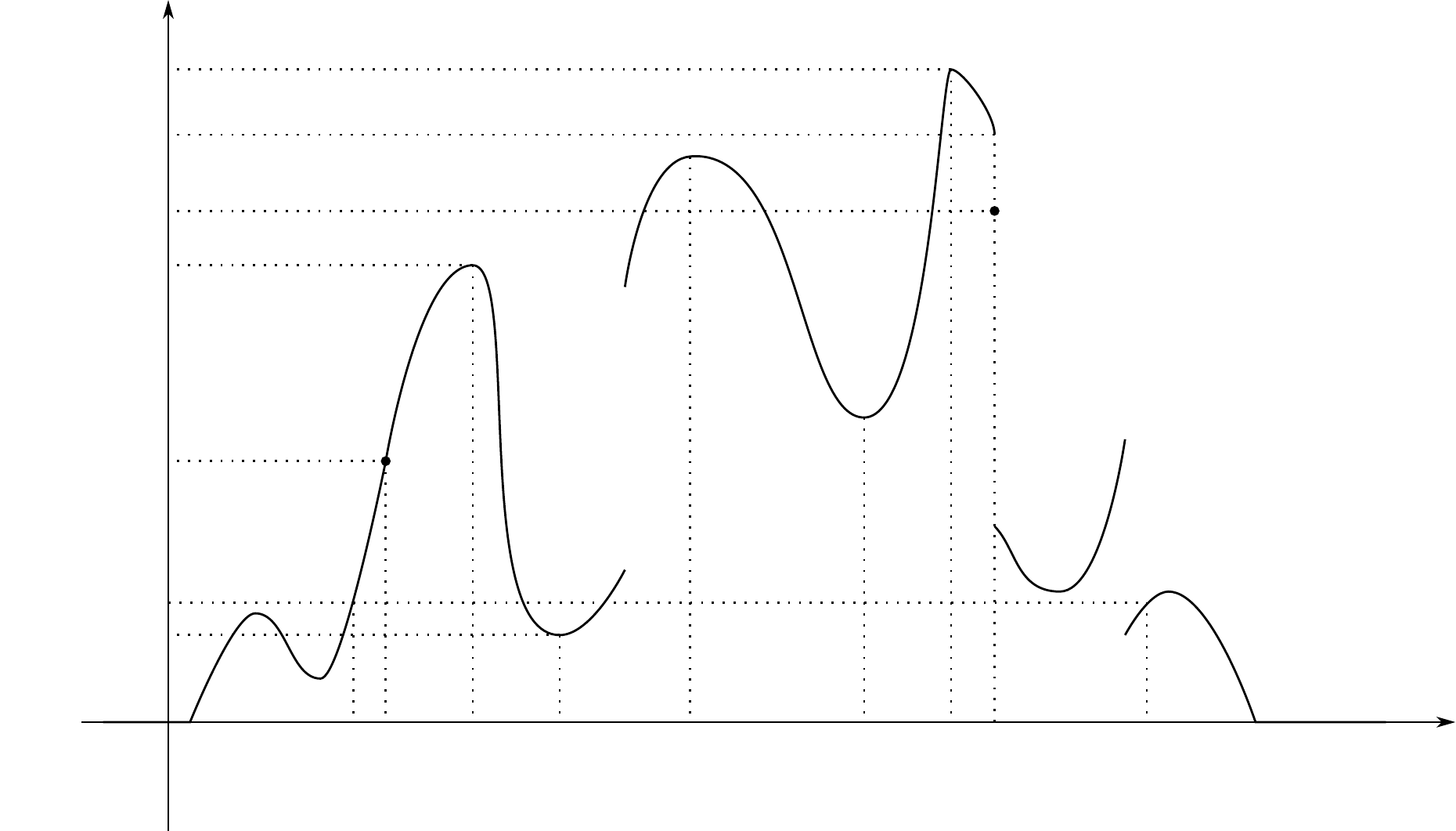
\caption{The graph of the solution $u(t)$ corresponding to the argument in Figure \ref{F_flusso} with the notation used in the proof of Theorem \ref{T_main_estimate}.}
\label{F_notation_sec3}
\end{figure}

\section{$\BV^\Phi$ regularity of the solution}
\label{S_frac_reg}

In this section we obtain the regularity of the entropy solution in terms of $\BV^\Phi$ spaces by means of 
Theorem \ref{T_main_estimate}. The definition of $\Phi$ depends on the nonlinearity of $f$. 
In particular we assume in this section that  $f$ is weakly genuinely nonlinear (see Definition \ref{D_weak_nonlinear}).

We also define $\di:\R^+\rightarrow [0,+\infty)$ by
\begin{equation}\label{E_def_di}
\di(h)=\inf_{a\in \R^+} \di(a,a+h),
\end{equation}
where $\di(a,a+h)$ is defined in \eqref{E_di}. This quantity quantifies the nonlinearity of the flux $f$. 
Since we consider bounded nonnegative solutions, the $\inf$ in \eqref{E_def_di} can be computed only on $[0,\|u\|_\infty-h]$; in that case it is a minimum and $\di(h)>0$ for every $h>0$ if and only if $f\llcorner [0,\|u_0\|_\infty]$ is weakly genuinely nonlinear.

Given a sequence $(h_n)_{n\in \N}$ with $h_n\ge 0$ for every $n\in \N$ and $h>0$, let
\begin{equation*}
N(h):= \#\{n: h_n\ge h\}.
\end{equation*}

\begin{lemma}\label{L_weak_l1}
Let $\Phi:[0,+\infty)\rightarrow [0,+\infty)$ be a convex function with $\Phi(0)=0$ and $\Phi>0$ in $(0,+\infty)$ such that for every $h>0$
\begin{equation}\label{E_weak_est}
N(h)\le \frac{1}{\Phi(h)}.
\end{equation}
Then, if we denote by $\bar h:=\max_n h_n$,  for every $\e>0$,
\begin{equation}\label{E_psi_e}
\sum_i \Psi_\e (h_i) \le \bar h^\e\frac{2^\e}{2^\e-1}, \qquad \mbox{where} \qquad \Psi_\e(x)=\Phi\left(\frac{x}{2}\right)x^\e.
\end{equation}
\end{lemma}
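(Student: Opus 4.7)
The plan is to carry out a standard dyadic decomposition of the index set, exactly as in the weak-$L^1$ to $L^p$ interpolation argument, and then to exploit that the extra factor $x^\varepsilon$ buys geometric decay at each scale.

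First I would record the elementary observation that $\Phi$ is automatically nondecreasing on $[0,+\infty)$: if $\Phi(b)<\Phi(a)$ for some $0<a<b$, then by convexity applied on $[0,b]$, $\Phi(a)\le\tfrac{a}{b}\Phi(b)<\Phi(b)$, a contradiction. This is what will allow me to compare $\Phi(h_i/2)$ with $\Phi(\bar h/2^{k+1})$ at each dyadic scale. Next I would discard the indices with $h_i=0$ (they contribute nothing since $\Phi(0)=0$) and partition the remaining indices into
\begin{equation*}
I_k:=\bigl\{i:\bar h/2^{k+1}<h_i\le \bar h/2^k\bigr\},\qquad k=0,1,2,\ldots,
\end{equation*}
which clearly covers every $i$ with $h_i>0$ since $h_i\le\bar h$.

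Then I would estimate each slice. For $i\in I_k$, the hypothesis \eqref{E_weak_est} gives
\begin{equation*}
\#I_k\le N(\bar h/2^{k+1})\le \frac{1}{\Phi(\bar h/2^{k+1})},
\end{equation*}
while monotonicity of $\Phi$ gives $\Phi(h_i/2)\le \Phi(\bar h/2^{k+1})$ and the bound $h_i\le \bar h/2^k$ gives $h_i^\varepsilon\le (\bar h/2^k)^\varepsilon$. Multiplying these three ingredients,
\begin{equation*}
\sum_{i\in I_k}\Psi_\varepsilon(h_i)=\sum_{i\in I_k}\Phi(h_i/2)\,h_i^\varepsilon
\le \#I_k\cdot\Phi(\bar h/2^{k+1})\cdot (\bar h/2^k)^\varepsilon\le \frac{\bar h^\varepsilon}{2^{k\varepsilon}}.
\end{equation*}

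Summing the geometric series in $k\ge 0$ then yields
\begin{equation*}
\sum_i\Psi_\varepsilon(h_i)\le \bar h^\varepsilon\sum_{k=0}^{+\infty}2^{-k\varepsilon}=\bar h^\varepsilon\frac{2^\varepsilon}{2^\varepsilon-1},
\end{equation*}
which is the desired inequality \eqref{E_psi_e}. There is really no ``main obstacle'' here: the only substantive content is the monotonicity of $\Phi$ (used to absorb $\Phi(h_i/2)$ into $\Phi(\bar h/2^{k+1})$), and the extra factor $x^\varepsilon$ in $\Psi_\varepsilon$ is exactly what makes the dyadic sum converge — without it one would only get a bound with $\log$ and need to be more careful, but with $\varepsilon>0$ the geometric series closes the argument cleanly.
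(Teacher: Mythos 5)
Your proof is correct and follows essentially the same argument as the paper: the same dyadic decomposition of the indices by the size of $h_i$, the bound $\#I_k\le N(\bar h/2^{k+1})\le 1/\Phi(\bar h/2^{k+1})$, and the geometric series in $k$. The only (welcome) addition is that you make explicit the monotonicity of $\Phi$, which the paper uses implicitly via the monotonicity of $\Psi_\e$.
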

\begin{remark}
In the case with $\e=0$ you only get a weak $\ell^1$ estimate of the sequence $\Psi_	\e(h_i)$. Observe also that, by \eqref{E_weak_est}, $\bar h\le \Phi^{-1}(1)$.
\end{remark}
\begin{proof}
Since $\Psi_\e$ is increasing, for every $n\in \N$,
\begin{equation*}
N\left(\frac{\bar h}{2^{n+1}}\right)\Psi_\e\left(\frac{\bar h}{2^n}\right) \ge \sum_{i\in I_n} \Psi_\e(h_i),
\end{equation*}
where $I_n$ denotes the set of indexes $i$ for which $h_i\in (2^{-n-1}\bar h, 2^{-n}\bar h)$.
Finally
\begin{equation*}
\sum_i \Psi_\e(h_i)\le \sum_n N(2^{-n-1}\bar h)\Psi_\e(2^{-n}\bar h)\le \sum_n\frac{\Psi_\e(2^{-n}\bar h)}{\phi(2^{-n-1}\bar h)}= \bar h^\e\sum_n \left(\frac{1}{2^\e}\right)^n
= \bar h^\e\frac{2^\e}{2^\e-1}.
\end{equation*}
This concludes the proof of the lemma.
\end{proof}

We want to apply the lemma above to the height of the undulations of the entropy solution $u$. The existence of such a function $\Phi$
is proved in the following lemma as a corollary of Theorem \ref{T_main_estimate} in the case of weakly genuinely nonlinear fluxes.

\begin{lemma}\label{L_number_oscillations}
Let $u$ be the entropy solution of \eqref{E_cl} with $u_0\in X$ and let $t>0$. Then the number
$N(u(t),h)$ of undulations of $u(t)$ of height strictly bigger than $h>0$ is bounded by
\begin{equation*}
N(u(t),h)\le \frac{4\|u_0\|_\infty(\mathcal L^1(\conv(\supp u_0))+\|f'\|_\infty t)}{t\di(h)}.
\end{equation*}
\end{lemma}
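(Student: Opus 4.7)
The plan is to combine the length estimate (Theorem~\ref{T_main_estimate}) with a charging argument over the forest structure of the undulations of $u(t)$. Set $\ell := \di(h)\,t/(2\|u_0\|_\infty)$ and $L_0 := \mathcal L^1(\conv(\supp u_0))$.

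For each undulation $u_{(i)}$ of $u(t)$ with $h_i>h$, write $[a_i,b_i]$ for its support at time $t$ and $v_i := u(t,a_i)=u(t,b_i)$ for its base value. Proposition~\ref{P_lagr_repr}(2) produces $y_l^{(i)}<y_r^{(i)}$ with $\X(t,y_l^{(i)})=a_i$, $\X(t,y_r^{(i)})=b_i$, $u_0(y_l^{(i)})=u_0(y_r^{(i)})=v_i$ and $\T(y_l^{(i)}),\T(y_r^{(i)})\ge t$. Since $u(t)$ oscillates by at least $h_i>h$ on $[a_i,b_i]$, the range $[w_m^{(i)},w_M^{(i)}]$ appearing in Theorem~\ref{T_main_estimate} has width greater than $h$, so $\di(w_m^{(i)},w_M^{(i)})\ge \di(h)$ by the definition of $\di(h)$ as an infimum; the theorem therefore yields
\[
\max\{y_r^{(i)}-y_l^{(i)},\,b_i-a_i\}\ge \ell.
\]

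I would then split the undulations into Type~A (where $b_i-a_i\ge\ell$) and Type~B (where $y_r^{(i)}-y_l^{(i)}\ge\ell$); each undulation lies in at least one class. By Proposition~\ref{P_undulations}, the intervals $[a_i,b_i]$ form a forest under inclusion (pairwise nested or with essentially disjoint interiors), and monotonicity of $\X(t,\cdot)$ transports the same structure to the intervals $[y_l^{(i)},y_r^{(i)}]$ at time $0$. Finite speed of propagation shows that $\supp u(t)$ is contained in an interval of length at most $L_0+2\|f'\|_\infty t$, while $\supp u_0$ lies in an interval of length $L_0$. A telescoping charging argument over the forest then gives $\#\text{Type~A}\le (L_0+2\|f'\|_\infty t)/\ell$ and $\#\text{Type~B}\le L_0/\ell$; adding these and substituting $\ell$ yields the announced bound.

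The hard step is the charging. If the Type~A intervals were pairwise disjoint the bound would follow from summing lengths, but nested undulations can in principle stack up. The remedy is to assign to each undulation its ``private'' length at time $t$---the length of $[a_i,b_i]$ minus the total length of its immediate children's intervals in the forest---and to show this private length is bounded below by a fixed multiple of $\ell$, before telescoping over the forest. Establishing this lower bound requires a sharper use of the length estimate, applied to characteristics chosen in the slivers between $u_{(i)}$ and its children, where the relevant oscillation of $u_0$ is still of order $h$; the analogous statement at time $0$ handles Type~B.
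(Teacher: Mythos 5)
Your reduction to the length estimate is the right starting point, and you have correctly identified where the difficulty lies: the supports of the undulations are nested, so summing their lengths does not count them. But the resolution you propose does not work. The ``private length'' of an undulation --- the measure of its support minus that of its children's supports --- is \emph{not} bounded below by $\ell=\di(h)t/(2\|u_0\|_\infty)$, and Theorem \ref{T_main_estimate} applied to the endpoints of the support cannot give such a bound: it controls only the total length of $[a_i,b_i]$ (or of its time-$0$ preimage), and a single child of height $>h$ may occupy all of $[a_i,b_i]$ except two arbitrarily thin slivers near the endpoints, leaving the parent with essentially no private length while both parent and child must be counted. Since a chain of such nested undulations can be long, the telescoping sum over the forest undercounts by an unbounded factor, and the final sentence of your proposal --- where you would need ``a sharper use of the length estimate in the slivers'' --- is precisely the step that is missing and cannot be supplied in the form you describe.

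The paper's proof sidesteps the forest entirely by attaching to each undulation $u_i$ of height $>h$ a \emph{different} interval: writing $\bar x_i$ for (the leftmost of) its maximum points, it sets $a_i=\sup\{x<\bar x_i: u(t,x)\le u(t,\bar x_i)-h\}$ and $b_i=\inf\{x>\bar x_i: u(t,x)\le u(t,\bar x_i)-h\}$, i.e.\ the maximal interval around the peak on which $u(t)$ stays above $u(t,\bar x_i)-h$. These intervals are \emph{pairwise disjoint}: by Proposition \ref{P_undulations}(3) a descendant's peak value never exceeds its ancestor's, so the descendant's interval terminates (the solution drops below ancestor's peak minus $h$) before the ancestor's interval begins. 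On each such interval the oscillation is still $\ge h$, so Theorem \ref{T_main_estimate} applies with constant $\di(h)$, and disjointness lets one sum the lower bounds against $\mathcal L^1(\conv(\supp u_0))+\,$the support at time $t$, yielding the stated constant. If you want to salvage your write-up, replace the full supports and the private-length charging by this choice of peak-centered intervals and prove their disjointness from the monotonicity of peak values along descent; the rest of your argument (the Type A / Type B split according to whether the time-$t$ or time-$0$ length dominates) then goes through unchanged.
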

\begin{proof}
The idea of the proof is the following: the measure of the support of an undulation of height bigger than $h$ is bounded from below by
Theorem \ref{T_main_estimate}. The inequality we want to prove states that the number of such undulations is bounded by the 
measure of the support of $u$ divided by the space occupied by each of them.
Actually the supports of the undulations are not disjoint in general and the proof consists in finding pairwise disjoint subsets of them
with the appropriate measure.

Denote for brevity by $N=N(u(t),h)$ and up to rearrangements we can assume that for $i=1,\ldots,N$ the undulations $u_i$ are the ones with height strictly bigger than $h$, 
let moreover
\begin{equation*}
\bar x_i=\min \arg\max u_i=\min \{x:u_i(x)=\max u_i\}
\end{equation*}
and let
\begin{equation*}
a_i = \sup\{x<\bar x_i : u(x)\le  u(t,\bar x_i)-h\}, \quad  b_i=\inf\{x>\bar x_i : u(x)\le  u(t,\bar x_i)-h\}.
\end{equation*}
It may happen  $a_i=\bar x_i$ or $ b_i=\bar x_i$, but it holds $ a_i< b_i$.
Moreover, since $h_i \ge h$, it holds 
\begin{equation}\label{E_supp_und}
( a_i, b_i)\subset \supp u_i.
\end{equation}

We claim that the intervals $( [a_i, b_i])_{i=1}^N$ are pairwise disjoint. 
Consider two undulations $u_i\ne u_j$ with $i,j=1,\ldots N$.
If $\supp u_i\cap \supp u_j$ has empty interior, then by \eqref{E_supp_und},
\begin{equation*}
(a_i,b_i) \cap (a_j,b_j)=\emptyset.
\end{equation*}

Suppose instead that $u_j$ is a descendant of $u_i$  and assume without loss of generality that $\bar x_j<\bar x_i$.
Then by point (3) in Proposition \ref{P_undulations}, $u(t,\bar x_i)\ge  u(t,\bar x_j)$, therefore
\begin{equation*}
\sci u(t,b_j)\le  u(t,\bar x_j)-h\le  u(t,\bar x_i)-h
\end{equation*}
and since $b_j\le \bar x_i$, by definition of $a_i$, it holds $b_j\le a_i$.
This proves that the intervals $((a_i,b_i))_{i=1}^N$ are pairwise disjoint. Finally we check that there exist no $i\ne j$ such that $a_i=b_j$.
In fact notice that by definition of $a_i$ the function $u(t)$ cannot have a decreasing jump at $a_i$ and similarly it cannot have an increasing jump
at $b_j$. In particular if $a_i=b_j$, it must be a point of continuity of $u(t)$ and therefore $u(t,a_i)= u(t,\bar x_i)-h= u(t,\bar x_j)-h$.
In particular by definition of $a_i$ and $b_j$ it holds $u(t,x)\ge u(t,\bar x_i)-h$ for every $x\in (\bar x_j,\bar x_i)$ and this is in contradiction
with the fact that both the undulations $u_i$ and $u_j$ have height strictly bigger than $h$.

By Proposition \ref{P_lagr_repr}, for every $i=1,\ldots,N$ there exists 
$y_i^-\in \R$ such that
\begin{equation*}
\X(t,y_i^-)=a_i, \quad u_0(y_i^-)=u^+(t,\bar x_i)-h\quad \mbox{and}\quad \T(y_i^-)\ge t. 
\end{equation*}
Similarly there exists $y_i^+\in \R$ such that
\begin{equation*}
\X(t,y_i^+)=b_i, \quad u_0(y_i^+)=u^+(t,\bar x_i)-h\quad \mbox{and}\quad \T(y_i^+)\ge t. 
\end{equation*}

For every $i=,\ldots, N$ apply Theorem \ref{T_main_estimate} with $y_l=y_i^-$ and $y_r=y_i^+$. Letting
\begin{equation*}
s_i:=\max\{ y^+_i- y^-_i, \X(t, y^+_i)-\X(t, y^-_i)\},
\end{equation*}
it holds
\begin{equation}\label{E_space}
s_i\ge \frac{t\di(h)}{2\|u_0\|_\infty}.
\end{equation}

Since the intervals $([a_i,b_i])_{i=1,\ldots,N}$ are pairwise disjoint, the same holds for the intervals $((y_i^-,y_i^+))_{i=1,\ldots,N}$
and for $((\X(t,y_i^-),\X(t,y_i^+)))_{i=1,\ldots, N}$ by monotonicity of the flow $\X$.

Moreover, by finite speed of propagation,
\begin{equation}\label{E_support}
\mathcal L^1(\conv(\supp u(t)))\le \mathcal L^1(\conv (\supp u_0))+ 2\|f'\|_\infty t,
\end{equation}
therefore, from \eqref{E_space}, \eqref{E_support} and the fact that we have disjoint intervals, it follows that
\begin{equation*}
N(u(t),h)\frac{t\di(h)}{2\|u_0\|_\infty}\le \sum_{i\in I_h}s_i\le2(\mathcal L^1(\conv(\supp u_0))+\|f'\|_\infty t),
\end{equation*}
and this concludes the proof.
\end{proof}

The main result of this section is the following:
\begin{theorem}\label{T_BV-Phi}
Let $u_0\in L^\infty(\R)$ be nonnegative with compact support and let $u$ be the entropy solution of \eqref{E_cl}. 
Let $\Phi$ be the convex envelope of $\di$, i.e. denote by
\begin{equation*}
\mathcal G =\{\varphi:[0,+\infty)\rightarrow [0,+\infty] \mbox{ convex }: \varphi(0)=0 \mbox{ and }\varphi(h)\le \di(h)\}
\end{equation*}
and let $\Phi=\sup_{\varphi\in \mathcal G} \varphi$. Then there exists a constant $C>0$ depending only on  
$\mathcal L^1(\conv(\supp u_0))$, $\|u_0\|_\infty$ and $\|f'\|_\infty$ such that 
for every $t>0$ and every $\e>0$ it holds
\begin{equation*}
u(t)\in \BV^{\Psi_\e} \qquad \mbox{and}\qquad \Psi_\e\mbox{-}\TV \, u(t)\le C\left(1+\frac{1}{t}\right)\frac{2^\e}{2^\e-1},
\end{equation*}
where $\Psi_\e$ is defined in \eqref{E_psi_e}.
\end{theorem}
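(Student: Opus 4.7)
The plan is to reduce to piecewise monotone continuous initial data, where the Lagrangian representation of Proposition~\ref{P_lagr_repr} is available, bound the $\Psi_\e$-total variation of $u(t)$ by a sum over the heights of its undulations via Lemma~\ref{L_osc_phi}, and then dominate that sum through a weak-$\ell^1$ estimate combining Lemma~\ref{L_number_oscillations} and Lemma~\ref{L_weak_l1}.

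First I would approximate $u_0$ by a sequence $(u_0^n)\subset X$ of continuous, nonnegative, piecewise monotone functions with $\|u_0^n\|_\infty\le \|u_0\|_\infty$ and $\supp u_0^n$ contained in a fixed compact, such that $u_0^n\to u_0$ in $L^1(\R)$. By the $L^1$-contraction of Kruzkov's theorem, the corresponding entropy solutions satisfy $u^n(t)\to u(t)$ in $L^1_{\loc}(\R)$ for every $t>0$, so up to a subsequence we have $\mathcal L^1$-a.e.\ convergence. The functional $v\mapsto \Psi_\e\text{-}\TV(v)$ is lower semicontinuous with respect to $\mathcal L^1$-a.e.\ convergence (by Fatou applied to each partition sum), so it suffices to establish the bound uniformly for $u_0^n\in X$ continuous.

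Second, fix $u_0\in X$ continuous. By Proposition~\ref{P_lagr_repr}, $u(t)\in X$ is piecewise monotone, with undulation heights $(h_i)_{i=1}^{\tilde N(u(t))}$. I claim $\Psi_\e$ is convex on $[0,+\infty)$: writing $g(x):=\Phi(x/2)$, which is convex with $g(0)=0$ and hence satisfies $xg'(x)\ge g(x)$, an elementary computation gives
\begin{equation*}
(\Psi_\e)''(x)=g''(x)x^\e+2\e g'(x)x^{\e-1}+\e(\e-1)g(x)x^{\e-2},
\end{equation*}
and the possibly negative third term (for $\e\in(0,1)$) is absorbed by the second via $xg'\ge g$ and $2\e\ge \e(1-\e)$. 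Applying Lemma~\ref{L_osc_phi} with $\Psi_\e$ in place of $\Phi$ yields
\begin{equation*}
\Psi_\e\text{-}\TV(u(t)) \le 2\sum_{i=1}^{\tilde N(u(t))}\Psi_\e(h_i).
\end{equation*}

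Third, since $\Phi$ is the convex envelope of $\di$ we have $\Phi\le \di$, so Lemma~\ref{L_number_oscillations} gives, for every $h>0$,
\begin{equation*}
N(u(t),h)\le \frac{K(t)}{\Phi(h)},\qquad K(t):=\frac{4\|u_0\|_\infty\bigl(\mathcal L^1(\conv(\supp u_0))+\|f'\|_\infty t\bigr)}{t}.
\end{equation*}
Set $\tilde\Phi:=\Phi/K(t)$, still convex with $\tilde\Phi(0)=0$ and $\tilde\Phi>0$ on $(0,+\infty)$, so that $N(u(t),h)\le 1/\tilde\Phi(h)$. Since the heights $h_i$ form a finite nonincreasing sequence with $\bar h\le \|u_0\|_\infty$, Lemma~\ref{L_weak_l1} applied to $\tilde\Phi$ gives
\begin{equation*}
\sum_i \tilde\Phi(h_i/2)\,h_i^\e \le \|u_0\|_\infty^\e\,\frac{2^\e}{2^\e-1},
\end{equation*}
which after multiplying by $K(t)$ reads $\sum_i \Psi_\e(h_i)\le K(t)\|u_0\|_\infty^\e\,2^\e/(2^\e-1)$. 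Combining with the previous step and observing $K(t)=4\|u_0\|_\infty\|f'\|_\infty+4\|u_0\|_\infty\mathcal L^1(\conv(\supp u_0))/t\le C_0(1+1/t)$, we obtain the claimed estimate with $C=2C_0\|u_0\|_\infty^\e$.

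The main obstacles are twofold: verifying that $\Psi_\e$ is genuinely convex for all $\e>0$ (so that Lemma~\ref{L_osc_phi} applies directly), which is handled by the short computation above; and the approximation step, which requires that all the relevant constants in Lemmas~\ref{L_osc_phi}, \ref{L_number_oscillations} and \ref{L_weak_l1} depend only on $\|u_0\|_\infty$, $\|f'\|_\infty$ and $\mathcal L^1(\conv(\supp u_0))$, and not on the specific approximant. Both points are routine but should be stated carefully.
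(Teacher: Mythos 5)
Your proposal is correct and follows essentially the same route as the paper: approximate $u_0$ by continuous data in $X$, bound $\Psi_\e\text{-}\TV(u(t))$ by the undulation heights via Lemma~\ref{L_osc_phi}, combine the counting bound of Lemma~\ref{L_number_oscillations} with the weak-$\ell^1$ summation of Lemma~\ref{L_weak_l1}, and conclude by lower semicontinuity. Your explicit check that $\Psi_\e$ is convex (so that Lemma~\ref{L_osc_phi} applies to it) is a detail the paper leaves implicit, and it is correctly handled.
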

\begin{proof}
Let $u_0\in L^\infty(\R)$ be nonnegative and with compact support. Since $X$ is dense in the space of nonnegative $L^\infty$ functions
with compact support with respect to the $L^1$-topology, there exists a sequence $(u_0^n)_{n\in \N}$ in $X$ such that 
$u_0^n\rightarrow u_0$ strongly in $L^1(\R)$.
Moreover we can also assume that for every $n\in \N$
\begin{enumerate}
\item $u_0^n$ is continuous;
\item it holds
\begin{equation*}
\conv(\supp u_0^n)\subset \conv(\supp u_0);
\end{equation*}
\item $\|u_0^n\|_\infty\le \|u_0\|_\infty$.
\end{enumerate}
By Lemma \ref{L_number_oscillations} and the choice of the approximation, there exists a constant $C'>0$ such that for every $n\in \N$ 
and for every $h>0$
\begin{equation}\label{E_merc}
N(u^n(t),h) \le C'\left(1+\frac{1}{t}\right)\frac{1}{\Phi(h)},
\end{equation}
therefore
\begin{equation*}
\begin{split}
\Psi_\e\mbox{-}\TV\, u^n(t)\le &~ 2\sum_{i=1}^{N(u(t))}\Psi^\e(h_i) \\
\le &~ C'\|u_0\|_\infty\left(1+\frac{1}{t}\right)\frac{2^\e}{2^\e-1},
\end{split}
\end{equation*}
where the first inequality holds by Lemma \ref{L_osc_phi} and the second one holds by \eqref{E_merc} and Lemma \ref{L_weak_l1}.
Finally, setting $C=C'\|u_0\|_\infty$, the result follows by lower semicontinuity of the $\Psi_\e$-total variation with respect to $L^1$ convergence.
\end{proof}

\begin{remark}
We give some comment on the previous result:
\begin{enumerate}
\item the regularity of $u$ depends crucially on the nonlinearity of $f$. Such dependence is encoded here in the condition
$\Phi(h)\le \di(h)$.
\item the upper bound for $\TV^{\Psi_\e}$ blows up as $t\rightarrow 0$, as we expect for $L^\infty$ entropy solutions;
\item in the case of $f$ of polynomial degeneracy $p\in \N$ (see Definition \ref{D_degeneracy}), it is not hard to prove that there exists $c>0$ such that for every $h>0$
\begin{equation*}
\di (h)\ge ch^{p+1}.
\end{equation*}
Therefore by Theorem \ref{T_BV-Phi} we get that for every $t>0$,
\begin{equation*}
u(t)\in \BV^{\frac{1}{p+1+\e}}(\R).
\end{equation*}
Relying on the $\BV$ regularity of $f'\circ u(t)$ (Section \ref{S_Cheng}), we will prove in Section \ref{S_frac_Cheng} that in this case the regularity of $u(t)$ can be improved to
\begin{equation*}
u(t)\in \BV^{\frac{1}{p}}(\R).
\end{equation*}
However in Section \ref{S_examples}, we prove that in general, even if $f$ is weakly nonlinearly degenerate, $f'\circ u\notin \BV_\loc(\R^+\times \R)$.
\end{enumerate}
\end{remark}

\section{$\BV$ regularity of $f'\circ u$}\label{S_Cheng}

In this section we prove that if the flux $f$ has finitely many inflection points of polynomial degeneracy (see Definition \ref{D_degeneracy}), then for every $T>0$ the velocity $f'\circ u(T)$ has bounded variation. 
In particular in this section we always assume that $f$ has degeneracy $p\in \N$.

We are going to prove a uniform estimate of $\TV (f'\circ u(T))$ for the entropy solutions $u$ of \eqref{E_cl} with $u_0\in X$ (defined in Section \ref{S_preliminaries})
and with $\|u_0\|_\infty$ and $\mathcal L^1 (\conv (\supp u_0))$  uniformly bounded. \\
The strategy is the following: we will consider separately small and big undulations of the solution $u$.
The number of big undulations is bounded a priori by Theorem \ref{T_main_estimate}. 
The contribution of small undulations is more delicate: if $u$ takes values in an interval where $f$ is convex, the
structure of characteristics is well-known and it implies a one-sided Lipschitz estimate on $f'\circ u$.
If instead $u$ oscillates around an inflection point of $f$ we adapt the argument of \cite{Cheng_speed_BV}.

We start by recalling the structure of characteristics in the convex case. We omit the proof of the following lemma that can be found in
\cite{Dafermos_book_4}: it can be proved either by means of Lax-Oleinik formula or with the method of generalized characteristics.

Let $0\le\bar t<T$ and let $\gamma_l,\gamma_r:[\bar t,T]\rightarrow \R$ be Lipschitz curves with $\gamma_l\le \gamma_r$ and consider the domain
\begin{equation}\label{E_Omega}
\Omega:=\{(t,x)\in (\bar t,T)\times \R: \gamma_l(t)<x<\gamma_r(t)\}.
\end{equation}

\begin{lemma}\label{L_convex}
Let $u$ be a piecewise monotone solution of \eqref{E_cl}.
Suppose that $u\llcorner \Omega$ takes values in $[u^-,u^+]$ and that $f\llcorner [u^-,u^+]$ is strictly convex. 
Then for every $x\in (\gamma_l(T),\gamma_r(T))$ there exists $\bar y$ and $t_0\in [\bar t,T)$ such that for every $t\in [t_0,T]$
\begin{equation*}
\X(t,\bar y)=x-(T-t)f'(u_0(\bar y))\qquad \mbox{and} \qquad \X(t_0,\bar y)\in \partial \Omega.
\end{equation*}
\end{lemma}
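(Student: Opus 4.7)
The plan is to reduce to the classical theory of generalized characteristics for strictly convex scalar conservation laws. Since $u\llcorner \Omega$ takes values in $[u^-, u^+]$ where $f$ is strictly convex, the structure of characteristics inside $\Omega$ coincides with that of the convex-flux case on the full half-plane, and I would derive the claim either by directly invoking Dafermos' description of minimal and maximal backward characteristics in the convex setting (see Chapter 11 of \cite{Dafermos_book_4}) or, equivalently, from the Lax-Oleinik formula applied to the initial-boundary value problem posed on $\Omega$.

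Concretely, given $x \in (\gamma_l(T), \gamma_r(T))$, I would use the Lagrangian representation of Proposition \ref{P_lagr_repr} to pick $\bar y$ with $\X(T, \bar y) = x$, and then set
\[
t_0 := \inf\bigl\{s \in [\bar t, T] : \X(\tau, \bar y) \in \Omega \text{ for all } \tau \in (s, T]\bigr\},
\]
so that $t_0 < T$ by continuity of $\X(\cdot, \bar y)$, and either $t_0 = \bar t$ or $\X(t_0, \bar y) \in \partial \Omega$. On $[t_0, T]$ the characteristic stays in $\Omega$, hence carries values in the strictly convex range of $f$. The admissibility of the boundary $(\X(\cdot, \bar y), u_0(\bar y))$ from point (4) combined with the characteristic equation from point (7) shows that at continuity points of $u$ along the characteristic the speed equals $f'(u_0(\bar y))$. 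At shock points the Rankine-Hugoniot speed coupled with the chord condition \eqref{E_chord} and the strict convexity of $f$ on $[u^-, u^+]$ rule out the possibility that a characteristic carrying $u_0(\bar y)$ runs along a nontrivial shock at a speed different from $f'(u_0(\bar y))$. Therefore $\X(\cdot, \bar y)$ must coincide with the straight line $t \mapsto x - (T - t) f'(u_0(\bar y))$ on $[t_0, T]$.

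The main obstacle is the shock analysis: one has to exclude that inside $\Omega$ the Lagrangian characteristic deviates from the classical one by sliding along a shock front at a speed different from $f'(u_0(\bar y))$. Strict convexity is precisely what makes this impossible, because for any two distinct values $w_1, w_2 \in [u^-, u^+]$ the Rankine-Hugoniot speed $(f(w_2)-f(w_1))/(w_2-w_1)$ lies strictly between $f'(w_1)$ and $f'(w_2)$, so a shock speed can match $f'(u_0(\bar y))$ only if the jump is trivial on the side carrying $u_0(\bar y)$. This quantitative observation is exactly the content of the classical backward-characteristic lemma and is the reason the proof is usually presented without full detail in standard references.
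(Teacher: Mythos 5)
Your proposal follows the same route as the paper, which omits the proof entirely and simply refers to the classical theory for strictly convex fluxes in \cite{Dafermos_book_4} (Lax--Oleinik formula or the method of generalized characteristics), so there is nothing to compare beyond the citation. The one point to be careful about in your concrete elaboration is that when $u(T)$ has a jump at $x$ you must take $\bar y$ extremal in $\X(T)^{-1}(x)$ (i.e.\ the minimal or maximal backward characteristic): for interior labels the Lagrangian trajectory travels with the shock near $T$ and is not a segment, and it is precisely the genuineness of extremal backward characteristics in the convex theory that rules this out.
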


The characteristic structure of solutions with bounded variation when the flux has only one inflection point is studied in \cite{Dafermos_inflection}. 
We introduce some terminology and recall his result, then we translate his result in our language.

\begin{definition}
A \emph{generalized characteristic} of \eqref{E_cl} associated with the admissible $\BV$ solution $u$ is a Lipschitz trajectory
$\chi:[a,b]\rightarrow \R$, $0\le a<b<\infty$ such that for $\mathcal L^1$-a.e. $t\in [a,b]$
\begin{equation*}
\dot\chi(t)\in [f'(u(t,\chi(t)+)),f'(u(t,\chi(t)-))].
\end{equation*}
\end{definition}

\begin{definition}
A generalized characteristic $\chi:[a,b]\rightarrow \R$ of \eqref{E_cl}, associated with the admissible $\BV$ solution $u$, is called a
\emph{left contact} or a \emph{right contact} if 
\begin{equation*}
\dot\chi(t)=f'(u(t,\chi(t)-)) \qquad \mbox{or} \qquad \dot\chi(t)=f'(u(t,\chi(t)+))
\end{equation*}
for $\mathcal L^1$-a.e. $t\in [a,b]$ respectively.
\end{definition}
The existence of generalized characteristics is granted by Filippov theory. 
In general uniqueness fails, in the following two theorems, whose proof can be found in \cite{Dafermos_inflection}, it is described the structure 
of maximal and minimal backward characteristics respectively (see also \cite{Dafermos_book_4}, Section 11.12).

As the author did in \cite{Dafermos_inflection}, we assume in Theorems \ref{T_daf_max} and \ref{T_daf_min} that:
\begin{enumerate}
\item it holds $f(0)=f'(0)=f''(0)=0$;
\item it holds $uf''(u)<0$ for every $u\ne 0$;
\item the function $f''$ is nonincreasing in a neighborhood of $0$.
\end{enumerate}

\begin{theorem}\label{T_daf_max}
Let $\xi$ denote the maximal backward characteristic through any point $(\bar t,\bar x)\in (0,+\infty)\times \R$.
When $u(\bar t,\bar x-)\ne 0$ or $u(\bar t,\bar x+)\ne 0$, then there is a finite mesh $0=a_0<a_1<\ldots<a_{N+1}=\bar t$ such that
$\xi$ is a convex polygonal line with vertices at the point $(a_n,\xi(a_n))$, $n=0,\ldots,N+1$. Furthermore,
\begin{equation}\label{E_daf_max}
\begin{split}
u(t,\xi(t)-)=u(t,\xi(t)+)=u(a_{n+1},\xi(a_{n+1})+), & \quad  a_n<t<a_{n+1}, \quad n=0,\ldots, N, \\
u(a_n,\xi(a_n)-)=u(a_{n+1},\xi(a_{n+1})+), &\quad  n=1,\ldots, N, \\
u_0(\xi(0))\ge u(a_1,\xi(a_1)+),  &\quad \mbox{if }u(a_1,\xi(a_1)+)>0, \\
u_0(\xi(0))\le u(a_1,\xi(a_1)+), &\quad  \mbox{if }u(a_1,\xi(a_1)+)<0, \\
\dot\xi(t)=f'(a_{n+1},u(\xi(a_{n+1})+)), &\quad  a_n<t<a_{n+1}, \quad n=0,\ldots,N, \\
f'(u(a_n,\xi(a_n)-))=\frac{f(u(a_n,\xi(a_n)+))-f(u(a_n,\xi(a_n)-))}{u(a_n,\xi(a_n)+)-u(a_n,\xi(a_n)-)}, & \quad n=1,\ldots,N.
\end{split}
\end{equation}
When $u(\bar t,\bar x-)=u(\bar t,\bar x+)=0$, then there is $a\in [0,\bar t]$ such that $\xi(t)=\bar x$, $t\in [a,\bar t]$, and
$u(t,\xi(t)-)=u(t,\xi(t)+)=0$, $t\in (a,\bar t]$ (also at $t=a$ if $a>0$). 
Moreover, if $a>0$, there is an increasing sequence $0=a_0<a_1<\ldots$ with $a_n\rightarrow a$ and $\xi(a_n)\downarrow \bar x$ as
$n\rightarrow \infty$, such that \eqref{E_daf_max} all hold for $n=1,2,\ldots$. In particular,
\begin{equation*}
|u(t,\xi(t))|\downarrow 0, \quad f'(u(t,\xi(t)))\uparrow 0, \quad \mbox{as }t\uparrow a.
\end{equation*}
\end{theorem}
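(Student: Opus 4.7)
The plan is to deduce the structure of the maximal backward characteristic $\xi$ from three ingredients: the general theory of generalized characteristics for $\mathrm{BV}$ entropy solutions (Filippov-type existence, together with the fact that on each open subinterval where $u$ is continuous along $\xi$, $\xi$ is a classical straight-line characteristic with slope $f'(u)$), the chord admissibility condition \eqref{E_chord} at shocks, and the specific geometry of $f$ near the inflection, namely $uf''(u)<0$ for $u\ne 0$ and $f''$ nonincreasing near $0$. By maximality, $\xi$ selects the left traces, so $\dot\xi(t)=f'(u(t,\xi(t)-))$ for a.e.\ $t$. Consequently $\xi$ is piecewise affine, its breakpoints $a_n$ being exactly the instants at which a shock collides with $\xi$ from the right, forcing a switch to a new straight characteristic. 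At each such breakpoint the chord condition matches the shock speed with $f'$ of the left state, yielding the last identity in \eqref{E_daf_max}.

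Next I would show that between consecutive breakpoints $u$ is continuous along $\xi$ and takes the constant value $u(a_{n+1},\xi(a_{n+1})+)$, which produces the first two identities of \eqref{E_daf_max}, and together with the step above the fifth one. The convexity of the polygonal line (monotonicity of slopes as $t$ decreases) follows from the observation that in any ``excursion'' of $\xi$ the values $u_n:=u(a_n,\xi(a_n)+)$ lie on a single side of $0$; on such a side $f''$ has a fixed sign and admissibility forces $f'(u_n)$ to be monotone. The inequalities $u_0(\xi(0))\gtreqless u(a_1,\xi(a_1)+)$ are obtained by tracing the first segment back to $t=0$ and applying the chord condition at any jump of $u_0$ there. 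When $u(\bar t,\bar x\pm)\ne 0$, the values $|u_n|$ stay bounded away from $0$, so the $f''$-bounds on each side of the inflection give a uniform lower bound on the slope change at each corner, forcing the mesh to be finite.

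The hard part is the case $u(\bar t,\bar x\pm)=0$. The first step is to show that whenever $u(t,\xi(t))=0$ one has speed $f'(0)=0$, so $\xi$ is constant on the maximal interval $[a,\bar t]$ on which $u(t,\xi(t))=0$. For $t<a$ the piecewise affine analysis applies locally, but finiteness may fail precisely when $|u_n|\downarrow 0$, which is the accumulation scenario described. Quantitatively, the monotonicity of $f''$ near $0$ combined with the shock admissibility constraint produces, for two consecutive states straddling $0$, a ratio $|u_{n+1}|/|u_n|$ bounded away from $1$ — this is the same geometric mechanism analyzed in Lemma \ref{L_conjugate} of the present paper for the conjugate point. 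Hence $|u_n|\downarrow 0$ geometrically and $f'(u_n)\uparrow 0$, giving the stated limit relations, while the $\xi(a_n)$ decrease to $\bar x$ because the slopes approach $0$ from below.

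The main obstacle I anticipate is the quantitative comparison at the corners near the inflection: one must rule out the possibility that the mesh accumulates on the side where $|u_n|$ stays bounded away from $0$, and verify that $\xi(a_n)\downarrow \bar x$ rather than $\xi$ drifting away. This requires combining the signed curvature of $f$ on each side of $0$ with the maximality of the backward characteristic in a careful monotonicity argument, together with the nonincreasing hypothesis on $f''$ to prevent spurious oscillations of the slopes $f'(u_n)$ as $u_n\to 0$.
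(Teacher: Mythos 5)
The first thing to say is that the paper does not prove this statement at all: Theorem \ref{T_daf_max} is recalled verbatim from \cite{Dafermos_inflection} (see also \cite{Dafermos_book_4}, Section 11.12), and the text explicitly defers the proof there. So there is no internal argument to compare yours against; what can be assessed is whether your outline would actually close into a proof of Dafermos's theorem.

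As an outline it identifies the right mechanisms (segments are classical characteristics; corners sit on left-contact discontinuities; the left-contact relation is exactly the conjugate-point equation, so consecutive carried values satisfy $w_{n+1}=w_n^*$ and grow in modulus going backward, which bounds $N$ when $u(\bar t,\bar x\pm)\ne 0$). But there are two concrete gaps. First, in the accumulation case you invoke ``a ratio $|u_{n+1}|/|u_n|$ bounded away from $1$ --- the same geometric mechanism analyzed in Lemma \ref{L_conjugate}''. The quantitative bound \eqref{E_ratio_delta} in that lemma is proved \emph{under polynomial degeneracy at the inflection}, which Theorem \ref{T_daf_max} does not assume (it assumes only $uf''(u)<0$ and $f''$ nonincreasing near $0$); for an infinitely degenerate inflection the contraction factor can approach $1$, so geometric decay is neither available nor needed --- the theorem only claims monotone convergence $|u(t,\xi(t))|\downarrow 0$, which must instead be extracted from strict monotonicity of $w\mapsto |w^*|$ plus the observation that a nonzero limit would make the mesh finite and let $\xi$ continue past $a$. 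Second, the load-bearing structural facts --- that $u$ is continuous along $\xi$ on each open segment with the constant value $u(a_{n+1},\xi(a_{n+1})+)$, that the breakpoints are precisely encounters with left-contact discontinuities, and the resulting trace identities in \eqref{E_daf_max} --- are asserted as consequences of ``the general theory of generalized characteristics'' rather than derived. Those assertions are the actual substance of Dafermos's proof (which proceeds through a careful case analysis of how a backward extremal characteristic can fail to be genuine), so as written the proposal presupposes most of what is to be shown.
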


\begin{theorem}\label{T_daf_min}
Let $\zeta$ denote the minimal backward characteristic through any point $(\bar t,\bar x)\in (0,+\infty)\times \R$. Then $u(t,\zeta(t)-)$ is 
a continuous function on $(0,\bar t]$, which is nondecreasing when $u(\bar t,\bar x-)<0$, nonincreasing when $u(\bar t,\bar x-)>0$ and
constant equal to 0 when $u(\bar t,\bar x-)=0$. For $t\in (0,\bar t)$,
\begin{equation*}
\dot\zeta(t)=f'(t,\zeta(t)-)
\end{equation*}
so, in particular, $\zeta$ is a convex $C^1$ curve. Furthermore, the interval $(0,\bar t)$ is decomposed into the union of two disjoint subset $O$ and $C$ with the following properties:
$O$ is the (at most) countable union of pairwise disjoint open intervals, $O=\bigcup_n(\alpha_n,\beta_n)$, such that 
\begin{equation*}
u(t,\zeta(t)-)=u(t,\zeta(t)+)=u(\alpha_n,\zeta(\alpha_n))=u(\beta_n,\zeta(\beta_n))
\end{equation*}
for all $t\in (\alpha_n,\beta_n)$ so the restriction of $\zeta$ on $(\alpha_n,\beta_n)$ is a straight line with slope 
$f'(u(\alpha_n,\zeta(\alpha_n)-))$. For any point $t\in C$, $u(t,\zeta(t)-)\ne u(t,\zeta(t)+)$ and 
\begin{equation*}
f'(u(t,\zeta(t)-))=\frac{f(u(t,\zeta(t)+))-f(u(t,\zeta(t)-))}{u(t,\zeta(t)+)-u(t,\zeta(t)-)}.
\end{equation*}
\end{theorem}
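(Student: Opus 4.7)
The plan is to follow the scheme of \cite{Dafermos_inflection}: construct the minimal backward characteristic $\zeta$ by a Filippov selection, use the minimality combined with the inflection-point structure of $f$ to force a one-sided speed equation, and finally derive monotonicity, continuity, and the $O\cup C$ decomposition from it.

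First I would define $\zeta$ as the pointwise infimum of all backward generalized characteristics through $(\bar t,\bar x)$. A standard Arzelà--Ascoli plus Filippov compactness argument shows $\zeta$ is itself a Lipschitz generalized characteristic. The key observation is that, because of the chord admissibility condition for $u$ (Proposition \ref{P_chord}) together with the sign assumption $uf''(u)<0$, among the admissible Filippov speeds $[f'(u(t,\zeta(t)+)),f'(u(t,\zeta(t)-))]$ the value $f'(u(t,\zeta(t)-))$ is the minimal slope that can actually be realized by a characteristic lying weakly to the left; minimality of $\zeta$ thus forces
\begin{equation*}
\dot\zeta(t)=f'(u(t,\zeta(t)-)) \quad\text{for }\mathcal L^1\text{-a.e. }t\in(0,\bar t),
\end{equation*}
so that $\zeta$ is a left contact.

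Second, I would establish the monotonicity of the left trace $g(t):=u(t,\zeta(t)-)$. Treat the case $g(\bar t)>0$, the case $g(\bar t)<0$ being symmetric and $g(\bar t)=0$ forcing $\zeta$ to be vertical on a maximal interval by minimality. Argue by contradiction: if $g$ were not nonincreasing, there would exist $t_1<t_2\le\bar t$ with $g(t_1)<g(t_2)$. Since $f''<0$ on $\{u>0\}$, $f'$ is strictly decreasing there, so $f'(g(t_1))>f'(g(t_2))$. Following the forward generalized characteristic from $(t_1,\zeta(t_1))$ with speed $f'(g(t_1))$, together with another Filippov selection, one constructs a backward generalized characteristic from $(\bar t,\bar x)$ lying strictly below $\zeta$ on some subinterval, contradicting the minimality of $\zeta$. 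The identical argument with the signs reversed handles $g(\bar t)<0$ using $f''>0$ on $\{u<0\}$.

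Third, I would deduce continuity of $g$. As a monotone bounded function, $g$ has at most countably many jump discontinuities; at such a point $t_0$ one would have $g(t_0-)\ne g(t_0+)$, and then choosing the Filippov speed $f'(g(t_0-))$ on $(t_0-\delta,t_0)$ would again yield a generalized backward characteristic through $(\bar t,\bar x)$ strictly to the left of $\zeta$, contradicting minimality. Hence $g\in C^0((0,\bar t])$, so $\dot\zeta=f'\circ g$ is continuous and $\zeta\in C^1$. Convexity is then immediate: if $g(\bar t)>0$ the monotonicity step gives $g$ positive and nonincreasing while $f'$ is decreasing on the positive axis, so $f'\circ g$ is nondecreasing; the case $g(\bar t)<0$ is symmetric.

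Finally, I would produce the decomposition $(0,\bar t)=O\cup C$. Set $C:=\{t:u(t,\zeta(t)-)\ne u(t,\zeta(t)+)\}$; on $C$ the generalized-characteristic inclusion together with $\dot\zeta=f'(g(t))$ and the Rankine--Hugoniot identity for admissible shocks (forced by the chord condition) gives the stated relation. The complement $O$ is open; on each connected component $(\alpha_n,\beta_n)$ the left and right traces coincide, the classical characteristic equation $\dot\zeta=f'(u(t,\zeta(t)))$ holds, and constancy of $u$ along $\zeta$ on $(\alpha_n,\beta_n)$ follows from transport along the classical characteristic combined with the continuity of $g$, yielding the line-segment description and the matching at $\alpha_n,\beta_n$. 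The main obstacle is the monotonicity step: it is the only place where the convex/concave structure of $f$ on the two sides of the inflection enters in an essential way, and setting up the contradiction with minimality requires carefully propagating the alternative characteristic all the way to $(\bar t,\bar x)$ through possibly many shock interactions using Filippov selections.
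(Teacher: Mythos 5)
First, a point of comparison: the paper does not prove Theorem \ref{T_daf_min} at all. It is recalled verbatim from \cite{Dafermos_inflection} (see also \cite{Dafermos_book_4}, Section 11.12), and the text explicitly defers to that reference for the proof. So there is no internal argument to measure your proposal against; what can be assessed is whether your sketch is a viable reconstruction of Dafermos's proof.

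At the level of architecture it is: Filippov funnel, extremality, left-contact property, monotonicity of the left trace, then continuity and the $O\cup C$ decomposition. But the two load-bearing steps are asserted rather than proved, and they are exactly the steps where the conservation law itself (not merely the chord condition) must enter. (i) The claim that minimality ``forces'' $\dot\zeta(t)=f'(u(t,\zeta(t)-))$ is not a soft selection argument: $x\mapsto f'(u(t,x-))$ is only of bounded variation, the extreme selection of a discontinuous differential inclusion need not be realizable pointwise, and one cannot conclude that the infimum of the solution funnel follows it. In \cite{Dafermos_inflection} the left-contact property is obtained by integrating the equation and the entropy inequalities over regions bounded by candidate characteristics and comparing; your sketch never invokes such an integral identity. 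Note also that your justification is inverted: the minimal backward characteristic realizes the \emph{maximal} admissible speed $f'(u(t,\zeta(t)-))$ in the Filippov interval (to be leftmost at earlier times one must travel with the largest slope), not the ``minimal slope''. (ii) The monotonicity of $g(t)=u(t,\zeta(t)-)$ is likewise reduced to ``construct a backward characteristic strictly to the left of $\zeta$'', and you yourself flag that propagating this competitor all the way up to $(\bar t,\bar x)$ through shock interactions is the main obstacle; that obstacle is the substance of the theorem, and it is again resolved in the cited source by balance and comparison arguments rather than by pure ODE reasoning. Finally, the openness of $O$ (equivalently, that the jump set $C$ along $\zeta$ is relatively closed in $(0,\bar t)$) is stated without justification. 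As it stands the proposal is a correct outline of the cited proof, but not a proof.
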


Now we restrict our attention to the case of piecewise monotone initial data and we formulate Theorem \ref{T_daf_max} in terms of the
Lagrangian representation. 

\begin{figure}
\centering
\def\svgwidth{0.7\columnwidth}
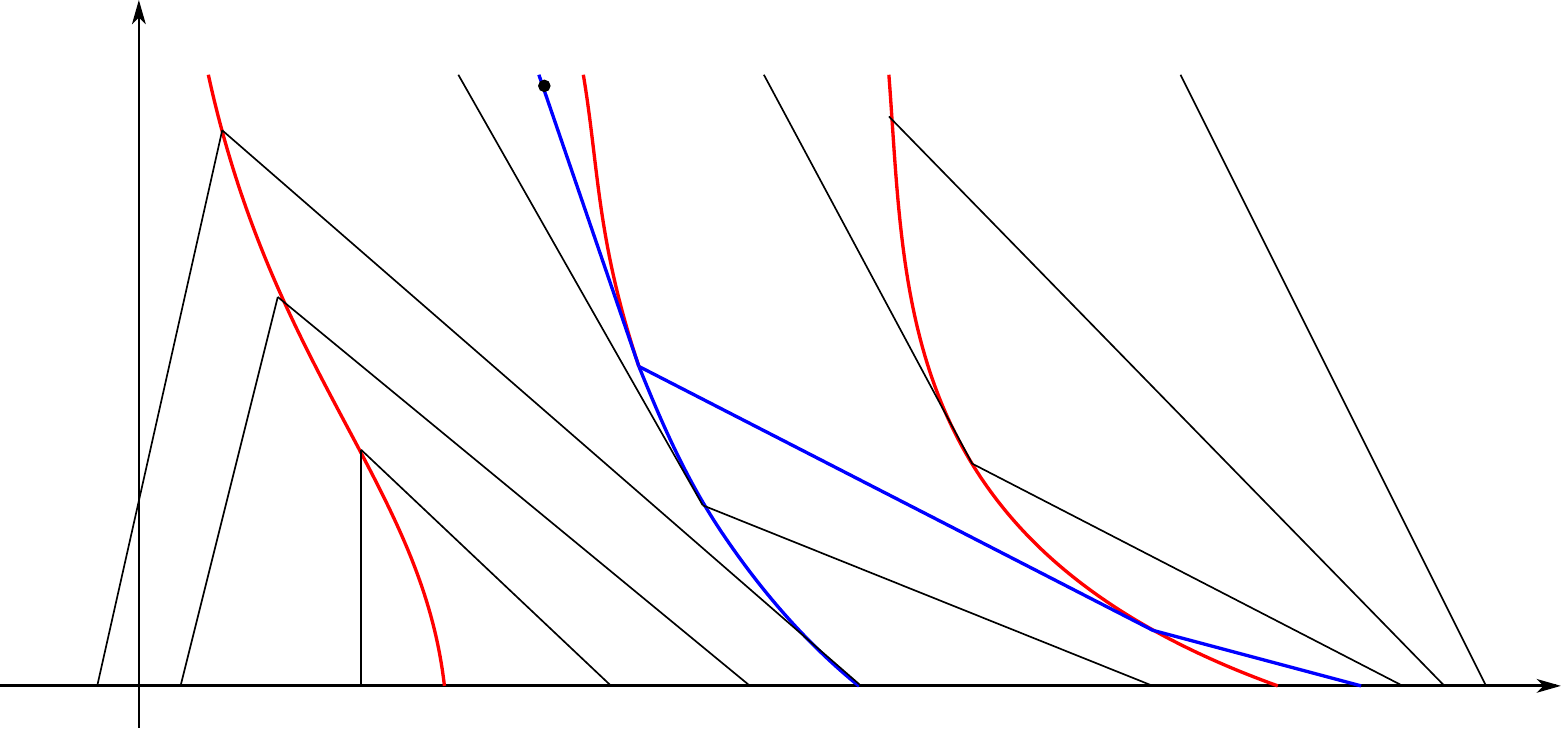
\caption{The structure of the characteristic curves: the minimal and maximal backward characteristic $\zeta$ and $\xi$ are blue and a shock and two left contact discontinuities are red.}\label{F_inflection}
\end{figure}

\begin{lemma}\label{L_inflection}
Let $u$ be the entropy solution of \eqref{E_cl} with $u_0\in X$ and let $\Omega$ be as in \eqref{E_Omega}.
Suppose that $u\llcorner \Omega$ takes values in $[u^-,u^+]$ with $\bar w\in (u^-,u^+)$,
\begin{equation*}
f''(w)(w-\bar w)<0 \quad \mbox{in}\quad [u^-,u^+]\setminus \{\bar w\}
\end{equation*}
and that $f''$ is nonincreasing in a neighborhood of $\bar w$.
Then for every $\bar x\in (\gamma_l(T),\gamma_r(T))$ the maximal backward generalized characteristic $\xi_{\bar x}$ from $(T,\bar x)$ enjoys the
following properties: there exists $N=N(\bar x)\in\N$, $\bar t\le t_0<t_1<\ldots <t_N=T$ and $y_1>\ldots>y_N$ such that
\begin{enumerate}
\item for every $n=1,\ldots,N$, for every $t\in [t_{n-1},t_n]$
\begin{equation*}
\xi_{\bar x}(t)=\X(t,y_n)=\X(t_{n-1})+(t-t_{n-1})f'(u_0(y_n)),
\end{equation*}
in particular $\xi_{\bar x}$ is piecewise affine;
\item for every $t\in (t_0,T]$, $\xi_{\bar x}(t)\in (\gamma_l(t),\gamma_r(t))$ and $(t_0,\xi_{\bar x}(t_0))\in \partial \Omega$;
\item for every $n=1,\ldots,N$
\begin{equation*}
\begin{split}
u_0(y_n)=u(t,\X(t,y_n)-) &\quad \mbox{for every }t\in (t_{n-1},t_n], \\
u_0(y_n)=u(t,\X(t,y_n)+) &\quad \mbox{for every }t\in [t_{n-1},t_n)\setminus \{t_0\};
\end{split}
\end{equation*}
\item for every $n=2,\ldots,N$
\begin{equation}\label{E_velocity_contact}
u_0(y_n)=u_0(y_{n-1})^*,
\end{equation}
where $u_0(y_{n-1})^*$ is defined by Lemma \ref{L_conjugate};
\item for every $\bar x_1,\bar x_2 \in (\gamma_l(T),\gamma_r(T))$ with $\bar x_1<\bar x_2$ it holds
\begin{equation*}
\xi_{\bar x_1}(t)< \xi_{\bar x_2}(t) \qquad \forall \, t\in (t_0(\bar x_1)\vee t_0(\bar x_2), T].  
\end{equation*}
\end{enumerate}
Moreover if $u(T,\bar x -)=u(T,\bar x+)=\bar w$, then the conditions above hold with $N=1$: in particular $\xi_{\bar x}\llcorner (t_0,T)$ 
has constant velocity.
\end{lemma}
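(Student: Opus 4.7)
My plan is to deduce the statement from Dafermos's analysis of maximal backward characteristics (Theorem \ref{T_daf_max}), reading his polygonal description through the Lagrangian representation of Proposition \ref{P_lagr_repr}. First I would normalise the setting by subtracting from $f$ the affine function $w\mapsto f(\bar w)+f'(\bar w)(w-\bar w)$; this changes the entropy solution by a Galilean shift and puts us exactly in the framework where $\bar w$ plays the role of Dafermos's inflection point $0$, with $uf''(u)<0$ for $u\neq\bar w$ in $[u^-,u^+]$ and $f''$ monotone in a neighborhood of $\bar w$. Apply Theorem \ref{T_daf_max} to the trajectory of the maximal backward generalized characteristic $\xi_{\bar x}$ from $(T,\bar x)$ until it leaves $\Omega$: this produces a convex polygonal line with finitely many break points. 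Define $t_0$ as the exit time: $t_0=\inf\{t\in[\bar t,T]:\xi_{\bar x}(s)\in(\gamma_l(s),\gamma_r(s))\ \forall s\in(t,T]\}$, so that $(t_0,\xi_{\bar x}(t_0))\in\partial\Omega$, yielding (2).

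Next I would lift each affine segment of $\xi_{\bar x}\llcorner[t_{n-1},t_n]$ to a Lagrangian characteristic. On the open segment the classical characteristic speed is $f'(u_0(y_n))$ for a unique value $u_0(y_n)$; using Proposition \ref{P_lagr_repr}(3) at a generic time $t\in(t_{n-1},t_n)$, this value is realised by a unique $y_n\in\R$ with $\X(t,y_n)=\xi_{\bar x}(t)$. The admissible boundary property (Proposition \ref{P_lagr_repr}(4)) and continuity in $t$ force $\X(\cdot,y_n)\equiv\xi_{\bar x}$ on the whole segment, and together with the sign assignment \eqref{E_sign} they identify $u_0(y_n)$ with the appropriate one-sided trace of $u$ along $\xi_{\bar x}$: the left trace wherever $\X(t,y_n)$ is being ``absorbed'' by the polygonal going forward (so on $(t_{n-1},t_n]$) and the right trace where it is being ``emitted'' (so on $[t_{n-1},t_n)\setminus\{t_0\}$). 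This gives (1) and (3).

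For (4), at each kink $t_{n-1}$ with $n\geq 2$ Dafermos's theorem tells us that the shock speed equals the slope of the outgoing segment, i.e.\
\[
f'(u_0(y_n))=\frac{f(u_0(y_{n-1}))-f(u_0(y_n))}{u_0(y_{n-1})-u_0(y_n)},
\]
which is exactly the defining equation of the conjugate point in Lemma \ref{L_conjugate}. Since $u_0(y_{n-1})\neq u_0(y_n)$ (there is a true shock), the uniqueness clause of Lemma \ref{L_conjugate} yields $u_0(y_n)=u_0(y_{n-1})^*$. The monotonicity $y_1>\ldots>y_N$ then follows because $\xi_{\bar x}$ is a convex polygonal in $(t_0,T)$: the slopes $f'(u_0(y_n))$ are strictly increasing in $n$, and the labels can be recovered as $y_n=\xi_{\bar x}(t_{n-1})-t_{n-1}f'(u_0(y_n))$ (using $\X(0,y_n)=y_n$), whence $y_n$ is strictly decreasing.

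Finally, for (5) I would use the standard no-crossing property of maximal backward characteristics: if $\xi_{\bar x_1}(t^*)=\xi_{\bar x_2}(t^*)$ for some $t^*\in(t_0(\bar x_1)\vee t_0(\bar x_2),T)$, then the concatenation of $\xi_{\bar x_1}\llcorner[t^*,T]$ with the portion of $\xi_{\bar x_2}$ backward of $t^*$ would produce a generalized backward characteristic from $(T,\bar x_1)$ strictly above $\xi_{\bar x_1}$, contradicting maximality; the conclusion at the endpoint $t=T$ is forced by $\bar x_1<\bar x_2$. The degenerate case $u(T,\bar x-)=u(T,\bar x+)=\bar w$ corresponds to the second alternative of Theorem \ref{T_daf_max}: the maximal backward characteristic is the straight line of slope $f'(\bar w)$ as long as it remains inside $\Omega$, so $N=1$ and $u_0(y_1)=\bar w$. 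The main obstacle I anticipate is a bookkeeping one: making sure that the exit point $(t_0,\xi_{\bar x}(t_0))$ on $\partial\Omega$ is compatible with the breakpoint mesh produced by Dafermos's theorem (one has to argue that the first segment reaches the boundary before any further shock absorption would have occurred), and that the Lagrangian characteristic $\X(\cdot,y_1)$ is still defined up to $t_0$ via property (4) of Proposition \ref{P_lagr_repr}.
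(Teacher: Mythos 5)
Your overall route (read Dafermos's Theorem \ref{T_daf_max} through the Lagrangian representation, identify each affine piece with some $\X(\cdot,y_n)$, and get \eqref{E_velocity_contact} from the last identity in \eqref{E_daf_max} via the uniqueness of the conjugate point) is the same as the paper's. But there are two genuine gaps.

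The first and most serious concerns the finiteness of $N$ and the degenerate case. You write that when $u(T,\bar x-)=u(T,\bar x+)=\bar w$ the second alternative of Theorem \ref{T_daf_max} gives a straight line, ``so $N=1$''. That is not what the theorem says: in that alternative, if $a>0$ the maximal backward characteristic consists of a vertical segment on $[a,\bar t]$ followed by \emph{infinitely many} kinks $a_n\uparrow a$ accumulating from below, and this scenario must be actively excluded, not assumed away. The paper's argument is precisely the point where the hypothesis $u_0\in X$ enters: by \eqref{E_velocity_contact} the consecutive values satisfy $(u_0(y_{n-1})-\bar w)(u_0(y_n)-\bar w)<0$, i.e.\ they alternate sides of $\bar w$, and since $n\mapsto y_n$ is strictly monotone and $u_0$ is piecewise monotone, $N$ is bounded by the number of monotone pieces of $u_0$; this uniform bound is what rules out the accumulating sequence and forces $N=1$ in the degenerate case. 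Your proposal never uses piecewise monotonicity of $u_0$ for this purpose, so the finiteness of the mesh inside $\Omega$ and the ``Moreover'' clause are unproved.

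The second gap is the derivation of $y_1>\ldots>y_N$. You recover $y_n$ as $\xi_{\bar x}(t_{n-1})-t_{n-1}f'(u_0(y_n))$, implicitly extending the $n$-th affine segment as a straight line all the way back to $t=0$. But $\X(\cdot,y_n)$ coincides with that segment only on $[t_{n-1},t_n]$; on $[0,t_{n-1}]$ it is a genuine Lagrangian characteristic with no reason to be affine, so $\X(0,y_n)=y_n$ does not equal the straight-line extrapolation. The correct ordering comes instead from the fact that at the kink $t_{n-1}$ both $\X(\cdot,y_{n-1})$ and $\X(\cdot,y_n)$ pass through the same point, where $u_0(y_{n-1})$ and $u_0(y_n)$ are the two one-sided traces of a genuine shock; the monotonicity of $w\mapsto y(t,x,w)$ in Proposition \ref{P_lagr_repr}(3), together with the monotonicity of the flow, then fixes the relative order of $y_{n-1}$ and $y_n$. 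Note also that this ordering is an input to the finiteness argument above, so the two gaps compound. Your concatenation argument for (5) is plausible but only yields $\xi_{\bar x_2}\le\xi_{\bar x_1}$ below the touching time, not a contradiction by itself; the paper instead uses that the touching point must lie on a $C^1$ left-contact curve, forcing equal one-sided speeds and hence touching only at $t=T$.
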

\begin{proof}
We first observe that for every $n=1,\ldots, N$ there exists $y_n$ such that $\xi_{\bar x}\llcorner (t_{n-1},t_n) = \X(\cdot,y_n)\llcorner (t_{n-1},t_n)$.
Let $t\in (t_{n-1},t_n)$ and $y_n$ be such that $\X(t,y_n)=\xi_{\bar x}(t)$ and $\T(y_n)\ge t$. Then $u_0(y_n)=u(t,\xi_{\bar x}(t))$ and, since 
$\X(\cdot, y_n)$ satisfies the characteristic equation and $u$ is continuous, 
it holds $\xi_{\bar x}\llcorner (t_{n-1},t_n) = \X(\cdot,y_n)\llcorner (t_{n-1},t_n)$.
By monotonicity of the flow $\X$, the maximality of $\xi_{\bar x}$ and the fact that for every $n=2,\ldots, N$,
\begin{equation*}
u_0(y_n)=u(t_{n-1},\X(t_n,y_n)+) \ne u(t_{n-1},\X(t_n,y_n)-)=u(y_{n-1})
\end{equation*}
we have $y_n>y_{n-1}$.
Observe that, by \eqref{E_velocity_contact}, the value $u_0(y_{n-1})$ is uniquely determined by $u_0(y_n)$ and in particular 
\begin{equation}\label{E_change_sign}
(u_0(y_{n-1})-\bar w)(u_0(y_n)-\bar w)<0.
\end{equation}
Since the initial datum is piecewise monotone and $n\mapsto y_n$ is strictly decreasing, \eqref{E_change_sign} implies that $N$ is bounded by the 
number of monotone regions of the initial datum. In particular if $u(T,\bar x -)=u(T,\bar x+)=0$, the existence of a sequence as in 
Theorem \ref{T_daf_max} is excluded.
It remains to prove the monotonicity in (5):
Let $t \in (t_0(\bar x_1)\vee t_0(\bar x_2), T]$ the maximal time such that $\xi_{\bar x_1}(t)= \xi_{\bar x_2}(t)$. By monotonicity of the flow and
since the maximal characteristics have piecewise constant speed, the point $(t,\xi_{\bar x_1}(t))$ must belong to a left-discontinuity curve. 
Since the left-discontinuity curve has $C^1$ regularity it holds $\partial_t\xi_{\bar x_1}(t+)=\partial_t \xi_{\bar x_2}(t)$ and this implies that $t=T$.
But this is in contradiction with the hypothesis $\bar x_1<\bar x_2$ and this concludes the proof.
\end{proof}

In the following two propositions, we deduce by the structure of the characteristics an estimate of the total variation of $f'\circ u(T)$ in the two cases of a convex flux or of a flux with an inflection point with polynomial degeneracy.

\begin{proposition}\label{P_convex}
Let $u$ be the entropy solution of \eqref{E_cl} with $u_0\in X$; let $\bar t,T,\gamma_l,\gamma_r$ and $\Omega$ be defined as in \eqref{E_Omega}.
Assume that there exists $a,b\in [0,\|u_0\|_\infty]$ such that $u\llcorner \Omega$ solves the initial-boundary
value problem
\begin{equation*}
\begin{cases}
u_t+f(u)_x=0 & \mbox{in }\Omega, \\
u(t,\gamma_l(t))=a & \mbox{for }t\in (\bar t,T), \\
u(t,\gamma_r(t))=b & \mbox{for }t\in (\bar t ,T).
\end{cases}
\end{equation*}
Denote by
\begin{equation*}
I:=\conv \big( \{a,b\}\cup u(\bar t,(\gamma_l(\bar t),\gamma_r(\bar t)))\big) 
\end{equation*}
and assume moreover that $f\llcorner I$ is strictly convex.
Then
\begin{equation*}
\TV_{(\gamma_l(T),\gamma_r(T))} (f'\circ u(T)) \le 6\|f''\|_\infty\|u_0\|_\infty + 2 \frac{\gamma_r(T)-\gamma_l(T)}{T-\bar t}.
\end{equation*}
\end{proposition}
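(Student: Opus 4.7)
The plan is to exploit the explicit characteristic structure of Lemma \ref{L_convex} combined with the monotonicity of the Lagrangian flow to reduce the problem to an Oleinik-type one-sided Lipschitz bound. I first partition the interval $(\gamma_l(T),\gamma_r(T))$ according to where the backward characteristic from Lemma \ref{L_convex} first meets the parabolic boundary $\partial\Omega$: let $L$ be the set of $x$ whose characteristic reaches $\graph \gamma_l$, $R$ the set whose characteristic reaches $\graph \gamma_r$, and $M$ the set whose characteristic reaches the initial slice $\{\bar t\}\times(\gamma_l(\bar t),\gamma_r(\bar t))$. Since $y\mapsto \X(t,y)$ is nondecreasing (Proposition \ref{P_lagr_repr}), these three sets are (possibly empty) intervals of the form $L=(\gamma_l(T),\alpha_L)$, $M=(\alpha_L,\alpha_R)$, $R=(\alpha_R,\gamma_r(T))$.

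On $L$ and $R$ the characteristic carries the constant boundary datum, hence $u(T,\cdot)\equiv a$ on $L$ and $u(T,\cdot)\equiv b$ on $R$, so $f'\circ u(T)$ is constant on each lateral region and contributes to the total variation only through the two transition jumps at $\alpha_L$ and $\alpha_R$. Each such jump is bounded by $\|f''\|_\infty\|u_0\|_\infty$, since all the relevant values lie in $[0,\|u_0\|_\infty]$.

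The main step is a one-sided Lipschitz estimate on $M$. For continuity points $x_1<x_2$ of $u(T)$ belonging to $M$ at a generic time $T$, Proposition \ref{P_lagr_repr}(3) produces unique indices $\bar y_i$ with $\X(T,\bar y_i)=x_i$ and $u_0(\bar y_i)=u(T,x_i)$, hence $\bar y_1\le \bar y_2$ by monotonicity of $\X(T,\cdot)$. Lemma \ref{L_convex} then yields
\begin{equation*}
\X(\bar t,\bar y_i)=x_i-(T-\bar t)f'(u(T,x_i)),
\end{equation*}
and monotonicity of $\X(\bar t,\cdot)$ gives $\X(\bar t,\bar y_1)\le \X(\bar t,\bar y_2)$, which rearranges into
\begin{equation*}
f'(u(T,x_2))-f'(u(T,x_1))\le \frac{x_2-x_1}{T-\bar t}.
\end{equation*}
Since the discontinuity set of a piecewise monotone $u(T)$ is at most countable, the positive variation $V^+_M$ of $f'\circ u(T)$ on $M$ is bounded by $\frac{\gamma_r(T)-\gamma_l(T)}{T-\bar t}$.

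To conclude, I decompose the total variation on $(\gamma_l(T),\gamma_r(T))$ into its positive and negative parts: the positive part $V^+$ combines $V^+_M$ with the two positive transition jumps at $\alpha_L$ and $\alpha_R$, while $V^-$ equals $V^+$ minus the signed total increment $f'(u(T,\gamma_r(T)^-))-f'(u(T,\gamma_l(T)^+))$, whose absolute value is again bounded by $\|f''\|_\infty\|u_0\|_\infty$. Summing yields the claimed estimate, the constant $6$ absorbing the two transition jumps, the signed-change correction, and the doubling that comes from combining $V^+$ and $V^-$. The main obstacle is handling non-generic $T$ and the possible non-uniqueness of backward characteristics at shock points; this is resolved by first proving the estimate at generic $T$ (where Proposition \ref{P_lagr_repr}(3) applies at each continuity point of $u(T)$) and then extending to arbitrary $T$ by lower semicontinuity of $\TV$ along a sequence of generic times $T_n\nearrow T$ with $u(T_n)\to u(T)$ in $L^1_{\loc}$.
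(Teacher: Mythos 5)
Your decomposition of $(\gamma_l(T),\gamma_r(T))$ into the lateral regions and the middle region, and your treatment of the middle region via the one\nobreakdash-sided Lipschitz estimate coming from Lemma \ref{L_convex} and the monotonicity of $\X$, coincide with the paper's argument and are fine. The genuine gap is in the lateral regions: the claim that ``the characteristic carries the constant boundary datum, hence $u(T,\cdot)\equiv a$ on $L$'' is false. The boundary condition $u(t,\gamma_l(t))=a$ holds only in the Bardos--LeRoux--N\'ed\'elec sense, so the trace of $u$ on $\graph\gamma_l$ need not equal $a$, and characteristics detaching from the lateral boundary need not carry the value $a$. Concretely, $\gamma_l$ is merely Lipschitz (in the application it is a Lagrangian characteristic, hence a curve with kinks), and at a point where $\dot\gamma_l$ drops below $f'(a)$ a rarefaction fan is emitted into $\Omega$ from that boundary point: its characteristics all lie in your set $L$ but carry a whole interval of values between $a$ and the pre-kink interior trace. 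The same happens at the corner $(\bar t,\gamma_l(\bar t))$. So $f'\circ u(T)$ is in general not constant on $L$, and your accounting of the lateral contribution as ``two transition jumps'' does not survive; with only this reasoning you cannot reach the stated constant.

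What is true, and what the paper proves, is that $u(T)$ restricted to each lateral region is \emph{monotone}. This is obtained by a different mechanism than characteristic transport of the boundary value: one takes the two extreme characteristics $\X(\cdot,y^-)$, $\X(\cdot,y^+)$ bounding the region swept by the $L$-characteristics, observes that they emanate from a single point of $\graph\gamma_l$ and are admissible boundaries (Proposition \ref{P_lagr_repr}(4) together with the stability result, Proposition \ref{P_stab}), and then applies the initial-boundary value TV estimate (Proposition \ref{P_TV_est}) to the resulting triangular domain with constant boundary data and degenerate initial slice; this forces $u(T)\llcorner(\gamma_l(T),x_l)$ to be monotone, whence $\TV_{(\gamma_l(T),x_l)}(f'\circ u(T))\le\|f''\|_\infty\|u_0\|_\infty$. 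You would need to insert this step (or an equivalent one) to repair the proof; the rest of your argument, including the passage from positive to total variation and the reduction to generic $T$ by lower semicontinuity, then goes through as in the paper.
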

\begin{proof}
Let $\X$ be a Lagrangian representation of $u$ and consider the following decomposition: 
\begin{equation*}
(\gamma_l(T),\gamma_r(T))=A_l\cup A_m\cup A_r,
\end{equation*}
where 
\begin{equation*}
\begin{split}
A_l:=&~\left\{x\in (\gamma_l(T),\gamma_r(T)): \exists t_0\in [\bar t,T), \exists \bar y\in \R 
\big(\X(t_0,\bar y)=\gamma_l(t_0), \X(T,\bar y)=x\big)\right\}, \\
A_r:=&~ \left\{x\in (\gamma_l(T),\gamma_r(T)): \exists t_0\in [\bar t,T), \exists \bar y\in \R 
\big(\X(t_0,\bar y)=\gamma_r(t_0), \X(T,\bar y)=x\big)\right\}, \\
 A_m:=&~(\gamma_l(T),\gamma_r(T))\setminus (A_l\cup A_r).
\end{split}
\end{equation*}
By monotonicity and continuity of the flow $\X$ with respect to $y$, there exist $x_l,x_r\in [\gamma_l(T),\gamma_r(T)]$ such that
\begin{equation*}
A_l=(\gamma_l(T),x_l] \qquad \mbox{and} \qquad A_r=[x_r\gamma_r(T)).
\end{equation*}
Observe that it may be $x_r\le x_l$; in that case  $A_m=\emptyset$.

Assume $A_l$ is nonempty and let $\bar y\in\R$ and $t_0\in [\bar t, T)$ be such that
\begin{equation*}
\X(T,\bar y)= x_l \qquad \mbox{and} \qquad \X(t_0,\bar y)=\gamma_l(t_0).
\end{equation*}
Moreover let
\begin{equation}\label{E_def_y-w-}
y^-:=\max\{y:\X(T,y)=\gamma_l(T)\} \qquad \mbox{and} \qquad \bar w^- := \lim_{x\rightarrow \gamma_l(T)^+}u(T,x),
\end{equation}
and similarly
\begin{equation*}
y^+:=\min\{y:\X(T,y)=x_l\} \qquad \mbox{and} \qquad \bar w^+ := \lim_{x\rightarrow x_l^-}u(T,x).
\end{equation*}
Denote by
\begin{equation*}
\Omega_l:=\{(t,x) \in (\bar t,T)\times \R : \X(t,y^-)<x<\X(t,y^+)\}.
\end{equation*}
By definition of $y^-$, $y^+$ and the monotonicity of $\X$ with respect to $y$ there exists $t_0\in [\bar t, T)$ such that
$\X(t_0,y^-)=\X(t_0,y^+)=\X(t_0,y_j)$.
Since the limit of admissible boundaries is an admissible boundary in the sense of Proposition \ref{P_stab}, 
$(\X(\cdot,y^-),\bar w^-)$ and $(\X(\cdot,y^+),\bar w^+)$ are admissible boundaries of $u$ for $t\in [0,T)$.
Therefore the restriction $u(T)\llcorner (\gamma_l(T),x_l)$ is the entropy solution at time $T$ of the boundary value problem
\begin{equation*}
\begin{cases}
u_t+f(u)_x=0 & \mbox{in }\Omega_l, \\
u(t,\X(t,y^-)=\bar w^- & \mbox{for }t\in (t_0,T), \\
u(t,\X(t,y^+)=\bar w^+ & \mbox{for }t\in (t_0,T).
\end{cases}.
\end{equation*}
By Proposition \ref{P_TV_est}, this implies that $u(T)\llcorner (\gamma_l(T),x_l)$ is monotone, therefore
\begin{equation}\label{E_est_left}
\TV_{(\gamma_l(T),x_l)}(f'\circ u(T))\le \|f''\|_\infty\|u_0\|_\infty.
\end{equation}
Similarly we can prove that 
\begin{equation}\label{E_est_right}
\TV_{(x_r,\gamma_r(T))}(f'\circ u(T))\le \|f''\|_\infty\|u_0\|_\infty,
\end{equation}
therefore it remains to estimate the total variation on $A_m$. Assume that $A_m\ne \emptyset$ i.e. $x_l<x_r$. This case is well-known, we take 
advantage of the fact that by Lemma \ref{L_convex}, the characteristics starting from $x \in A_m$ are segments in $[\bar t,T]$ and we deduce a one-sided Lipschitz estimate.
Denote by 
\begin{equation*}
\bar y^-:=\max\{y:\X(T,y)=x_l\} \qquad \mbox{and} \qquad \bar y^+:=\max\{y:\X(T,y)=x_r\}.
\end{equation*}
By Lemma \ref{L_convex}, for every $x\in A_m$ there exists $ y(x)\in (\bar y^-,\bar y^+)$ such that for every $t\in [\bar t,T]$, it holds
\begin{equation*}
\X(t,y(x))=x-f'(u(T,x))(T-t).
\end{equation*}
By monotonicity of the flow, for every $x_l<x_1<x_2<x_r$, it holds
\begin{equation*}
x_1-f'(u(T,x_1))(T-\bar t) = \X(\bar t, y(x_1)) \le \X(\bar t, y(x_2)) =  x_2-f'(u(T,x_2))(T-\bar t)
\end{equation*}
which gives the one-sided Lipschitz estimate
\begin{equation*}
f'(u(T,x_2))\le f'(u(T,x_1)) + \frac{x_2-x_1}{T-\bar t}.
\end{equation*}
This implies that the positive total variation
\begin{equation*}
\TV^+_{(x_l,x_r)}(f'\circ u(T)) \le \frac{x_r-x_l}{T-\bar t}.
\end{equation*}
Hence, by Lemma \ref{L_trivial}, the whole total variation can be estimate by
\begin{equation}\label{E_est_middle}
\TV_{(x_l,x_r)}(f'\circ u(T)) \le 2 \TV^+_{(x_l,x_r)}(f'\circ u(T)) + 2\|f''\|_\infty\|u_0\|_\infty \le 2\frac{x_r-x_l}{T-\bar t} +  2\|f''\|_\infty\|u_0\|_\infty.
\end{equation}
Adding \eqref{E_est_left}, \eqref{E_est_right}, \eqref{E_est_middle} and taking into account the possible jumps of $f'\circ u(T)$ at the points 
$x_l$ and $x_r$ we get
\begin{equation*}
\TV_{(\gamma_l(T),\gamma_r(T))}(f'\circ u(T)) \le 6\|f''\|_\infty\|u_0\|_\infty + 2 \frac{\gamma_r(T)-\gamma_l(T)}{T-\bar t},
\end{equation*}
that is the claimed estimate.
\end{proof}

The case of a flux with an inflection point is more elaborate and it is based on the structure of maximal characteristics.
\begin{proposition}\label{P_inflection}
Let $u$ be the entropy solution of \eqref{E_cl} with $u_0\in X$; let $\bar t,T,\gamma_l,\gamma_r$ and $\Omega$ be defined as in \eqref{E_Omega}.
Assume that $\bar t,T>0$ are generic (Definition \ref{D_generic}) and that there exists $a,b\in [0,\|u_0\|_\infty]$ such that $u\llcorner \Omega$
solves the initial-boundary value problem
\begin{equation*}
\begin{cases}
u_t+f(u)_x=0 & \mbox{in }\Omega, \\
u(t,\gamma_l(t))=a & \mbox{for }t\in (0,T), \\
u(t,\gamma_r(t))=b & \mbox{for }t\in (0,T).
\end{cases}
\end{equation*}
Denote by
\begin{equation*}
I:=\conv \big( \{a,b\}\cup u(\bar t,(\gamma_l(\bar t),\gamma_r(\bar t)))\big).
\end{equation*}
Assume moreover that there exists a unique inflection point $\bar w\in I$ of $f$ and that $\bar w$ has degeneracy $p\in \N$.
Let $\delta',\e'>0$ be given by Lemma \ref{L_conjugate} and assume finally that $I\subset (\bar w-\delta',\bar w+\delta')$.

Then there exists a constant $C>0$ depending on $\e', \|f'\|_\infty, \|f''\|_\infty, \mathcal L^1(\conv (\supp u_0)),\|u_0\|_\infty$ such that
\begin{equation}\label{E_est_inflection}
\TV_{(\gamma_l(T),\gamma_r(T))} (f'\circ u(T)) \le C\left(1+\frac{1}{T-\bar t}\right).
\end{equation}
\end{proposition}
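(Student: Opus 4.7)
The plan is to adapt the proof of Proposition \ref{P_convex}, replacing the straight-line structure of characteristics by the piecewise-affine bouncing structure of maximal backward characteristics provided by Lemma \ref{L_inflection}. As in that proof, decompose $(\gamma_l(T), \gamma_r(T)) = A_l \cup A_m \cup A_r$ according to whether the Lagrangian characteristic through a point at time $T$ traces back to $\graph \gamma_l$, to $\graph \gamma_r$, or to the bottom face $\{\bar t\} \times (\gamma_l(\bar t), \gamma_r(\bar t))$. The estimate on $A_l$ and $A_r$ is obtained verbatim as in the convex case: the admissibility of $(\X(\cdot, y^\pm), \bar w^\pm)$ as boundaries follows from Proposition \ref{P_stab}, and Proposition \ref{P_TV_est}, which uses no convexity, bounds $\TV_{A_l \cup A_r}(u(T))$ by $2\|u_0\|_\infty$, and hence $\TV_{A_l \cup A_r}(f' \circ u(T))$ by $2\|f''\|_\infty \|u_0\|_\infty$. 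Note that in the inflection setting $u(T)$ on these intervals need not be monotone, but the total variation estimate from Proposition \ref{P_TV_est} still applies.

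The core of the argument is on $A_m$, and it is here that Corollary \ref{C_conjugate} plays the decisive role. For $\bar x \in A_m$, by Lemma \ref{L_inflection} the maximal backward characteristic $\xi_{\bar x}$ starts at the bottom face at time $\bar t$ and is piecewise affine with $N(\bar x)$ segments of velocities $f'(u_0(y_n(\bar x)))$ linked by the conjugate relation $u_0(y_{n+1}) = u_0(y_n)^*$. Following \cite{Cheng_speed_BV}, I would stratify $A_m = \bigsqcup_{n \geq 1} A_m^{(n)}$ with $A_m^{(n)} := \{\bar x \in A_m : N(\bar x) = n\}$, and estimate the variation contribution of each $A_m^{(n)}$ separately. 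The base layer $n = 1$ is handled as in Proposition \ref{P_convex}: the segment joining $(\bar t, \xi_{\bar x}(\bar t))$ to $(T, \bar x)$ has velocity $f'(u(T, \bar x))$, and property (5) of Lemma \ref{L_inflection} makes the map $\bar x \mapsto \bar x - (T-\bar t)\,f'(u(T, \bar x))$ non-decreasing on $A_m^{(1)}$, yielding a one-sided Lipschitz estimate and hence, via Lemma \ref{L_trivial}, a total variation bound of order $\mathcal L^1(A_m^{(1)})/(T-\bar t) + \|f''\|_\infty \|u_0\|_\infty$. For $n \geq 2$, the conjugate relation coupled with the monotonicity of the flow $\X$ transfers the variation estimate from $A_m^{(n-1)}$ to $A_m^{(n)}$ with a contraction factor of $1-\varepsilon'$ per bounce, thanks to the second inequality in \eqref{E_ratio_delta}. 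Summing the resulting geometric series over $n$ and combining with the contributions from $A_l$ and $A_r$ produces the announced bound \eqref{E_est_inflection}.

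The main obstacle is making the induction step across bounces fully rigorous. The bounce times $t_0(\bar x), \ldots, t_{N-1}(\bar x)$ depend on $\bar x$ in a possibly discontinuous way, and one must pair points of $A_m^{(n)}$ with suitable points of $A_m^{(n-1)}$ at the $(n-1)$-th bounce in such a way that the monotonicity in property (5) of Lemma \ref{L_inflection} does transfer the variation of the velocities at the $n$-th level to the $(n-1)$-th level, with a pointwise application of the conjugate inequality \eqref{E_ratio_delta}. This bookkeeping is the delicate content of \cite{Cheng_speed_BV}; in our setting the single-inflection hypothesis and the detailed structure of $\xi_{\bar x}$ given by Lemma \ref{L_inflection} are precisely what is needed to carry it out.
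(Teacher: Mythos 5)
Your decomposition $A_l\cup A_m\cup A_r$ and your identification of Lemma \ref{L_inflection} and the conjugate contraction \eqref{E_ratio_delta} as the key tools match the paper's strategy, but there are two genuine gaps. First, the estimate on $A_l$ and $A_r$ is \emph{not} verbatim the convex one. The right boundary of the region $\Omega_l$ is the maximal backward characteristic $\xi_{\bar x}$, along which the trace $u^+(t)=u(t,\xi_{\bar x}(t))$ is not constant: it jumps to the conjugate value at every bounce time $t_n$. Proposition \ref{P_TV_est} therefore gives $\TV_{(\gamma_l(T),\bar x)}(u(T))\le \TV_{(t_0,T)}(u^+)+|u^+(t_0+)-a|$, and $\TV_{(t_0,T)}(u^+)=\sum_{n\ge 2}|u_0(y_n)-u_0(y_{n-1})|$ must itself be controlled; the paper does this with the geometric decay $|u_0(y_n)-\bar w|\le(1-\e')^{n-1}|u_0(y_1)-\bar w|$, arriving at the bound $\|u_0\|_\infty(1+2/\e')$ rather than your unjustified $2\|u_0\|_\infty$.

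Second, and more seriously, the stratification of $A_m$ by the bounce number $N(\bar x)$ with an induction ``transferring the variation from level $n-1$ to level $n$ with factor $1-\e'$'' does not close. Iterating the conjugate relation reduces the variation of $f'(u(T,\cdot))$ on a stratum to the variation of the \emph{first-segment} velocities $x\mapsto f'(u_0(y_1(x)))$, and the only monotonicity available there is non-crossing of segments over the time interval $[\bar t,t_1(x)]$, whose length can be arbitrarily small; the resulting one-sided Lipschitz constant $1/(t_1(x)-\bar t)$ is not uniformly controlled, so no geometric series forms. This is not bookkeeping: it is the reason the paper's proof takes a different route. There, $A_m$ is partitioned into intervals $A(\el)$ according to which monotonicity blocks of $u_0$ the bounces visit (shown to be intervals via a partial order), refined so that the bounce times of any two points in the same piece differ by less than $(T-\bar t)/2$ in total. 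For a fixed pair $x_1<x_2$ in one piece, one builds pairwise disjoint triangles $\Delta_n$ between the $n$-th segments of $\xi_{x_1}$ and $\xi_{x_2}$, with $\Delta_n=\frac{\tau_n^2}{2}\bigl(f'(u_0(y_n(x_2)))-f'(u_0(y_n(x_1)))\bigr)$, observes $\sum_n\tau_n\ge(T-\bar t)/2$, and combines the levels by Cauchy--Schwarz against the geometric weights $(1-\e')^{-(\#\el-n)}$ coming from \eqref{E_ratio_delta}; this bounds the final velocity gap by $C\,(\text{area})/(T-\bar t)^2$ and summing the disjoint areas closes the estimate. Without this area/Cauchy--Schwarz mechanism your argument cannot produce the $1/(T-\bar t)$ dependence in \eqref{E_est_inflection}.
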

\begin{proof}
The structure of the proof of this proposition is similar to the one of Proposition \ref{P_convex}. 
Here we reach the final estimate studying the behavior of maximal backward characteristics.
Let $(\X,\T)$ be a Lagrangian representation of $u$ and for every $x\in (\gamma_l(T),\gamma_r(T))$ let $\xi_x$ be the maximal backward
characteristic from $(T,x)$. Consider moreover the corresponding $y_1(x),\ldots y_{N(x)}$ and $t_0(x),\ldots, t_{N(x)}$ given by Lemma
\ref{L_inflection}.
Consider the decomposition 
\begin{equation*}
(\gamma_l(T),\gamma_r(T))=A_l\cup A_m \cup A_r,
\end{equation*}
where
\begin{equation*}
\begin{split}
A_l:=&~\left\{\bar x\in (\gamma_l(T),\gamma_r(T)): \xi_{\bar x}(t_0(\bar x))=\gamma_l(t_0(\bar x)) \right\}, \\
A_r:=&~ \left\{\bar x\in (\gamma_l(T),\gamma_r(T)): \xi_{\bar x}(t_0(\bar x))=\gamma_r(t_0(\bar x)) \right\}, \\
A_m:=&~(\gamma_l(T),\gamma_r(T))\setminus (A_l\cup A_r).
\end{split}
\end{equation*}

Let $\bar x\in A_l$ and set 
\begin{equation*}
\Omega_l:=\left\{(t,x)\in (t_0(\bar x),T]\times \R: \gamma_l(t)<x<\xi_{\bar x}(t)\right\}.
\end{equation*}
Then $u(T)\llcorner (\gamma_l(T),\bar x)$ is the entropy solution at time $T$ of the boundary value problem
\begin{equation*}
\begin{cases}
u_t+f(u)_x=0 & \mbox{in }\Omega_l, \\
u(t,\gamma_l(t))=a & \mbox{for }t\in (t_0,T), \\
u(t,\xi_{\bar x}(t))=u^+(t) & \mbox{for }t\in (t_0,T),
\end{cases}
\end{equation*}
where $u^+(t)=u(t,\xi_{\bar x}(t))$, and this definition makes sense since by Lemma \ref{L_inflection}, for every $t\notin \{t_i\}_{i=1}^n$  
the solution $u$ is continuous at $(t,\xi_{\bar x}(t))$.
By Proposition \ref{P_TV_est}, it holds
\begin{equation}\label{E_tvleft}
\TV_{(\gamma_l(T),\bar x)}(u(T))\le \TV_{(t_0,T)} (u^+) + |u^+(t_0+)-a|,
\end{equation}
and by Lemma \ref{L_inflection}, we have that
\begin{equation*}
\TV_{(t_0,T)} (u^+) = \sum_{n=2}^{N(\bar x)}|u_0(y_n)-u_0(y_{n-1})| \le 2\sum_{n=1}^{N(\bar x)}|u_0(y_n)-\bar w|.
\end{equation*}
Since for every $n=2,\ldots, N(\bar x)$, it holds $u_0(y_n(\bar x))=u_0(y_{n-1}(\bar x))^*$, by Lemma \ref{L_conjugate} and \eqref{E_velocity_contact}, it holds
\begin{equation*}
|u_0(y_n(\bar x))-\bar w|\le |u_0(y_1(\bar x))-\bar w|(1-\e')^{h-1}.
\end{equation*}
So we finally have that
\begin{equation}\label{E_tvu+}
\TV_{(t_0,T)} (u^+) \le 2|u_0(y_1(\bar x))-\bar w|\sum_{n=1}^{N(\bar x)}(1-\e')^{n-1} \le \frac{2\|u_0\|_\infty}{\e'}.
\end{equation}
By \eqref{E_tvleft} and \eqref{E_tvu+}, we get
\begin{equation*}
\TV_{(\gamma_l(T),\bar x)}(u(T))\le \|u_0\|_\infty \left(1+\frac{2}{\e'}\right),
\end{equation*}
and, since the estimate is independent of $\bar x\in \Int A_l$, it holds
\begin{equation*}
\TV_{\Int A_l}(u(T))\le  \|u_0\|_\infty \left(1+\frac{2}{\e'}\right).
\end{equation*}
Therefore it immediately follows that
\begin{equation}\label{E_est_A_l}
\TV_{\Int A_l}(f'\circ u(T))\le \|f''\|_\infty \|u_0\|_\infty \left(1+\frac{2}{\e'}\right),
\end{equation}
and the same argument proves that
\begin{equation}\label{E_est_A_r}
\TV_{\Int A_r}(f'\circ u(T))\le \|f''\|_\infty \|u_0\|_\infty \left(1+\frac{2}{\e'}\right).
\end{equation}

It remains to prove the estimate in $A_m$.
Again we take advantage of the fact that the generalized characteristics $\xi_{\bar x}$ for $\bar x\in \Int A_m$ are defined on the whole time interval
$[\bar t,T]$. 
The estimate is obtained by partitioning $(x_l,x_r)=\Int A_m$ in regions where the maximal characteristics of each region cross the same set of minimal 
backward characteristic, bounding $\TV^+ (f'\circ u)$ on each of these regions and adding them.

Let
\begin{equation*}
y^-:=\max\{y: \X(T,y)=\inf A_m\} \qquad \mbox{and} \qquad y^+:=\min\{y: \X(T,y)=\sup A_m\}.
\end{equation*}
Since $u_0$ is piecewise monotone, there exist $L \in \N$ and $y^-=\bar y_0<\ldots< \bar y_L=y^+$ such that
\begin{enumerate}
\item for every $l=1,\ldots, L$,
\begin{equation*}
\bar y_l\in \{y:\T(y)\ge \bar t\} \qquad \mbox{and} \qquad u_0(\bar y_l)=\bar w.
\end{equation*}
\item the function $u_0$ alternates the sign on $((y_l,y_{l+1}))_{l=1}^{L-1}$, i.e. the function 
\begin{equation*}
\sum_{l=1}^{L-1}(-1)^lu_0\llcorner \{y\in(\bar y_l,\bar y_{l+1}): \T(y)\ge \bar t\}
\end{equation*}
has constant sign; without loss of generality we assume that it is nonnegative.
\item  for every $l=1,\ldots,L-2$, there exist $y'\in \{y\in(\bar y_l,\bar y_{l+1}): \T(y')\ge \bar t\}$ and $y''\in \{y\in(\bar y_{l+1},\bar y_{l+2}): \T(y)\ge \bar t\}$
such that $u_0(y')u_0(y'')<0$.
\end{enumerate}
For every $x\in(x_l,x_r)$ and for every $n=1,\ldots, N(x)$, let $l(x,n)$ be the unique value in $\{1,\ldots,L\}$ such that 
\begin{equation*}
y_n(x)\in[\bar y_{l(x.n)},\bar y_{l(x_n)+1}) \qquad \mbox{and let} \qquad \el(x):=\{l(x,n):n=1,\ldots,N(x)\}.
\end{equation*}
For every $\el \in  \mathcal P (\{1,\ldots,L-1\})$, let
\begin{equation*}
A(\el):=\{x\in \Int A_m : \el(x)=\el\}.
\end{equation*}
Clearly it holds
\begin{equation*}
\bigcup_{\el \in \mathcal P({1,\ldots,L})} A(\el)=(x_l,x_r),
\end{equation*}
now we check that for every $\el \in \mathcal P (\{1,\ldots,L\})$ the set $A(\el)$ is an interval.

In order to do this let us introduce a partial ordering on $\mathcal P (\{1,\ldots,L-1\})$:
we say that $\el_1 \preceq \el_2$ if
\begin{enumerate}
\item $\min \el_1\le \min \el_2$;
\item $\max \el_1\le \max \el_2$;
\item for every $j\in [\min \el_2, \max \el_1]$,
\begin{equation*}
j\in \el_2 \quad \Rightarrow \quad j\in \el_1.
\end{equation*}
\end{enumerate}
It is standard to check that $\preceq$ is a partial ordering, so in order to prove that $A(\el)$ are intervals, it suffices to prove that for every 
$x_1,x_2\in (x_l,x_r)$ it holds
\begin{equation*}
x_1<x_2 \qquad \Longrightarrow \qquad  \el(x_1) \preceq \el (x_2).
\end{equation*}
The conditions (1) and (2) of the definition of $\preceq$ immediately follow from the monotonicity of $x\mapsto \xi_x$ (Point (5) of Lemma \ref{L_inflection}).
Finally by Proposition \ref{P_lagr_repr}, it follows that if $\X(t',\bar y_{l_1})=\X(t', \bar y_{l_2})$ for some $t'\in (\bar t, T)$, then for every $t\in [t',T]$
it holds $\X(t,\bar y_{l_1})=\X(t, \bar y_{l_2})$ and, by Point (5) in Lemma \ref{L_inflection}, this implies that if $j\in [\min \el_2, \max \el_1]$ is such that
$j\notin \el_1$. Then $j\notin \el_2$ and this proves condition (3) in the definition of $\preceq$.

\textbf{Claim 1.} There exist $V\in \N$ and $x_l=\bar x_1<\ldots<\bar x_V=x_r$ such that for every $v=1,\ldots, V-1$ there exists $\el(v) \in \mathcal P (\{1,\ldots,L-1\})$ such that
\begin{equation}\label{E_Aelv}
(\bar x_v,\bar x_{v+1})\subset A(\el(v)).
\end{equation}
Moreover if $v= 1,\ldots, V-1$ is such that $\#\el(v)\ge 2$, then
\begin{enumerate}
\item for every $x_1,x_2\in (\bar x_v,\bar x_{v+1})$ it holds
\begin{equation}\label{E_small_gap_t}
\sum_{n=1}^{\#\el(v)}|t_n(x_2)-t_n(x_1)| < \frac{T-\bar t}{2}.
\end{equation}
\item the velocity $f'\circ u(T)$ is strictly increasing in $(\bar x_v,\bar x_{v+1})$;
\end{enumerate}

\emph{Proof of Claim 1.}
Let
\begin{equation*}
\{\tilde x_1, \ldots, \tilde x_Q\}:= \{\inf A(\el): \el \in \mathcal P(\{1,\ldots, L-1\}) \mbox{ and }A(\el)\ne \emptyset\}\cup\{x_l,x_r\}
\end{equation*}
with $\tilde x_1<\ldots, \tilde x_Q$. Since $(A(\el))_{\el\in\mathcal P(\{1,\ldots, L-1)}$ is a family of pairwise disjoint intervals, the sequence 
$(\tilde x_q)_{q=1}^Q$ satisfies \eqref{E_Aelv}. \newline 
Therefore, in order to prove (1), it is sufficient to prove that for every $q=1,\ldots, Q-1$ there exists $V'\in \N$ and $\tilde x_q=x_1'<\ldots<x_{V'}'=\tilde x_{q+1}$ 
such that condition (2) holds for every $v'=1\ldots V'-1$.
Let $q\in \{1,\ldots, Q-1\}$ and let $\el \in \mathcal P(\{1,\ldots, L-1\})$ be such that
\begin{equation*}
(\tilde x_q, \tilde x_{q+1})\subset A(\el).
\end{equation*}
Since for every $x\in \Int A(\el)$ and for every $n=1,\ldots, \#\el(v) $ the function $x\mapsto t_n(x)$ is increasing, the set
\begin{equation*}
F:=\left\{x: \exists n=1,\ldots, \#\el(v): \left( t_l(x+)-t_l(x-) \ge \frac{T-\bar t}{2L} \right) \right\}
\end{equation*}
is finite.
For every $n=1,\ldots, \#\el(v)$ let $\tilde \delta (n)$ be given by Lemma \ref{L_uniform_continuity} with 
\begin{equation*}
g=t_n:\Int A(\el)\rightarrow \R \qquad \mbox{and} \qquad \tilde \e = \frac{T-\bar t}{2L},
\end{equation*}
and fix
\begin{equation*}
\tilde \delta = \min_{n=1,\ldots, \#\el(v)}\delta(n).
\end{equation*}
Then, by Lemma \ref{L_uniform_continuity}, any sequence $(x'_{v'})_{v'=1}^{V'}$ with
$x_1'=\tilde x_q$, $x'=\tilde x_{q+1}$ and such that for every $v'=1,\ldots,V'-1$
\begin{equation*}
0<x_{v'+1}'-x_{v'}'<\tilde \delta
\end{equation*}
satisfies condition (1) of Claim 1.
The Point (2) follows by the Point (5) in Proposition \ref{P_lagr_repr} and this concludes the proof of Claim 1.

\textbf{Claim 2.}
Let $(\bar x_v,\bar x_{v+1})$ as in Claim 1.
There exist $\bar \delta>0$ and a constant $C$ as in the statement of Proposition \ref{P_inflection} such that the positive total variation
\begin{equation*}
\TV_{(\bar x_v,\bar x_{v+1})}^+(f'\circ u(T)) \le \frac{C}{(T-\bar t)^2}A_{v},
\end{equation*}
where $A_{v}$ denotes the area of the region
\begin{equation*}
\Omega_{v}:=\{(t,x) \in (\bar t,T): \xi_{\bar x_v}(t)<x<\xi_{\bar x_{v+1}}(t)\}.
\end{equation*}
\emph{Proof of Claim 2.}
If $\#\el(v)=1$ we are in the same position as in Proposition \ref{P_convex}: in particular $f'\circ u(T)$ is one-sided Lipschitz and
\begin{equation}
\TV^+_{(\bar x_v,\bar x_{v+1})}(f'\circ u(T)) \le \frac{\bar x_{v+1}-\bar x_v}{T-\bar t}\le \frac{A_v}{2(T-\bar t)^2}.
\end{equation}
Now we consider the case $\#\el(v)\ge 2$ so that by Claim 1 for every $x_1<x_2$ in $(\bar x_v, \bar x_{v+1})$, it holds
\begin{equation*}
\TV_{(x_1,x_2)}(f'\circ u(T)) = \TV^+_{(x_1,x_2)}(f'\circ u(T))= f'(u(T,x_2))-f'(u(T,x_1)).
\end{equation*}

For every $n=2,\ldots, \#\el(v)$, consider the time $t'_n \in \R$ for which the straight-line extensions of the segments 
$\X(\cdot, y_n(x_1))\llcorner[t_{n-1}(x_1),t_n(x_1)]$ and $\X(\cdot, y_n(x_2))\llcorner[t_{n-1}(x_2),t_n(x_2)]$ intersect.
Since they are tangent to the same convex curve at the time $t_{n-1}(x_1)$ and $t_{n-1}(x_2)$ respectively it holds 
\begin{equation}\label{E_bound_t'}
t'_n\in (t_{n-1}(x_1),t_{n-1}(x_2)).
\end{equation}
See Figure \ref{F_Cheng}. Moreover for every $n=2,\ldots,\#\el(v)$, set
\begin{equation*}
\tau_n:= (t_n(x_1)-t'_n)^+ \qquad \mbox{and let}\qquad \tau_1:=t_1(x_1)-\bar t.
\end{equation*}
Let $\Delta_{\#\el(v)}$ be the area of the triangle bounded by the following three lines:
\begin{equation*}
\begin{split}
& \{(t,x): x= x_1 - f'(u_0(y_{\#\el(v)}(x_1)))(T- t)\}, \\
& \{(t,x): x= x_2 - f'(u_0(y_{\#\el(v)}(x_2)))(T- t)\}, \\
& \{(t,x): t=T\}.
\end{split}
\end{equation*}
If $n=2,\ldots, \#\el(v)-1$ is such that $\tau_n>0$ let $\Delta_n$ be the area of the triangle bounded by the following three lines:
\begin{equation*}
\begin{split}
& \{(t,x): x= \X(t_n(x_1),y_n(x_1)) - f'(u_0(y_n(x_1)))(t_n(x_1)- t)\}, \\
& \{(t,x): x= \X(t_n(x_2),y_n(x_2)) - f'(u_0(y_n(x_2)))(t_n(x_2)- t)\}, \\
& \{(t,x): t=t_n(x_1)\}.
\end{split}
\end{equation*}
If $n=2,\ldots, \#\el(v)-1$ is such that $\tau_n=0$, let $\Delta_n=0$, and finally let $\Delta_1$ be the area of the trapezoid delimited by the lines 
\begin{equation*}
\begin{split}
& \{(t,x): x= \X(t_1(x_1),y_1(x_1)) - f'(u_0(y_1(x_1)))(t_1(x_1)- t)\}, \\
& \{(t,x): x= \X(t_1(x_2),y_1(x_2)) - f'(u_0(y_1(x_2)))(t_1(x_2)- t)\}, \\
& \{(t,x): t=t_1(x_1)\}, \\
& \{(t,x): t=\bar t\}.
\end{split}
\end{equation*}
For every $n=2,\ldots,\#\el(v)$ the area of the triangle is given by
\begin{equation}\label{E_Delta_k}
\Delta_n=\frac{\tau_n^2}{2}\big(f'(u_0(y_n(x_2)))-f'(u_0(y_n(x_1)))\big),
\end{equation}
and for $n=1$
\begin{equation*}
\begin{split}
\Delta_1=&~ \frac{\tau_1^2}{2}\big(f'(u_0(y_1(x_2)))-f'(u_0(y_1(x_1)))\big)+ \tau_1\big(\X(\bar t,y_1(x_2))- \X(\bar t,y_1(x_1))\big) \\ 
\ge &~  \frac{\tau_1^2}{2}\big(f'(u_0(y_1(x_2)))-f'(u_0(y_1(x_1)))\big).
\end{split}
\end{equation*}

\begin{figure}
\centering
\def\svgwidth{0.9\columnwidth}
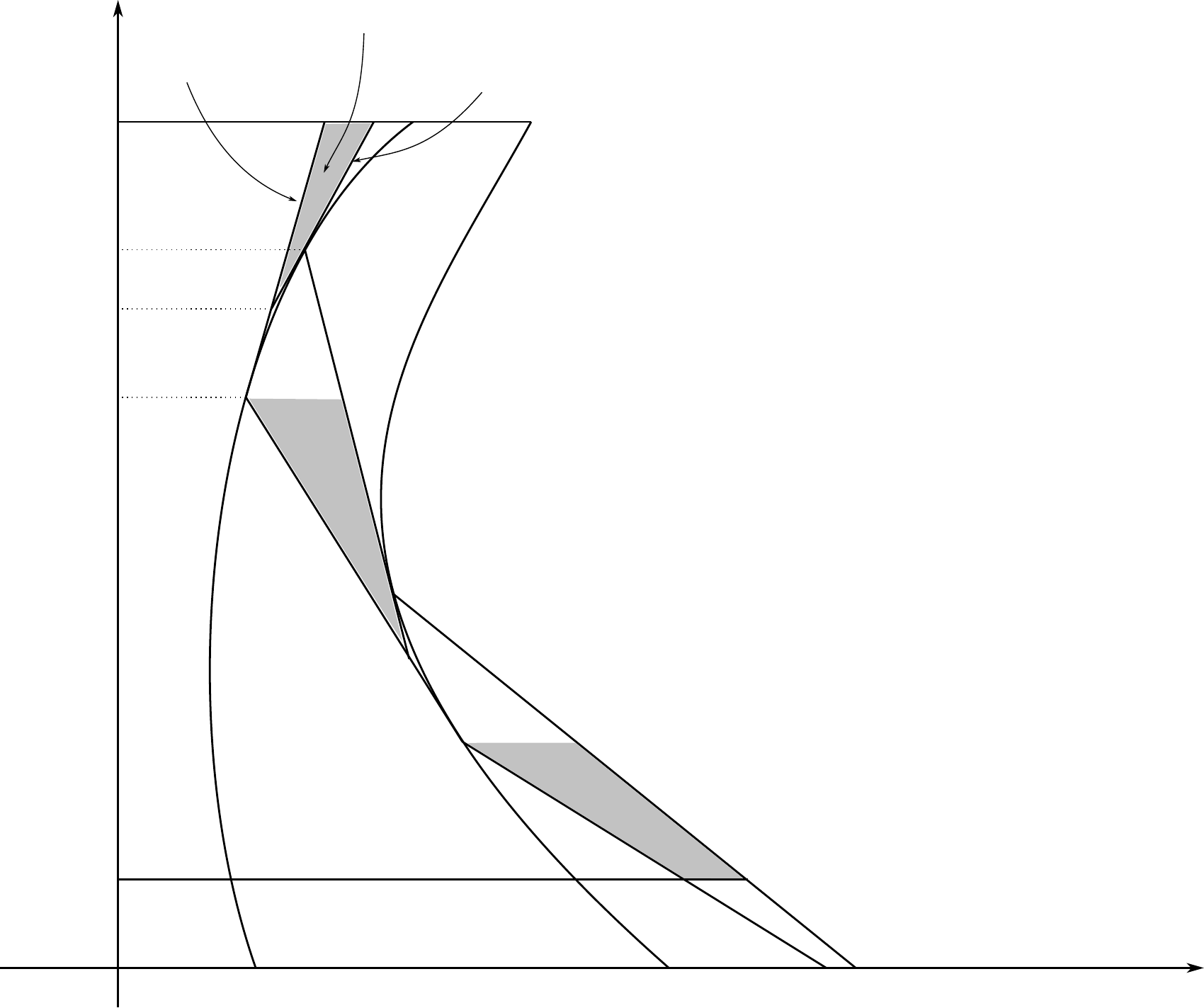
\caption{The notation of the construction to estimate $\TV^+( f'\circ u(T))$ in $A_m$ for fluxes with an inflection point of polynomial degeneracy.}\label{F_Cheng}
\end{figure}

We now prove that
\begin{equation}\label{E_lower_tau}
\sum_{n=1}^{\#\el(v)}\tau_n \ge \frac{T-\bar t}{2}.
\end{equation}

Recalling that $t_{\#\el(v)}(x_1)=T$, $t_0(x_2)=\bar t$ and \eqref{E_bound_t'}, we have that
\begin{equation*}
\begin{split}
\sum_{n=1}^{\#\el(v)}\tau_n \ge &~ \sum_{n=1}^{\#\el(v)}\big(t_n(x_1)-t'_n\big) \\
\ge &~ \sum_{n=1}^{\#\el(v)}\big( t_n(x_1)-t_{n-1}(x_2)\big) \\
= &~ T-\bar t + \sum_{n=1}^{\#\el(v)-1}\big(t_n(x_1)-t_n(x_2)\big) \\
\ge &~ \frac{T-\bar t}{2},
\end{split}
\end{equation*}
where the last inequality follows by \eqref{E_small_gap_t}.

Since for every $n=2,\ldots,\#\el(v)$ and $s=1,2$, $u_0(y_n(x_s))=u_0(y_{n-1}(x_s))^*$, by  \eqref{E_Delta_k} and iterating \eqref{E_ratio_delta}, we have that for every $n=1,\ldots, \#\el(v)$ it holds
\begin{equation}\label{E_estimate_Delta_n}
\Delta_n\ge \frac{\tau_n^2}{2}\big(f'(u_0(y_{\#\el(v)}(x_2)))-f'(u_0(y_{\#\el(v)}(x_1)))\big)\bigg(\frac{1}{1-\e'}\bigg)^{\#\el(v)-n}.
\end{equation}
Let us for brevity denote by
\begin{equation*}
\lambda:=\frac{1}{1-\e'}>1.
\end{equation*}
Since the $(\Delta_n)_{n=1}^{\#\el(v)}$ are the area of pairwise disjoint regions contained in $\Omega_{x_1,x_2}$, it holds
\begin{equation*}
\sum_{n=1}^{\#\el(v)} \Delta_n \le A_{x_1,x_2}.
\end{equation*}
Therefore adding for $n=1,\ldots, \#\el(v)$ the inequality \eqref{E_estimate_Delta_n} we obtain
\begin{equation*}
f'(u_0(y_{\#\el(v)}(x_2)))-f'(u_0(y_{\#\el(v)}(x_1)) \le A_{x_1,x_2}\bigg(\sum_{n=1}^{\#\el(v)} \frac{\tau_n^2}{2}\lambda^{\#\el(v)-n} \bigg)^{-1}.
\end{equation*}
Hence the proof of Claim 2 reduces to proving that there exists a constant $C>0$ as in the statement of Proposition \ref{P_inflection} such that
\begin{equation*}
\left(\sum_{n=1}^{\#\el(v)} \frac{\tau_n^2}{2}\lambda^{\#\el(v)-n} \right)^{-1}\le C,
\end{equation*}
or equivalently that there exists $c>0$ such that  
\begin{equation*}
\sum_{n=1}^{\#\el(v)} \frac{\tau_n^2}{2}\lambda^{\#\el(v)-n} \ge c.
\end{equation*}
This follows by \eqref{E_lower_tau} and $\lambda>1$. In fact let $a,b\in \R^{\#\el(v)}$ be the vectors of components
\begin{equation*}
a_n=\tau_n\lambda^{\frac{\#\el(v)-n}{2}}, \qquad \mbox{and}\qquad b_n= \lambda^{-\frac{\#\el(v)-n}{2}}.
\end{equation*}
Then, by the Cauchy-Schwarz inequality,
\begin{equation*}
\left(\sum_{n=1}^{\#\el(v)}\tau_n\right)^2\le \left(\sum_{n=1}^{\#\el(v)} \tau_n^2\lambda^{\#\el(v)-n}\right)\sum_{n=1}^{\#\el(v)}\lambda^{\#\el(v)-n},
\end{equation*}
so that by \eqref{E_lower_tau},
\begin{equation*}
\sum_{n=1}^{\#\el(v)} \tau_n^2\lambda^{\#\el(v)-n}\ge \left(\frac{T-\bar t}{2}\right)^2\left(\sum_{n=1}^\infty\lambda^{n}\right)^{-1},
\end{equation*}
and this concludes the proof of Claim 2.
Since $T>0$ is generic, the function $f'\circ u(T)$ does not have jumps of positive sign, therefore applying
Point(1) of Lemma \ref{L_trivial} with $n=V$ and $x_i=\bar x_i$ for $i=1,\ldots, V$, we get
\begin{equation}\label{E_est_A_m}
\TV^+_{A_m}(f'\circ u(T))\le \frac{C}{(T-\bar t)^2}A_v.
\end{equation}
Finally by \eqref{E_est_A_l}, \eqref{E_est_A_r} and \eqref{E_est_A_m}, it follows \eqref{E_est_inflection} and this
concludes the proof of Proposition \ref{P_inflection}.
\end{proof}

The next lemma will be used to reduce the estimate of the total variation of $f'\circ u(T)$ to the estimate on the
regions where the oscillation of the solution is small. The smallness parameter $\delta>0$ will be chosen later. 

\begin{lemma}\label{L_red_small}
Let $u$ be the entropy solution of \eqref{E_cl} with $u_0\in X$ and let $\bar t,\delta>0$ with $\bar t$ generic.
Then there exists $M\in\N$ depending only on $\delta, f, \|u_0\|_\infty, \mathcal L^1(\conv (\supp u_0)), \bar t$ and there exist 
$y_1,\ldots, y_{M(u_0)}$ with $M(u_0)\le M$ such that for every $m=1,\ldots M-1$, there exists $k=k(m)\in \N$ for which for every $t>\bar t$, it holds
\begin{equation}\label{E_small_osc}
u(t,(\X(t,y_m),\X(t,y_{m+1})))\subset [(k-2)\delta, (k+2)\delta]
\end{equation}
and
\begin{equation}\label{E_small_osc_lat}
u(t,(-\infty,\X(t,y_1)))\subset [0,2\delta], \qquad u(t,(\X(t,y_m),+\infty))\subset [0,2\delta].
\end{equation}
\end{lemma}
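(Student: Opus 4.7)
The plan is to select the $y_m$'s from among the characteristics that survive for all time and carry values at multiples of $\delta$; these then act as barriers partitioning the half-plane into strips of controlled oscillation. Specifically, I would consider the candidate set
\[
\mathcal Y := \{y \in \R : u_0(y) \in \delta \N \cap [0, \|u_0\|_\infty],\ \T(y) = +\infty\}.
\]
By Proposition \ref{P_lagr_repr}(4), for each $y \in \mathcal Y$ the pair $(\X(\cdot, y), u_0(y))$ is an admissible boundary of $u$ on $[0, +\infty)$, so $\X(t, y)$ is defined for all $t > 0$ and the value along the characteristic is constantly equal to $u_0(y) = k\delta$ for some $k \in \N$. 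Since $u_0 \in X$ is piecewise monotone and bounded, $\mathcal Y$ is finite, and the final $y_1 < \ldots < y_{M(u_0)}$ will be chosen as a subset of it.

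To bound $M(u_0)$ uniformly, I would invoke the length estimate (Theorem \ref{T_main_estimate}). Since $f$ has polynomial degeneracy (the standing assumption of Section \ref{S_Cheng}), the quantity $\tilde \di(\delta) := \min_{a \in [0, \|u_0\|_\infty - \delta]} \di(a, a+\delta)$ is strictly positive. If two consecutive elements $\tilde y < \tilde y'$ of $\mathcal Y$ with common value $u_0(\tilde y) = u_0(\tilde y')$ bracket a strip where $u(\bar t)$ oscillates by at least $\delta$, Theorem \ref{T_main_estimate} yields
\[
\max\bigl\{\tilde y' - \tilde y,\ \X(\bar t, \tilde y') - \X(\bar t, \tilde y)\bigr\} \ge \frac{\tilde \di(\delta)\, \bar t}{2 \|u_0\|_\infty}.
\]
By finite speed of propagation, $\supp u(\bar t)$ is contained in an interval of length at most $\mathcal L^1(\conv(\supp u_0)) + 2 \|f'\|_\infty \bar t$; summing the resulting count over the $O(\|u_0\|_\infty/\delta)$ possible values of $k\delta$ produces the desired uniform bound $M$. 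Retain as $y_1, \ldots, y_{M(u_0)}$ precisely those elements of $\mathcal Y$ that either bracket a strip with oscillation $\ge \delta$ at time $\bar t$ or are needed to control the tails; the discarded elements are then absorbed into neighboring strips without destroying the oscillation bound.

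For each pair $(y_m, y_{m+1})$ of retained characteristics with $u_0(y_m) = k\delta$ and $u_0(y_{m+1}) = k'\delta$, the values $\{k\delta, k'\delta\}$ and the range of $u(\bar t)$ on $(\X(\bar t, y_m), \X(\bar t, y_{m+1}))$ lie in a common interval of length at most $4\delta$; choose $k(m) \in \N$ so that this interval is contained in $[(k(m)-2)\delta, (k(m)+2)\delta]$. On the strip $\Omega_m := \{(t,x) : t > \bar t,\ \X(t, y_m) < x < \X(t, y_{m+1})\}$, the function $u$ solves an initial-boundary value problem whose lateral boundary data $k\delta, k'\delta$ are constant admissible boundaries (Proposition \ref{P_lagr_repr}(4)) and whose initial datum at $t = \bar t$ is the restriction of $u(\bar t)$; Kruzkov's comparison with the constants $(k(m) \pm 2)\delta$ then forces $u(t,\cdot) \in [(k(m)-2)\delta, (k(m)+2)\delta]$ for every $t > \bar t$. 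The tail inclusions $u(t, \cdot) \in [0, 2\delta]$ are then obtained by choosing $y_1, y_{M(u_0)}$ as persistent characteristics with $u_0(y_1) = u_0(y_{M(u_0)}) = 0$ placed just outside $\supp u_0$: finite speed of propagation and $u_0 \ge 0$ force $u$ to vanish outside the main strip.

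The main obstacle is proving that the retained candidates in $\mathcal Y$ are dense enough to ensure an oscillation of at most $4\delta$ between any two consecutive $y_m, y_{m+1}$; the delicate case is a big undulation of $u(\bar t)$ of height $> 2\delta$ sitting between two persistent characteristics of $\mathcal Y$ whose own base is traced back only by characteristics with $\T < +\infty$. To handle this, Lemma \ref{L_number_oscillations} bounds the number of undulations of $u(\bar t)$ of height $> \delta$ by a quantity depending only on the allowed parameters; for each such undulation one argues, via the length estimate applied at large times, that its persistent core must contain a $\delta$-level surviving characteristic that can be added to the selection. These extra separators keep the total count within the claimed $M$ and provide the missing barriers.
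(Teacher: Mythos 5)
Your overall architecture is close to the paper's: choose characteristics carrying values spaced by $\delta$ as separators, bound their number through the length estimate (Theorem \ref{T_main_estimate}, packaged as Lemma \ref{L_number_oscillations}) together with finite speed of propagation, and propagate the oscillation bound forward in time using the boundary-value structure. The fatal problem is the definition of the candidate set $\mathcal Y$ through the condition $\T(y)=+\infty$. For a nonnegative, compactly supported datum the entropy solution decays, so a characteristic carrying a value $k\delta$ with $k\ge 1$ ceases to be an admissible boundary in finite time: once $\|u(t)\|_\infty$ drops below $k\delta$ there is no point where the value $k\delta$ is attained and no shock whose traces bracket it, hence $\T(y)<+\infty$ (already for Burgers with $u_0=\chi_{[0,1]}$ every positive level dies in finite time). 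Thus $\mathcal Y$ typically contains only points with $u_0(y)=0$ and provides no separators at positive levels, while $u(\bar t)$ may still oscillate by order $\|u_0\|_\infty$. Your proposed repair in the last paragraph --- that the ``persistent core'' of each large undulation must contain a $\delta$-level characteristic with infinite existence time, via the length estimate applied at large times --- does not work: the length estimate controls the size of oscillations present at a given time, it does not produce characteristics surviving for all time, and in the decaying regime such characteristics simply do not exist. Since your forward-in-time step (Kruzkov comparison on a strip with \emph{constant lateral data for all} $t>\bar t$) rests on this infinite survival, it collapses with it.

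The correct condition, and the one the paper uses, is $\T(y)\ge\bar t$: set $A_w=\{y:\T(y)\ge\bar t,\ u_0(y)=w\}$, take $y_1=\min A_\delta$ and recursively let $y_m$ be the first point to the right of $y_{m-1}$ in $A_{u_0(y_{m-1})+\delta}\cup A_{u_0(y_{m-1})-\delta}$. By Proposition \ref{P_lagr_repr}, every value attained by $u(\bar t)$ is carried by some $y$ with $\T(y)\ge\bar t$, so these separators exist; and for $t>\bar t$ the values attained on $(\X(t,y_m),\X(t,y_{m+1}))$ are carried by characteristics with $\T\ge t\ge\bar t$ lying between $y_m$ and $y_{m+1}$, which the construction already controls (if the two curves merge, the inclusion becomes vacuous). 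No comparison principle is needed. The counting step then goes through essentially as you describe: $(M(u_0)-1)\delta\le\TV(u(\bar t))$, which is compared with the number of undulations of height at least $\delta$ and hence bounded via Lemma \ref{L_number_oscillations}. A smaller inaccuracy: the lateral bound \eqref{E_small_osc_lat} is $u\in[0,2\delta]$, not $u=0$; it comes from the minimality of $y_1$ in $A_\delta$ (no surviving characteristic to its left carries a value $\ge\delta$), not from placing zero-valued characteristics outside the support.
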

\begin{proof}
Let $\X$ be a Lagrangian representation of $u$ and let $\T$ be a time existence function as in Proposition \ref{P_lagr_repr}.
Consider the map $y=y(t,x,w)$ defined in Proposition \ref{P_lagr_repr} and for every $w\in\R$ let
\begin{equation*}
A_w:=\{y\in \R: \T(y)\ge \bar t \mbox{ and } u_0(y)=w\}.
\end{equation*}
Let $y_1:=\min A_\delta$ and for $m\in \N$ with $l\ge 2$ we define recursively
\begin{equation*}
y_m := \min \big((A_{u_0(y_{m-1})+\delta}\cup A_{u_0(y_{m-1})-\delta})\cap [y_m,+\infty)\big)
\end{equation*}
if the set on the right hand side is nonempty, otherwise we set $y_m=+\infty$ (see Figure \ref{F_y-l}).

\begin{figure}
\centering
\def\svgwidth{0.7\columnwidth}
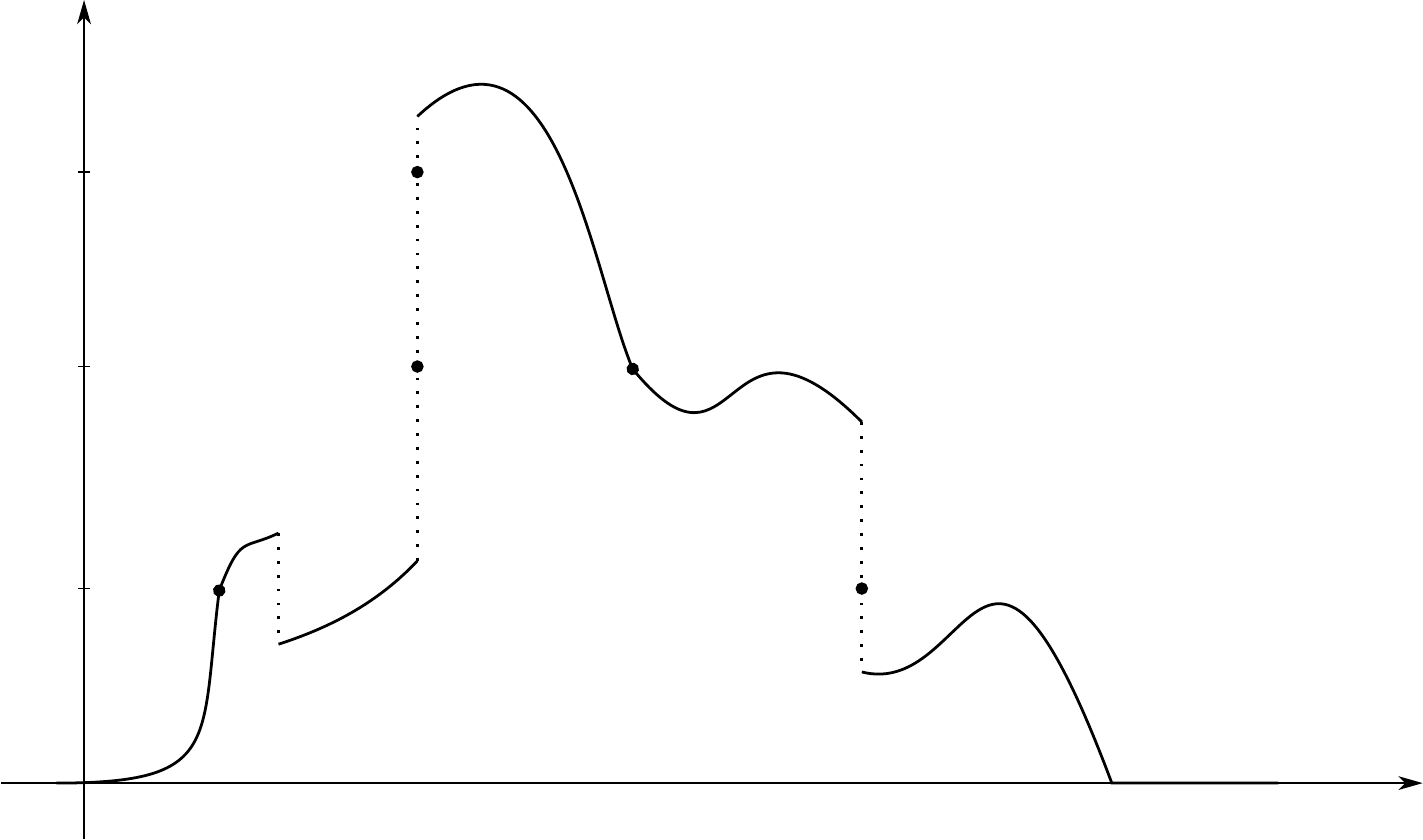
\caption{A representation of the construction of $(y_m)_m$: for $m=1,\ldots,5$, $y_m=y(x_m,w_m)$.}\label{F_y-l}
\end{figure}

By definition it is obvious that the sequence $(y_m)_{m\in \N}$ is increasing.
For every $u_0\in X$ denote by $M(u_0)$ the number of indexes $m$ such that $y_m$ is finite; by construction we
have the estimate
\begin{equation*}
M(u_0)\delta \le \TV (u(\bar t)) \le \TV (u_0).
\end{equation*}

Since $\|u(\bar t)\|_\infty\le \|u_0\|_\infty$ the number $N(u(\bar t),\delta)$ of undulations of $u(\bar t)$ of height bigger than $\delta$ is bounded
by below by
\begin{equation}\label{E_est_below}
N(u(\bar t),\delta)\ge \frac{M(u_0)\delta}{2\|u_0\|_\infty}.
\end{equation}
Moreover, by Lemma \ref{L_number_oscillations},
\begin{equation}\label{E_est_above}
N(u(\bar t),\delta) \le \frac{4\|u_0\|_\infty(\mathcal L^1(\conv(\supp u_0))+\|f'\|_\infty \bar t)}{\bar t\di(\delta)}.
\end{equation}
By \eqref{E_est_below} and \eqref{E_est_above}, we have that $M(u_0)$ is uniformly bounded by a constant $M$ as in the statement. 
Now it remains to prove \eqref{E_small_osc} and \eqref{E_small_osc_lat} and they follow by the definition of Lagrangian representation and the construction of 
$(y_m)_{m\in \N}$.
\end{proof}

We now have all the ingredients to prove the main result of this section.
\begin{theorem}\label{T_Cheng}
Let $f$ be a flux with polynomial degeneracy and let $u$ be the entropy solution of \eqref{E_cl} with $u_0\in L^\infty(\R)$ nonnegative and
with compact support.
Then there exists a constant $C>0$ depending on $f$, $\|u_0\|_\infty$ and $\mathcal L^1(\conv(\supp u_0))$ such that for every $T>0$
\begin{equation}\label{E_Cheng}
\TV (f'\circ u(T)) \le C\left(1+\frac{1}{T}\right).
\end{equation}
\end{theorem}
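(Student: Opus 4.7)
The plan is to combine the three ingredients built up in this section: the localization lemma \ref{L_red_small}, the convex estimate of Proposition \ref{P_convex}, and the inflection-point estimate of Proposition \ref{P_inflection}. First I would reduce to the case $u_0\in X$ continuous, by taking a sequence $u_0^n\in X$ continuous with $u_0^n\to u_0$ in $L^1$, $\|u_0^n\|_\infty\le \|u_0\|_\infty$ and $\conv(\supp u_0^n)\subset \conv(\supp u_0)$ (as in the proof of Theorem \ref{T_BV-Phi}); the result for $u_0\in L^\infty$ then follows by lower semicontinuity of the total variation under $L^1_{\loc}$ convergence together with the Kruzkov contraction.

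Next I would pick a single $\delta>0$ calibrated to the flux. Using Corollary \ref{C_conjugate}, let $\delta'$ and $\e'$ be the universal constants attached to the inflection points $w_1<\ldots<w_S$; choose $\delta>0$ so small that (i) the intervals $[w_s-3\delta,w_s+3\delta]$ are pairwise disjoint and each contained in $(w_s-\delta',w_s+\delta')$, and (ii) $f$ is either strictly convex or strictly concave on every connected component of $[0,\|u_0\|_\infty]\setminus\bigcup_s(w_s-2\delta,w_s+2\delta)$. Apply Lemma \ref{L_red_small} with this $\delta$ and $\bar t=T/2$ (assuming $T/2$ is generic, otherwise approximating by generic times and passing to the limit). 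This produces $y_1<\ldots<y_{M(u_0)}$ with $M(u_0)\le M=M(\delta,f,\|u_0\|_\infty,\mathcal L^1(\conv(\supp u_0)),T)$ such that on each strip $\Omega_m=\{(t,x):\bar t<t<T,\ \X(t,y_m)<x<\X(t,y_{m+1})\}$ the solution takes values in a single interval $[(k-2)\delta,(k+2)\delta]$, and analogous control holds on the two unbounded strips via \eqref{E_small_osc_lat}.

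On each strip $\Omega_m$, property (4) of Proposition \ref{P_lagr_repr} tells us that $(\X(\cdot,y_m),u_0(y_m))$ and $(\X(\cdot,y_{m+1}),u_0(y_{m+1}))$ are admissible boundaries, so $u\llcorner\Omega_m$ is an entropy solution of an initial-boundary value problem with constant boundary data on $[\bar t,T]$. By the calibration of $\delta$, the interval $I_m:=\conv(\{u_0(y_m),u_0(y_{m+1})\}\cup u(\bar t,(\X(\bar t,y_m),\X(\bar t,y_{m+1}))))$ is either contained in a region of strict convexity/concavity of $f$ — to which Proposition \ref{P_convex} applies directly (the concave case reduces to the convex one by replacing $u$ with $-u$ and $f$ with $\tilde f(w)=-f(-w)$) — or it lies in a single $(w_s-\delta',w_s+\delta')$ and Proposition \ref{P_inflection} applies. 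In either case we get
\begin{equation*}
\TV_{(\X(T,y_m),\X(T,y_{m+1}))}(f'\circ u(T))\le C_1\left(1+\frac{1}{T-\bar t}\right)=C_1\left(1+\frac{2}{T}\right),
\end{equation*}
with $C_1$ depending only on $f$ and $\|u_0\|_\infty$. A completely analogous estimate applies to the two unbounded outer strips, where by \eqref{E_small_osc_lat} the solution takes values in $[0,2\delta]$ (necessarily in a strict convexity/concavity region if $0\notin\bigcup_s[w_s-2\delta,w_s+2\delta]$, or close to the inflection at $0$ otherwise, adjusting the initial choice of $\delta$ accordingly).

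Finally I would add these $M$ estimates together, also adding the contributions of the jumps of $f'\circ u(T)$ that can occur at the points $\X(T,y_m)$, each bounded by $2\|f''\|_\infty\|u_0\|_\infty$. Since $M$ is bounded by a constant depending only on $\delta$, $f$, $\|u_0\|_\infty$, $\mathcal L^1(\conv(\supp u_0))$ and, through \eqref{E_est_above}, mildly on $\bar t=T/2$ (giving a factor of the form $1+1/T$), the resulting estimate takes the form $\TV(f'\circ u(T))\le C(1+1/T)$. The main technical obstacle is ensuring that the $M$ furnished by Lemma \ref{L_red_small} can be absorbed into the final constant $C$ uniformly in $T$: this requires tracking that the bound \eqref{E_est_above} only introduces a factor of order $(1+1/\bar t)$, which is precisely what we need after choosing $\bar t=T/2$. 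The approximation from $u_0^n\in X$ to general $u_0\in L^\infty$ is then routine by the lower semicontinuity argument mentioned above.
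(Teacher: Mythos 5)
Your plan follows the paper's proof essentially step for step: reduction to continuous $u_0\in X$ and lower semicontinuity, calibration of $\delta$ via Corollary \ref{C_conjugate}, localization by Lemma \ref{L_red_small} with $\bar t=T/2$, the dichotomy between strips where $f$ is convex/concave (Proposition \ref{P_convex}) and strips around an inflection point (Proposition \ref{P_inflection}), and the final summation including the jump terms at the points $\X(T,y_m)$ and the finite-speed-of-propagation bound on $\sum_m(\X(T,y_{m+1})-\X(T,y_m))$. The one point the paper handles slightly differently is the admissibility of the strip boundaries up to time $T$: since Lemma \ref{L_red_small} only guarantees $\T(y_m)\ge\bar t$, the paper does not apply property (4) of Proposition \ref{P_lagr_repr} to $y_m$ directly but instead passes to $y^{-}=\max\{y:\X(T,y)=\X(T,y_m)\}$ with the trace value $w^{-}$ and invokes the stability of admissible boundaries (Proposition \ref{P_stab}); this is a minor fix rather than a change of approach.
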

\begin{proof}
Observe that it is enough to prove the statement for $u_0 \in X$: indeed for every $u_0$ as in the statement there exists a sequence 
$(u_0^n)_{n\in \N}$ contained in $X$ such that $u_0^n\rightarrow u_0$ in $L^1(\R)$ and $\|u^n_0\|_\infty, \mathcal L^1(\conv(\supp u^n_0))$ are
uniformly bounded by $\|u_0\|_\infty$ and $\mathcal L^1(\conv(\supp u_0))$ respectively.

Then notice also that, since $t\mapsto u(t)$ is continuous with respect to the $L^1$ topology and the total variation is lower semicontinuous
with respect to the same topology, it is sufficient to prove \eqref{E_Cheng} for a dense set of $T>0$. In particular we assume that $T$ is generic 
(see Definition \ref{D_generic}).

Since every inflection point $w_s$ of $f$ has polynomial degeneracy, if $f''$ changes sign at $w_s$, then there exists $\delta_s>0$ such that $f''$ is
monotone in $(w_s-\delta_s,w_s+\delta_s)$. Consider $\delta',\e'>0$ given by Corollary \ref{C_conjugate} and apply Lemma \ref{L_red_small} with
\begin{equation*}
\delta < \left(\frac{\delta'}{2} \wedge \min_{s=1,\ldots,S-1} \delta_s\right).
\end{equation*}
Taking into account the possible jumps of $f'\circ u(T)$ at the points $\X(T,y_m)$ for $m=1,\ldots, M(u_0)$, we have that
\begin{equation}\label{E_total_variation}
\TV (f'\circ u(T)) \le \sum_{m=1}^{M(u_0)-1}\TV_{(\X(T,y_m)\X(T,y_{m+1}))}(f'\circ u(T)) + M\|f''\|_\infty\|u_0\|_\infty,
\end{equation}
where $M$ and $y_1,\ldots, y_{M(u_0)}$ are given by Lemma \ref{L_red_small}.
By the choice of $\delta$, it holds in particular that for every $m=1,\ldots M(u_0)-1$, there exists at most one inflection point $w_s$ of $f$ such that
\begin{equation}\label{E_inflection}
w_s\in u(\bar t,(\X(\bar t, y_m),\X(\bar t, y_{m+1}))).
\end{equation}
We can therefore distinguish the following cases:
\begin{enumerate}
\item there exists no $s$ such that \eqref{E_inflection} holds;
\item there exists $s=1,\ldots, S$ such that \eqref{E_inflection} holds and $f'$ does not change sign at $w_s$;
\item there exists $s=1,\ldots, S$ such that \eqref{E_inflection} holds and $f'$ changes sign at $w_s$.
\end{enumerate}
Now we check that in Case (1) and in Case (2) we can apply Proposition \ref{P_convex} (or its obvious version in the concave case).
Let
\begin{equation*}
y^-:=\max\{y: \X(T,y)=\X(T,y_m)\} \qquad \mbox{and} \qquad w^-:=\lim_{x\rightarrow \X(T,y_m)^+}u(T,x).
\end{equation*}
By the stability of the notion of admissible boundary (Proposition \ref{P_stab}), we have that $(\X(\cdot,y^-),w^-)$ is an admissible
boundary of $u$.
Similarly, if we let  
\begin{equation*}
y^+:=\min\{y: \X(T,y)=\X(T,y_{m+1})\} \qquad \mbox{and} \qquad w^+:=\lim_{x\rightarrow \X(T,y_{m+1})^-}u(T,x),
\end{equation*}
we have that $(\X(\cdot,y^+),w^+)$ is an admissible boundary of $u$.
Therefore we can apply Proposition \ref{P_convex} and we get that
\begin{equation}\label{E_est_convex}
\TV_{(\X(T, y_m),\X(T, y_{m+1}))}(f'\circ u(T)) \le 5\|f''\|_\infty\|u_0\|_\infty + 2 \frac{\X(T,y_{m+1})-\X(T,y_m)}{T-\bar t}.
\end{equation}

The same argument shows that in Case (3) we can apply Proposition \ref{P_inflection} and it implies that there exists a constant $C>0$ depending
on $f,\|u_0\|_\infty,\mathcal L^1(\conv(\supp u_0))$ such that 
\begin{equation}\label{E_est_inflection2}
\TV_{(\X(T,y_m),\X(T,y_{m+1}))} (f'\circ u(T)) \le C\left(1+\frac{1}{T-\bar t}\right).
\end{equation}

By finite speed of propagation
\begin{equation*}
\begin{split}
\sum_{m=1}^{M(u_0)-1}(\X(T,y_{m+1})-\X(T,y_m))=&~ \X(T,y_{M(u_0)})-\X(T,y_1) \\
\le &~  \mathcal L^1(\conv(\supp u(T)))\\ 
\le &~  \mathcal L^1(\conv(\supp u_0)) + 2\|f'\|_\infty T.
\end{split}
\end{equation*}

Therefore choosing $\bar t=T/2$ and combining \eqref{E_est_convex}, \eqref{E_est_inflection2} and \eqref{E_total_variation}, we get that
there exists a constant $C$ depending on $f,\|u_0\|_\infty,\mathcal L^1(\conv(\supp u_0))$ such that 
\begin{equation*}
\TV (f'\circ u(T)) \le C\left(1+\frac{1}{T}\right),
\end{equation*}
and this concludes the proof.
\end{proof}

We conclude this section with the following remark about the hypothesis of Theorem \ref{T_Cheng}.

\begin{remark}\label{R_sharp_assumptions}
Proposition \ref{P_inflection} requires the polynomial degeneracy assumption of $f$ at the inflection point; on the contrary Proposition \ref{P_convex}
does not. Moreover the structure of characteristics described in Lemma \ref{L_convex} holds for every strictly convex flux $f$.
In particular Theorem \ref{T_Cheng} holds under the following assumptions on the flux: there exists $w_1<\ldots <w_S$ such that
$f\llcorner (w_s,w_{s+1})$ is strictly convex or strictly concave for every $s=1,\ldots, S-1$ and that $f$ has polynomial degeneracy at $w_s$ for 
every $s=1,\ldots, S$.
\end{remark}

\section{Fractional $\BV$ regularity of the solution}\label{S_frac_Cheng}
In this section we want to deduce a $\BV^{1/p}$ regularity result of the solution $u$ from the $\BV$ regularity of $f'\circ u$ 
obtained in Section \ref{S_Cheng}, where $p$ is the degeneracy of $f$.

We briefly describe the argument. If the flux is strictly convex, then the polynomial degeneracy  of $f$ implies an H\"{o}lder type estimate for $(f')^{-1}$: 
\begin{equation}\label{E_Holder}
(b-a)^{p-1}\le C|f'(b)-f'(a)|
\end{equation}
for some $C>0$ and this is sufficient to conclude.
Of course \eqref{E_Holder} does not hold for general fluxes $f$ of polynomial degeneracy, 
but it holds for every $a<b$ for which $f$ has no inflection points in $(a,b)$, (Lemma \ref{L_convex_flux}).
This is sufficient to conclude the proof for continuous solutions.

It remains to consider jumps. As before we distinguish among big and small jumps: big jumps are treated as in Section \ref{S_Cheng}
by means of Theorem \ref{T_main_estimate} and small jumps around the inflection point between two different values $w,w'$ with $f'(w)\simeq f'(w')$ are excluded by
the entropy admissibility condition (Lemma \ref{L_chord} and Figure \ref{F_secant}).

\begin{figure}
\centering
\def\svgwidth{0.5\columnwidth}
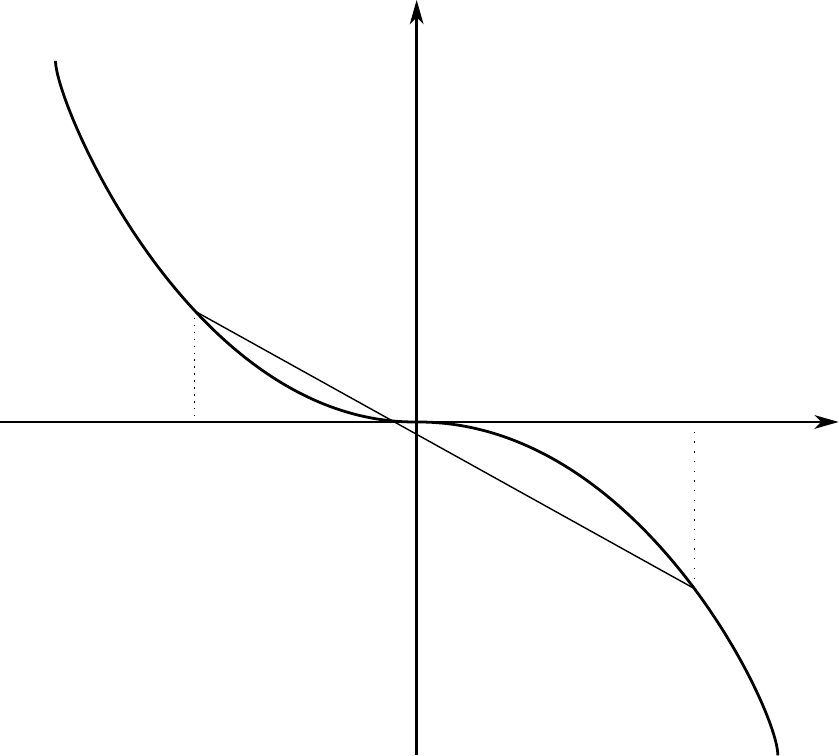
\caption{If $f'(w_1)\approx f'(w_2)$ with $w_1<\bar w<w_2$ the shocks between $w_1$ and $w_2$ are not admissible.}
\label{F_secant}
\end{figure}

\begin{lemma}\label{L_convex_flux}
Let $g:[0,M]\rightarrow \R$ be a smooth function and $p \ge 2$ be an integer such that 
\begin{equation*}
g'\ne 0 \mbox{ in }(0,M], \qquad g^{(j)}(0)=0 \mbox{ for }j=1,\ldots, p-1 \qquad \mbox{and}\qquad  g^{(p)}(0)\ne 0.
\end{equation*}
Then for every $l\ge p$ there exists a constant $C>0$ such that for every $0\le a\le b\le M$
\begin{equation*}
(b-a)^l\le C|g(b)-g(a)|.
\end{equation*}
\end{lemma}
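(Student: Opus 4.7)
The plan is to first reduce to the case $l=p$, since once we have $(b-a)^p \le C_0|g(b)-g(a)|$ and $b-a\le M$, we get $(b-a)^l=(b-a)^{l-p}(b-a)^p \le M^{l-p}C_0|g(b)-g(a)|$ for every $l\ge p$.

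Next, I would use the nondegeneracy of $g$ at $0$ to obtain a pointwise lower bound on $|g'|$. Since $g'\ne 0$ on $(0,M]$ and $g'$ is continuous, $g'$ has constant sign there; up to replacing $g$ by $-g$, assume $g'>0$ on $(0,M]$. The Taylor expansion together with $g^{(j)}(0)=0$ for $j=1,\dots,p-1$ gives
\begin{equation*}
g'(t)=\frac{g^{(p)}(0)}{(p-1)!}\,t^{p-1}+o(t^{p-1})\qquad \text{as }t\to 0^+,
\end{equation*}
so there exists $\delta>0$ with $g'(t)\ge \frac{|g^{(p)}(0)|}{2(p-1)!}\,t^{p-1}$ on $[0,\delta]$. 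On the compact interval $[\delta,M]$, continuity and positivity of $g'$ yield $m:=\min_{[\delta,M]}g'>0$, and since $t^{p-1}\le M^{p-1}$ we get $g'(t)\ge \frac{m}{M^{p-1}}t^{p-1}$ there. Combining these two bounds produces a constant $c>0$ such that $g'(t)\ge c\,t^{p-1}$ for every $t\in[0,M]$.

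Integrating this inequality on $[a,b]$ gives
\begin{equation*}
|g(b)-g(a)|=\int_a^b g'(t)\,dt \;\ge\; \frac{c}{p}\bigl(b^p-a^p\bigr).
\end{equation*}
The proof then concludes with the elementary inequality $(b-a)^p\le b^p-a^p$ for $0\le a\le b$, which follows from the factorisation $b^p-a^p=(b-a)\sum_{j=0}^{p-1}b^{p-1-j}a^j$ together with the bound $(b-a)^{p-1}\le b^{p-1}\le \sum_{j=0}^{p-1}b^{p-1-j}a^j$ valid for $a\ge 0$. This yields $(b-a)^p\le \frac{p}{c}|g(b)-g(a)|$, and combined with the first step it proves the lemma with $C=\frac{p}{c}M^{l-p}$.

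There is no real obstacle here: the only subtlety is patching together the behavior of $g'$ near the degenerate point $0$ with its behavior away from $0$, which is handled by the compactness argument above.
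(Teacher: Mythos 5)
Your proof is correct and follows essentially the same route as the paper, which only sketches the argument as "Taylor expansion in a right neighborhood of zero plus $\min_{[\delta,M]}|g'|>0$"; you have simply filled in the details (the pointwise bound $g'(t)\ge c\,t^{p-1}$, the integration, and the elementary inequality $(b-a)^p\le b^p-a^p$). No issues.
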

\begin{proof}
The result easily follows for $l=p$ and hence for every $l\ge p$ by Taylor expansion in a right neighborhood $[0,\delta)$ of zero and 
by the fact that 
\begin{equation*}
\min_{[\delta,M]}|g'|> 0. \qedhere
\end{equation*}
\end{proof}

\begin{lemma}\label{L_chord}
Let $u$ be an entropy solution of \eqref{E_cl} with $u_0\in L^\infty(\R)$ and $f$ of degeneracy $p\in \N$ and let $\bar t>0$ be generic.
Then there exist two constants $c,\delta'>0$, depending on $f$ and $\|u_0\|_\infty$, such that
for every $x_1,x_2$ with
\begin{equation*}
w_s-\delta' < u(\bar t,x_1)<w_s< u(\bar t,x_2)<w_s+\delta'
\end{equation*}
for some $s=1,\ldots,S$,
it holds
\begin{equation*}
\TV_{\mathcal I(x_1,x_2)}(f'\circ u(\bar t)) \ge c |u(\bar t, x_2)-u(\bar t,x_1)|^p,
\end{equation*}
where $\mathcal I(x_1,x_2)=(x_1,x_2)\cup (x_2,x_1)$ denotes the open interval with endpoints $x_1$, $x_2$. 
\end{lemma}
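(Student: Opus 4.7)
The plan exploits the local structure of $f$ near $w_s$ through the Taylor expansion
\[
f'(w)-f'(w_s)=\tfrac{f^{(p_s+1)}(w_s)}{p_s!}(w-w_s)^{p_s}+O\bigl((w-w_s)^{p_s+1}\bigr),
\]
where $p_s$ denotes the degeneracy of $f$ at $w_s$ in the sense of Definition \ref{D_degeneracy}, and it splits according to the parity of $p_s$. If $p_s$ is odd, $f''$ does not change sign at $w_s$, so $f'$ is strictly monotone on $(w_s-\delta',w_s+\delta')$ for $\delta'$ small. Setting $w_i:=u(\bar t,x_i)$ and applying Lemma \ref{L_convex_flux} to $g(t)=f'(w_s\pm t)-f'(w_s)$ on each side of $w_s$ yields $|f'(w_2)-f'(w_1)|\ge c[(w_s-w_1)^{p_s}+(w_2-w_s)^{p_s}]\ge c'(w_2-w_1)^{p_s}$ by convexity of $t\mapsto t^{p_s}$; the trivial bound $\TV_{\mathcal I(x_1,x_2)}(f'\circ u(\bar t))\ge|f'(w_2)-f'(w_1)|$ closes the case, since $p\ge p_s$ and $|w_2-w_1|$ is bounded.

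In the main case $p_s$ even, $w_s$ is a genuine inflection and $f'$ has a local minimum at $w_s$ (after normalising $f^{(p_s+1)}(w_s)>0$); in particular $f'(w)-f'(w_s)\ge c(w-w_s)^{p_s}$ in a neighbourhood of $w_s$ by Lemma \ref{L_convex_flux}. I distinguish two subcases. If $u(\bar t)$ attains the value $w_s$ on $[x_1,x_2]$, then $f'\circ u(\bar t)$ takes the minimum value $f'(w_s)$ at some interior point, while equalling $f'(w_i)\ge f'(w_s)$ at the endpoints; the elementary inequality $\TV_{\mathcal I(a,b)}(g)\ge(g(a)-\inf g)+(g(b)-\inf g)$ gives
\[
\TV_{\mathcal I(x_1,x_2)}(f'\circ u(\bar t))\ge c\bigl[(w_s-w_1)^{p_s}+(w_2-w_s)^{p_s}\bigr]\ge c'(w_2-w_1)^{p_s}.
\]

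The core case is when $u(\bar t)$ does not attain $w_s$: piecewise monotonicity forces a jump discontinuity at some $x^*\in(x_1,x_2)$ whose one-sided limits $u^-,u^+$ straddle $w_s$, WLOG $u^-<w_s<u^+$ (the decreasing case is symmetric). Since $\bar t$ is generic, Proposition \ref{P_chord} gives the chord admissibility $f(k)\ge f(u^-)+\lambda(k-u^-)$ on $(u^-,u^+)$ with $\lambda=(f(u^+)-f(u^-))/(u^+-u^-)$. Setting $a=w_s-u^-$ and $b=u^+-w_s$ and substituting the Taylor expansion of $f$ at $w_s$, evaluating the chord condition at the interior tangency point $k_0\in(u^-,u^+)$ where $f'(k_0)=\lambda$ produces the strict quantitative asymmetry $a\ge C_{p_s}b$ with a constant $C_{p_s}>1$ depending only on $f$ near $w_s$ and satisfying $C_{p_s}^{p_s}>2$; for the normalised model $f(w)=(w-w_s)^{p_s+1}$, $C_{p_s}$ is the unique root $>1$ of $t^{p_s+1}=(p_s+1)t+p_s$, so $C_2=2$. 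Combining $|f'(u^-)-f'(u^+)|\ge c(a^{p_s}-b^{p_s})\ge c(1-C_{p_s}^{-p_s})a^{p_s}$ with the TV contributions on the monotone segments $(x_1,x^*)$ and $(x^*,x_2)$ (on which $f'\circ u(\bar t)$ is piecewise monotone since $f'$ is monotone on each side of $w_s$) produces $\TV_{\mathcal I(x_1,x_2)}(f'\circ u(\bar t))\ge c'(w_2-w_1)^{p_s}$; configurations where $u(\bar t)$ oscillates widely inside $(w_s-\delta',w_s+\delta')$ yield even more TV by construction, and the case where $u(\bar t)$ leaves this neighbourhood is handled trivially by the boundedness $|w_2-w_1|\le 2\delta'$.

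The main obstacle is the extraction of the strict quantitative asymmetry $a\ge C_{p_s}b$ with $C_{p_s}>1$ in the jump case: testing the chord inequality only at $k=w_s$ yields the borderline $a\ge b$ (a short computation reduces it to $b(a^{p_s}-b^{p_s})\ge 0$), which permits the jump in $f'$ to vanish and is therefore insufficient. The polynomial-degeneracy hypothesis enters exactly here: evaluating the chord condition at the interior tangent point $k_0$ and expanding $f$ near $w_s$ via Taylor produces the strict improvement. Geometrically, the symmetric shock $u^\pm=w_s\mp h$ is inadmissible because the chord rises above the graph of $f$ by an amount of order $h^{p_s+1}$ in the interior of $(w_s-h,w_s+h)$, forcing admissible shocks to be strictly asymmetric. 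Finally, since $p\ge p_s$ and $|w_2-w_1|\le 2\delta'$ is bounded, the inequality $(w_2-w_1)^{p_s}\ge(2\delta')^{p_s-p}(w_2-w_1)^p$ converts the $p_s$-th power estimate to the $p$-th power estimate claimed in the lemma, with all constants depending only on $f$ and $\|u_0\|_\infty$.
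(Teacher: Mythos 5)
Your proof is correct and follows essentially the same route as the paper: a trichotomy according to whether $u(\bar t)$ attains $w_s$, leaves a fixed neighbourhood of $w_s$, or jumps across it, with the H\"{o}lder-type bound of Lemma \ref{L_convex_flux} handling the first two cases and a quantitative asymmetry of admissible shocks handling the third. The asymmetry constant you extract from the chord condition (the root $>1$ of $t^{p_s+1}=(p_s+1)t+p_s$) is exactly $1/|\bar\rho|$, the reciprocal of the conjugate-point ratio that the paper imports from Lemma \ref{L_conjugate} and Corollary \ref{C_conjugate}, so the two arguments coincide at the crux.
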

\begin{proof}
We assume for simplicity that $w_s=0$ and $f'(0)=0$. Moreover it is not restrictive to assume that $x_1<x_2$.
Let $\delta,\e$ be given by Corollary \ref{C_conjugate}. Then
\begin{equation*}
|w|<\delta \qquad \Rightarrow \qquad \frac{|w^*|}{|w|}\in (0,1-\e).
\end{equation*}
Therefore there exists $\tilde c>0$ such that if $|w|<\delta$, then
\begin{equation*}
|f'(w)-f'(w^*)|\ge \tilde c |w|^p \qquad \mbox{and in particular} \qquad |f'(w)|\ge \tilde c |w|^p.
\end{equation*}
We distinguish three cases:
\begin{enumerate}
\item there exists $\bar x \in (x_1,x_2)$ such that $u(\bar t,\bar x)=0$;
\item there exists $\bar x \in (x_1,x_2)$ such that $u(\bar t,\bar x) \notin (-2\delta,2\delta)$.
\item there exists $\bar x \in (x_1,x_2)$ such that 
\begin{equation*}
-2\delta<u(\bar t, \bar x+)<0<u(\bar t, \bar x-)<2\delta;
\end{equation*}
\end{enumerate}
Case (1): it holds
\begin{equation*}
\begin{split}
\TV_{[x_1,x_2]}(f'\circ u(\bar t)) \ge&~ |f'(u(\bar t, x_1))-f'(u(\bar t, \bar x))|+|f'(u(\bar t, \bar x))-f'(u(\bar t, x_2))| \\
= &~|f'(u(\bar t, x_1))|+|f'(u(\bar t, x_2)))| \\
\ge &~\tilde c (|u(\bar t, x_1)|^p+|u(\bar t, x_2)|^p) \\
\ge &~\tilde c2^{-p}|u(\bar t, x_1)-u(\bar t, x_2)|^p.
\end{split}
\end{equation*}
Case (2): similarly to the case above above, it holds 
\begin{equation*}
\begin{split}
\TV_{[x_1,x_2]}(f'\circ u(\bar t)) \ge&~ |f'(u(\bar t, x_1))-f'(u(\bar t, \bar x))| \\
\ge &~\tilde c |u(\bar t, x_1)-u(\bar t, \bar x)|^p \\
\ge &~\tilde c \delta^p \\
\ge &~\tilde c2^{-p}|u(\bar t, x_1)-u(\bar t, x_2)|^p.
\end{split}
\end{equation*}
Case (3):
suppose additionally that $\max\{|u(\bar t,x_1)|, |u(\bar t,x_2)|\}\le 2 |u(\bar t, \bar x+)|$. Then
\begin{equation*}
\begin{split}
\TV_{[x_1,x_2]}(f'\circ u(\bar t)) \ge&~ |f'(u(\bar t, \bar x+))-f'(u(\bar t, \bar x-))| \\
\ge &~ |f'(u(\bar t, \bar x+))-f'(u(\bar t, \bar x+)^*)| \\
\ge &~\tilde c |u(\bar t, \bar x+)|^p \\
\ge &~\tilde c 2^{-p}\max\{|u(\bar t,x_1)|, |u(\bar t,x_2)|\}^p \\
\ge &~\tilde c4^{-p}|u(\bar t, x_1)-u(\bar t, x_2)|^p.
\end{split}
\end{equation*}
If instead for definitness $|u(\bar t,x_1)|=\max\{|u(\bar t,x_1)|, |u(\bar t,x_2)|\}\ge 2 |u(\bar t, \bar x+)|$, then
\begin{equation*}
\begin{split}
\TV_{[x_1,x_2]}(f'\circ u(\bar t)) \ge&~ |f'(u(\bar t, \bar x+))-f'(u(\bar t, x_1))| \\
\ge &~ \tilde c |u(\bar t, \bar x+)-u(\bar t, x_1)|^p \\
\ge &~ \tilde c 2^{-p}|u(\bar t,x_1)|^p \\
\ge &~\tilde c 4^{-p}|u(\bar t, x_1)-u(\bar t, x_2)|^p.
\end{split}
\end{equation*}
Setting $c=\tilde c 4^{-p}$, the lemma is proved.
\end{proof}

The main result of this section is stated in the following theorem.

\begin{theorem}\label{T_frac_regularity}
Let $u$ be the entropy solution of \eqref{E_cl} with $u_0\in L^\infty(\R)$ nonnegative and with compact support, and let $p$ be the degeneracy of the flux $f$. 
Then for every $t>0$ the solution
\begin{equation*}
u(t)\in \BV^{1/p}(\R),
\end{equation*}
and there exists a constant $C>0$ depending on $f,\|u_0\|_\infty$ and $\mathcal L^1(\conv (\supp u_0))$ such that for every $t>0$
\begin{equation*}
\left(\TV^{1/p}u(t)\right)^p\le C\left(1+\frac{1}{t}\right).
\end{equation*}
\end{theorem}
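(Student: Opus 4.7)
The plan is to reduce the $\BV^{1/p}$ bound to the $\BV$ bound on $f'\circ u(T)$ furnished by Theorem~\ref{T_Cheng}, via an H\"older-type pointwise comparison between increments of $u$ and of $f'$. By approximation and the $L^1$ lower semicontinuity of $\PTV$, I may assume $u_0\in X$ and $T$ generic. Fix an arbitrary partition $x_1<\dots<x_n$ of $\R$; the goal is to bound $\sum_i |u(T,x_{i+1})-u(T,x_i)|^p$ by $C(1+1/T)$.

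First I apply Lemma~\ref{L_red_small} with a parameter $\delta>0$ chosen so small that every band $[(k-2)\delta,(k+2)\delta]\cap[0,\|u_0\|_\infty]$ meets at most one inflection point $w_s$ of $f$ and is contained in the neighborhood $(w_s-\delta',w_s+\delta')$ furnished by Lemma~\ref{L_chord} and Corollary~\ref{C_conjugate}. This yields points $y_1<\dots<y_{M(u_0)}$ with $M(u_0)$ uniformly bounded by some $M$, and on each region $J_m:=(\X(T,y_m),\X(T,y_{m+1}))$ the function $u(T)$ takes values in such a small interval $I_m$. I split the partition into \emph{transition pairs} (those $(x_i,x_{i+1})$ containing at least one of the points $\X(T,y_m)$, of which there are at most $M$, each contributing at most $(2\|u_0\|_\infty)^p$ to the sum, a harmless constant) and \emph{interior pairs} (both endpoints in the same $J_m$).

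For an interior pair in $J_m$ I distinguish two sub-cases according to the position of $u(T,x_i)$ and $u(T,x_{i+1})$ with respect to the possible unique inflection point $w_s\in I_m$. When the two values lie on the same side of $w_s$ (or when $I_m$ contains no inflection point at all), $f'$ is strictly monotone between them, and Lemma~\ref{L_convex_flux} applied to $g(s)=\pm(f'(w_s\pm s)-f'(w_s))$ with $l=p$ gives
\begin{equation*}
|u(T,x_{i+1})-u(T,x_i)|^p\le C\bigl|f'(u(T,x_{i+1}))-f'(u(T,x_i))\bigr|,
\end{equation*}
with $C$ depending only on $f$ and $\|u_0\|_\infty$ thanks to the finiteness of the inflection set. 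Summing over such pairs bounds their contribution by $C\cdot \TV_{J_m}(f'\circ u(T))$. For pairs straddling $w_s$, Lemma~\ref{L_chord} directly yields $|u(T,x_{i+1})-u(T,x_i)|^p\le c^{-1}\TV_{(x_i,x_{i+1})}(f'\circ u(T))$, and the pairwise disjointness of the intervals $(x_i,x_{i+1})$ makes these contributions sum to at most $c^{-1}\TV_{J_m}(f'\circ u(T))$ as well.

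Summing over $m=1,\dots,M(u_0)-1$ and invoking Theorem~\ref{T_Cheng} then yields the required bound $C'(1+1/T)$. The main subtlety is the treatment of straddling pairs: because $f'$ fails to be monotone near $w_s$ (when $f''$ changes sign), a direct inversion via Lemma~\ref{L_convex_flux} is impossible, and configurations with $f'(u(T,x_i))\simeq f'(u(T,x_{i+1}))$ but $|u(T,x_{i+1})-u(T,x_i)|\asymp\delta$ would ruin the desired $p$-H\"older inversion. It is Lemma~\ref{L_chord}, a manifestation of the chord admissibility condition, that rules out such configurations by forcing a comparable total variation of $f'\circ u(T)$ on the spatial interval connecting the two points, thereby allowing the straddling contribution to be absorbed into the $\BV$ bound of Theorem~\ref{T_Cheng}.
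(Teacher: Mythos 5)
Your proof is correct and follows essentially the same route as the paper's: separate large from small increments, invert $f'$ via Lemma \ref{L_convex_flux} on value intervals where $f''\ne 0$, use Lemma \ref{L_chord} for increments straddling an inflection point, and conclude with Theorem \ref{T_Cheng}. The only (immaterial) difference is that the paper bounds the number of increments of size at least $\delta$ directly by Theorem \ref{T_main_estimate}, whereas you route this through the spatial localization of Lemma \ref{L_red_small}; note only that the resulting $M$ there depends on $\bar t$ and is of order $1+1/T$ rather than a true constant, which still yields the claimed bound.
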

\begin{proof}
By lower semicontinuity of the $\TV^{1/p}$ and the continuity in time of the entropy solution with respect to the $L^1$ topology, it suffices to prove the estimate for a dense set of $t>0$; in particular we can assume that $t$ is generic.
Let $\delta>0$ be given so that the conclusion of Lemma \ref{L_chord} holds.
By Theorem \ref{T_main_estimate} there exists a constant $\bar N=\bar N(\mathcal L^1(\conv(\supp u_0)),\delta,\|u_0\|_\infty,f)$ 
such that for every $x_1<\ldots<x_m$,
\begin{equation*}
\#\{i: |u(t,x_{i+1})-u(t,x_i)|\ge \delta\} \le  \bar N \left(1+\frac{1}{t}\right).
\end{equation*}
So
\begin{equation*}
\sum_{i=1}^{m-1} |u(t,x_{i+1})-u(t,x_i)|^p \le \bar N (2\|u_0\|_\infty)^p + \sum_{I_\delta}|u(t,x_{i+1})-u(t,x_i)|^p,
\end{equation*}
where $I_\delta=\{i: |u(t,x_{i+1})-u(t,x_i)|<\delta\}$.
If $f''\ne 0$ in $\mathcal I(u(t,x_{i+1}),u(t,x_i))$, by Lemma \ref{L_convex_flux} with $g=f'$, we get 
\begin{equation*}
|u(t,x_{i+1})-u(t,x_i)|^p\le C|f'(u(t,x_{i+1}))-f'(u(t,x_i))|.
\end{equation*}
Similarly if $w_s\in \mathcal I(u(t,x_{i+1}),u(t,x_i))$ for some $s=1,\ldots,S$, by Lemma \ref{L_chord}, 
\begin{equation*}
\TV_{[x_1,x_2]}(f'\circ u(t)) \ge c |u(t,x_2)-u(t,x_1)|^p.
\end{equation*}
Finally
\begin{equation*}
\sum_{i=1}^{m-1} |u(t,x_{i+1})-u(t,x_i)|^p \le  \bar N\left(1+\frac{1}{t}\right) (2\|u_0\|_\infty)^p + \tilde C \TV (f'\circ u(t)),
\end{equation*}
where $\tilde C=\max\{C,1/c\}$. The conclusion follows by Theorem \ref{T_Cheng}.
\end{proof}

\section{$\SBV$ regularity of $f'\circ u$}
\label{S_SBV}
In this section we prove that the $\BV$ regularity of the velocity $f'\circ u$ can be improved to $\SBV$ regularity.
As in the convex case \cite{ADL_note}, the proof is based on the structure of characteristics.
Recall the partition $\R^+\times \R=A\cup B\cup C$ introduced in Proposition \ref{P_structure} and let us set 
for every $t>0$, the time sections
\begin{equation*}
A_t:=\{x:(t,x)\in A\}, \qquad B_t:=\{x:(t,x)\in B\}, \qquad C_t:=\{x:(t,x)\in C\}.
\end{equation*}

\begin{proposition}\label{P_SBV}
Let the flux $f$ be smooth and $\bar u$ be a representative of the entropy solution $u$ to \eqref{E_cl} as in Proposition \ref{P_structure}. Denote by
\begin{equation*}
\begin{split}
\mathcal B := &~ \{t\in (0,+\infty): f'\circ \bar u(t) \in \BV_{\loc}(\R)\}, \\
\mathcal S := &~ \{t\in (0,+\infty): f'\circ \bar u(t) \in \SBV_{\loc}(\R)\}.
\end{split}
\end{equation*}
Then $\mathcal B \setminus \mathcal S$ is at most countable.
\end{proposition}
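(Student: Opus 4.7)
The plan is to adapt the argument of \cite{ADL_note} to the present general setting, using Proposition \ref{P_structure} in place of the Lax-Oleinik formula. Fix $t\in\mathcal B$ and consider the partition $\R=A_t\cup B_t\cup C_t$. The restriction $(f'\circ\bar u)\llcorner B$ is locally Lipschitz, so on each compact subset of $B_t$ contained in a neighborhood where the Lipschitz bound is uniform, the measure $D(f'\circ\bar u(t))$ is absolutely continuous; since $A_t$ is at most countable (being the section at time $t$ of the countably many Lipschitz graphs that define $A$), its contribution is purely atomic. Therefore the Cantor part $D^c(f'\circ\bar u(t))$ is concentrated on $C_t$, which reduces the problem to the analysis of the Cantor part on $C_t$.

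By property (3) of Proposition \ref{P_structure}, each $(t,x)\in C$ lies on a unique straight backward characteristic of constant velocity ending at $(0,\phi_t(x))$, where $\phi_t(x):=x-t\,f'(\bar u(t,x))$. Two such segments cannot cross before their endpoints, or else they would produce a shock and leave $C$; consequently $\phi_t:C_t\to\R$ is non-decreasing, and $f'\circ\bar u(t)$ satisfies the one-sided Lipschitz bound $f'(\bar u(t,x_2))-f'(\bar u(t,x_1))\le (x_2-x_1)/t$ for $x_1<x_2$ in $C_t$. Via the identity $f'(\bar u(t,x))=(x-\phi_t(x))/t$, a non-trivial Cantor part of $f'\circ\bar u(t)$ on $C_t$ is equivalent to the existence of a Borel set $E\subset C_t$ with $\mathcal L^1(E)=0$ and $\mathcal L^1(\phi_t(E))>0$. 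For each compact $K\subset\R$ I introduce the counting function
\begin{equation*}
F_K(t) := \mathcal L^1\bigl(\phi_t(C_t\cap K)\bigr).
\end{equation*}
A segment in $C$ through a point of $K$ at time $t$ was in $C$ at every earlier time, and by finite speed of propagation its starting position $\phi_t(x)$ lies in a compact set $K'\supset K$ depending only on a bounded time window; exploiting the monotonicity of $\phi_t$, one shows that $F_K$ admits a non-increasing representative up to a bounded perturbation. Hence $F_K$ has at most countably many jump discontinuities; taking an exhaustion $K_n\uparrow\R$, the union $Q$ of these discontinuity sets is countable.

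The proof concludes with the inclusion $\mathcal B\setminus\mathcal S\subset Q$. Given $\bar t\in\mathcal B\setminus\mathcal S$, pick a compact $K$ with $|D^c(f'\circ\bar u(\bar t))|(K)>0$ and obtain $E\subset C_{\bar t}\cap K$ with $\mathcal L^1(\phi_{\bar t}(E))>0$. The geometric content is that two Cantor-close points $x_1<x_2\in E$ must carry segments of macroscopically different slopes $v_1>v_2$: indeed $\phi_{\bar t}(x_2)-\phi_{\bar t}(x_1)=(x_2-x_1)-\bar t(v_2-v_1)$ is bounded away from zero while $x_2-x_1$ is microscopic, so $v_1-v_2\simeq[\phi_{\bar t}(x_2)-\phi_{\bar t}(x_1)]/\bar t$. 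The two segments therefore intersect at a time $\bar t+(x_2-x_1)/(v_1-v_2)$, which tends to $\bar t^+$ as $x_2-x_1\to 0$; since the monotonicity of $\phi$ forbids crossings within $C$, the segments starting from positions in $\phi_{\bar t}(E)$ must leave $C$ precisely at time $\bar t$. This produces a jump of $F_{K'}$ at $\bar t$ of size at least $\mathcal L^1(\phi_{\bar t}(E))$, so $\bar t\in Q_{K'}\subset Q$. The main obstacle is making this last geometric argument rigorous, namely showing that the Cantor mass at $\bar t$ really corresponds to characteristics whose death time equals $\bar t$ rather than a nearby time; this is the place where the precise structure in Proposition \ref{P_structure}(3) is essential, replacing the uniqueness of minimizers in the Lax-Oleinik formula exploited by \cite{ADL_note}.
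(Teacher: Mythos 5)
Your proposal follows essentially the same route as the paper: concentrate the Cantor part of $D(f'\circ\bar u(t))$ on $C_t$ using the local Lipschitz bound on $B$ and the countability of $A_t$, exploit the one-sided Lipschitz estimate on $C_t$ to make the backward foot map monotone, and detect a jump, at each time of $\mathcal B\setminus\mathcal S$, of the decreasing function measuring the initial points of characteristics still in $C$. The step you flag as the remaining obstacle is closed in the paper by a Besicovitch covering argument giving $\mathcal L^1(\X(0,Y))\ge \bar t(1-3\e)\|\mu\|$ for the set $Y$ of labels landing on the Cantor carrier, combined with the observation that the map $\X(\bar t,y)\mapsto\X(0,y)$ is $\tau/(\tau-\bar t)$-Lipschitz on the set of characteristics that remain straight up to any $\tau>\bar t$, so that their starting points form a null set and the whole transferred mass is lost exactly at $\bar t$.
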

\begin{proof}
By finite speed of propagation, it is not restrictive to assume that $\supp u_0 \subset [a,b]$ for some $a,b\in \R$.
For every $t >0$, we set
\begin{equation*}
F(t):=\mathcal L^1\left(\{\X(0,y)\in[a,b]: \X(t,y)\in C_t\} \right),
\end{equation*}
where $\X$ is the Lagrangian flow given by Proposition \ref{P_structure}.
Observe that $F$ is decreasing.
We are going to prove that if $t\in \mathcal B\setminus \mathcal S$, then $F(t+)<F(t-)$ and this easily implies the claim.
So let $t\in  \mathcal B\setminus \mathcal S$, and denote by $v:=f'\circ \bar u(t)$. Finally let $\mu$ be the Cantor part of $Dv$.
Since $v$ is locally Lipschitz in $B_t$ by Proposition \ref{P_structure}, and the fact that $A_t$ is at most countable, we have that the measure 
$\mu$ is concentrated on $C_t$.
Moreover, as already observed in the proof of Proposition \ref{P_convex}, for every $x_1<x_2$ in $C_t$ it holds the one-sided Lipschitz estimate
\begin{equation*}
v(x_2)-v(x_1)\le \frac{x_2-x_1}{t}.
\end{equation*}
Therefore $\mu$ is a negative measure.
Fix $\e\in \left(0,\frac{1}{3}\right)$; since $\mu$ is negative and it is singular to $\mathcal L^1 + |Dv-\mu|$, by Besicovitch differentation theorem, 
there exists $E\subset \R$ such that
\begin{enumerate}
\item $\mu$ is concentrated on $E$; 
\item $\mathcal L^1(E)=0$;
\item for every $x\in E$ there exists two sequences $z_i^1(x)\to x$ and $z_i^2(x)\to x$ with $z_i^1(x)<x$, $z_i^2(x)>x$ 
and  such that for every $i\ge 1$,
\begin{equation}\label{E_all_mu}
v(z^1_i(x))-v(x) \ge (1-\e)|Dv|([z^1_i(x),x]) \qquad \mbox{and} \qquad v(x)- v(z^2_i(x))\ge (1-\e)|Dv|([x,z^2_i(x)]).
\end{equation}
\end{enumerate}
Observe that since $\mu$ has no atoms, up to removing a countable set from $E$, we can assume that the sequences $z^1_i$ and $z^2_i$
are contained in $C_t$.

The next step is to give a lower bound on $\mathcal L^1(\{\X(0,y): \X(t,y) \in E\}$, see Figure \ref{F_backward}.
Denote by
\begin{equation*}
Y:=\{y:\X(t,y)\in E\} \qquad \mbox{and} \qquad \nu:= \X(t,\cdot)_\sharp \left(\mathcal L^1 \llcorner \X(0,\R\setminus Y)\right);
\end{equation*}
since $\mu$ is concentrated on $E$, it holds $\mu \bot \nu$. Therefore, by Besicovitch covering theorem, 
there exist $x_1,\ldots, x_N \in E$ and $a_n:=z^1_i(x_n)$, $b_n:=z^2_j(x_n)$ for some $i,j\ge 1$ such that $([a_n,b_n])_{n=1}^N$ is a pairwise
disjoint family of intervals and
 \begin{equation}\label{E_choice_cover}
|\mu|\left(\bigcup_{n=1}^N[a_n,b_n]\right)\ge (1-\e)\|\mu\|, \qquad 
\nu \left(\bigcup_{n=1}^N[a_n,b_n]\right) \le t\e\|\mu\|.
\end{equation}
Since $a_n,b_n\in C_t$ for every $n=1,\ldots,N$, it holds
\begin{equation*}
y(t,b_n)-y(t,a_n)=b_n-a_n+t(v(a_n)-v(b_n)) > t(v(a_n)-v(b_n)).
\end{equation*}
Moreover, by \eqref{E_all_mu}, we have $t(v(a_n)-v(b_n))>(1-\e)|\mu|([a_n,b_n])$.
Set $U:= \X(t)^{-1}\left(\bigcup_{n=1}^N[a_n,b_n]\right)$. By \eqref{E_choice_cover}, summing on $n=1,\ldots, N$, we get
\begin{equation*}
\mathcal L^1(U)\ge t(1-\e) \sum |\mu_t|([a_n,b_n]) \ge t(1-\e)^2\|\mu\| \qquad \mbox{and} \qquad \mathcal L^1(U\setminus \X(0, Y))<t\e \|\mu\|.
\end{equation*}
Therefore we have
\begin{equation}\label{E_dying}
\mathcal L^1(\X(0,Y)) \ge t(1-3\e)\|\mu\|.
\end{equation}
Then we conclude by the following geometrical observation:
let $\tilde Y\subset \R$ be such that 
\begin{equation*}
\mathcal L^1(\X(0,\tilde Y))>0, \qquad \mbox{and} \qquad\mathcal L^1(\X(t,\tilde Y))=0.
\end{equation*}
Let $\tau>t$ and consider the set $\tilde Y(\tau)$ of points $y\in \tilde Y$ such that $\X(\cdot,y)$ has constant speed in $[0,\tau]$;
then $\mathcal L^1(\X(0,\tilde Y(\tau)))=0$. 

This follows from the monotonicity of the map $\X$, see Figure \ref{F_opening}. Indeed for any $y_1<y_2$ in $\tilde Y(\tau)$, since $\X(0,y_1)<\X(0,y_2)$, we have
\begin{equation*}
\begin{split}
(\X(0,y_2) - \X(0,y_1))(\tau-t) =&~ (\X(\tau,y_2)-\X(\tau,y_1))(\tau -t) - (\partial_t\X(t,y_2)-\partial_t\X(t,y_1))(\tau-t)\tau \\
\le &~ (\X(\tau,y_2) -\X(\tau,y_1))\tau + (\partial_t\X(t,y_2)-\partial_t\X(t,y_1))(\tau-t)\tau \\
= &~ (\X(t,y_2)-\X(t,y_1))\tau,
\end{split}
\end{equation*}
i.e. the map 
\begin{equation*}
\X(t,y)\mapsto \X(0, y)
\end{equation*}
is $\tau/(\tau-t)$ Lipschitz on $\tilde Y(\tau)$. In particular, since $\mathcal L^1(\X(t,\tilde Y(\tau)))=0$, then $\mathcal L^1(\X(0,\tilde Y(\tau)))=0$.

Applying this observation to our case with $\tilde Y=Y$ and an arbitrary $\tau >t$, we get that
\begin{equation*}
\mathcal L^1(\{y\in Y: \X(\tau,y)\in C_\tau\})=0.
\end{equation*}
Since $\tau>t$ is arbitrary, by \eqref{E_dying}, we have that
\begin{equation*}
F(t)-F(t+)\ge t(1-3\e)\|\mu\| >0,
\end{equation*}
and this concludes the proof.
\end{proof}
\begin{corollary}\label{C_SBV}
Let $u$ be the entropy solution of \eqref{E_cl} with $u_0\in L^\infty$ and $f$ of polynomial degeneracy (or more in general as in Remark \ref{R_sharp_assumptions}).
Then
%Then there exists an at most countable set $Q\subset \R^+$ such that for every $t\in \R^+\setminus Q$,
%\begin{equation*}
%f'\circ u(t) \in \SBV_{\loc}(\R).
%\end{equation*}
%Moreover 
\begin{equation*}
f'\circ u\in \SBV_{\loc}(\R^+\times \R).
\end{equation*}
\end{corollary}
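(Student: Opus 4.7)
The plan is to combine the uniform $\BV$ regularity obtained in Theorem~\ref{T_Cheng} with the sporadic $\SBV$ improvement of Proposition~\ref{P_SBV}, and then to lift the resulting fiberwise $\SBV$ property to genuine space-time $\SBV$ by a slicing argument that exploits the Burgers-type equation satisfied by $v:=f'\circ u$.

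First, under the polynomial degeneracy assumption (or more generally that of Remark~\ref{R_sharp_assumptions}), Theorem~\ref{T_Cheng} gives $v(t,\cdot)\in\BV_\loc(\R)$ for every $t>0$, so that in the notation of Proposition~\ref{P_SBV} one has $\mathcal B=(0,+\infty)$. Proposition~\ref{P_SBV} then immediately yields that $(0,+\infty)\setminus\mathcal S$ is at most countable; in particular $v(t,\cdot)\in\SBV_\loc(\R)$ for $\mathcal L^1$-almost every $t>0$.

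Next I would promote this to $v\in\SBV_\loc(\R^+\times\R)$ in three moves. \emph{(i)} Show $v\in\BV_\loc(\R^+\times\R)$: the spatial derivative $\partial_x v$ is a Radon measure on $(t_0,T)\times\R$ because $\int_{t_0}^T\TV(v(t))\,dt<+\infty$ by Theorem~\ref{T_Cheng}, and the time derivative is controlled by the Burgers-type identity
\begin{equation*}
\partial_t v+v\,\partial_x v=0,
\end{equation*}
which is justified through the Lagrangian structure of Proposition~\ref{P_structure}: on $B$ the composition is locally Lipschitz (classical chain rule), on $C$ the value of $v$ is constant along characteristic segments of slope $v$ (precisely the transport equation above), and on the $1$-rectifiable set $A$ only jump terms arise, consistent with the Rankine--Hugoniot condition for the original law. \emph{(ii)} Apply the slicing theorem for $\BV$ vector measures to $Dv$: its Cantor part has an $x$-component equal to $\int D^c v(t,\cdot)\,dt$, which vanishes because $v(t,\cdot)\in\SBV_\loc$ for a.e. $t$. \emph{(iii)} From the Burgers identity, applied to the precise representative $\tilde v$ (well defined $|Dv|$-a.e. outside the jump set), the $t$-component of $D^c v$ equals $-\tilde v$ times its $x$-component and hence also vanishes. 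Therefore $D^c v=0$, which is exactly $v\in\SBV_\loc(\R^+\times\R)$.

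The main obstacle is the rigorous justification of the distributional identity $\partial_t v+v\,\partial_x v=0$: since $u$ is only in $L^\infty$, the naive chain rule applied to $u_t+f(u)_x=0$ is not available, and some care is needed to match the jump contributions on the rectifiable set $A$. The Lagrangian representation of Proposition~\ref{P_structure} is the right tool to bypass this difficulty, by separately handling $A$, $B$ and $C$ and reducing the verification to the classical chain rule on $B$, to constancy along characteristics on $C$, and to the Rankine--Hugoniot condition on $A$.
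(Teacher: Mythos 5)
Your overall strategy coincides with the paper's: obtain $\mathcal B=(0,+\infty)$ from Theorem \ref{T_Cheng}, use Proposition \ref{P_SBV} to get $f'\circ \bar u(t)\in\SBV_\loc(\R)$ for all but countably many $t$, kill the Cantor part of $D_x(f'\circ u)$ by slicing, and then transfer this to the $t$-component through an evolution equation for $v:=f'\circ u$. The one step that does not survive as written is the distributional identity $\partial_t v+v\,\partial_x v=0$. This is false across shocks for a general flux: at a jump of $u$ between $u^-$ and $u^+$ the discontinuity of $v$ travels with the Rankine--Hugoniot speed $\sigma=\big(f(u^+)-f(u^-)\big)/(u^+-u^-)$ of the original law, whereas the conservative identity $\partial_t v+\partial_x(v^2/2)=0$ would force the speed $\tfrac12\big(f'(u^+)+f'(u^-)\big)$. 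These agree for the Burgers flux but not in general, so the appeal to ``consistency with the Rankine--Hugoniot condition for the original law'' is precisely the point that fails, and the homogeneous equation cannot be used to express $D^c_t v$ in terms of $D^c_x v$ without further input.

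The paper's fix is to work with the correct identity, derived from the kinetic formulation,
\begin{equation*}
\partial_t(f'\circ u)+\partial_x\Big(\tfrac{1}{2}\,(f'\circ u)^2\Big)=\bar\mu,
\qquad \bar\mu=\int_\R f^{(3)}(w)\,\mu_w^+\,dw,
\end{equation*}
whose right-hand side is a genuinely nonzero measure. The conclusion is then rescued not because the source vanishes but because, by Proposition \ref{P_structure}, $\bar\mu\ll\mathcal H^1\llcorner A$ with $A$ countably $1$-rectifiable, hence $\bar\mu$ has no Cantor part; combining this with the Volpert chain rule, which gives $D^c_x\big(\tfrac12 v^2\big)=\tilde v\,D^c_x v=0$ once $D^c_x v=0$ is known from slicing, one reads off $D^c_t v=0$ from the equation. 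With the homogeneous Burgers identity replaced by this kinetic identity with source, and the rectifiability of the entropy dissipation invoked to discard the Cantor part of $\bar\mu$, your argument becomes the paper's proof.
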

\begin{proof}
By Theorem \ref{T_Cheng} and Proposition \ref{P_SBV}, it immediately follows  that  
there exist a representative $\bar u$ of $u$ and  an at most countable set $Q\subset \R^+$ such that for every $t\in \R^+\setminus Q$,
\begin{equation*}
f'\circ \bar u(t) \in \SBV_{\loc}(\R).
\end{equation*}
By slicing theory (see \cite{AFP_book}), the Cantor part of $D_x(f'\circ u)$ vanishes, therefore it remains to prove that also $D_t(f'\circ u)$ has no
Cantor part.

Moreover, denoting by $\mu_k^+$ the dissipation measure of the entropy $\eta_k^+(w)=(w-k)^+$, we have that the velocity $f'\circ u$ 
satisfies the following equation:
\begin{equation}\label{E_f'_diss}
f'(u)_t + \bar q(u)_x = \bar\mu,
\end{equation}
where 
\begin{equation*}
\bar q(w)=\frac{f'(w)^2}{2} \qquad \mbox{and} \qquad \bar \mu=\int_\R \left(f^{(3)}(w)\mu_w^+\right)dw.
\end{equation*}
By Volpert chain rule for functions of bounded variation $\bar q\circ \bar u(t)\in \SBV_{\loc}(\R)$ for every $t\in \R^+\setminus Q$;
in particular the Cantor part of the measure $D_x (\bar q \circ u)$ vanishes.
By Proposition \ref{P_structure}, $\bar \mu$ is absolutely continuous with respect to $\mathcal H^1\llcorner A$, with $A$ countably 
1-rectifiable. In particular it has no Cantor part. Therefore, by \eqref{E_f'_diss}, it follows that the measure $D_t(f'\circ u)$ has no Cantor part,
and this concludes the proof.
\end{proof}

\begin{figure}
     \centering
       \subfloat[][{\normalsize $d_n=b_n-a_n -tDv((a_n,b_n))$, therefore $d_n \approx t|\mu|((a_n,b_n))$ and $\mathcal L^1(\X(0,Y))\approx|\mu|(E)$.}]{
     \centering
     \def\svgwidth{0.5\columnwidth}
     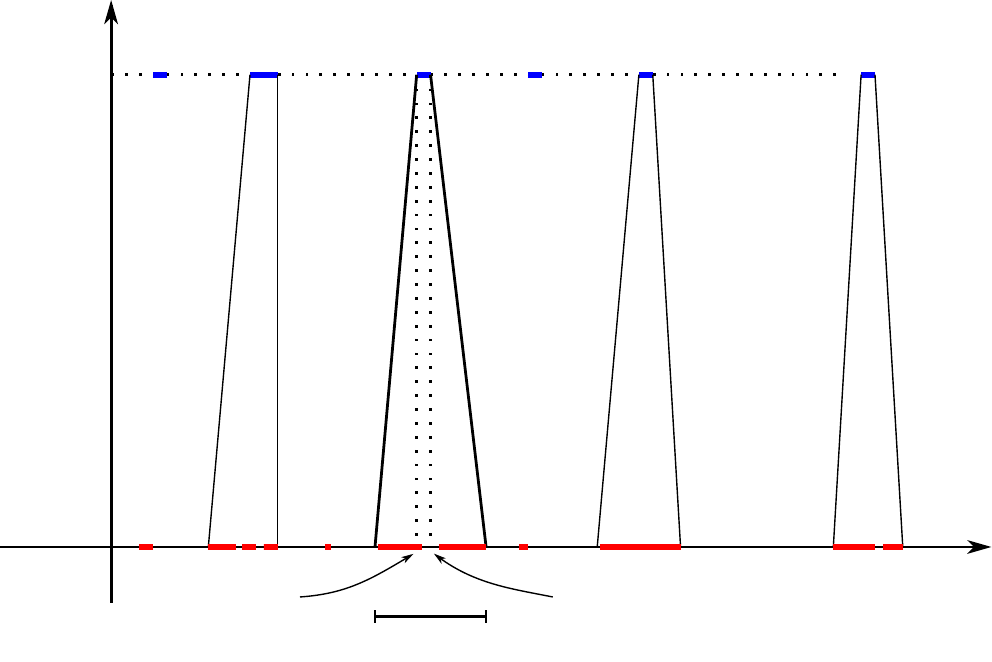
     \label{F_backward}}
     \subfloat[][{\normalsize Since $l_{\tau}\ge 0$, it holds
     $l_0\le \frac{\tau l_t}{\tau -t}$.}]{
     \centering
     \def\svgwidth{0.35\columnwidth}
     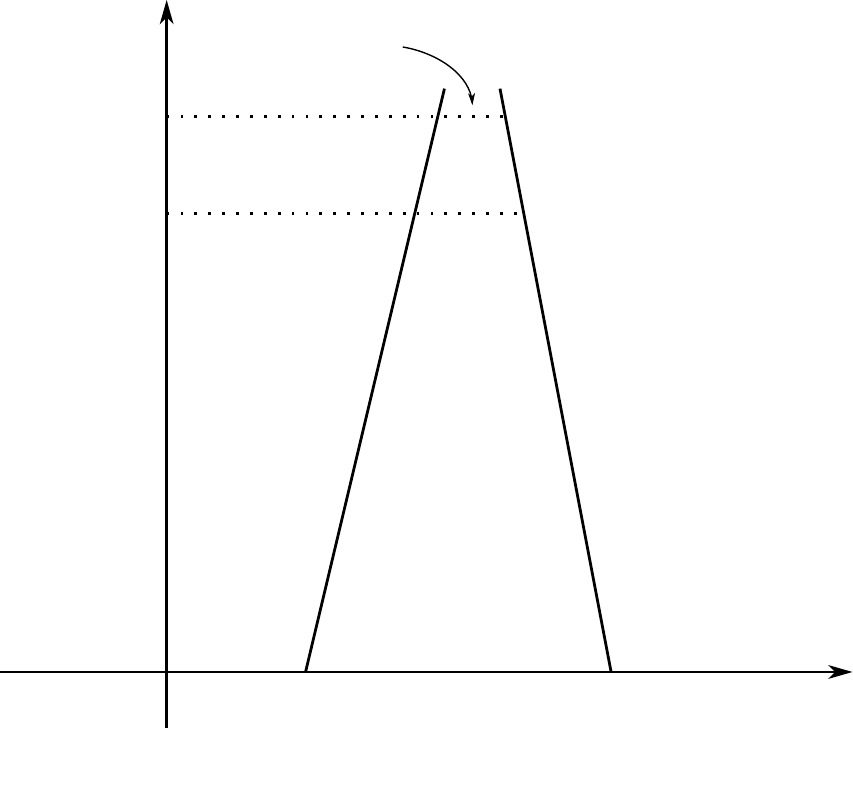
     \label{F_opening}}
     \caption{SBV regularity of $f'\circ u$.}
   \end{figure}

\section{Examples}
\label{S_examples}

In this section we present four examples: the first one shows that Theorem
\ref{T_Cheng} does not hold in general removing the assumption of polynomial degeneracy at the inflection points of the flux.
The second example is about the regularity in the kinetic formulation, the third one concerns fractional $\BV$ spaces and the last example
shows the sharpness of Theorem \ref{T_frac_regularity}.

\subsection{Polynomial degeneracy condition is needed in Theorem \ref{T_Cheng}}
In \cite{BM_structure} is provided an example of entropy solution to \eqref{E_cl} such that $f'\circ u\notin \BV_\loc(\R^+\times \R)$.
The flux $f$ in that example is weakly genuinely nonlinear and it has countably many inflection points.
Adapting the same idea, we provide here an example with the same property and such that $f$ has only one inflection point: in view of
Theorem \ref{T_Cheng}, that inflection point has not polynomial degeneracy.

\subsubsection{Building block}
For every $n\in\N$ let $g_n:[-1,1]\rightarrow \R$ be odd and such that
\begin{equation*}
g_n(x)= 
\begin{cases}
0 & \mbox{if }x \in (-1,-a_{n-1}), \\
\e_n & \mbox{if }x\in (-a_{n-1},-2a_n), \\
b_n & \mbox{if }x\in (-2a_n,-a_n), \\
0 & \mbox{if }x\in (-a_n,0)
\end{cases}
\end{equation*}
with 
\begin{equation*}%\label{E_cond_example}
a_1<\frac{1}{2}, \quad a_n<\frac{a_{n-1}}{2}, \quad \sum_n \e_n<1, \quad \sum_nb_n < 1
\end{equation*}
and let $f:[-1,1]\rightarrow \R$ the unique continuous function for which for $\mathcal L^1$-almost every $x\in [-1,1]$
\begin{equation}\label{E_f''}
f''(x)=\sum_{n=1}^\infty g_n(x)
\end{equation}
with $f(0)=0$ and $f'(-1)=0$.
We consider the solution $u^n$ with initial datum
\begin{equation*}
u_0^n(x)=
\begin{cases}
-a_n & \mbox{if }x<0, \\
a_n & \mbox{if }x\in (0,d_n), \\
-a_n & \mbox{if }x>d_n,
\end{cases}
\end{equation*}
where $d_n>0$ will be chosen.

\begin{figure}
\centering
\def\svgwidth{0.7\columnwidth}
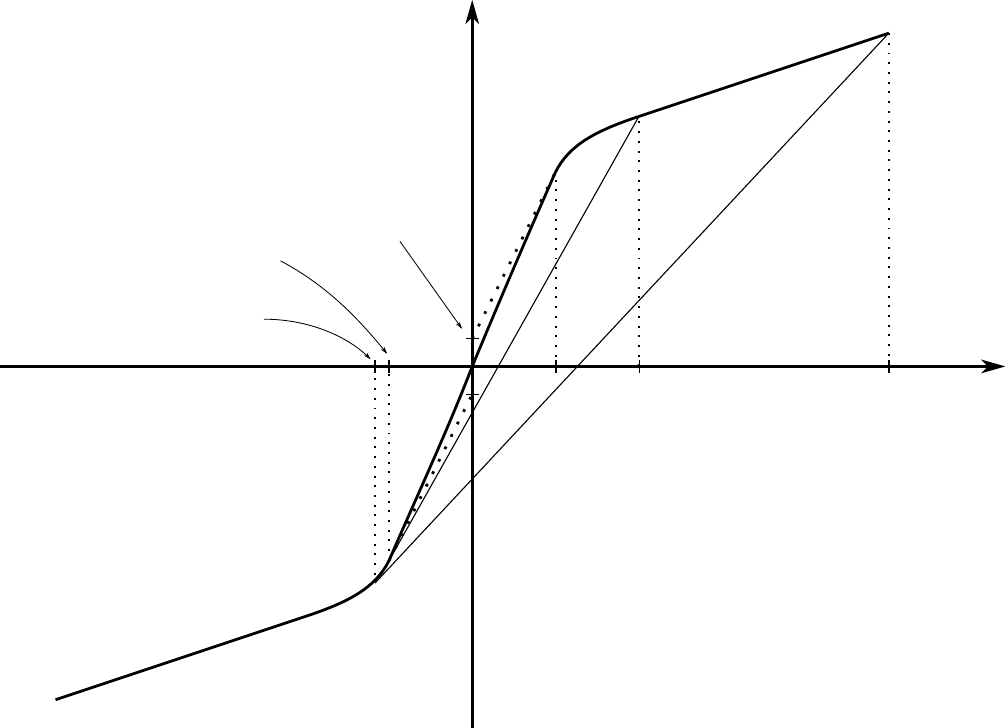
\caption{The flux $f$ in the interval $[-a_{n-1},a_{n-1}]$.}
\label{F_flux_an}
\end{figure}

\begin{figure}
\centering
\def\svgwidth{0.8\columnwidth}
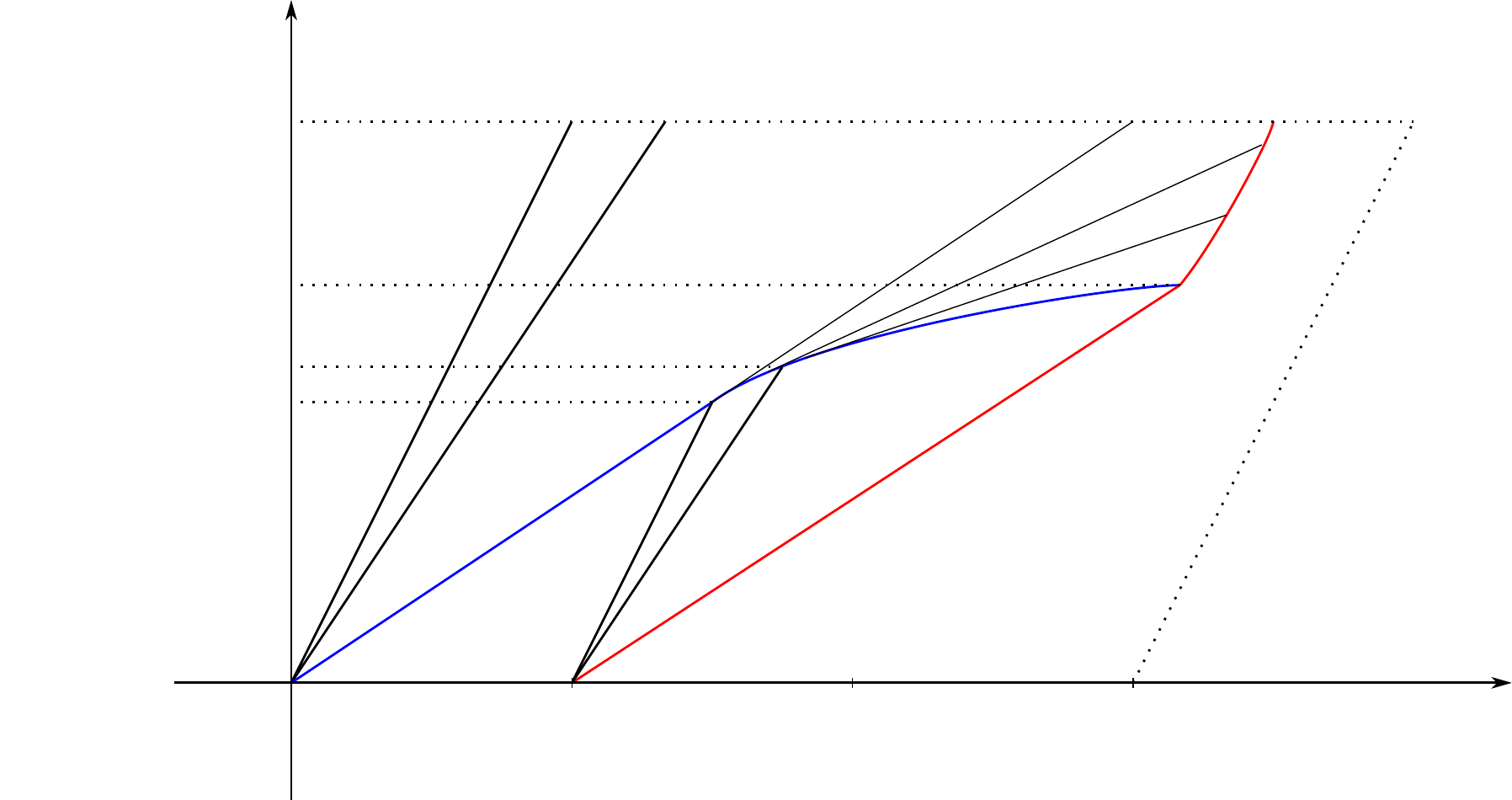
\caption{The solution $u$ for $t\in (0,2)$.}
\label{F_solution_basic}
\end{figure}

The parameters $\e_n,a_n,b_n$ will be chosen in particular in such a way that 
\begin{equation}\label{E_cond_example_star}
a_n<(-2a_n)^*<-a_{n-1}^*<2a_n,
\end{equation}
where $(-2a_n)^*$ denotes the conjugate point of $-2a_n$ defined in Lemma \ref{L_conjugate}.
We assume it at the moment and we describe the entropy solution (see Figure \ref{F_flux_an} and \ref{F_solution_basic}):
 for small $t>0$ the solution is obtained solving the two Riemann problems
at $x=0$ and $x=d_n$. Being $f$ odd, it suffices to discuss the Riemann problem at $x=0$.
The solution has a strict rarefaction between the curves 1 and 2 with values $-a_{n-1}$ and $-2a_n$ respectively, then another rarefaction
between the values $-2a_n$ and $a_{n-1}^*$ and finally
a left-contact discontinuity 3 that travels with speed $f'(a_{n-1}^*)$.
We set
\begin{equation}\label{E_dn}
d_n= f'(a_{n-1}^*)-f'(a_{n-1})
\end{equation}
so that the left-contact discontinuity 3 interacts with the characteristic 4 of therarefaction starting from $x=d_n$ at time $t=1$.
Then the left-contact discontinuity cancels the rarefaction and increases its speed. In particular it interacts with the characteristic 5 with value 
$a_{2n}$ at time $t + \Delta t^1_n$ with
\begin{equation}\label{E_delta1}
\Delta t^1_n\le \frac{f'(2a_n)-f'(a_{n-1})}{f'(a_{n-1}^*)-f'(2a_n)},
\end{equation}
indeed $f'(2a_n)-f'(a_{n-1})$ is the distance of the two curves at time $t=1$ and $f'(a_{n-1}^*)-f'(2a_n)$ is smaller than the difference 
of their speeds.
After time $1+\Delta t^1_n$ the left contact discontinuity moves with speed bigger than $f'(2a_n^*)$. Moreover by convexity of the curve 3, 
the distance between the curves 3 and 6 at time $1+ \Delta t^1_n$ is less than $d_n$. Therefore, recalling \eqref{E_dn},
 curve 3 interacts with curve 6 at time $1+\Delta t^1_n +\Delta t^2_n$ with 
\begin{equation}\label{E_delta2}
\Delta t^2_n \le \frac{f'(a_{n-1}^*)-f'(a_{n-1})}{f'((2a_n)^*)-f'(a_{n-1}^*)}.
\end{equation}
Finally observe that the speed of curve 6 decreases after the collision with curve 3, in particular
\begin{equation}\label{E_supp_example}
u(t,x)=-a_{n-1} \quad \text{for every }(t,x) \in\{ (t,x)\in (0,2)\times \R: x<f'(a_{n-1})t\mbox{ or }f'(a_{n-1})t+3d_n<x)\}.
\end{equation}

Now we estimate $\TV (f'\circ u^n(t))$ for $t\in (1+\Delta t^1_n+\Delta t^2_n,2)$: given $t$ as before, consider the characteristic $\X(\cdot, y_t)$
entering in curve 6 from the left at time $t$. By monotonicity of the flow, the distance at time 1 between the characteristic and curve 6 is at least
$d_n$. Moreover, since the speed of curve 6 for every $t\in (0,2)$ is bigger than $f'(a_{n-1})$ and since the characteristic is convex,
the speed $v_{\max}(t)$ of the characteristic at time $t$ is such that 
\begin{equation*}
v_{\max}(t)-f'(a_{n-1})\ge \frac{d_n}{t-1}.
\end{equation*}
Therefore, if we denote by 
\begin{equation*}
A_n:=\{(t,x)\in (1+\Delta^1_n +\Delta^2_n,2) \times \R: f'(a_{n-1})t<x< f'(a_{n-1})t+3d_n\},
\end{equation*}
it holds
\begin{equation*}
|D_x (f'\circ u^n)|(A_n) \ge \int_{(1+\Delta^1_n +\Delta^2_n)}^2 \left(v_{\max}(t)-f'(a_{n-1})\right)dt \ge d_n 
\log\left( \frac{1}{\Delta t^1_n+\Delta t^2_n}\right).
\end{equation*}
This additional logarithm allows to conclude the example after choosing in an appropriate way the parameters $a_n,\e_n,b_n$.

\subsubsection{General example}
In order to build the general counterexample we consider an initial datum of the following form:
\begin{equation*}
u_0=-\chi_{(-3\|f'\|_\infty,0)}+ \sum_{n=1}^\infty\sum_{i=1}^{N_n}\left(a_{n-1} \chi_{[x^n_i,x^n_i +d_n]} - a_{n-1} \chi_{(x^n_i +d_n,x^n_i+3d_n)}\right),
\end{equation*}
where $x^n_i$ is defined inductively by
\begin{equation*}
\left\{
\begin{aligned}
x^1_1= 0, &  \\
x^n_{i+1} = x^n_i+3d_n & \quad \mbox{for }n\ge1,\, i=1,\ldots, N_n-1, \\
x^{n+1}_1=x^n_{N_n}+3d_n & \quad \mbox{for }n\ge 1.
\end{aligned}
\right.
\end{equation*}
For every $n\ge 1$ and $i=1,\ldots, N_n-1$, denote by
\begin{equation*}
S^n_i:=\{(t,x)\in (0,2)\times \R: x_i^n+f'(a_{n-1})t<x<x_i^n+f'(a_{n-1})t + 3d_n\}.
\end{equation*}
By \eqref{E_supp_example}, for every $(t,x)\in S^n_i$,
\begin{equation*}
u(t,x)=u^n(t, x-x^n_i),
\end{equation*}
where $u^n$ is the solution described in the previous step.
Therefore 
\begin{equation*}
|D_x(f'\circ u)|((1,2)\times \R) \ge \sum_{n=1}^\infty N_nd_n\log\left( \frac{1}{\Delta t^1_n+\Delta t^2_n}\right).
\end{equation*}
In order to have $u_0$ with bounded support, we need
\begin{equation}\label{E_space}
\sum_{n=1}^\infty N_nd_n<+\infty
\end{equation}
and finally, choosing $\e_n,b_n\le a_n^n$ we have that $f^{(p)}(0)=0$ for every $p\ge 2$.
Therefore we conclude by proving that there exists $\e_n,a_n,b_n>0$ such that 
\begin{equation}\label{E_goal_example}
\begin{split}
\e_n,b_n\le a_n^n, & \qquad a_n<(-2a_n)^*<a_{n-1}^*<2a_n, \\
\sum_{n=1}^\infty N_nd_n<+\infty, &\qquad  \sum_{n=1}^\infty N_nd_n\log\left( \frac{1}{\Delta t^1_n+\Delta t^2_n}\right)=+\infty,
\end{split}
\end{equation}
where we recall that $d_n$ is defined by \eqref{E_dn}.
In particular we need to estimate from above $\Delta t^1_n$ and $\Delta t^2_n$.
By \eqref{E_delta1}, \eqref{E_delta2} and \eqref{E_f''},
\begin{equation}\label{E_delta12}
\Delta t^1_n\le \frac{\e_n(a_{n-1}-2a_n)}{b_n(2a_n-a_{n-1}^*)} \qquad \mbox{and}\qquad 
\Delta t^2_n \le \frac{b_n(2a_n-a_{n-1}^*)+\e_n(a_{n-1}-2a_n)}{b_n(a_{n-1}^*-(2a_n)^*)}.
\end{equation}
We estimate now $(2a_n)^*$ and $a_{n-1}^*$.
Imposing $(2a_n)^*=(1+\alpha_n)a_n$ for some $\alpha_n\in (0,1)$ we get by definition of $(2a_n)^*$,
\begin{equation}\label{E_2an*}
f'((1+\alpha)a_n))((3+\alpha)a_n) = f((1+\alpha)a_n)+f(2a_n).
\end{equation}
Let $h_n:= f(a_n)-f'(a_n)a_n$; by elementary computations
\begin{equation*}
h_n= \sum_{i=n+1}^\infty \Delta_i, \qquad \mbox{where }\Delta_i:= \frac{\e_i}{2}(a_{i-1}^2-4a_i^2) + \frac{3}{2}a_i^2b_i.
\end{equation*}
Using this notation \eqref{E_2an*} is equivalent to
\begin{equation*}
(\alpha_n^2+6\alpha_n -1) a_n^2b_n + 2h_n=0.
\end{equation*}
If we denote by $\alpha$ the positive root of $\alpha^2+6\alpha -1=0$, i.e. $\alpha = \sqrt{10} -3$, we have that 
\begin{equation}\label{E_alpha_n}
\alpha_n=\alpha + r^1\left(\frac{h_n}{a_n^2b_n}\right),
\end{equation}
and $r^1(s)\to 0$ as $s\to 0$.
Similarly we impose $a_{n-1}^*=(2-\beta_n)a_n$ and we get
\begin{equation*}
\left[\frac{\beta_n^2}{2}-(2+R_n)\beta_n +1 \right] a_n^2b_n = 2h_n - \e_n\frac{(a_{n-1}-2a_n)^2}{2},
\end{equation*}
where $R_n= \frac{a_{n-1}}{a_n}$. Therefore
\begin{equation}\label{E_beta_n}
\beta_n= \frac{a_n}{a_{n-1}}+ r^3_n \left(\frac{a_n}{a_{n-1}}\right) + r^2\left(\frac{\e_n+h_n}{a_n^2b_n}\right),
\end{equation}
with $r^2(s)\to 0$ as $s \to 0$ and $r^3_n(s)=O(s^2)$ as $s\to 0$.

Let us take now $b_n=a_n^n$ for every $n\ge 1$. Therefore if 
\begin{equation*}
a_n<\frac{a_{n-1}}{3} \quad \mbox{and} \quad \e_{n+1}<\e_n,
\end{equation*}
then $\Delta_n<\frac{\Delta_{n-1}}{2}$, so that $h_n < 2\Delta_{n+1}$.
Therefore \eqref{E_alpha_n} reduces to
\begin{equation}\label{E_alpha}
\alpha_n= \alpha + \tilde r^1_n\left(\frac{\e_{n+1}}{a_n^{n+2}}\right) + \tilde r^2_n\left(\frac{a_{n+1}^{n+3}}{a_n^{n+2}}\right)
\end{equation}
and \eqref{E_beta_n} reduces to
\begin{equation}\label{E_beta}
\beta_n=  \frac{a_n}{a_{n-1}}+ r^3\left(\frac{a_n}{a_{n-1}}\right) + \tilde r^4_n\left(\frac{\e_n}{a_n^{n+2}}\right) + \tilde r^5_n\left(\frac{a_{n+1}^{n+3}}{a_n^{n+2}}\right).
\end{equation}

Fix $\e'\in (0,\alpha/2)$. We choose the parameters $a_n$ and $\e_n$.
By definition $a_0=1$; let $a_1\in (0,1/3)$ such that $|\tilde r^3(a_1)|< \e'a_1/3$. The existence is granted by the fact that $r^3(s)=O(s^2)$ as 
$s\to 0$. 
Moreover let $\e_1\in (0,1/2)$ be such that
\begin{equation*}
\tilde r^1_1 \left(\frac{\e_1}{a_1^3}\right)<\frac{\e'}{2} \qquad \mbox{and} \qquad \tilde r^4_1 \left(\frac{\e_1}{a_1^3}\right)<\frac{\e'a_1}{3}.
\end{equation*}
Inductively, since for every $n\ge 1$ the remainders $\tilde r^1_n,\tilde r^2_n, \tilde r^4_n, \tilde r^5_n$ are infinitesimal at 0 and 
$\tilde r^3_n(s)= O(s^2)$ as $s\to 0$, it is possible to choose $a_n$ and $\e_n$ (for every $n$ first choose $a_n$ then $\e_n$) such that for every $n\ge 1$,
\begin{enumerate}
\item
\begin{equation*}
\sum_{n=1}^\infty\e_n<1, \qquad \sum_{n=1}^\infty a_n^n<1,\qquad  \e_n<a_n^n,
\end{equation*}
\item
\begin{equation*}
\left|\tilde r^1_n\left(\frac{\e_{n+1}}{a_n^{n+2}}\right)\right|< \frac{\e'}{2}, \qquad \left|\tilde r^2_n\left(\frac{a_{n+1}^{n+3}}{a_n^{n+2}}\right)\right| < \frac{\e'}{2};
\end{equation*}
\item
\begin{equation*}
\left|\tilde r^3_n\left(\frac{a_n}{a_{n-1}}\right)\right|<\e' \frac{a_n}{3a_{n-1}}, \quad \left|\tilde r^4_n \left(\frac{\e_n}{a_n^{n+2}}\right)\right|<\e'\frac{a_n}{3a_{n-1}},
\quad \left|\tilde r^5_n\left(\frac{a_{n+1}^{n+3}}{a_n^{n+2}}\right)\right|< \e'\frac{a_n}{3a_{n-1}};
\end{equation*}
\item
\begin{equation}\label{E_log1}
\log\left(\frac{a_n}{a_{n+1}}\right)> n;
\end{equation}
\item
\begin{equation}\label{E_log2}
\log\left(\frac{a_n^{n+2}}{\e_n}\right)>n;
\end{equation}
\end{enumerate}
Conditions (1), (2) and (3) implies in particular that all the assumptions we made in the previous parts (in particular \eqref{E_cond_example_star})
are satisfied. Condition (4) and (5) will be useful in a moment. 

With this choice, by \eqref{E_delta12} we have that 
\begin{equation*}
\Delta t^1_n \le \frac{\e_n}{a_n^{n+1}\beta_n} \le \frac{\e_n}{a_n^{n+2}(1-\e')},
\end{equation*}
and
\begin{equation*}
\Delta t^2_n \le \frac{\beta_na_n^{n+1}+\e_n}{a_n^{n+1}(1-\alpha_n-\beta_n)}\le \frac{a_{n-1}}{ca_n}+\frac{\e_n}{ca_n^{n+1}},
\end{equation*}
where $c>0$ is a constant such that $1-\alpha_n-\beta_n>c$. Such a constant exists by \eqref{E_alpha}, \eqref{E_beta} and the choice of the 
parameters.
Therefore by \eqref{E_log1} and \eqref{E_log2}, there exists $\tilde c>0$ such that
\begin{equation}\label{E_log_final}
\log\left(\frac{1}{\Delta t^1_n+\Delta t^2_n}\right) \ge \tilde c n.
\end{equation}
Choosing finally
\begin{equation*}
N_n= \left\lfloor \frac{1}{n^2d_n} \right\rfloor
\end{equation*}
we have that 
\begin{equation*}
\sum_{n=1}^\infty N_nd_n \approx \sum_{n=1}^\infty \frac{1}{n^2}<+\infty
\end{equation*}
and by \eqref{E_log_final},
\begin{equation*}
\sum_{n=1}^\infty N_nd_n\log\left( \frac{1}{\Delta t^1_n+\Delta t^2_n}\right) \gtrsim \sum_{n=1}^{\infty}\frac{1}{n}=+\infty.
\end{equation*}
This concludes the analysis of this example.

\begin{figure}
\centering
\def\svgwidth{\columnwidth}
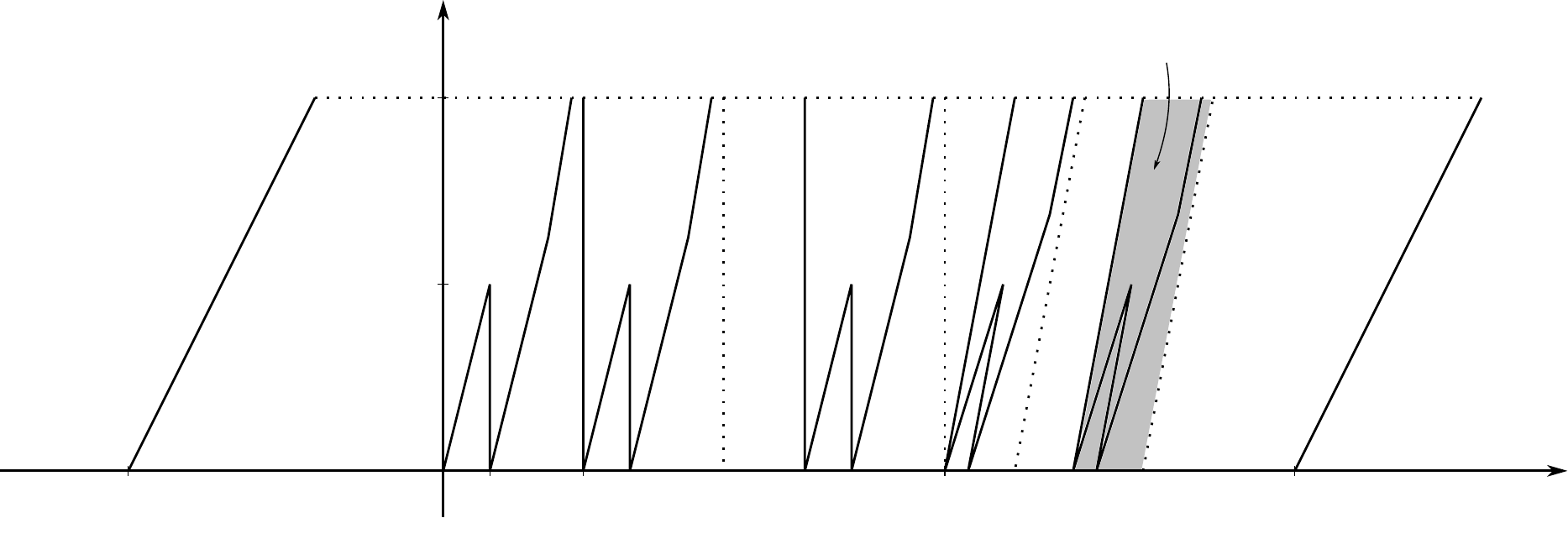
\caption{The solution $u$ for $t\in (0,2)$.}
\end{figure}

\subsection{$\partial_w\mu$ is not a measure}
We start by briefly recalling the kinetic formulation of \eqref{E_cl} (see \cite{LPT_kinetic}): $u\in L^\infty(\R^+\times \R)$ is an entropy solution of \eqref{E_cl} 
if and only if 
\begin{equation*}
\partial_t\chi_{\{u>w\}}+f'(w)\partial_x\chi_{\{u>w\}}=\partial_w\mu,
\end{equation*}
where $\mu\in \mathcal M(\R\times\R^+\times\R)$ can be obtained as
\begin{equation*}
\mu=\mathcal L^1\otimes \mu_k^+
\end{equation*}
with $\mu_k^+$ the dissipation of the entropy $\eta_k^+(w)=(w-k)^+$.

In \cite{DLR_dissipation} it is proved that Theorem \ref{T_Cheng} implies that there exists a constant $C>0$ such that
\begin{equation*}
\|\partial^2_w\mu\|_{\mathcal M} \le C.
\end{equation*}
Then this has been used to get a refined averaging lemma and finally to deduce the rectifiability of the entropy dissipation measure.

The following example shows that there exists a degenerate flux $f$ such that even the first derivative $\partial_w\mu$ can not be 
represented as a Radon measure.

\subsubsection{Building block}
Consider a flux as in Figure \ref{F_measure_flux}.

Now we consider the initial datum
\begin{equation*}
u_0=3L\chi_{[0,A]}.
\end{equation*}
The solution has a shock starting from 0 moving with velocity 0 between the values $0$ and $3L$ that does not interact with anything for
$t \in [0, AL/h]$. In particular we choose
\begin{equation*}
A=\frac{h}{L} \qquad \mbox{so that} \qquad \supp u(t)\subset [0,2A] \quad \mbox{and} \quad u(t,0-)=0, \quad  u(t,0+)=3L
\end{equation*}
for every $t\in [0,1]$.
We compute $|\partial_w\mu|$ along the shock at $x=0$:
by standard computations,
\begin{equation*}
\mu_k^+\llcorner (\{0\}\times [0,1]) =( f(3L)-f(k))\mathcal H^1\llcorner (\{0\}\times [0,1]) =-f(k)\mathcal H^1\llcorner (\{0\}\times [0,1]),
\end{equation*}
therefore
\begin{equation*}
|\partial_w\mu| (\{0\}\times [0,1]) = \int_L^{2L}|f'(w)|dw= 2\frac{hL}{a}.
\end{equation*}

\begin{figure}
\centering
\def\svgwidth{0.7\columnwidth}
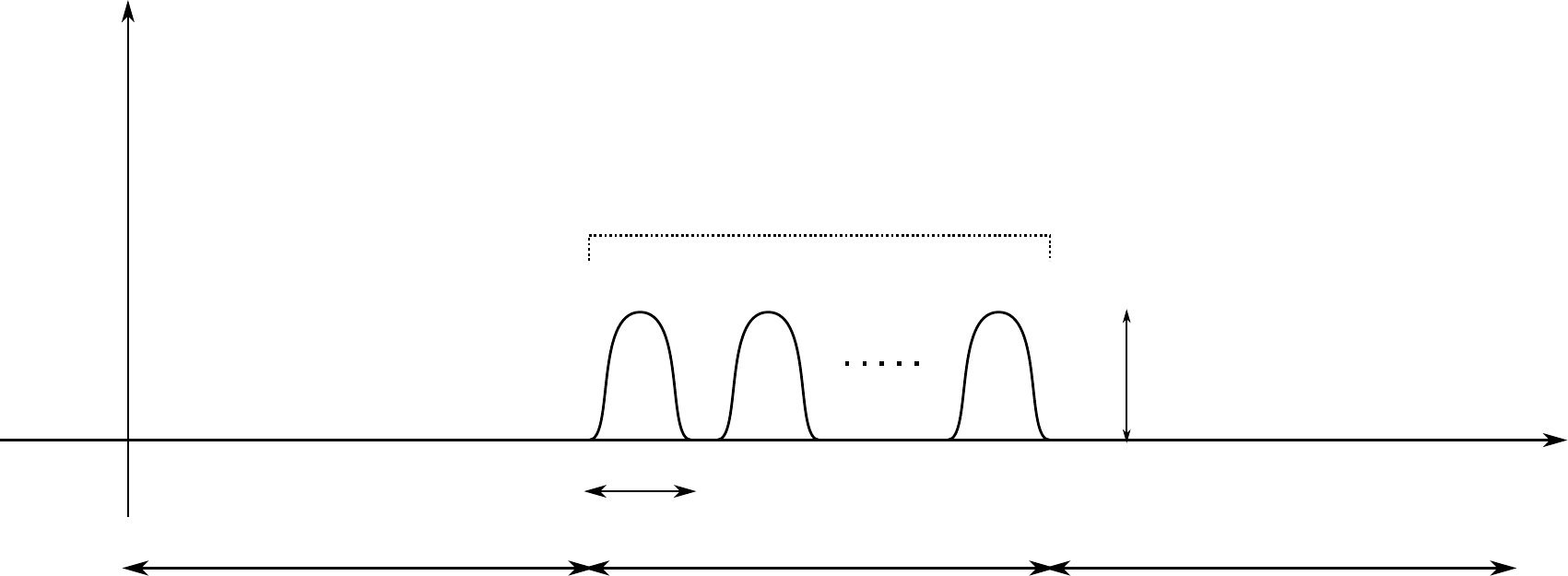
\caption{The flux $f$ for the basic step of the construction.}
\label{F_measure_flux}
\end{figure}

\subsubsection{General example}
The flux is obtained repeating the construction above with smaller and smaller parameters and the initial datum is obtained placing 
side by side $N_n$ multiples of characteristic functions at step $n$. See Figure \ref{F_flux_full} and Figure \ref{F_initial}.

\begin{figure}
\centering
\def\svgwidth{0.7\columnwidth}
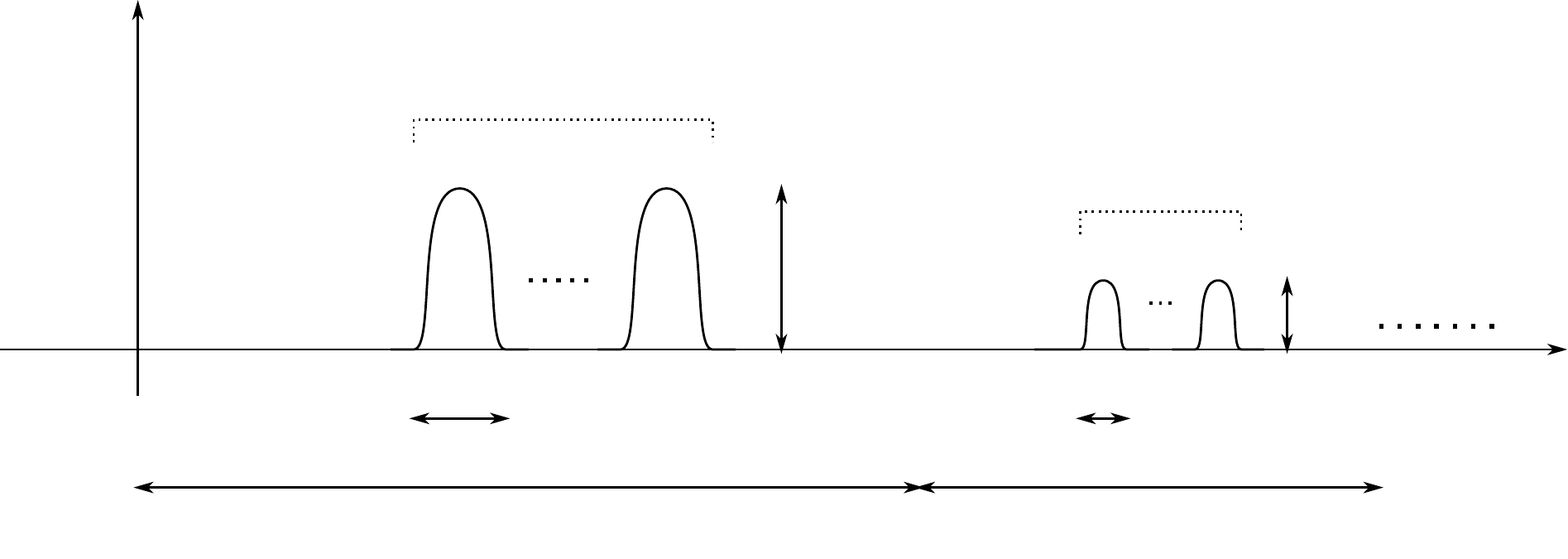
\caption{The flux $f$ for the general example.}
\label{F_flux_full}
\end{figure}

\begin{figure}
\centering
\def\svgwidth{0.7\columnwidth}
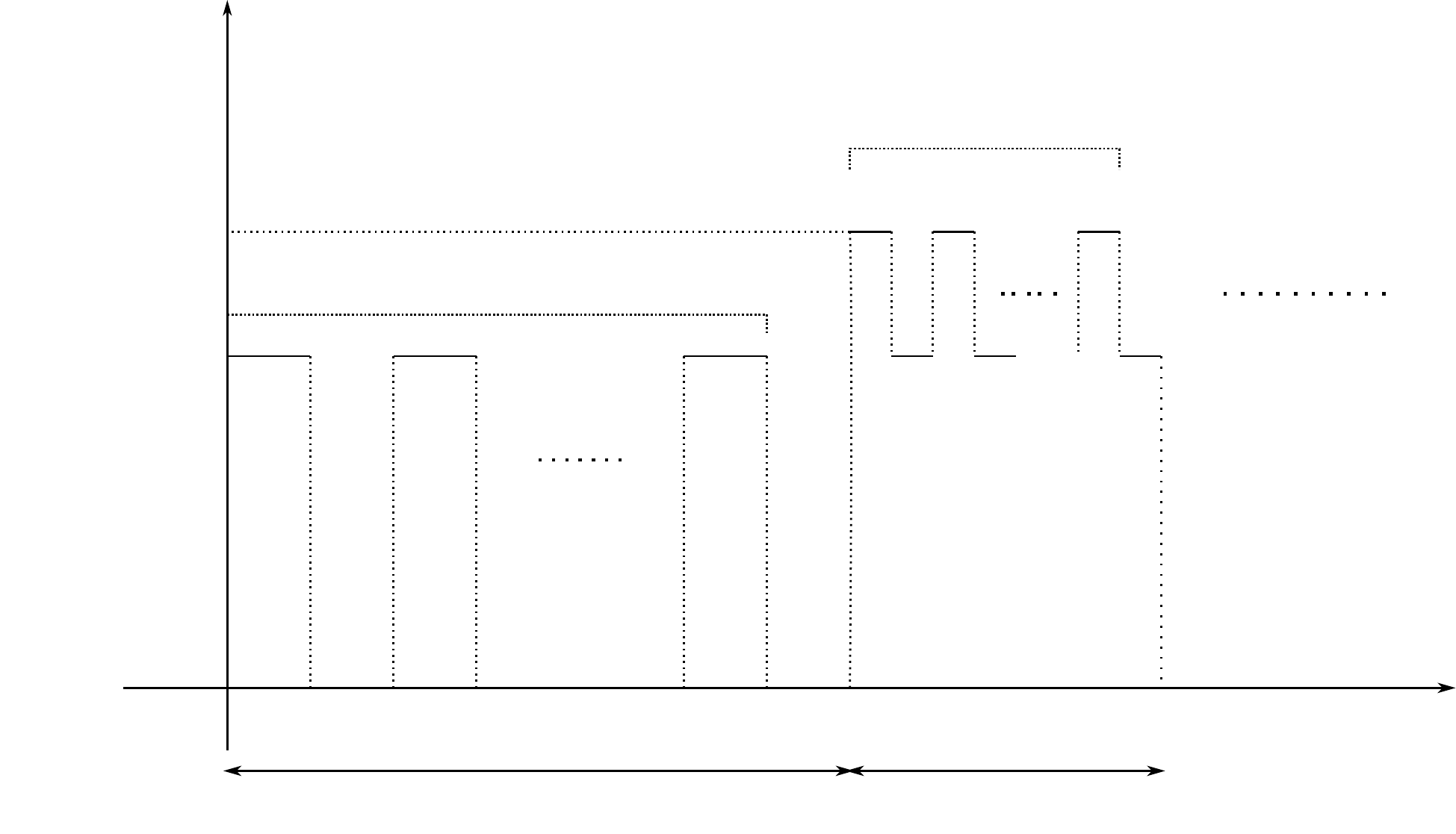
\caption{The initial datum $u_0$ for the general example.}
\label{F_initial}
\end{figure}

At step $n$ choose $h_n=a_n^n$ in order to have a $C^\infty$ flux.
In order to have an $L^\infty$ initial datum we need
\begin{equation*}
\sum_nL_n<\infty.
\end{equation*}
In order to have the initial datum with bounded support it suffices to have
\begin{equation*}
\sum N_n\frac{a_n^n}{L_n} <\infty
\end{equation*}
and finally the distribution $\partial_w\mu$ is not a Radon measure if 
\begin{equation*}
\sum_nN_na_n^{n-1}L_n=\infty.
\end{equation*}
A possible choice is
\begin{equation*}
L_n=2^{-n},\qquad a_n=8^{-n}, \qquad N_n=\frac{8^{(n^2)}}{4^n}.
\end{equation*}

\subsection{Positive and negative fractional total variation}
In this section we provide an example that proves the following Proposition.

\begin{proposition}\label{P_bv+-}
For every $p>1$ there exists a function $u:[0,1]\rightarrow [0,1]$ such that
\begin{equation*}
\left(\TV^{1/ p}_+ u\right)^p:= \sup_{\mathcal P([0,1])}\sum_{i=1}^{k-1}\left[(u(x_{i+1})-u(x_i))^+\right]^p=1
\end{equation*}
and 
\begin{equation*}
\left(\TV^{1/p}_- u\right)^p:= \sup_{\mathcal P([0,1])}\sum_{i=1}^{k-1}\left[(u(x_{i+1})-u(x_i))^-\right]^p=+\infty.
\end{equation*}
\end{proposition}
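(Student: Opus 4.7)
The goal is to construct $u:[0,1]\to[0,1]$ with $\TV^{1/p}_+(u)$ finite (normalised to $1$) but $\TV^{1/p}_-(u)=+\infty$. A first observation is that such a $u$ must have infinite total variation: indeed if $\TV^{1/p}_-(u)=+\infty$ then the negative $1$-variation is also infinite, and since $u$ is bounded the positive $1$-variation is infinite too. So the upward part of $u$ has to fall into the regime \emph{infinite $1$-variation, finite $p$-variation}, analogous to sample paths of a Brownian motion.

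Concretely I would take $u=1+S$, where $S$ is a bounded pure-jump function on $[0,1]$ built from two families of jumps:
\begin{itemize}
\item negative jumps of sizes $\delta_n:=n^{-1/p}$ at positions $a_n\in[0,1]$, so that $\sum_n\delta_n^p=\sum_n \tfrac1n=+\infty$;
\item positive jumps of sizes $h_m:=1/m$ at positions $b_m\in[0,1]$, so that $\sum_m h_m=+\infty$ (needed to compensate the negative drops since $\sum_n\delta_n=+\infty$) while $\sum_m h_m^p<+\infty$ for $p>1$.
\end{itemize}
The positions $\{a_n\}\cup\{b_m\}$ are interleaved by a \emph{pay-as-you-go greedy ordering}: at every $x\in[0,1]$ one insists that
\[
 S(x)\;:=\;\sum_{b_m\le x}h_m\;-\;\sum_{a_n\le x}\delta_n\;\in\;[-\delta_1,0],
\]
which is achievable because both partial sums diverge. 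This forces $u=1+S\in[1-\delta_1,1]\subset[0,1]$.

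The lower bound $\TV^{1/p}_-(u)=+\infty$ follows by taking a partition containing points immediately before and immediately after each $a_n$: the negative increments are exactly the $\delta_n$, whence $\sum_n\delta_n^p=+\infty$. For the upper bound on $\TV^{1/p}_+(u)$, one argues that, because the running sum $S$ is uniformly bounded and because the pay-as-you-go scheme prevents positive jumps from accumulating in any subinterval without intervening negative jumps, the positive excursion of $S$ across any subinterval $(x_i,x_{i+1}]$ is controlled, up to a bounded combinatorial factor, by the largest positive jump it contains. Summing $p$-th powers over the partition then counts each $h_m^p$ only a bounded number of times and is dominated by a constant multiple of $\sum_m h_m^p<+\infty$; after rescaling one attains $\TV^{1/p}_+(u)=1$.

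The main obstacle is the rigorous verification of the ``controlled positive excursion'' property. A naive pay-as-you-go interleaving (placing, right after each $a_n$, enough $h_m$'s to compensate for $\delta_n$) fails, because a partition subinterval straddling just the positive block gives positive excess $\delta_n$ and hence contribution $\sum_n\delta_n^p=+\infty$. One must therefore interleave positive jumps from \emph{many different} batches, so that any long run of positive jumps is cut by a negative one. This step is the combinatorial heart of the argument and is analogous to the sharp $p$-variation estimates used for jump-type rough paths.
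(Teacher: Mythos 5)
Your overall strategy is sound and you have correctly located the difficulty, but the proposal stops exactly where the proof has to begin: the ``controlled positive excursion'' property is asserted, not established, and the mechanism you suggest for it is not the right one. The claim that the positive excess of $u$ over an arbitrary subinterval is bounded, up to a constant, by the largest positive jump it contains cannot be arranged for a flat interleaving of the families $h_m=1/m$ and $\delta_n=n^{-1/p}$. Indeed, since $\sum_n\delta_n=+\infty$ and $u$ is bounded, essentially every descent $\delta_n$ must eventually be recovered; if the block of positive jumps recovering $\delta_n$ sits between $a_n$ and the next descent (which is what any ``pay-as-you-go'' scheme produces), then the intervals running from just after $a_n$ to the end of that block are pairwise disjoint and each carries positive increment comparable to $\delta_n$, so $\bigl(\TV^{1/p}_+u\bigr)^p\gtrsim\sum_n\delta_n^p=+\infty$. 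Escaping this forces the ascent recovering a given descent to be interrupted by smaller descents, whose recoveries are interrupted by still smaller ones, and so on: the recovery intervals must be deeply nested across all scales. Designing that nesting and then actually proving the upper bound on $\TV^{1/p}_+$ is the entire content of the proposition, and it is missing from your argument. Note also that the counting bound forced by $\bigl(\TV^{1/p}_+u\bigr)^p\le M$ (the number of descents of size $\ge\varepsilon$ is $O(\varepsilon^{-p})$, by the power-mean inequality applied to the rises between consecutive such descents) shows your jump sizes sit exactly at the critical threshold, so there is no slack for a crude estimate.

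The paper builds the required nesting explicitly with a self-similar Cantor-type construction: starting from the identity on $[0,1]$, each ascent of height $H$ is replaced by ascent $\frac{1+\alpha}{2}H$, descent $\alpha H$, ascent $\frac{1+\alpha}{2}H$, with $\alpha=2^{(p-1)/p}-1$ chosen so that $\bigl(\frac{1+\alpha}{2}\bigr)^p=\frac12$. The $2^n$ level-$n$ descents then contribute $\alpha^p$ per level to the negative $p$-variation, which therefore diverges, while the bound $\bigl(\TV^{1/p}_+u\bigr)^p\le1$ is proved by a convexity lemma: from an optimal partition one may delete the two points realizing the smallest descent without decreasing the positive $p$-sum, and iterating reduces to the trivial partition $\{0,1\}$. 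To complete your jump-function variant you would need both an analogue of that multiscale arrangement and a substitute for that reduction lemma.
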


\begin{remark}
As we already mentioned, the conclusion of the proposition above cannot hold for $p=1$.
In fact the trivial relation holds:
\begin{equation}\label{E_tv+-}
\TV_+(u) = \TV_-(u) + u(1)-u(0).
\end{equation}
This proves that a bounded function with finite positive total variation is a function of bounded variation. 
In particular if a bounded function $u:\R\rightarrow \R$ is one-sided Lipschitz, then it has locally finite total variation.
By the Oleinik estimate, this argument applies to entropy solutions to \eqref{E_cl} with $f$ uniformly convex.
In the more general case of strictly convex fluxes with polynomial degeneracy, it holds a one-sided H\"{o}lder estimate.
As observed in \cite{CJJ_fractional}, an analogous of \eqref{E_tv+-} for $\TV^{1/p}$ would allow to 
apply the same argument to get fractional BV regularity of the solution $u$. 
The example shows that this cannot be done, however as in \cite{CJJ_fractional} for the convex case and here in Section \ref{S_Cheng}, it is
enough to rely on the $\BV$ regularity of $f'\circ u$.
\end{remark}

Let $n\ge 0$ and $C_n$ be the $n$-th step in the construction of the Cantor set:
\begin{equation*}
C_n=\left\{x: x=\sum_{i=1}^\infty \frac{x_i}{3^i} \mbox{ with }x_i\in \{0,2\} \,\forall i=1,\ldots,n \mbox{ and } x_i\in \{0,1,2\} \,\forall k>n\right\}. 
\end{equation*}
Let $u_0=\Id\llcorner [0,1]$ and fix $\alpha=2^{\frac{p-1}{p}} -1<1$.
Define by induction $u_n$ for $n\ge 1$:
\begin{equation*}
u_n=
\begin{cases}
u_{n-1} & \mbox{in }[0,1]\setminus C_{n-1} \\
v_n &  \mbox{in }C_{n-1}
\end{cases}
\end{equation*}
where $v_n$ is defined on each connected component $[a,b]$ of $C_{n-1}$ as the piecewise affine interpolation between the points
\begin{equation*}
\begin{split}
&(a,u_{n-1}(a)), \quad \left(a+\frac{b-a}{3}, u_{n-1}(a) + \frac{1+\alpha}{2}(u_{n-1}(b)-u_{n-1}(a))\right) \\
&\left(a+\frac{2}{3}(b-a), u_{n-1}(a) + \frac{1-\alpha}{2}(u_{n-1}(b)-u_{n-1}(a))\right), \quad (b, u_{n-1}(b)).
\end{split}
\end{equation*}
See Figure \ref{F_BV+-} for the first two steps of the construction.

Observe that $\alpha$ has been chosen in such a way that
\begin{equation*}
\TV^{1/p}_+ u_1= \TV^{1/p}_+  u_0 =1.
\end{equation*}
Let $n\ge 1$ and let $[a,b]$ be a connected component of $C_n$. A straightforward computation leads to
\begin{equation*}
\begin{split}
\|u_n-u_{n-1}\|_\infty = &~ u_n\left(\frac{2a}{3}+\frac{b}{3}\right)- u_{n-1}\left(\frac{2a}{3}+\frac{b}{3}\right) \\
= &~ \left( \frac{1+\alpha}{2}-\frac{1}{3}\right)(u_{n-1}(b)-u_{n-1}(a)) \\
= &~ \left( \frac{1+\alpha}{2}-\frac{1}{3}\right)\left(\frac{1+\alpha}{2}\right)^{n-1}.
\end{split}
\end{equation*}

Since $\alpha<1$ this implies that the sequence $u_n$ converges uniformly to a continuous function $u$.

We estimate from below the negative $1/p$-variation of $u$ by considering $(\bar x_1,\ldots,\bar x_{2^{n+1}})\in \mathcal P$ where
$\{\bar x_1,\ldots,\bar x_{2^{n+1}}\} = \partial C_n$:
\begin{equation*}
\sum_{i=1}^{2^{n+1}-1}[(u(x_{i+1})-u(x_i))^-]^p=\sum_{i=1}^{2^{n+1}-1}[(u_n(x_{i+1})-u_n(x_i))^-]^p=
\sum_{j=1}^{n}2^j\alpha^p\left(\frac{1+\alpha}{2}\right)^{jp} = n\alpha^p.
\end{equation*}
In particular the negative $1/p$-variation of $u$ is not finite.

Now we prove that the positive $1/p$-variation is equal to 1.
It is sufficient to prove
\begin{equation*}
\TV^{1/p}_+ u_n \le 1
\end{equation*}
for every $n$ by lower semicontinuity of $\TV^{1/p}_+ $ with respect to pointwise convergence.

Since $u_n$ is piecewise monotone and $\phi(u)=|u|^p$ is convex it is easy to show that there exits $(x_1,\ldots,x_{2k})\in \mathcal P$
such that
\begin{equation}\label{E_optimal}
\TV^{1/p}_+ u_n=\sum_{i=1}^{2k-1}[(u_n(x_{i+1})-u_n(x_i))^+]^p= \sum_{i=1}^k[u_n(x_{2i})-u_n(x_{2i-1})]^p
\end{equation}
and for every $i=1,\ldots,2k$
\begin{equation*}
x_i=\bar x_{\sigma(i)},
\end{equation*}
where $\sigma:[1,2k]\cap \N\rightarrow [1,2^{n+1}]\cap \N$ is strictly increasing.
By \eqref{E_optimal}, it immediately follows that $\sigma(1)=1$, $\sigma(2k)=2^{n+1}$ and $\sigma$ maps even indexes into even
indexes and odd indexes into odd indexes.

We are going to prove that given $(x_1,\ldots,x_{2k})\in \mathcal P$ such that \eqref{E_optimal} holds with $k>1$ there exists
$(y_1,\ldots, y_{2k-2})\in \mathcal P$ which realizes the $\TV^{1/p}_+ u_n$ too.
$(y_1,\ldots, y_{2k-2})$ is obtained eliminating two consecutive points in $(x_1,\ldots,x_{2k})$. 

We need also the following property that follows by the optimality of the partition: given $(x_1,\ldots,x_{2k})\in \mathcal P$ such that \eqref{E_optimal} holds, for every 
$j=1,\ldots,k-1$
\begin{equation}\label{E_successore}
\sigma(2j)=2l\quad \Longrightarrow \quad \sigma(2j+1)=2l+1.
\end{equation}
Let $\bar j\in[1,k-1]\cap \N$ such that
\begin{equation}\label{E_minimality}
u_n(x_{2\bar j})-u_n(x_{2\bar j +1})=\min_{j\in [1,\ldots k-1]} u_n(x_{2 j})-u_n(x_{2j +1}).
\end{equation}
\textbf{Claim.} If $(x_1,\ldots,x_{2k})\in \mathcal P$ is optimal, then $(x_1,\ldots, x_{2k})\setminus (x_{2\bar j},x_{2\bar j+1})$ is
still optimal. 

First we observe that iterating this argument $k-1$ times we get that
\eqref{E_optimal} holds for $(0,1)\in \mathcal P$ so that
\begin{equation*}
\TV_s^+u_n \le 1
\end{equation*}
and this reduces the proof of Proposition \ref{P_bv+-} to the proof of the claim.

The claim is a consequence of the convexity of $\phi(u)=|u|^p$, which is exploited in the following lemma.
\begin{lemma}
Let $w<z$ and
\begin{equation*}
u_1\le w, \quad u_2\ge z, \quad v_1=w+\frac{1+\alpha}{2}(z-w), \quad v_2=w+\frac{1-\alpha}{2}(z-w).
\end{equation*}
Then 
\begin{equation*}
(v_1-u_1)^p+(u_2-v_2)^p\le (u_2-u_1)^p.
\end{equation*}
\end{lemma}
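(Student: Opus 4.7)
My plan is to reduce the inequality to elementary one-variable convexity. Introduce the non-negative quantities $a := w - u_1 \ge 0$, $b := u_2 - z \ge 0$, together with $h := z - w > 0$ and $c := \frac{1+\alpha}{2}h$. Since $v_1 - u_1 = a + c$, $u_2 - v_2 = b + c$, and $u_2 - u_1 = a + b + h$, the lemma is equivalent to
\begin{equation*}
(a+c)^p + (b+c)^p \le (a+b+h)^p \qquad \text{for all } a,b \ge 0.
\end{equation*}

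The first step is the boundary case $a = b = 0$. After dividing by $h^p$ it reads $2\bigl(\tfrac{1+\alpha}{2}\bigr)^p \le 1$, i.e. $(1+\alpha)^p \le 2^{p-1}$, which holds with equality by the calibration $\alpha = 2^{(p-1)/p}-1$ fixed at the start of the construction of $u_n$. This is exactly the choice of $\alpha$ that makes the iteration preserve $\mathrm{TV}^{1/p}_+$, so it is encouraging that it is precisely what is needed to make the inequality sharp at the corner.

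The second and main step is to show that $F(a,b) := (a+c)^p + (b+c)^p - (a+b+h)^p$ is non-increasing in each variable on $[0,\infty)^2$. Differentiating,
\begin{equation*}
\partial_a F(a,b) = p\bigl[(a+c)^{p-1} - (a+b+h)^{p-1}\bigr],
\end{equation*}
and since $p>1$ makes $x\mapsto x^{p-1}$ monotone increasing on $[0,\infty)$, this is non-positive as soon as $a+c \le a+b+h$, i.e. $c \le b+h$. But $c = \tfrac{1+\alpha}{2}h < h \le b+h$ using $\alpha < 1$ and $b\ge 0$; the computation for $\partial_b F$ is identical. Combining monotonicity with the equality $F(0,0)=0$ from Step 1 yields $F(a,b)\le 0$ on the whole admissible region, which is exactly the inequality in the lemma.

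The argument is essentially elementary and I do not anticipate a real obstacle; the only delicate point is that the proof uses the sharp calibration $(1+\alpha)^p = 2^{p-1}$ in an essential way (to get $F(0,0)=0$), while everything else is insensitive to the precise value of $\alpha$ as long as $\alpha\in(0,1)$. This matches the role the lemma plays in the surrounding argument, where one wants a partition of length $2k$ to be reducible to one of length $2k-2$ without loss in the $\mathrm{TV}^{1/p}_+$ functional.
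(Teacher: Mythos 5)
Your proof is correct and is essentially the paper's argument in differential form: both rest on the exact calibration identity $2\left(\tfrac{1+\alpha}{2}\right)^p=1$ (i.e.\ $(v_1-w)^p+(z-v_2)^p=(z-w)^p$) together with the monotonicity of $t\mapsto pt^{p-1}$, which the paper exploits by writing each $p$-th power as $\int_0^x pt^{p-1}\,dt$ and comparing the leftover integrals, while you exploit it by showing the defect $F(a,b)$ is non-increasing in each variable. There is no gap.
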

\begin{proof}
By elementary computations,
\begin{equation*}
\begin{split}
(v_1-u_1)^p+(u_2-v_2)^p =&~\int_0^{v_1-u_1}pt^{p-1}dt+\int_0^{u_2-v_2}pt^{p-1}dt \\
 =&~ \int_0^{v_1-w}pt^{p-1}dt+\int_{v_1-w}^{v_1-u_1}pt^{p-1}dt + \int_0^{z-v_2}pt^{p-1}dt + \int_{z-v_2}^{u_2-v_2}pt^{p-1}dt.
\end{split}
\end{equation*}
Since $(v_1-w)^p+(z-v_2)^p= (z-w)^p$,
\begin{equation*}
\begin{split}
(v_1-u_1)^p+(u_2-v_2)^p =&~(z-w)^p + \int_{v_1-w}^{v_1-u_1}pt^{p-1}dt +  \int_{z-v_2}^{u_2-v_2}pt^{p-1}dt \\
\le &~ (z-w)^p + \int_{z-w}^{u_2-u_1} pt^{p-1}dt \\
= &~ (u_2-u_1)^p,
\end{split}
\end{equation*}
where the inequality holds since $pt^{p-1}$ is increasing with respect to $t$.
\end{proof}

We want to apply the previous lemma with
\begin{equation*}
u_1=u(x_{2\bar j -1}), \quad u_2=u(x_{2\bar j +2}), \quad v_1=u(x_{2\bar j}), \quad v_2=u(x_{2\bar j+1})
\end{equation*}
and
\begin{equation*}
w= u(x_{2\bar j}-(x_{2\bar j +1}- x_{2\bar j})), \quad  z= u(x_{2\bar j+1}+(x_{2\bar j +1}- x_{2\bar j})).
\end{equation*}
The two equalities
\begin{equation*}
v_1=w+\frac{1+\alpha}{2}(z-w) \quad \mbox{and} \quad v_2=w+\frac{1-\alpha}{2}(z-w)
\end{equation*}
hold by construction. Therefore it remains to check that $u_1\le w$ and $u_2\ge z$. Since they are similar we prove only the first inequality: 
by the minimality in \eqref{E_minimality} it follows that $x_{2\bar j-1}\le x_{2\bar j}-(x_{2\bar j +1}- x_{2\bar j})$ and
therefore by optimality in \eqref{E_optimal} it follows that
\begin{equation*}
u_1=u(x_{2\bar j -1})\le u(x_{2\bar j}-(x_{2\bar j +1}- x_{2\bar j}))= w.
\end{equation*}

Hence we can apply the lemma and this implies the claim, therefore the proof of Proposition \ref{P_bv+-} is complete.

\begin{figure}
\centering
\def\svgwidth{0.7\columnwidth}
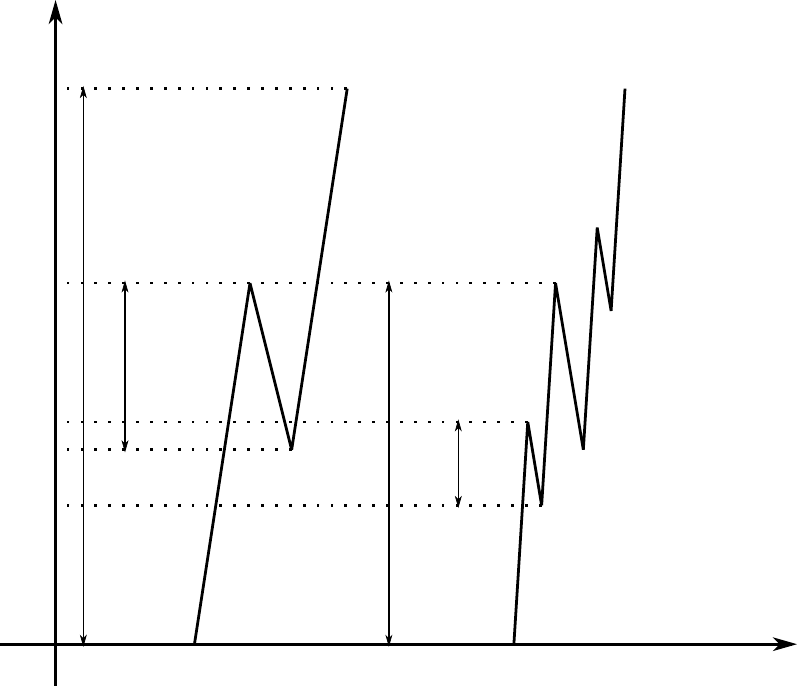
\caption{The first two steps of the construction of the function $u$ in Proposition \ref{P_bv+-}.}\label{F_BV+-}
\end{figure}

\subsection{Theorem \ref{T_frac_regularity} is sharp}
We show here for completeness, the already known sharpness of Theorem \ref{T_frac_regularity}: see
\cite{CJ_oscillating} for a similar construction and \cite{DLW_averaging}, where in particular the optimality is shown in the setting of fractional
Sobolev spaces.

Let $p\in \N$ and consider the flux $f(u)=u^{p+1}$ of degeneracy $p$.
We provide a bounded initial datum $u_0$ with compact support such that for every $q\in [1,p)$ the entropy solution $u$ at time 1 does not 
belong to $\BV^{1/q}(\R)$.

Consider first the entropy solution of \eqref{E_cl} with $f(u)=u^{p+1}$ and $u_0= a \chi_{[0,L]}$ for some $a,L>0$.
The solution for small $t>0$ is given by a rarefaction starting from $x=0$ and a shock starting from $x=L$. The maximal speed of the rarefaction
is $f'(a)=(p+1)a^p$ and the the velocity $\lambda$ of the shock is given by the Rankine-Hugoniot condition:
\begin{equation*}
\lambda = \frac{f(a)-f(0)}{a-0}= a^p.
\end{equation*}
Therefore
\begin{equation}\label{E_est_basic}
t<\frac{L}{pa^p} \qquad \Longrightarrow \qquad \max u(t)=a \quad \mbox{and} \quad \supp u(t)\subset [0,L+ta^p].
\end{equation}

Consider now an initial datum of the form
\begin{equation*}
u_0=\sum_{n=1}^\infty a_n \chi_{[x_n,x_n+L_n]}.
\end{equation*}
Choose
\begin{equation*}
L_n= (p+1)a_n^p > p a_n^p, \qquad x_1=0, \quad x_n = x_{n-1} + L_n + a_n^p.
\end{equation*}
Let $q\ge 1$, by the choice above and \eqref{E_est_basic}, it holds
\begin{equation*}
\supp u_0 \subset \left[0, (p+2)\sum_{n=1}^\infty a_n^p\right], \qquad 
\left(\TV^{1\over q}u(1)\right)^q = 2 \sum_{n=1}^\infty a_n^q.
\end{equation*}
Therefore in order to conclude the example it suffices to consider a nonnegative sequence $(a_n)_{n\in \N}\in \ell^p \setminus \ell^q$ for every $q<p$. For example let
\begin{equation*}
a_n=\left[\frac{1}{n[\log (1+ n)]^2}\right]^{\frac{1}{p}}.
\end{equation*}

\noindent
\textbf{Acknowledgement.} The author gratefully acknowledges Stefano Bianchini for several discussions and sugge\-stions, and for a careful reading of a preliminary version of this paper.

\end{document}